\documentclass[a4paper,10pt]{article}

\usepackage{amssymb, amsmath, mathtools, amsfonts, times, amsthm, amscd, setspace, leftidx, enumitem, xfrac, xcolor}
\usepackage{geometry}
\input xy 
\xyoption{all}
\usepackage{scalerel}
\usepackage{hyperref}

\usepackage[titletoc,title]{appendix}
\usepackage{graphicx}
\graphicspath{ {Graphics/} }
\usepackage{pdflscape}

\usepackage{array,multirow}

\theoremstyle{plain}
\newtheorem{proposition}{Proposition}[section]
\newtheorem{lemma}[proposition]{Lemma}
\newtheorem{thm}[proposition]{Theorem}
\newtheorem{corollary}[proposition]{Corollary}
\newtheorem{prop}[proposition]{Proposition}

\theoremstyle{definition}
\newtheorem{ex}[proposition]{Example}
\newtheorem{example}[proposition]{Example}
\newtheorem{defi}[proposition]{Definition}
\newtheorem{rmk}[proposition]{Remark}

\newenvironment{sketchproof}{\paragraph{\textit{Sketch of proof.}}}{\hfill$\diamond$}

\newcommand{\Omit}[1]{\iffalse #1 \fi}


\newcommand{\ct}[1]{\mathcal{#1}}
\newcommand{\und}[1]{\underline{#1}}
\newcommand{\ov}[1]{\overline{#1}}

\newcommand{\field}[0]{\mathbb{K}}
\newcommand{\K}[0]{\mathbb{K}}
\newcommand{\tnK}[0]{\otimes_{\mathbb{K}}}

\newcommand{\bosonleft}{\mathrel{\rtimes \hspace{-0.15cm}\cdot}}


\newcommand{\forg}[0]{\mathrm{forg.}}
\newcommand{\op}[0]{\mathrm{op}}
\newcommand{\lax}[0]{\mathrm{lax}}
\newcommand{\End}[0]{\mathrm{End}}
\newcommand{\id}[0]{\mathrm{id}}
\newcommand{\Hom}[0]{\mathrm{Hom}}
\newcommand{\flip}[0]{\mathfrak{flip}}

\newcommand{\inc}[0]{\mathrm{inc.}}
\newcommand{\rig}[0]{\mathrm{rig}}


\newcommand{\fdVecs}[0]{\mathsf{Vec}_{\mathrm{fd}}}
\newcommand{\Vecs}[0]{\mathsf{Vec}}
\newcommand{\Cats}[0]{\mathsf{Cat}}

\newcommand{\Set}[0]{\mathsf{Set}} 
\newcommand{\lcomod}[1]{{}^{#1}{\mathcal{M}}}

\newcommand{\lmod}[1]{{}_{#1}{\mathcal{M}}}

\newcommand{\bim}[0]{{}_{A}{\mathcal{M}}_{A}}
\newcommand{\Yetter}[1]{{}^{#1}_{#1}\mathcal{YD}}

\newcommand{\source}[0]{\mathsf{s}}
\newcommand{\target}[0]{\mathsf{t}}


\newcommand{\ev}[0]{\mathrm{ev}}
\newcommand{\cvl}[0]{\mathrm{coev}}
\newcommand{\evl}[0]{\mathrm{ev}}
\newcommand{\cvr}[0]{\underline{\mathrm{coev}}}
\newcommand{\evr}[0]{\underline{\mathrm{ev}}}
\newcommand{\pr}[1]{{}^{\vee}{#1}}
\newcommand{\un}[0]{\mathtt{1}}
\newcommand{\dual}[3]{\left( {}_{#1}{#2}_{#3}\right)^{\circ}}
\newcommand{\tn}[0]{\otimes}

\newcommand{\st}[1]{\lbrace #1 \rbrace}

\newcommand{\comment}[1]{} 
\begin{document}
\title{Hopf Monads: A Survey with New Examples and Applications}
\author{Aryan Ghobadi \\ \small{Queen Mary University of London }\\\small{ School of Mathematics, Mile End Road}\\\small{ London E1 4NS, UK }\\ \small{Email: a.ghobadi@qmul.ac.uk}}
\date{}

\maketitle
\begin{abstract}
We survey the theory of Hopf monads on monoidal categories, and present new examples and applications. As applications, we utilise this machinery to present a new theory of cross products, as well as analogues of the Fundamental Theorem of Hopf algebras and Radford's biproduct Theorem for Hopf algebroids. Additionally, we describe new examples of Hopf monads which arise from Galois and Ore extensions of bialgebras. We also classify Lawvere theories whose corresponding monads on the category of sets and functions become Hopf, as well as Hopf monads on the poset of natural numbers.
\end{abstract}
\begin{footnotesize}2020\textit{ Mathematics Subject Classification}: Primary 16T99, 18C15, 18M05; Secondary: 03G30, 18D15
\\\textit{Keywords}: monads, bimonads, Hopf algebras, monoidal categories, braided categories, Hopf algebroid, bialgebroid\end{footnotesize}
\tableofcontents
\section{Introduction}\label{SIntro}
Hopf monads were originally introduced to generalise the lifting properties of ordinary Hopf algebras in the category $\Vecs$ of vector spaces, to arbitrary monoidal categories. In \cite{moerdijk2002monads}, Hopf monads were defined as monads which lift the monoidal structure of a monoidal category to their category of modules (or Eilenberg-Moore categories). These monads are now referred to as \emph{bimonads} or \emph{comonoidal monads}, whereas monads which also lift the closed structure (inner-homs) of a closed monoidal category are called Hopf monads \cite{bruguieres2011hopf}. While the purpose of the latter definition was rooted in lifting the closed structure, the `Hopf condition' presented in \cite{bruguieres2011hopf} makes sense for bimonads on arbitrary monoidal categories. Additionally, this theory has garnered interest because many of the various categorical generalisations of Hopf algebras including Hopf algebroids, Hopf Polyads and Hopf categories, among others \cite{bohm2017hopf,bohm2018hopf} can all be viewed within this framework. Hence, any results proved at the level of Hopf monads can have fruitful applications for all these other structures.

Initially, Hopf monads were defined for rigid monoidal categories \cite{bruguieres2007hopf} and various Hopf algebraic results were extended to this setting and applied in the study of tensor categories \cite{bruguieres2011exact} and topological field theories \cite{turaev2017monoidal}. This new point of view has also produced novel results for its simplest family of examples, namely braided Hopf algebras in rigid monoidal categories, such as the notions of Drinfeld double and quasitriangular structures \cite{bruguieres2012quantum,bruguieres2013doubles}. While some of the classical Hopf algebraic results, including the Fundamental Theorem of Hopf algebras and Radford's biproduct Theorem, were also generalised to the setting of Hopf monads on arbitrary monoidal categories \cite{bruguieres2011hopf}, other results such as the theory of integrals have not received the same treatment. This is partly due to the lack of explicit examples when the category is not rigid. In the same vein, even some of the results which have been generalised to this setting have not been translated into the language of interesting examples of Hopf monads, such as Hopf algebroids. 

Here, we present a survey-style review of the theory of Hopf monads and bimonads with a particular focus on constructing new examples of these objects. While collecting the various results in the theory of Hopf monads, which are spread between \cite{bruguieres2007hopf,bruguieres2011hopf,bruguieres2013doubles,shimizu2021tannaka} among other texts, we also compare them with their classical analogues in the theory of ordinary Hopf algebras and apply some of them in the setting of Hopf algebroids. We also aim to provide simpler sketches of the proofs of these results, which provide the reader with a better picture of the proof and avoid unnecessary details. 

The book \cite{bohm2018hopf} along with several surveys including \cite{vercruysse2013hopf} formulate numerous generalisations of Hopf algebraic structures as examples of Hopf monads, but do not review the various results on Hopf monads themselves. In addition to providing a survey of these results, we take a different approach to examples. Throughout Section~\ref{SExamplesHopfMnds}, we pick several base monoidal categories and then construct examples of Hopf monads on these bases, which do not already appear as Hopf algebra-like structures in the literature. The reader can also refer to Chapter II of \cite{turaev2017monoidal} as it contains a review of the theory of Hopf monads on rigid monoidal categories. In this survey however, we primarily focus on results and examples in the non-rigid setting.

\textbf{Organisation:} In Section~\ref{SCategoricalBackgrnd}, we review the basic categorical machinery which we will use and review some key features of some well-known monoidal categories. In Section~\ref{SHopfBimonad}, we recall the notion of bimonads and the various definitions of Hopf monads and review their fundamental properties. In Section~\ref{SCombine}, we review the different ways in which we can combine these structures to obtain new Hopf monads, while in Section~\ref{SClassics}, we review some well-known results in the theory of Hopf algebras which have been generalised to the setting of Hopf monads. In Section~\ref{SInducedCHpfMnd} we recall the correspondence between Hopf monads and cocommutative central coalgebras. The novel portions of our work mainly appear in Section~\ref{SExamplesHopfMnds} where we present several new examples of Hopf monads by looking at poset categories, algebraic theories and Galois extensions of bialgebras. The various new applications of Hopf monads to the theory of Hopf algebroids are spread within Sections~\ref{SCombine}, \ref{SInducedCHpfMnd} and \ref{SClassics} as examples. Finally, we mention some aspects of Hopf monads which we have not discussed here in Section~\ref{SHopfMonadNotes} and present an account of where Hopf adjunctions appear in topos theory in Appendix~\ref{STopos}.

\textbf{Novel Constructions:} Our simplest family of examples appear in Section~\ref{SNHpfMnds}, where we classify Hopf monads on the poset $(\mathbb{N}_{0},\leq)$ which is viewed as a category with its monoidal structure given by $+$. We show that bimonads on this category correspond to infinite submonoids of $(\mathbb{N}_{0},+)$, while Hopf monads on this category correspond to submonoids which are generated by a single positive number. 

Monads on $\Set$ are said to be generalisation of algebraic theories, in particular, because finitary monads on $\Set$ correspond exactly to algebraic or Lawvere theories. In Section~\ref{SSet}, we show that any finitary Hopf monad must correspond to the theory of $G$-sets for a group $G$ (a Hopf algebra in $\Set$). 

In Section~\ref{SGalois}, we consider extensions of bialgebras $f:B\rightarrow H$ and investigate when the induced adjunction given by restriction and extension of scalars $\lmod{H}\leftrightarrows \lmod{B}$ gives rise to a Hopf monad. We show that the (left) \emph{pre-Hopf} condition presented in \cite{bruguieres2011hopf} corresponds to $H$ being a Galois extension \cite{schauenburg2005generalized} of $B$ over a coalgebra determined by $f$. Furthermore, we present a generalised Galois condition \eqref{EqGenerGaloisExt} for $f$ which determines when the adjunction is (left) Hopf. We then show that any suitable Ore extension of bialgebras satisfies this condition. 

In \cite{ghobadi2021pivotal}, we presented a construction for Hopf monads corresponding to pivotal pairs $(P,Q)$ in suitable closed monoidal categories. In Section~\ref{SMnd}, we review this construction and show that we still obtain a Hopf monad under more general assumptions where the base category is not necessarily closed, Theorem~\ref{TPivHpfMndGen}. We already noted in \cite{ghobadi2021pivotal}, that this construction can be viewed as an example of Shimizu's Tannaka-Krein reconstruction for Hopf monads \cite{shimizu2021tannaka}. In Section~\ref{STannakaMnds}, we review the latter theory and use our example in Section~\ref{SMnd} to clarify different aspects of this complicated machinery. 

In Section~\ref{SBial}, we review how Hopf algebroids and bialgebroids over a base algebra $A$ can be viewed as Hopf monads and bimonads over the category of $A$-bimodules. While this point of view is well-known, various useful results for Hopf monads have not been translated into the language of Hopf algebroids. For instance, in Example~\ref{EAlgebroidBoson} we translate the theory of cross products of Hopf monads to the setting of Hopf algebroids and present an analogous construction to the cross product of Hopf algebras in this setting. We present three more such results in Examples \ref{ExAlgebroidCCC}, \ref{ExFundThmHpfAlgebroids} and \ref{ExRadfordTheorem}, which include the construction of a lax braiding on the induced coalgebra of Hopf algebroids and analogues of the Fundamental Theorem of Hopf algebras and Radford's biproduct Theorem for these structures. 

\textbf{Acknowledgements:} The author would like to thank Shahn Majid for numerous helpful conversations on all things Hopf-related. We would also like to show gratitude to Ivan Toma{\u s}i{\'c} for useful discussions on Section~\ref{SSet} and Appendix~\ref{STopos}, as well as Shu Sasaki and Tomasz Brzezi{\'n}ski for their examination of the author's dissertation \cite{ghobadi2022thesis}, which contained a large portion of this work in its first two chapters. The author is also grateful to the EPSRC for grant number EP/W522508/1 which supported part of this work.

\section{Categorical Background}\label{SCategoricalBackgrnd}
In this section we recall some of the categorical concepts which we will use in future chapters and set our notation. Our main references for basic category theory will be \cite{mac2013categories,riehl2017category}.

\subsection{Monoidal Categories}\label{SMonoidal}
In this section we briefly recall the theory of monoidal categories and set our notation. We refer the reader to Section 1 of \cite{turaev2017monoidal} for additional details. 

Given a category $\ct{C}$, a quadruple $(\otimes, \un,\alpha,l,r)$ is called a monoidal structure on $\ct{C}$, where $\un$ is an object in $\mathcal{C}$, $\otimes:\mathcal{C}\times \mathcal{C}\rightarrow\mathcal{C}$ a bifunctor and $\alpha: (\id_{\mathcal{C}}\otimes  \id_{\mathcal{C}})\otimes \id_{\mathcal{C}}\rightarrow \id_{\mathcal{C}}\otimes(\id_{\mathcal{C}}\otimes \id_{\mathcal{C}})$, $l:\un\otimes \id_{\mathcal{C}}\rightarrow \id_{\mathcal{C}}$ and $r:\id_{\mathcal{C}}\otimes \un \rightarrow \id_{\mathcal{C}}$ natural isomorphisms satisfying coherence axioms as presented in Section 1.2.1 of \cite{turaev2017monoidal}. Given such a structure $(\ct{C},\tn, \un)$ is referred to as a \emph{monoidal category} and $\un$ is called its \emph{monoidal unit}. The monoidal structure is said to be \emph{strict} if $\alpha, l$ and $r$ are all identity morphisms. Given any monoidal category $(\ct{C},\tn, \un)$ we obtain another monoidal category $(\ct{C},\tn^{\op}, \un)$ where the bifunctor $\tn^{\op}$ is defined by $X\tn^{\op}Y= Y\tn X$. This construction is distinct to the opposite category $\ct{C}^{\op}$ which has the same objects as $\ct{C}$ but morphisms in reversed directions.

Throughout this work, every monoidal category will either be strict or the coherence isomorphisms $\alpha, l, r$, while not the identity morphisms, will be trivial e.g. in the case of the category of sets, $\Set$, the function $\alpha_{X,Y,Z}:(X\times Y)\times Z\rightarrow X\times (Y\times Z)$ sends every element $((x,y),z)$ to $(x,(y,z))$. Hence, the effect of these natural isomorphisms will be negligible and we will not discuss them further. 

A functor $F:\mathcal{C}\rightarrow\mathcal{D}$ between monoidal categories $(\ct{C},\tn_{\mathcal{C}}, \un_{\ct{C}})$ and $(\ct{D},\tn_{\mathcal{D}}, \un_{\mathcal{D}})$ is said to be \emph{(strong) monoidal} if there exist a pair $(F_{2}, F_{0})$ where $F_{2}(-,-): F(-)\otimes_{\mathcal{D}}F(-)\rightarrow F(-\otimes_{\mathcal{C}}-)$ is a natural (isomorphism) transformation and $F_{0} : \un_{\ct{D}}\rightarrow F(\un_{\ct{C}})$ a (isomorphism) morphism satisfying 
\begin{align}
F_{2}(X\otimes_{\ct{C}} Y,Z)( F_{2}(X,Y)\otimes_{\ct{D}} \id_{F(Z)})=&F_{2}(X, Y\otimes_{\ct{C}} Z) (\id_{F(X)}\otimes_{\ct{D}}  F_{2}(Y,Z))\label{EqComonoidalCoass}
\\F_{2}(X,\un_{\ct{C}})(\id_{F(X)}\otimes_{\ct{D}} F_{0})=\id_{F(X)}&=F_{2}(\un_{\ct{C}},X)(F_{0}\otimes_{\ct{D}} \id_{F(X)})\label{EqComonoidalCoun}
\end{align}
for any three objects $X,Y,Z$ in $\ct{C}$. The functor $F$ is said to be \emph{(strong) comonoidal} if there exist a pair $(F_{2}, F_{0})$ with arrows going in the opposite direction. From here onwards, we will omit the subscript denoting the ambient category e.g. replace $\tn_\ct{C}$ and $\tn_{\ct{D}}$ by $\tn$, since the choice of monoidal structure will be clear from context. Note that a strong monoidal structure $(F_{2},F_{0})$ on a functor $F$ is equivalent to a strong comonoidal structure $(F_{0}^{-1},F_{2}^{-1})$ on $F$. A strong monoidal functor with $F_{2}=\id_{-\tn -}$ and $F_{0}=\id_{\un}$ is called \emph{strict monoidal}. The reader should also note that in many sources the term monoidal functor refers to a strong monoidal functor, but we choose to match the terminology used in the main Hopf monad literature \cite{bruguieres2007hopf, bruguieres2011hopf}. 

The composition $GF$ of two functors $F:\ct{C}\rightarrow\ct{D}$ and $G:\ct{D}\rightarrow \ct{E}$ with (co)monoidal structures $(F_{2},F_{0})$ and $(G_{2},G_{0})$ obtains a natural (co)monoidal structure by $(GF_{2})G_{2}(F, F)$ and $(GF_{0})G_{0}$ (resp. $G_{2}(F, F)GF_{2}$ and $G_{0}(GF_{0})$). A natural transformation $\theta:F\Rightarrow G$ between two (co)monoidal functors $(F,F_{2},F_{0})$ and $(G,G_{2},G_{0})$ is said to be \emph{(co)monoidal} if $\theta F_{2}= G_{2}(\theta \tn \theta)$ and $\theta_{\un}F_{0}=G_{0}$ (resp. $(\theta \tn \theta)F_{2}= G_{2}\theta$ and $G_{0}\theta=F_{0}$) hold.

Given an object $X$ in a monoidal category $(\mathcal{C}, \tn, \un)$, we say an object $\pr{X}$ is a \emph{left dual} of $X$, if there exist morphisms $\mathrm{ev}_{X}:\pr{X}\otimes X\rightarrow \un$ and $\mathrm{coev}_{X}:\un\rightarrow X\otimes \pr{X}$ such that 
\begin{equation*}(\evl_{X}\otimes \id_{\pr{X}})(\id_{\pr{X}}\otimes \cvl_{X})=\id_{\pr{X}} , \quad (\id_{X}\otimes \evl_{X} )(\cvl_{X}\otimes \id_{X})=\id_X
\end{equation*}
In such a case, we call $X$ a \emph{right dual} for $\pr{X}$. Furthermore, a right dual of an object $X$ is denoted by $X^{\vee}$, with evaluation and coevaluation maps denoted by $\evr_{X}:X\otimes X^{\vee}\rightarrow \un$ and $\cvr_{X}:\un\rightarrow X^{\vee}\otimes X$, respectively.  We will refer to evaluation and coevaluation maps as such, as \emph{duality morphisms}. We say an object $X$ is \emph{dualizable} if it has both a left dual and a right dual. Note that if a left (or right) dual object to $X$ exists, then it is unique upto isomorphisms. Let $Y$ and $Z$ both be left dual objects to $X$, with duality morphisms $(\evl,\cvl)$ and $(\evl',\cvl')$, then $(\evl\tn\id_{Z})(\id_{Y}\tn \cvl')$ and $(\evl'\tn\id_{Y})(\id_{Z}\tn \cvl)$ form an isomorphism between $Y$ and $Z$. 

The monoidal category $\mathcal{C}$ is said to be left (right) \emph{rigid} or \emph{autonomous} if all objects have left (right) duals. If a category is both left and right rigid, we simply call it \emph{rigid}. In the literature, when a category is said to be left (or right rigid), it is assumed that we have \emph{chosen} a left dual and a pair of duality morphisms for every object $X$ and $\pr{X}$ denotes this specific choice of left dual. Given these choices, we have a contravariant functor $\pr{(-)}:\ct{C}\rightarrow \ct{C}$ which sends objects $X$ to their left duals $\pr{X}$ and morphisms $f:X\rightarrow Y$ to morphisms $(\evl_{Y} \tn \mathrm{id}_{\pr{X}})(\mathrm{id}_{\pr{Y}}\tn f\tn \mathrm{id}_{\pr{X}} ) ( \mathrm{id}_{\pr{Y}}\tn \cvl_{X})$. Similarly, $(-)^{\vee}:\ct{C}\rightarrow \ct{C}$ defines a contravariant functor on a right rigid category. 

We call a monoidal category $\mathcal{C}$ \emph{left (right) closed} if for every object $X$ there exists an endofunctor $[X,-]^{l}$ (resp. $[X,-]^{r}$) on $\mathcal{C}$ which is right adjoint to $-\otimes X$ (resp. $X\otimes -$). We will denote the unit and counit of these adjunctions by $\cvl^{-}_{X}$ (resp. $\cvr^{-}_{X}$) and $\evl^{-}_{X}$ (resp. $\evr^{-}_{X}$). By definition $[-,-]^{l},[-,-]^{r}:\mathcal{C}^{\op}\times\mathcal{C}\rightarrow \mathcal{C}$ become bifunctors and we refer to them as \emph{inner-homs}. If a category is left and right closed we call it \emph{closed}. Observe that if $X$ has a left (right) dual $\pr{X}$ (resp. $X^{\vee}$), the functor $-\otimes \pr{X}$ (resp. $X^{\vee}\otimes -$) becomes right adjoint to $- \otimes X$ (resp. $X\otimes -$) and therefore every left (right) rigid category is left (right) closed. Furthermore,  if $X$ has a left (right) dual, $\pr{X}\cong [X,\un]^{l}$ (resp. $X^{\vee}\cong [X,\un]^{r}$). We have adopted the notation of \cite{bruguieres2011hopf} here, and what we refer to as a left closed structure is referred to as a right closed structure in some of our other references such as \cite{schauenburg2000algebras}.

It is well-known that strong monoidal functors preserve dual objects i.e. $F(\pr{X})\cong \pr{F(X)}$ since $F_{0}^{-1}F(\mathrm{ev})F_{2}(\pr{X},X)$ and $ F_{2}^{-1}(X,\pr{X}) F(\mathrm{coev})F_{0}$ act as the evaluation and coevaluation morphisms making $F(\pr{X})$ a left dual object to $F(X)$. 

Given a monoidal functor $F:\ct{C}\rightarrow \ct{D}$ between a pair of left closed monoidal categories, $\ct{C}$ and $\ct{D}$, we obtain a canonical family of morphisms $\Phi^{l}_{X,Y}:=F(\evl_{X}^{Y})F_{2}([X,Y]^{l}_{\ct{C}},X): F[X,Y]_{\ct{C}}^{l}\tn_{\ct{D}} FX\rightarrow FY$ for every pair of objects $X,Y$ in $\ct{C}$.  Consequently we obtain a family of morphisms 
\begin{equation*} 
F_{X,Y}^{l}:= [F(X),\Phi^{l}_{X,Y}]^{l}_{\ct{D}}\cvl_{F(X)}^{F[X,Y]^{l}_{\ct{C}}}:F[X,Y]^{l }_{\ct{C}}\rightarrow [F(X),F(Y)]^{l}_{\ct{D}}
\end{equation*} 
as the corresponding morphism to $\Phi^{l}_{X,Y}$ under the adjunction $-\tn_{\ct{D}}F(X)\dashv [F(X),-]^{l}_{\ct{D}}$. We say that the monoidal functor $F$ is \emph{left closed} if the morphisms $F^{l}_{X,Y}$ are isomorphisms for all pair of objects $X,Y$ in $\ct{C}$. A symmetric definition can be made for \emph{right closed} functors. We refer the reader to Section 3.2 of \cite{bruguieres2011hopf} for additional details. 

\subsection{Braided Monoidal Categories and Hopf Algebras}
In this section we briefly recall the definitions of braided monoidal categories and braided bialgebras and Hopf algebras in them, which appeared in the work of Majid \cite{majid1993braided} under the name of braided groups. We refer the reader to Section II.6 of \cite{turaev2017monoidal} for additional details and a modern treatment.

A monoidal category is said to be (lax) \emph{braided} if there exists a natural isomorphism (natural transformation) $\Psi_{X,Y}: X\tn Y\rightarrow Y\tn X $ satisfying what we refer to as the braiding axioms: 
\begin{align}
\Phi_{X,Y\tn Z} = (\id_{Y}\tn \Phi_{X,Z})(\Phi_{X,Y}\tn \id_{Z}), \quad \Phi_{X\tn Y, Z} = (\Phi_{X,Z}\tn \id_{Y})(\id_{X}\tn \Phi_{Y,Z})
\end{align}
A braided monoidal category is said to be symmetric if $\Phi_{Y,X}\Phi_{X,Y}= \id_{X\tn Y}$ for every pair of objects $X,Y$ in $\ct{C}$. 

Note that in a braided monoidal category, an object is left dualisable if and only if it is right dualisable. If $\pr{X}$ denotes the left dual of $X$ in $\ct{C}$, with duality morphisms $\cvl$ and $ \evl $ then $ \evl \Psi_{X, \pr{X}}$ and $\Psi^{-1}_{X, \pr{X}}\cvl$ make $\pr{X}$ a right dual of $X$. The converse argument also holds, making $X^{\vee}$ a left dual of $X$. More generally, $\Psi_{-,X}:-\tn X\rightarrow X\tn -$ is an isomorphism of functors and thereby a braided monoidal category is left closed if and only if its is right closed.

An \emph{algebra} or \emph{monoid} in a monoidal category $\ct{C}$ consists of a triple $(M,\mu ,\eta)$, where $M$ is an object of $\ct{C}$ and $\mu : M\otimes M\rightarrow M$ and $\eta :1 \rightarrow M$ are morphisms in $\ct{C}$ which satisfy $\mu (\id_{M}\otimes \eta ) =\id_{M}= \mu (\eta\otimes \id_{M} )$ and $\mu (\id_{M}\otimes\mu )= \mu ( \mu\otimes \id_{M})$. A \emph{coalgebra} or \emph{comonoid} in $\ct{C}$ can be defined by simply reversing the arrows in the definition of a monoid. 

A \emph{braided bialgebra} in a braided monoidal category $(\mathcal{C},\Psi)$ consists of an object $B$ in $\ct{C}$, and quadruple of morphisms $(m, \eta, \Delta, \epsilon)$, such that $(B,m,\eta)$ form a monoid in $\ct{C}$, $(B,\Delta,\epsilon)$ form a comonoid in $\ct{C}$ and the following axioms hold 
\begin{align}
(m\otimes m)(\id_{B}\otimes\Psi_{B,B} \otimes \id_{B})(\Delta\otimes\Delta)=\Delta& m\label{EqBialgebraAxiom}
\\ \Delta \eta = \eta\tn \eta,\quad \epsilon m = (\epsilon\otimes\epsilon), \quad \epsilon\eta=\id_{\un}&\label{EqBialgebraRest}
\end{align}
A braided bialgebra is called a \emph{braided Hopf algebra} if there exists a morphism $S:B\rightarrow B$ called its \emph{antipode} such that $m(\id_{B}\otimes S)\Delta =\eta\epsilon = m(S\otimes \id_{B})\Delta $. If $(\ct{C},\Psi)$ is a symmetric category then we simply omit the ``braided'' prefix when referring to bialgebras and Hopf algebras in $\ct{C}$. 
 
Given any monoidal category $\ct{C}$, we can define the category of monoids in $\ct{C}$ which has monoids $(M,m,\eta)$ in $\ct{C}$ as objects and morphisms $f:(M,m,\eta)\rightarrow (M',m',\eta')$ are morphisms $f:M\rightarrow M'$ in $\ct{C}$ which commute with the structural morphisms i.e. satisfy $m'(f\tn f)= fm$ and $f\eta=\eta'$. If $\ct{C}$ is endowed with a braiding $\Psi$, then the category of monoids in $\ct{C}$ obtains a monoidal structure defined by
\begin{equation*}
(M,m,\eta)\tn (M',m',\eta')= \big( M\tn_{\ct{C}} M', (m\tn m')(\id_{M}\tn \Psi_{M,M'}\tn \id_{M'}), \eta\tn \eta'\big)
\end{equation*}
We can define the category of comonoids in $\ct{C}$ and its monoidal structure, when $\ct{C}$ is braided, in a symmetric way. From this point of view, picking out a bialgebra in $\ct{C}$ is equivalent to picking a comonoid (monoid) in the monoidal category of monoids (comonoids) in $\ct{C}$. 

\subsection{Dual of a Monoidal Functor and Center}\label{SDual}
In this section we recall the definition of the dual of a monoidal functor and the center of a monoidal category with \cite{majid1991dual} as our main reference. We also present some basic results regarding the dual construction which do not appear together elsewhere.

If $U:\mathcal{D}\rightarrow\mathcal{C}$ is a strong monoidal functor between monoidal categories, the \emph{dual} of the functor $U$ is defined as the category whose objects are pairs $(X,\tau)$ with $X$ being an object of $\mathcal{C}$ and $\tau : X\otimes U(-)\rightarrow U(-)\otimes X$ a natural isomorphism satisfying
\begin{align}
(\id_{U(M)}\otimes \tau_{N})(\tau_{M}\otimes \id_{U(N)})=&(U_{2}(M,N)^{-1}\otimes \id_{X})\tau_{M\otimes N}(\id_{X}\otimes U_{2}(M,N))\label{EqBrai1}
\\(U_{0}^{-1}\otimes &\id_{X})\tau_{\un}(\id_{X}\otimes U_{0})= \id_{X} \label{EqBrai2}
\end{align}
with morphisms $f: (X,\tau)\rightarrow (X',\tau')$ being morphisms $f:X\rightarrow X'$ in $\ct{C}$ which satisfy $\tau'(f\tn \id_{\ct{C}})= (\id_{\ct{C}}\tn f)\tau$. We denote the dual by $\dual{\ct{D}}{U}{\ct{C}} $. The dual of the identity functor $\id_{\ct{C}}:\ct{C}\rightarrow \mathcal{C}$, is called the \emph{center} of $\mathcal{C}$ and denoted by $Z(\mathcal{C})$. This construction is often referred to as the Drinfeld-Majid center.  

The \emph{lax left} dual of $U$, denoted by $\dual{\ct{D}}{U}{\ct{C}}_{l,\lax}$ is defined exactly as $\dual{\ct{D}}{U}{\ct{C}}$ but where the natural transformations $\tau$ are not assumed to be isomorphisms. In a symmetric manner, we can define the \emph{lax right} dual of $U$, whose objects are pairs $(X,\tau)$ where $\tau : U(-)\tn X\rightarrow X\tn U(-)$ is a natural transformation. The lax left (right) dual of the identity functor on $\mathcal{C}$ is called the lax left (right) center of $\ct{C}$ and is denoted by $Z_{l,\lax}(\ct{C})$ ($Z_{r,\lax}(\ct{C})$). In \cite{schauenburg2017dual}, $\dual{\ct{D}}{U}{\ct{C}}_{l,\lax}$ is referred to as the \emph{weak left centralizer} of $U$ and denoted by $\ct{W}_{l}(U)$. 

It is easy to check that the dual $\dual{\ct{D}}{U}{\ct{C}}$ carries a monoidal structure via 
\begin{equation}\label{EqDualMonoidalStrc}
(X,\tau )\otimes (X', \tau'):= (X\otimes X' , (\tau \otimes \id_{X'})(\id_{X}\otimes\tau' ) )
\end{equation} 
with the pair $(\un_{\ct{C}}, \id_{U(-)})$ acting as the monoidal unit. Furthermore, the forgetful functor $\overline{U} :\dual{\ct{D}}{U}{\ct{C}}\rightarrow \mathcal{C}$ defined by $\overline{U} (X,\tau )=X$ becomes strict monoidal. Thereby, the dual construction actually sends a strong monoidal functor $U:\mathcal{D}\rightarrow\mathcal{C}$ with codomain $\ct{C}$ to another such functor $\dual{\ct{D}}{U}{\ct{C}}\rightarrow \mathcal{C}$.

The notion of lax left (right) dual of a functor $U$ becomes equivalent to the dual of $U$ when $\ct{D}$ is right (left) rigid: 
\begin{thm}\label{ProplaxDual} If $\mathcal{D}$ is a right rigid category then $( _\mathcal{D}U_\mathcal{C})^{\circ}=( _\mathcal{D}U_\mathcal{C})^{\circ}_{l,\lax}$.
\end{thm}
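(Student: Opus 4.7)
The plan is to show that under the right rigidity hypothesis on $\mathcal{D}$, any lax object $(X,\tau)\in (_\mathcal{D}U_\mathcal{C})^{\circ}_{l,\lax}$ is automatically strict, i.e.\ each component $\tau_M$ is invertible. The inclusion $(_\mathcal{D}U_\mathcal{C})^{\circ}\subseteq (_\mathcal{D}U_\mathcal{C})^{\circ}_{l,\lax}$ is tautological (since every isomorphism is in particular a morphism and the two categories have identical hom-sets), so the work is to produce an inverse to $\tau_M$ from the data at $M^{\vee}$.

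The construction I would use is the standard one for half-braidings in the presence of duals. Pick a right dual $(M^{\vee}, \evr_M, \cvr_M)$ of $M$ in $\mathcal{D}$. Since $U$ is strong monoidal, $U(M^{\vee})$ is a right dual to $U(M)$ in $\mathcal{C}$ with evaluation and coevaluation given by $U_0^{-1}U(\evr_M)U_2(M,M^{\vee})$ and $U_2(M^{\vee},M)^{-1}U(\cvr_M)U_0$; write these as $\widetilde{\ev}, \widetilde{\cv}$. Then define
\begin{equation*}
\sigma_M := (\widetilde{\ev}\tn \id_X \tn \id_{U(M)})(\id_{U(M)}\tn \tau_{M^{\vee}}\tn \id_{U(M)})(\id_{U(M)}\tn \id_X\tn \widetilde{\cv}): U(M)\tn X\rightarrow X\tn U(M).
\end{equation*}
The claim is that $\sigma_M$ is two-sided inverse to $\tau_M$, which will upgrade $\tau$ to a natural isomorphism and place $(X,\tau)$ in $(_\mathcal{D}U_\mathcal{C})^{\circ}$. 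Morphisms are the same on both sides, so this establishes the equality of categories.

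To verify $\sigma_M\tau_M=\id$ and $\tau_M\sigma_M=\id$, I would combine three ingredients. First, naturality of $\tau$ applied to the morphisms $\evr_M:M\tn M^{\vee}\rightarrow \un_{\mathcal{D}}$ and $\cvr_M:\un_{\mathcal{D}}\rightarrow M^{\vee}\tn M$ in $\mathcal{D}$, which relates $\tau_{M\tn M^{\vee}}$, $\tau_{M^{\vee}\tn M}$ and $\tau_{\un_{\mathcal{D}}}$. Second, the hexagon axiom \eqref{EqBrai1} applied to the pairs $(M,M^{\vee})$ and $(M^{\vee},M)$, which expands $\tau_{M\tn M^{\vee}}$ and $\tau_{M^{\vee}\tn M}$ in terms of $\tau_M$, $\tau_{M^{\vee}}$ and $U_2$. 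Third, axiom \eqref{EqBrai2} together with the snake identities for $\widetilde{\ev},\widetilde{\cv}$. Composing $\sigma_M\tau_M$ and using these relations, the $\tau_{M^{\vee}}$ factor is essentially ``cancelled'' by one of the hexagon applications, leaving a composite involving only $\widetilde{\ev}, \widetilde{\cv}$ and identity half-braidings on $\un_{\mathcal{D}}$, which collapses via snake identity to $\id_{X\tn U(M)}$; the argument for $\tau_M\sigma_M$ is symmetric.

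The main obstacle is purely notational: keeping track of the $U_2$ and $U_0$ coherences sprinkled through the dual structure on $U(M^{\vee})$ and matching them against those appearing in the hexagon axioms. If one is willing to assume $U$ strict monoidal (which may be done without loss of generality via strictification), the computation reduces to the familiar proof that a lax half-braiding on $X$ in a right rigid category is invertible, and the rest is essentially bookkeeping. A minor check is independence of the construction from the choice of right dual, which follows from the uniqueness of duals up to canonical isomorphism already noted in Section~\ref{SMonoidal}.
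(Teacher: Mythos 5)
Your proposal is correct and follows essentially the same route as the paper: transport the right dual $M^{\vee}$ along the strong monoidal functor $U$ to get duality morphisms for $U(M)$ in $\mathcal{C}$, define the candidate inverse of $\tau_{M}$ as $(\evr'\tn \id_{X\tn U(M)})\,\tau_{M^{\vee}}\,(\id_{U(M)\tn X}\tn \cvr')$, and verify invertibility from naturality of $\tau$ together with \eqref{EqBrai1} and \eqref{EqBrai2}. Your sketch of the verification (hexagons at $(M,M^{\vee})$ and $(M^{\vee},M)$, naturality at $\evr_{M}$, $\cvr_{M}$, and the snake identities) fills in exactly the details the paper leaves implicit.
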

\begin{proof} Let $M$ be an object of $\ct{D}$ and $M^{\vee}$ denote its right dual. Since $U$ is strong monoidal then following our discussion in Section~\ref{SMonoidal}, $ U(M^{\vee})$ becomes a right dual of $U(M)$ via $\evr'= U_{0}^{-1}U(\evr_{M})U_{2}(M,M^{\vee})$ and $\cvr'= U_{2}^{-1}(M^{\vee},M)U(\cvr_{M} )U_{0}$. Hence, for any object $(X,\tau)$ in $\dual{\mathcal{D}}{U}{\mathcal{C}}_{l,\lax}$, we can define $\tau^{-1}_{M}$ by $(\evr'\tn \id_{X\tn M} ) \tau_{M^{\vee}}(\id_{M\tn X}\tn \cvr')$. It follows from $\tau$ being natural and satisfying \eqref{EqBrai1} and \eqref{EqBrai2} that $\tau_{M}$ and $\tau_{M}^{-1}$ are inverses. Consequently, for any $(X,\tau)$ in $\dual{\mathcal{D}}{U}{\mathcal{C}}_{l,\lax}$, the transformation $\tau$ is an isomorphism and $( _\mathcal{D}U_\mathcal{C})^{\circ}=( _\mathcal{D}U_\mathcal{C})^{\circ}_{l,\lax}$
\end{proof}
Now we observe that dualities between objects in the base category lift to the monoidal dual. 
\begin{corollary}\label{CUDualRigid} Let $X$ be a left dualizable object of $\ct{C}$ and $\pr{X}$ denote its left dual. If $(X,\tau)$ is an object of $\dual{\ct{D}}{U}{\ct{C}}$, then there exists a natural braiding on $\rho$ such that $(\pr{X}, \rho)$ becomes an object of $\dual{\ct{D}}{U}{\ct{C}}$ and the left dual of $(X,\tau)$.
\end{corollary}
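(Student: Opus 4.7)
The plan is to construct $\rho$ explicitly using $\tau^{-1}$ (which exists because $\tau$ is by assumption a natural isomorphism) together with the chosen left duality morphisms $\evl_X$ and $\cvl_X$. For each object $M$ of $\ct{D}$ I would set
\begin{equation*}
\rho_M := (\evl_X \otimes \id_{U(M) \otimes \pr{X}})(\id_{\pr{X}} \otimes \tau_M^{-1} \otimes \id_{\pr{X}})(\id_{\pr{X} \otimes U(M)} \otimes \cvl_X).
\end{equation*}
Naturality of $\rho$ in $M$ is immediate from the naturality of $\tau^{-1}$ and the fact that the duality morphisms do not depend on $M$.

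I would then verify that $(\pr{X}, \rho)$ satisfies the axioms \eqref{EqBrai1} and \eqref{EqBrai2}. Inverting the axioms for $\tau$ produces analogues for $\tau^{-1}$. Substituting the definition of $\rho$ into each axiom and using the zigzag identities for $(\evl_X, \cvl_X)$ to collapse adjacent cup-cap configurations reduces both sides to expressions involving only $\tau^{-1}$, at which point the inverted axioms of $\tau$ complete the argument. These computations are notationally dense but transparent in string-diagram calculus, where each reduction is visibly a zigzag straightening.

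For the duality statement, I would show that $\evl_X$ and $\cvl_X$ lift to morphisms $(\pr{X},\rho)\otimes (X,\tau)\rightarrow (\un, \id_{U(-)})$ and $(\un, \id_{U(-)})\rightarrow (X,\tau)\otimes (\pr{X},\rho)$ in $\dual{\ct{D}}{U}{\ct{C}}$. For $\evl_X$ the required compatibility is
\begin{equation*}
(\id_{U(M)} \otimes \evl_X)(\rho_M \otimes \id_X)(\id_{\pr{X}} \otimes \tau_M) = \evl_X \otimes \id_{U(M)}.
\end{equation*}
This is most transparent via the adjunction $-\otimes X \dashv -\otimes \pr{X}$: the formula defining $\rho_M$ is precisely the mate of $(\evl_X\otimes \id_{U(M)})(\id_{\pr{X}}\otimes \tau_M^{-1}) : \pr{X}\otimes U(M)\otimes X \rightarrow U(M)$, so passing to mates on both sides of the required identity reduces it to the tautology $\rho_M = \rho_M$. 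The case of $\cvl_X$ is dual. Since $(\evl_X, \cvl_X)$ satisfy the zigzag identities in $\ct{C}$ and the forgetful functor $\ov{U}: \dual{\ct{D}}{U}{\ct{C}}\rightarrow \ct{C}$ is strict monoidal and faithful, these identities are inherited in $\dual{\ct{D}}{U}{\ct{C}}$, establishing $(\pr{X}, \rho)$ as the left dual of $(X, \tau)$. The main obstacle is the verification of the braiding axioms for $\rho$, which I would conduct in string-diagram notation to avoid the clutter of nested tensor expressions.
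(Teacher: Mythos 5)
Your proposal is correct and follows essentially the same route as the paper: you define $\rho_{M}$ by exactly the same formula, namely the conjugate $(\evl_{X}\tn \id_{U(M)\tn \pr{X}})(\id_{\pr{X}}\tn \tau_{M}^{-1}\tn \id_{\pr{X}})(\id_{\pr{X}\tn U(M)}\tn \cvl_{X})$, and then check that $\evl_{X}$ and $\cvl_{X}$ become morphisms in $\dual{\ct{D}}{U}{\ct{C}}$. The paper leaves these verifications as "it follows by definition," whereas you supply the mate/zigzag argument explicitly; the only detail you might add is a word on why $\rho_{M}$ is itself invertible (conjugate $\tau_{M}$ with the other zigzag), which the paper also omits.
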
 
\begin{proof} Let $\evl_{X}$ and $\cvl_{X}$ denote the relevant duality morphisms between $X$ and $\pr{X}$. We define $\rho_{M}:= (\evl_{X} \tn\id_{M\tn\pr{X}})(\id_{\pr{X}}\tn \tau^{-1}_{M}\tn \id_{\pr{X}})(\cvl_{X} )$. It follows by definition that $\evl_{X}$ and $\cvl_{X}$ become morphisms in $\dual{\ct{D}}{U}{\ct{C}}$.
\end{proof}
In \cite{schauenburg2017dual}, a stronger statement is proved: 
\begin{thm}[Proposition 3.1 \cite{schauenburg2017dual}]\label{TSchauenburg} If $U: \ct{D}\rightarrow\ct{C}$ is a strong monoidal functor, $\ct{C}$ is left closed and $\ct{D}$ right rigid then $\dual{\mathcal{D}}{U}{\mathcal{C}}$ has a left closed structure which $U$ preserves.
\end{thm}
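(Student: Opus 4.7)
The plan is to equip $[X,Y]^l$, the internal hom in $\ct{C}$, with a half-braiding $\tau'$ so that $([X,Y]^l,\tau')$ serves as the left internal hom of $(X,\tau)$ and $(Y,\sigma)$ in $\dual{\ct{D}}{U}{\ct{C}}$. Since $\overline U$ is strict monoidal, this will automatically make it preserve the left closed structure on the nose. I would begin by unpacking the universal property: under the $\ct{C}$-adjunction $-\tn X\dashv [X,-]^l$, a morphism $f:W\tn X\to Y$ corresponds to $\hat f:W\to [X,Y]^l$, and the task is to define $\tau'$ so that $\hat f$ is a morphism in the dual iff $f$ is. Specialising to $W=[X,Y]^l$, $\rho=\tau'$, $\hat f=\id$ forces the identity
\[
\sigma_M\circ(\evl^-_{X,Y}\tn\id_{U(M)}) = (\id_{U(M)}\tn\evl^-_{X,Y})\circ(\tau'_M\tn\id_X)\circ(\id_{[X,Y]^l}\tn\tau_M), \qquad M\in\ct{D}. \tag{$\star$}
\]

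Next I would construct $\tau'_M$ by exploiting right rigidity of $\ct{D}$. Strong monoidality of $U$ makes $U(M^\vee)$ a right dual of $U(M)$ in $\ct{C}$, yielding the adjunction $-\tn U(M^\vee)\dashv -\tn U(M)$ (equivalently, $U(M)$ is the left dual of $U(M^\vee)$). Combining this with $-\tn X\dashv [X,-]^l$ and the half-braidings $\tau_{M^\vee}$, $\sigma_M$, a Yoneda argument shows that the natural map $\alpha_M:[X,Y]^l\tn U(M)\to [X,U(M)\tn Y]^l$, adjoint to $\sigma_M(\evl^-_{X,Y}\tn\id)(\id\tn\tau_M^{-1})$, is an isomorphism. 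The ``left'' projection formula $U(M)\tn[X,Y]^l\cong [X,U(M)\tn Y]^l$ need not hold in general, but the right dual $U(M^\vee)$ is itself both-sided dualizable in $\ct{C}$ (with left dual $U(M)$ and right dual $U(M^{\vee\vee})$), so $U(M^\vee)\tn -$ is simultaneously a left and a right adjoint, and its projection formula does hold. Threading the right duality morphisms $\cvr_{U(M)}$ and $\evr_{U(M)}$ through this both-sided-dualizable detour, together with $\alpha_M$, I would define $\tau'_M:[X,Y]^l\tn U(M)\to U(M)\tn[X,Y]^l$ explicitly and verify $(\star)$ via the snake identities together with the explicit formula for $\tau_M^{-1}$ from the proof of Theorem~\ref{ProplaxDual}.

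The remaining checks are (a) naturality of $\tau'$ in $M$, (b) the half-braiding axioms \eqref{EqBrai1}--\eqref{EqBrai2}, which reduce via $(\star)$ and the closed adjunction to the corresponding axioms for $\tau$ and $\sigma$ plus functoriality of $[X,-]^l$, and (c) that the $\ct{C}$-bijection $f\leftrightarrow \hat f$ restricts to a bijection of dual morphisms (using $(\star)$ and the iso $\alpha_M$ to pass between the two compatibility conditions). The main obstacle is precisely the construction of $\tau'_M$: the naive left projection formula fails, so one must route through the both-sided-dualizable object $U(M^\vee)$ and coordinate the duality data $\cvr_{U(M)}$, $\evr_{U(M)}$ with the half-braidings $\tau_{M^\vee}$, $\sigma_M$ delicately enough to secure $(\star)$ and the coherence with $\tn$ and $\un$ in $\dual{\ct{D}}{U}{\ct{C}}$.
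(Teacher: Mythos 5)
The paper offers no proof of this statement (it is quoted from \cite{schauenburg2017dual}), so I can only assess your argument on its own terms. Two of your observations are correct and worth keeping: the map $\alpha_M:[X,Y]^{l}\tn U(M)\rightarrow[X,U(M)\tn Y]^{l}$ adjoint to $\sigma_M(\evl\tn\id_{U(M)})(\id\tn\tau_M^{-1})$ is indeed an isomorphism, via the chain
\[
\Hom(A,[X,Y]^{l}\tn U(M))\cong\Hom(A\tn U(M^{\vee})\tn X,Y)\cong\Hom(A\tn X\tn U(M^{\vee}),Y)\cong\Hom(A\tn X,U(M)\tn Y)
\]
using $-\tn U(M^{\vee})\dashv -\tn U(M)$, the closed structure, $\tau_{M^{\vee}}$ and $\sigma_M$; and the naive projection formula for $U(M)\tn -$ against $[X,-]^{l}$ does fail in general when $U(M)$ has only a right dual.

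The gap is the step you describe as ``threading the right duality morphisms through the both-sided-dualizable detour'': as written this does not produce the map $\tau'_M:[X,Y]^{l}\tn U(M)\rightarrow U(M)\tn[X,Y]^{l}$ at all. Every move available to you --- $\alpha_M$, the projection formula for $U(M^{\vee})\tn-$, the duality morphisms $U(M)\tn U(M^{\vee})\rightarrow\un$ and $\un\rightarrow U(M^{\vee})\tn U(M)$, and the half-braidings of $X$ and $Y$ --- lands in $[X,U(M)\tn Y]^{l}$ or in $U(M^{\vee})\tn(\cdots)$; none of them places a bare $U(M)$ to the \emph{left} of $[X,Y]^{l}$, since the given structure provides no morphism $\un\rightarrow U(M)\tn(\cdots)$ and the only maps that create a $U(M)$ on the left are $\tau_M$ and $\sigma_M$, which cannot be fed a copy of $X$ or $Y$ extracted from the inner hom. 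Relatedly, your condition $(\star)$ only determines the composite $\kappa_M\circ\tau'_M$, where $\kappa_M:U(M)\tn[X,Y]^{l}\rightarrow[X,U(M)\tn Y]^{l}$ is exactly the projection-formula map you flagged as unavailable; and since $\tau'_M$ must be invertible (Theorem~\ref{ProplaxDual}) and $\alpha_M$ is, the theorem you are proving forces $\kappa_M=\alpha_M(\tau'_M)^{-1}$ to be invertible. So the projection formula for $U(M)$, restricted to objects carrying half-braidings, is not an obstacle to be routed around --- it is equivalent to the theorem, and your proposal never establishes it. (In the intended application $\ct{C}=\bim$ it holds because a one-sided f.g.\ projective module is a retract of a free one; a general argument must extract it from the invertibility of $\tau$ and $\sigma$, and that is the missing idea. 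Without it, $(\star)$ also fails to give the ``only if'' half of your check (c), which needs $\kappa_M$ to be monic in order to cancel it.)
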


The center of a monoidal category $\ct{C}$ is of particular interest since it has a braided structure $\Psi$ defined by $\Psi_{(X,\tau),(X',\tau')}= \tau_{X'}$. A \emph{central bialgebra (Hopf algebra)} in $\mathcal{C}$ refers to an object $(B,\tau)$ in $Z(\ct{C})$ with a braided bialgebra (Hopf algebra) structure. 

At this point we reflect on the fact that, under suitable conditions, the dual of a monoidal functor lifts colimits from the base category.
\begin{thm}\label{TCol} If $-\tn-$ in $\ct{C}$ preserves colimits in both entries, the forgetful functor $\ov{U}: \dual{\ct{D}}{U}{\ct{C}}_{l,\lax}\rightarrow \ct{C}$ creates colimits which exist in $\ct{C}$. The same statement holds if we consider $\ov{U}: \dual{\ct{D}}{U}{\ct{C}}\rightarrow \ct{C}$
\end{thm}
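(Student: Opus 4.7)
The plan is to construct the lifted colimit directly using the colimit-preservation hypothesis on $\tn$. Let $F\colon J\to \dual{\ct{D}}{U}{\ct{C}}_{l,\lax}$ be a small diagram with $F(j)=(X_{j},\tau^{j})$, and suppose the underlying diagram $\ov{U}F$ has colimit $(X,(\iota_{j}))$ in $\ct{C}$. By the hypothesis that $-\tn-$ preserves colimits in its first entry, $X\tn U(M)=\mathrm{colim}_{j}(X_{j}\tn U(M))$ with coprojections $\iota_{j}\tn\id_{U(M)}$, for every object $M$ of $\ct{D}$. The family $(\id_{U(M)}\tn\iota_{j})\tau^{j}_{M}$ forms a cocone on this diagram (using that the connecting maps of $F$ are morphisms in the dual), so there is a unique $\tau_{M}\colon X\tn U(M)\to U(M)\tn X$ characterised by $\tau_{M}(\iota_{j}\tn\id)=(\id\tn\iota_{j})\tau^{j}_{M}$.

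I would then verify that $\tau$ is natural in $M$ and satisfies \eqref{EqBrai1} and \eqref{EqBrai2}. Each such identity is an equation of parallel morphisms whose source, either $X\tn U(M)$ or $X\tn U(M)\tn U(N)$, is itself a colimit by the hypothesis, so the identity may be checked after precomposition with the relevant coprojections, where it reduces to the corresponding property of $\tau^{j}$ together with the comonoidal data of $U$. This makes each $\iota_{j}$ a morphism in $\dual{\ct{D}}{U}{\ct{C}}_{l,\lax}$. For universality, given a cocone $g_{j}\colon (X_{j},\tau^{j})\to (Y,\sigma)$, the unique map $g\colon X\to Y$ in $\ct{C}$ factoring it lifts to the dual: the equation $\sigma_{M}(g\tn\id)=(\id\tn g)\tau_{M}$ is again an identity of morphisms out of the colimit $X\tn U(M)$, and it holds on each $\iota_{j}\tn\id$ precisely because $g_{j}$ is a morphism of dual objects. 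Uniqueness of the lift is automatic since $\ov{U}$ is faithful, which is what makes this a \emph{creation} of colimits rather than mere preservation.

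Finally, for the non-lax case $\dual{\ct{D}}{U}{\ct{C}}$, I would symmetrically use the hypothesis on the second entry to write $U(M)\tn X=\mathrm{colim}_{j}(U(M)\tn X_{j})$ and construct a candidate inverse $\tau_{M}^{-1}$ from the cocone $(\iota_{j}\tn\id)(\tau^{j}_{M})^{-1}$; the two universal properties then force $\tau_{M}\tau_{M}^{-1}=\id$ and $\tau_{M}^{-1}\tau_{M}=\id$ componentwise. The main obstacle is not conceptual but the book-keeping in the hexagon check \eqref{EqBrai1}, where the monoidal constraints $U_{2}(M,N)$ must be threaded through and one must use that $(-\tn U(M))\tn U(N)$ likewise preserves colimits; but no new idea is required once the colimit identification has reduced every equality of natural transformations to a per-component verification against the axioms satisfied by each $\tau^{j}$.
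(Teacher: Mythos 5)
Your proposal is correct and follows essentially the same route as the paper's proof: construct $\tau_{M}$ on the colimit via the universal property of $X\tn U(M)$ (available because $\tn$ preserves colimits), verify naturality and the axioms \eqref{EqBrai1}--\eqref{EqBrai2} componentwise against the coprojections, lift the universal comparison map by the same device, and in the non-lax case build the inverse from the colimit presentation of $U(M)\tn X$. Your explicit remark that uniqueness of the lift follows from faithfulness of $\ov{U}$, making this genuine creation rather than mere preservation, is a point the paper leaves implicit.
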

\begin{proof} Consider a diagram $\mathbb{D}:\ct{J}\rightarrow \dual{\ct{D}}{U}{\ct{C}}_{l,\lax}$ so that the diagram $\ov{U} \mathbb{D}:\ct{J}\rightarrow \ct{C} $ has a colimit $A$ in $\ct{C}$ with a family of universal morphisms $\pi_{j}:\ov{U}\mathbb{D}(j) \rightarrow A$ for objects $j$ in $ \ct{J}$. Since $\mathbb{D}:\ct{J}\rightarrow \dual{\ct{D}}{U}{\ct{C}}_{l,\lax}$ is a functor, for any object $X\in \ct{D}$ we have a family of morphisms $\sigma^{j}_{X}:\mathbb{D}(j) \tn U(X) \rightarrow U(X)\tn \mathbb{D}(j)$ which are natural with respect to $\ct{J}$ and thereby form a natural transformation $\sigma : \mathbb{D} \tn U(X) \Rightarrow U(X)\tn \mathbb{D}$. Because the bifunctor $-\tn-$ in $\ct{C}$ preserves colimits in both components, the diagrams $\ov{U}\mathbb{D}\tn U(X)$ and $U(X)\tn \ov{U}\mathbb{D}$ admit colimits $A\tn U(X)$ and $U(X)\tn A$, respectively. By the universal property of $A\tn U(X)$, there exists a unique morphism $\sigma^{A}_{X}$ such that $\sigma^{A}_{X}(\pi_{j}\tn U(X))= (U(X)\tn \pi_{j} )\sigma^{j}_{X}$. If we show that $(A,\sigma_{A})$ is indeed an object of $\dual{\ct{D}}{U}{\ct{C}}_{l,\lax}$, then it follows that $\pi: \mathbb{D} \Rightarrow (A,\sigma_{A}) $ becomes a cocone of the diagram $\mathbb{D}$. 

Let $f:X\rightarrow Y$ be a morphism in $\ct{D}$. To show that $(U(f)\tn \id_{A}) \sigma^{A}_{X}= \sigma^{A}_{Y} (\id_{A}\tn U(f))$, we observe that
\begin{align*}
\sigma^{A}_{Y}& (\id_{A}\tn U(f))( \pi_{j}\tn \id_{U(X)}) =\sigma^{A}_{Y} ( \pi_{j}\tn \id_{U(Y)})(\id_{\mathbb{D}(j)}\tn U(f))
\\&= (\id_{U(Y)}\tn  \pi_{j}) \sigma^{j}_{Y} (\id_{\mathbb{D}(j)}\tn U(f))= (\id_{U(Y)}\tn  \pi_{j}) (U(f)\tn\id_{\mathbb{D}(j)})  \sigma^{j}_{X}
\\ & = (U(f)\tn A) (\id_{U(X)}\tn  \pi_{j})  \sigma^{j}_{X}= (U(f)\tn \id_{A}) \sigma^{A}_{X} ( \pi_{j}\tn \id_{U(X)})
\end{align*}
holds and use the universal property of $( \pi_{j}\tn \id_{U(X)})$.

Hence, $(A,\sigma_{A})$ is an object of $\dual{\ct{D}}{U}{\ct{C}}_{l,\lax}$ and a cocone of the diagram $\mathbb{D}$ via $\pi_{j}$. Consider another cocone $\kappa: \mathbb{D} \Rightarrow (B,\sigma_{B}) $. Since $A$ is a colimit of $\ov{U}\mathbb{D}$, there exists a unique morphism $t:A\rightarrow B$ such that $\ov{U}(\kappa) =t\ov{U}(\pi )$. What remains to be shown is whether $t$ is a morphism in $\dual{\ct{D}}{U}{\ct{C}}_{l,\lax}$. This also follows from the universality of $A\tn U(X)$ and the calculation below
\begin{align*}
(\id_{U}\tn t )\sigma^{A}(\ov{U}(\pi) \tn \id_{U})&= (\id_{U}\tn t\ov{U}(\pi) )\sigma^{j}= (\id_{U}\tn \ov{U} (\kappa)) \sigma^{j}
\\&=\sigma^{B}(\ov{U} (\kappa) \tn \id_{U} ) = \sigma^{B}(t \tn \id_{U} )(\ov{U}(\pi)\tn \id_{U} )
\end{align*}
Hence $(\id_{U}\tn t )\sigma^{A}= \sigma^{B}(t \tn \id_{U} )$ and thereby, $(A,\sigma_{A})$ is a colimit of the original diagram $\mathbb{D}$.
 
If $\sigma^{j}$ were invertible, it follows from the universal property of $U(X)\tn A$ that there exists a unique morphism $(\sigma^{A})^{-1}$ such that $(\sigma^{A})^{-1}(\id_{U}\tn \pi )=(\pi\tn \id_{U})( \sigma^{j})^{-1}$. It follows from the universal properties of $U(X)\tn A$ and $A\tn U(X)$ that $\sigma^{A}_{X}$ and $(\sigma^{A}_{X})^{-1}$ are inverses.
\end{proof}
The reader should note that the proof of Theorem~\ref{TCol} can be replicated in a symmetric manner for any limits in $\ct{C}$ which are preserved in both entries of $-\tn -$. 
\subsection{Key Examples of Monoidal Categories} 
In this section we review some key examples of monoidal categories and their properties.
\begin{example}\label{EEndC} Given any category $\ct{C}$, we obtain a new category denoted by $\mathrm{End}(\ct{C})$, which has endofunctors $F:\ct{C}\rightarrow \ct{C}$ as objects and natural transformations between them as morphisms. The category of endofunctors has a canonical monoidal structure via composition of functors i.e. $F\tn G = FG$ for endofunctors $F,G $ and the identity functor $\id_{\ct{C}} $ acting as the monoidal unit. In this monoidal category, a functor $F$ being left (right) dual to $G$, is exactly equivalent to $F$ being left (right) adjoint to $G$. Additionally, the right adjoint functor $[F,-]^{l}$ to $-\tn F: \End (\ct{C})\rightarrow \End (\ct{C})$, if it exists, is usually denoted by $\mathrm{Ran}_{F}$ and $[F,G]^{l}$ is called the \emph{right Kan extension} of $G$ along $F$, see Proposition 6.1.5 of \cite{riehl2017category}. We also have a trivial description of the center of any endofunctor category:
\end{example}
\begin{prop}\label{PCenterEndo} Given any non-empty category $\ct{C}$, the center of the monoidal category $\End (\ct{C})$ is equivalent to the trivial category. 
\end{prop}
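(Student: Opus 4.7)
The plan is to show that $Z(\End(\ct{C}))$ has exactly one isomorphism class of objects, namely $(\id_{\ct{C}},\id)$, and that its only endomorphism is the identity. The key tool throughout is to evaluate the half-braiding $\tau$ on the constant functors $K_Y\colon \ct{C}\to\ct{C}$, which exist precisely because $\ct{C}$ is non-empty, together with the identities $FK_Y = K_{FY}$ and $K_Y F = K_Y$ for any endofunctor $F$.

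First I would fix an object $(F,\tau)$ of $Z(\End(\ct{C}))$ and use that $\tau_{K_Y}\colon K_{FY}\to K_Y$ is a natural isomorphism between constant functors. A routine naturality check shows any natural transformation between constant functors has all components equal, so $\tau_{K_Y}$ is the constant family at a single isomorphism $\phi_Y\colon FY\to Y$. Applying naturality of $\tau$ in its functor argument to the natural transformation $\hat{f}\colon K_X\to K_Y$ induced by a morphism $f\colon X\to Y$ then yields $f\phi_X = \phi_Y F(f)$, so $\phi\colon F\Rightarrow \id_{\ct{C}}$ is a natural isomorphism. Transporting $\tau$ along $\phi$ produces an isomorphism $(F,\tau)\cong (\id_{\ct{C}},\tau')$ inside $Z(\End(\ct{C}))$, so I may assume $F=\id_{\ct{C}}$ from the start.

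With $F=\id_{\ct{C}}$, I would set $\psi_Y := (\tau_{K_Y})_Y$ and check by the same naturality argument that $\psi\colon \id_{\ct{C}}\Rightarrow\id_{\ct{C}}$ is a natural automorphism. Specialising the braiding axiom \eqref{EqBrai1} to $M=G$ and $N=K_X$, and using $GK_X = K_{GX}$, collapses it to $\psi_{GX}=G(\psi_X)\circ(\tau_G)_X$, so $\tau$ is completely determined by $\psi$ via $(\tau_G)_X = G(\psi_X)^{-1}\psi_{GX}$. A direct substitution then verifies that $\psi$ itself defines a morphism $(\id_{\ct{C}},\tau)\to(\id_{\ct{C}},\id)$ in the center, since the required compatibility collapses to the tautology $\psi_{GX}=\psi_{GX}$; being invertible in $\End(\ct{C})$, it is an isomorphism in the center as well, so every object is isomorphic to $(\id_{\ct{C}},\id)$. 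Finally, any endomorphism $\sigma$ of $(\id_{\ct{C}},\id)$ must satisfy $\sigma G = G\sigma$ for every endofunctor $G$, and taking $G=K_Y$ forces $\sigma_Y=\id_Y$ for all $Y\in\ct{C}$. The main place where care is needed is the bookkeeping for horizontal versus vertical whiskering when unpacking \eqref{EqBrai1}; once the formula for $(\tau_G)_X$ has been extracted, the remaining checks are formal.
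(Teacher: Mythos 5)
Your proof is correct and rests on the same key device as the paper's: evaluating the half-braiding on the constant endofunctors $K_Y$ to extract a natural isomorphism $F\Rightarrow\id_{\ct{C}}$. In fact your write-up is more complete than the argument in the text, which stops after producing $\tau_{F_-}\colon G\Rightarrow\id_{\ct{C}}$ and does not verify that this natural isomorphism is a morphism in $Z(\End(\ct{C}))$ (i.e.\ that it intertwines $\tau$ with the trivial braiding, which is exactly your identity $(\tau_G)_X=G(\psi_X)^{-1}\psi_{GX}$ extracted from \eqref{EqBrai1}), nor that the endomorphism monoid of $(\id_{\ct{C}},\id)$ is trivial --- both of which are needed to conclude an equivalence with the trivial category rather than mere connectedness of the isomorphism classes. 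One small repair: your claim that a natural transformation between constant functors has all components equal fails when $\ct{C}$ is not connected, since naturality only forces components to agree along morphisms of $\ct{C}$. This does not damage the argument: fix a single object $c_0$ of $\ct{C}$ (non-emptiness is used precisely here) and define $\phi_Y$ and $\psi_Y$ as the components of $\tau_{K_Y}$ at $c_0$; every subsequent equation, including the specialisation of \eqref{EqBrai1}, is then read off at the single component $c_0$ and closes up exactly as you describe. With your convention $\psi_Y:=(\tau_{K_Y})_Y$ one would additionally need $(\tau_{K_{GX}})_X=(\tau_{K_{GX}})_{GX}$, which is not automatic in a disconnected category.
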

\begin{proof} For any object $X$ in $\ct{C}$, there exists an endofunctor $F_{X}$ which sends all objects to $X$ and all morphisms to $\id_{X}$. Let $(G,\tau)$ belong to $Z(\End(\ct{C}))$. It follows that for every object $X$ in $\ct{C}$ we obtain an isomorphism $\tau_{F_{X}}:GF_{X}\rightarrow F_{X}G=F_{X}$. Hence, we have a family of isomorphisms $\tau_{F_{X}}:G(X)\cong X$ for objects $X$ in $\ct{C}$. Moreover, natural transformations $F_{f}:F_{X}\rightarrow F_{Y}$ are in correspondence with morphisms $f:X\rightarrow Y$. Since $\tau$ is a natural then for any morphism $f$ the equality $f\tau_{F_{X}}=\tau_{F_{Y}}G(f)$ holds and $\tau_{F_{-}}: G\rightarrow \id_{\ct{C}}$ becomes a natural isomorphism of functors. Hence, every pair $(G,\tau)$ is isomorphic to $(\id_{\ct{C}},\id)$ via $\tau_{F_{-}}$ and the skeleton of $Z(\End(\ct{C}))$ is the trivial category.
\end{proof}

\begin{example}\label{Ex:Set} Any category with finite products obtains a natural symmetric monoidal structure with the product acting as $\tn$ and the final object acting as the monoidal unit. Such monoidal structures are called \emph{cartesian}. In particular, the category of sets and functions $\Set$ has a monoidal structure via the product of sets $\times$ and the set with one element $\un=\st{\star}$ acting as the monoidal unit. The symmetric structure is given by the flip map $\Psi_{X,Y}=\flip $ which sends a pair $(x,y)\in X\times Y$ to $(y,x)\in Y\times X$. The only dualizable object in $\Set$ is $\un$ since $\cvl: \un \rightarrow X\times \pr{X}$ would have to be surjective on its projections to $\pr{X}$ and $X$ but its image contains exactly one element, so $X\cong \pr{X}\cong \un$.  
\end{example}
\begin{prop}\label{P:SetCenter} There exists a monoidal equivalence $Z(\Set) \simeq \Set$.
\end{prop}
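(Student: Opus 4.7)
The plan is to construct the equivalence from the obvious functor $\Phi:\Set\to Z(\Set)$ sending a set $X$ to the pair $(X,\flip)$, where $\flip$ is the symmetric braiding inherited from the cartesian structure, and to show that every object of $Z(\Set)$ is forced to have precisely this braiding.

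First I would verify the easy directions. The functor $\Phi$ is strong monoidal essentially by inspection: the candidate braiding on $X\times X'$ built from \eqref{EqDualMonoidalStrc} using two copies of $\flip$ agrees with the flip of $(x,x')$ past $y$, so $\Phi(X\times X')\cong \Phi(X)\tn \Phi(X')$ as objects of $Z(\Set)$. Full faithfulness of $\Phi$ reduces to checking that the morphism condition $\tau'(f\times \id_{Y})=(\id_{Y}\times f)\tau$ for $\tau=\tau'=\flip$ is automatic for every function $f:X\to X'$ and every set $Y$, which is immediate.

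The main content, and where essentially all the work lies, is essential surjectivity: given an arbitrary $(X,\tau)\in Z(\Set)$, I want to show that $\tau_{Y}$ must equal $\flip_{X,Y}$ for every set $Y$. The key device is that in $\Set$ every element $y\in Y$ is represented by a function $\hat{y}:\un\to Y$, $\star\mapsto y$. Naturality of $\tau$ applied to $\hat{y}$ gives
\begin{equation*}
\tau_{Y}\circ(\id_{X}\times \hat{y})=(\hat{y}\times \id_{X})\circ \tau_{\un}.
\end{equation*}
The unit axiom \eqref{EqBrai2} forces $\tau_{\un}$ to be the canonical isomorphism $X\times\un\to\un\times X$, so evaluating both sides at $(x,\star)$ yields $\tau_{Y}(x,y)=(y,x)$. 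Thus $\tau=\flip$ pointwise and $(X,\tau)=\Phi(X)$, so $\Phi$ is (even) essentially surjective on the nose.

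Combining these three ingredients gives the monoidal equivalence $\Phi:\Set\simeq Z(\Set)$. The only mildly subtle point is remembering to invoke \eqref{EqBrai2} to pin down $\tau_{\un}$ before running the naturality argument; once that is in place, the rest is a direct check, and no appeal to the hexagon axiom \eqref{EqBrai1} is actually needed.
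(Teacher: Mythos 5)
Your proof is correct and follows essentially the same route as the paper: both arguments use the maps $\un\to Y$ picking out elements, naturality of $\tau$, and the unit axiom \eqref{EqBrai2} to force $\tau_{Y}(x,y)=(y,x)$. The extra verifications you include (strong monoidality and full faithfulness of $\Phi$) are routine and only make the argument more complete than the paper's sketch.
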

\begin{proof} Let $(X,\tau)$ be an object of $Z(\Set)$. For any set $Y$ and element $y\in Y$, we have a unique morphism $f_{y}: \un \rightarrow Y$ sending the element $\star$ to $y$. By the naturality of $\tau$ and \eqref{EqBrai2} it follows that $\tau_{Y}(\id_{X}\times f_{y})= (f_{y}\times \id_{X})\id_{\un \times X}$ and thereby $\tau_{Y}(x,y)= (y,x)$ for any $x\in X$. Hence, $\tau$ must act as the flip map and the only objects in $Z(\Set)$ are of the form $(X,\flip )$. \end{proof}
Since the category $(\Set ,\times, \un)$ is symmetric monoidal, we can describe a Hopf algebra object in $\Set$: Let $(X,\Delta, \epsilon)$ be a comonoid in $(\Set ,\times, \un)$. Since $\epsilon:X \rightarrow \un$ is the unique map sending all elements of $X$ to the unique element in $\un$, then it follows from $(\epsilon\times \id_{X})\Delta =\id_{X}= (\id_{X}\times \epsilon)\Delta$ that $\Delta (x)=(x,x)\in X\times X$ for all $x\in X$. Consequently, any monoid $(X,m,\eta)$ becomes a bialgebra $(X,m,\eta,\Delta, \epsilon)$ with this trivial comonoid structure. The existence of an antipode for such a bialgebra is precisely equivalent to the existence of inverses for all elements. Hence, Hopf algebras in $\Set$ are precisely groups.   
\begin{example}\label{Ex:Vec} The category of vectorspaces $\Vecs$ over a base field $\field$ is endowed with a symmetric monoidal structure via the tensor product of vectorspaces $\tn_{\field}$ and the one dimensional vectorspace $\field$ acting as the unit. The symmetric structure $\Psi_{V,W}:V\tn_{\field}W\rightarrow W\tn_{\field}V$ is the flip map sending $v\tn_{\field}w \in V\tn_{\field}W$ to $v\tn_{\field}w$. The famous hom-tensor adjunction illustrates that this monoidal structure is closed with inner-homs $[V,W]$ given by spaces of $\field$-linear maps $\Hom_{\field}(V,W)$. It is well-known that a vectorspace $V$ is dualizable in $\Vecs$ if and only if $V$ is finite dimensional. This is because $\cvl: \field\rightarrow V\tn_{\field}\pr{V}$ is uniquely determined by the choice of $\cvl (1)$ which must be of the form $ \sum_{i=1}^{n} v_{i}\tn_{\field}v^{*}_{i}$ for some number $n$. It then follows by the definition of the duality morphisms that $\st{v_{i}}_{i=1}^{n} $ must form a finite basis for $V$. A similar argument to Proposition~\ref{P:SetCenter} with maps $f_{v}:\field \rightarrow V$ corresponding to $v\in V$ proves that $Z(\Vecs )\simeq \Vecs$. 
\end{example}
Monoids in $\Vecs$ are simply $\field$-algebras. Comonoids $(C,\Delta, \epsilon)$ in $\Vecs$ are called $\field$-coalgebras, or coalgebras for simplicity. We will use \emph{Sweedler's notation} for the \emph{coproduct} $\Delta: C\rightarrow C\tn_{\field} C$ and write  $\Delta(c)= c_{(1)}\tn_{\field} c_{(2)}$ where the right hand side depicts the finite sum of elements of the form $ c_{1}\tn_{\field} c_{2} \in C\tn_{\field}C$ corresponding to $c$. We recover the theory of ordinary bialgebras and Hopf algebras when looking at bialgebras and Hopf algebras in $\Vecs$ with this symmetric monoidal structure and refer the reader to \cite{majid2000foundations} for more details on these.
\begin{example}\label{Ex:BimCat} Let $A$ be a $\field$-algebra. The category of $A$-bimodules has a natural monoidal structure by tensoring bimodules over the algebra $A$, denoted by $\otimes_{A}$, and the algebra $A$, regarded as an $A$-bimodule, acting as the unit object. It is well-known that a bimodule has a left (right) dual in the monoidal category $\bim$ if and only if it is finitely generated and projective, \emph{fgp} for short, as a right (left) $A$-module. In particular, $\bim$ is closed with 
\begin{align*}
[M,N]^{l}:=\mathrm{Hom}_{A}(M,N), \hspace{1cm} &[afb](m)= af(bm),\quad f\in \mathrm{Hom}_{A}(M,N)
\\ [M,N]^{r}:=\prescript{}{A}{\mathrm{Hom}}(M,N),  \hspace{1cm} & [agb](m)= g(ma)b,\quad g\in \prescript{}{A}{\mathrm{Hom}}(M,N)
\end{align*}
where $a,b\in A$ and $\mathrm{Hom}_{A}(M,N)$ and $\prescript{}{A}{\mathrm{Hom}(M,N)}$ denote the vectorspaces of right and left $A$-module morphisms from $M$ to $N$, respectively. Explicitly, the units and counits of the adjunctions for the left and right closed structures are given by 
\begin{align}\label{EqAdj}
&\xymatrix@C+3pt@R-26pt{\varrho^{M}_{N}: N\longrightarrow \mathrm{Hom}_{A}(M,N\otimes M),& \varepsilon^{M}_{N}: \mathrm{Hom}_{A}(M,N)\otimes M\longrightarrow N
\\ \hspace{0.5cm}n\longmapsto f_{n}:(m\mapsto n\otimes m) & \hspace{1.3cm} f \otimes m\longmapsto f(m)}
\\&\xymatrix@C+8pt@R-26pt{\Theta^{M}_{N}: N\longrightarrow \prescript{}{A}{\mathrm{Hom}}(M,M\otimes N),&\hspace{-0.25cm} \Pi^{M}_{N}: M\otimes \prescript{}{A}{\mathrm{Hom}}(M,N)\longrightarrow N
\\ \hspace{0.5cm}n\longmapsto g_{n}:(m\mapsto m\otimes n) & \hspace{1.1cm}m \otimes g\longmapsto g(m)}\nonumber
\end{align}
for any pair of $A$-bimodules $M$ and $N$. Consequently, for a right or left fgp bimodule $M$, we identify $\pr{M}$ by $\mathrm{Hom}_{A}(M,A)$ and $M^{\vee}$ by $\prescript{}{A}{\mathrm{Hom}(M,A)}$.
Note that the category $\bim$ is generally not braided. For a discussion of when $\bim$ admits a braiding we refer the reader to \cite{agore2014braidings}. Several characterisations of the center of $\bim$ are also provided in \cite{agore2012center}.\end{example}
Since the category of bimodules does not admit a braiding in general, we can not discuss Hopf algebra objects in this category. However, the role of Hopf algebras and bialgebras in this setting is taken by Hopf algebroids and bialgebroids which we will discuss further in Section~\ref{SBial}.

\subsection{Monads}\label{SMonads}
In this section we recall the theory of monads, with Chapter VI of \cite{mac2013categories} as our main reference.

A \emph{monad} on a category $\mathcal{C}$ is a monoid in its monoidal category of endofunctors $\End (\ct{C})$. Explicitly, a monad consists of a triple $(T,\mu , \eta)$, where $T$ is an endofunctor on $\ct{C}$ and $\mu :TT\rightarrow T$ and $\eta : \id_\mathcal{C} \rightarrow T$ are natural transformations satisfying $\mu T\mu =\mu \mu_{T}$, $ \mu T\eta = \id_{T} = \mu \eta_{T}$. Morphisms of monads $\theta: (T,\mu,\eta) \rightarrow (T,\mu',\eta')$ can be defined accordingly as natural transformations $\theta:T \rightarrow T'$ satisfying $\theta\eta = \eta'$ and $\theta\mu= \mu'(T'\theta)\theta_{T}$.

Any monad gives rise to an adjunction $F_{T}\dashv U_{T}: \mathcal{C}^{T}\leftrightarrows \mathcal{C}$, where $\mathcal{C}^{T}$ is the \emph{Eilenberg-Moore category} associated to $T$. The category $\mathcal{C}^{T}$ has pairs $(X,r)$, where $X$ is an object in $\mathcal{C}$ and $r:TX\rightarrow X$ a $T$-\emph{action} satisfying $r\mu_{X}=rTr$ and $r\eta=\id_{X}$, as its objects and morphisms of $\ct{C}$ which commute with the $T$-actions as its morphisms. The \emph{free} functor $F_{T}$ acts as $F_{T}(X)=(TX,\mu_{X})$ and the \emph{forgetful} functor $U_{T}$ acts by $U_{T}(X,r)=X$. We will call every pair $(X,r)$ a $T$-\emph{module} and will sometimes refer to the Eilenberg-Moore category as the category of $T$-modules. In the converse direction, any adjunction $F\dashv G: \mathcal{D}\leftrightarrows \mathcal{C}$ gives rise to a monad $(GF, \eta, G\epsilon_{F})$ where $\eta : \id_{\mathcal{C}}\rightarrow GF$ and $\epsilon :FG\rightarrow \id_{\mathcal{D}}$ denote the unit and counit of the adjunction, respectively. We also obtain a natural functor $K:\mathcal{D}\rightarrow \mathcal{C}^{T}$ defined by $K(d)= (Gd,G\epsilon_{d})$. This functor is called the \emph{comparison functor} and satisfies $U_{T}K=G$ and $KF=F_{T}$. We say the functor $G$ is \emph{monadic} if $K$ is an equivalence of categories. 
 
Now we recall Beck's Theorem. The proof of this theorem and several equivalent formulations of it can be found in Section VI. 7 of \cite{mac2013categories}. 
\begin{thm}[Beck's Theorem]\label{TBecksThm} Given an adjunction $F\dashv G: \mathcal{D}\leftrightarrows \mathcal{C}$, $G$ is monadic if and only if the functor $G$ creates coequalizers for parallel pairs $f,g: X\rightrightarrows Y$ for which $Gf, Gg$ has a split coequalizer.
\end{thm}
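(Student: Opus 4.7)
The plan is to prove both implications using the universal properties of the comparison functor $K:\ct{D}\to \ct{C}^{T}$ and of free $T$-modules. Since $G$ is monadic precisely when $K$ is an equivalence, and since $G = U_{T}K$, for the forward direction it suffices to check the property for $U_{T}$, while for the backward direction I aim to build an explicit quasi-inverse $L$ to $K$.

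For the forward direction, I show that $U_{T}$ creates coequalizers of $U_{T}$-split pairs. Given $(f,g):(X,r_{X})\rightrightarrows (Y,r_{Y})$ in $\ct{C}^{T}$ whose underlying pair in $\ct{C}$ has a split coequalizer $q:Y\to Z$ with splitting data $s:Z\to Y$ and $t:Y\to X$, I would define a candidate action $r_{Z}:= q\, r_{Y}\, T(s)$. The key point is that a split coequalizer is absolute, so applying the functor $T$ to all the splitting identities yields that $T(q)$ is again a coequalizer of $Tf, Tg$; this lets me verify that $r_{Z}$ is independent of $s$, satisfies the $T$-module axioms, and makes $q$ into a morphism in $\ct{C}^{T}$. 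Uniqueness of the lifted action is automatic since $q$ is (split) epic.

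For the backward direction I construct $L:\ct{C}^{T}\to \ct{D}$. For any module $(X,r)$, the canonical pair $F(r),\epsilon_{FX}:FTX\rightrightarrows FX$ in $\ct{D}$ maps under $G$ to $Tr,\mu_{X}:TTX\rightrightarrows TX$, and this pair carries a canonical split coequalizer $r:TX\to X$ with splittings built from $\eta_{X}$ and $\eta_{TX}$; the splitting identities are precisely the unit axiom, the action axiom, and naturality of $\eta$. The hypothesis therefore creates a coequalizer $L(X,r)$ in $\ct{D}$ whose image under $G$ is $(X,r)$, so $KL(X,r)=(X,r)$ by construction. Functoriality of $L$ follows from the universal property of these coequalizers, yielding $KL=\id_{\ct{C}^{T}}$. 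Conversely, for $D\in \ct{D}$ the pair $F(G\epsilon_{D}),\epsilon_{FGD}:FGFGD\rightrightarrows FGD$ is sent by $G$ to exactly the canonical split pair associated to the module $K(D)$, and it has $\epsilon_{D}$ as a coequalizer in $\ct{D}$; uniqueness of the lifted coequalizer then gives a natural isomorphism $LK\cong \id_{\ct{D}}$.

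The main obstacle I expect is in the backward direction: verifying coherently that the coequalizers defining $L$ are functorial in $(X,r)$ and that the two cocones (one from the construction of $L$, one from the canonical resolution of $D$) assemble into a natural isomorphism $LK\cong \id_{\ct{D}}$. The forward direction reduces to a compact diagram chase once the formula for $r_{Z}$ is written down, whereas sufficiency hinges on the delicate interplay between the creation hypothesis in $\ct{D}$ and the presentation of every $T$-module as a reflexive coequalizer of free modules.
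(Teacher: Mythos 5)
The paper does not prove this theorem itself---it defers to Section VI.7 of Mac Lane---and your argument is precisely the standard proof found there: for necessity, lifting the action along the absolute (split) coequalizer via $r_{Z}=q\,r_{Y}\,T(s)$, and for sufficiency, building a quasi-inverse $L$ to the comparison functor $K$ out of created coequalizers of the canonical split presentations $F(r),\epsilon_{FX}:FTX\rightrightarrows FX$. The outline is correct; the one step worth writing out explicitly is that $KL(X,r)=(X,r)$ requires verifying $G(\epsilon_{L(X,r)})=r$ and not merely $G(L(X,r))=X$, which follows from naturality of $\epsilon$ together with the fact that $T(r)$ is split epic.
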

Split coequalizers should not to be confused with reflexive coequalizers. A pair $ Gf, Gg$ has a \emph{split} coequalizer if there exists an object $C$ in $\mathcal{C}$ along with morphisms $s:C\rightarrow GY$, $ h:GY\rightarrow GX$ and $t:GY\rightarrow C$
such that $t$ is a coequalizer and $ts=\id_{C}$, $(Gg) h=\id_{GY}$ and $(Gf) h=st$ hold. A coequalizer of a parallel pair $f,g: X\rightrightarrows Y$ is called \emph{reflexive} if there exists a morphism $h$ such that $hf=hg=\id_{X}$. The importance of reflexive coequalizers within the theory of monads was described in a theorem of Linton \cite{linton1969coequalizers}, which concerns the existence of colimits in the category of $T$-modules. We will refer to reflexive coequalizers and coreflexive equalisers by RCs and CEs, respectively.

If $\mathcal{C}$ is a monoidal category, then a monoid structures $(A,m,\eta)$ on an object $A$ in $\ct{C}$ gives rise to a monad structures on the endofunctor $A\tn -$ with $\mu= m\tn -$. The Eilenberg-Moore category $\mathcal{C}^{T}$ in this case becomes the category of left $A$-modules i.e. objects $X$ with an action $r: A\otimes X\rightarrow  X$. The free functor $F_{T}$ sends an object $X$ to the free module generated by it $(A\otimes X , m\otimes \id_{X})$ and the forgetful functor $U_{T}$ sends a module $(X,r)$ to the object $X$.  

The notation for monads is slightly confusing when compared to algebras. We usually denote the category of (left) modules over an algebra $A$ by $\lmod{A}$ and the category of (left) comodules over a coalgebra $C$ by $\lcomod{C}$, while in the monadic case, the category of modules over a monad $T$ is usually written as $\mathcal{C}^{T}$ and category of comodules of a comonad $S$ as $\mathcal{C}_{S}$. Moreover, in many texts including \cite{mac2013categories} $T$-modules are referred to as \emph{$T$-algebras} to stay consistent with the point of view that monads generalise algebraic theories and $\mathcal{C}^{T}$ plays the role of the category of algebras over a theory (see Section~\ref{SSet}). Here we use the term $T$-modules to stay closer to representation theoretic language and the example $T=A\otimes -$.

Finally, we recall the following well-known fact.
\begin{thm}\label{TAdjMonadComonad} Given a pair of adjoint endofunctors $F\dashv G :\ct{C}\leftrightarrows \ct{C}$, monad structures on $F$ correspond to comonad structures on $G$.
\end{thm}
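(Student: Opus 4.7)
The plan is to realise the statement as a special case of the general monoidal duality between monoid and comonoid structures on an object and its dual. As noted in Example~\ref{EEndC}, the data of an adjunction $F \dashv G$ in $\ct{C}$ coincides with the data of $F$ being a left dual of $G$ (equivalently, of $G$ being a right dual of $F$) in the monoidal category $\End(\ct{C})$, with the adjunction unit $\eta: \id \Rightarrow GF$ and counit $\epsilon: FG \Rightarrow \id$ playing the roles of coevaluation and evaluation. Moreover, a monad structure on $F$ is by definition a monoid structure on $F$ in $\End(\ct{C})$, and a comonad structure on $G$ is a comonoid structure on $G$ there. So it will suffice to prove the general fact that, in any monoidal category, if $X$ has a right dual $X^\vee$, then monoid structures on $X$ are in natural bijection with comonoid structures on $X^\vee$.

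Second, I would write down the bijection explicitly via the mate correspondence. Given a monad $(F,\mu,\eta^F)$, one sets
\begin{align*}
\delta &:= (GG\epsilon)(GG\mu_G)(G\eta_{FG})\eta_G : G \Rightarrow GG,\\
\epsilon^G &:= \epsilon \cdot \eta^F_G : G \Rightarrow \id_{\ct{C}}.
\end{align*}
These are the mates of $\mu$ and $\eta^F$ under, respectively, the paired adjunctions $FF \dashv GG$ versus $F \dashv G$, and $\id \dashv \id$ versus $F \dashv G$ (using that $\tilde\eta_G = G\eta_{FG}\cdot\eta_G$ is the unit of $FF\dashv GG$ at $G$). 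The inverse map, sending a comonad on $G$ to a monad on $F$, is defined symmetrically by the mate construction in the opposite direction; the two constructions are mutually inverse by the standard fact that mates are.

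Finally, I would verify that $(F,\mu,\eta^F)$ satisfies the monad axioms (associativity and unit) if and only if $(G,\delta,\epsilon^G)$ satisfies the dual comonad axioms (coassociativity and counit). This is the main bookkeeping step, and the one I expect to be the principal obstacle. It is most cleanly handled either string-diagrammatically in $\End(\ct{C})$, or by an explicit pasting-diagram calculation that repeatedly applies the triangle identities $G\epsilon \cdot \eta_G = \id_G$ and $\epsilon_F \cdot F\eta = \id_F$; each monad axiom transforms into its dual counterpart by naturality and a single use of each triangle identity. Once this verification is done — which amounts to the standard observation that passing to the right dual sends a monoid object to a comonoid object in any monoidal category — the theorem follows.
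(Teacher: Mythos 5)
Your proposal is correct and follows essentially the same route as the paper: the paper also observes that $F$ and $G$ are duals in $\End(\ct{C})$ so that the statement reduces to the general monoid/comonoid duality (citing Proposition 2.4 of Majid's \emph{Algebras and Hopf algebras in braided categories}), and it records exactly your formulas $(GG\epsilon)(GG\mu_{G})(G\eta_{FG})\eta_{G}$ and $\epsilon\nu_{G}$ for the induced comonad structure.
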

This can be seen easily from the fact that $F$ and $G$ are duals in the monoidal category of $\End (\ct{C})$ and this duality can be used to take an algebra structure on one object to a coalgebra structure on the other, see Proposition 2.4 of \cite{majid1994algebras}. Explicitly, if $\eta$ and $\epsilon$ denote the unit and counit of $F\dashv G$ and $(F,\mu ,\nu )$ is a monad structure, then $(GG\epsilon)(GG\mu_{G}) (G\eta_{FG}) \eta_{G} $ and $\epsilon\nu_{G}$ provide a comonad structure on $G$. 
\subsection{Ends and Coends}\label{SCoend}
In this section we briefly recall the notion of ends and coends, with Chapter IX of \cite{mac2013categories} serving as our main reference. 

For categories $\ct{C}$ and $\ct{D}$ and bifunctors $F,G:\ct{C}^{\op}\times\ct{C}\rightarrow \ct{D}$, a \emph{dinatural transformation} $d:F\rightarrow G$ consists of a family of morphisms $d_{X}:F(X,X)\rightarrow G(X,X)$ such that for any morphism $f:X\rightarrow Y$, we have an equality of morphisms
\begin{equation}\label{EqDinatural}
G(\id_{X},f) d_{X} F(f,\id_{X})=G(f,\id_{Y}) d_{Y} F(\id_{Y}, f): F(Y,X)\rightarrow G(X,Y)
\end{equation}
An \emph{end} for a bifunctor $F:\ct{C}^{\op}\times\ct{C}\rightarrow \ct{D}$ is a pair $(E,e)$ consisting of an object $E$ of $\ct{D}$ and a dinatural transformation $e:E\rightarrow F$, where we regard $E$ as a constant functor $E: \ct{C}^{\op}\times\ct{C}\rightarrow \ct{D}$ sending all objects in $\ct{C}^{\op}\times\ct{C}$ to $E$ and all morphisms to $\id_{E}$, where $e$ is universal in the sense that for any other such pair $(E',e')$, there exists a unique morphism $f:E'\rightarrow E$ satisfying $ef=e'$. Note that for an end $(E,e)$ of $F$, the equations \eqref{EqDinatural} reduce to $F(\id_{X},f) e_{X} =F(f,\id_{Y}) e_{Y}$. Dually, we can define a \emph{coend} of $F$ as a pair $(C,d)$ where  $d:F\rightarrow C$ is a universal dinatural transformation satisfying $d_{X} F(f,\id_{X})= d_{Y} F(\id_{Y}, f)$. The end and coend of $F$ will be denoted by $\int_{X\in\ct{C}}F(X,X)$ and $\int^{X\in\ct{C}} F(X,X)$, respectively.

Given any bifunctor $F:\ct{C}^{\op}\times\ct{C}\rightarrow \ct{D}$, we obtain a diagram in $\ct{D}$ containing morphisms $F(\id_{Y}, f)$ and $F(f,\id_{X})$ corresponding to $f:X\rightarrow Y$ in $\ct{C}$. An end $\int_{X\in\ct{C}}F(X,X)$ and a coend $\int^{X\in\ct{C}} F(X,X)$ are precisely a limit and colimit for this diagram, respectively. Hence, (co)ends for a bifunctor $F:\ct{C}^{\op}\times\ct{C}\rightarrow \ct{D}$ exists if $\ct{D}$ is (co)complete.

\section{Hopf and Bimonads}\label{SHopfBimonad}
In this section, we review the definitions and basic properties of bimonads and Hopf monads from \cite{bruguieres2007hopf,bruguieres2011hopf}. After first reviewing the theory of bimonads, we will look at the definition of Hopf monads on rigid monoidal categories, defined in \cite{bruguieres2007hopf}. We will then review the notion of Hopf monads on general monoidal categories and closed monoidal categories, which were defined in \cite{bruguieres2011hopf}. 
\subsection{Bimonads and Comonoidal Adjunctions}\label{SBimonad} 
In this section, we first recall the theory of \emph{bimonads} or \emph{comonoidal monads} from \cite{bruguieres2007hopf}, which first appeared in \cite{mccrudden2002opmonoidal} under the name of \emph{opmonoidal monads} and independently in \cite{moerdijk2002monads} under the name of \emph{Hopf monads}. Since these objects generalise braided bialgebras to arbitrary monoidal categories, we choose to stay consistent with the terminology of \cite{bruguieres2007hopf,bruguieres2011hopf} and refer to them as bimonads. 
\begin{defi}\label{DefBimon} A monad $(T,\mu , \eta )$ on a monoidal category $\mathcal{C}$ is said to be a \emph{bimonad} or an \emph{comonoidal monad} if it also has a compatible comonoidal structure $(T_{2},T_{0})$ satisfying 
\begin{align}
T_{2} (X,Y) \mu_{X\otimes Y}=(\mu_{X}\otimes\mu_{Y})& T_{2}(T(X),T(Y))TT_{2}(X,Y)\label{EqBimonad1}
\\T_{2}(X,Y)\eta_{X\otimes Y} =& \eta_{X}\otimes \eta_{Y}\label{EqBimonad2} 
\\ T_{0}\mu_{\un}=&T_{0}(TT_{0})\label{EqBimonad3}
\\T_{0}\eta_{\un}=&\id_{\un}\label{EqBimonad4} 
\end{align}
for arbitrary objects $X$ and $Y$ in $\ct{C}$. 
\end{defi}
The conditions in the above definition are just stating that $\mu$ and $\eta$ are \emph{comonoidal natural transformations}. A \emph{bimonad morphism} between two bimonads is exactly a morphism between monads which is also a comonoidal natural transformation. A bimonad structure on a monad $T$ is precisely the structure needed to lift the monoidal structure on the base category $\ct{C}$ to $\ct{C}^{T}$: 
\begin{thm}[Theorem 7.1 \cite{moerdijk2002monads}]\label{TBimonad} If $T$ is a monad on a monoidal category $\mathcal{C}$, then bimonad structures on $T$ are in correspondence with liftings of the monoidal structure of $\mathcal{C}$ onto $\mathcal{C}^{T}$ i.e. monoidal structures on $\mathcal{C}^{T}$ such that $U_{T}$ is strict monoidal.\end{thm}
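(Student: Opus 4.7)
The plan is to exhibit the correspondence explicitly in both directions and then verify invertibility. For the forward direction, starting from a bimonad $(T, T_2, T_0)$, I would define
\[ (X, r) \tn (Y, s) := \bigl( X \tn Y,\ (r \tn s)\, T_2(X, Y) \bigr), \qquad \un_{\mathcal{C}^T} := (\un, T_0), \]
and take the associator and unitors to be those of $\mathcal{C}$. The unit axioms \eqref{EqBimonad3}--\eqref{EqBimonad4} assert exactly that $T_0$ is a $T$-action on $\un$, while \eqref{EqBimonad1}--\eqref{EqBimonad2} assert exactly that $(r \tn s)\, T_2(X, Y)$ is a $T$-action on $X \tn Y$. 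Naturality of $T_2$ ensures that tensor products of $T$-module morphisms are again $T$-module morphisms, and the comonoidal coassociativity and counitality \eqref{EqComonoidalCoass}--\eqref{EqComonoidalCoun} amount precisely to the $T$-equivariance of the associator and left/right unitors of $\mathcal{C}$. By construction $U_T$ is strict monoidal.

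For the reverse direction, suppose $\mathcal{C}^T$ carries a monoidal structure making $U_T$ strict monoidal. Strict monoidality forces the underlying object of the monoidal unit to be $\un$, and its $T$-action furnishes the required $T_0 : T\un \to \un$. To recover $T_2$, I would exploit the universal property of the free functor: the tensor $F_T(X) \tn F_T(Y)$ has underlying object $TX \tn TY$, so the morphism $\eta_X \tn \eta_Y : X \tn Y \to TX \tn TY$ transposes under the adjunction $F_T \dashv U_T$ to a unique $T$-module map $F_T(X \tn Y) \to F_T(X) \tn F_T(Y)$, and $T_2(X, Y)$ is defined as its underlying $\mathcal{C}$-morphism. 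The axioms \eqref{EqBimonad1}--\eqref{EqBimonad2} then drop out from the definition of the $T$-action on a tensor product of free modules together with the unit triangle of the adjunction, while the comonoidal coassociativity and counitality follow from the pentagon and triangle axioms for the monoidal structure on $\mathcal{C}^T$ after transposition.

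More conceptually, both directions can be packaged as an instance of Kelly's doctrinal adjunction: a strict monoidal right adjoint $U_T$ canonically endows its left adjoint $F_T$ with a comonoidal structure for which the unit and counit of the adjunction become comonoidal natural transformations, and composing with the strict monoidal $U_T$ transports this to a comonoidal structure on the monad $T = U_T F_T$ with $\mu$ and $\eta$ comonoidal. The main obstacle in a direct treatment is the bookkeeping of matching each of \eqref{EqBimonad1}--\eqref{EqBimonad4} and the comonoidality axioms with a specific piece of monoidal coherence on $\mathcal{C}^T$; the doctrinal-adjunction viewpoint collapses almost all of these checks into a single $2$-categorical principle. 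Invertibility of the two constructions is then essentially automatic, since the monoidal structure on $\mathcal{C}^T$ is determined by its values on free modules, and the bimonad data is visibly recovered from the $T$-actions on $F_T(X) \tn F_T(Y)$ and on the monoidal unit.
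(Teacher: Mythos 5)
Your proposal is correct and follows essentially the same route as the paper: the forward direction uses the identical formulas for the tensor product of modules and the unit $(\un,T_0)$, and your recovery of $T_2(X,Y)$ as the adjoint transpose of $\eta_X\tn\eta_Y$ unwinds to exactly the composite $(\mu_X\,\overline{\tn}\,\mu_Y)\,T(\eta_X\tn\eta_Y)$ given in \eqref{EqT2fromMonoidal}. The doctrinal-adjunction packaging you mention is the same conceptual content the paper records separately as Lemma~\ref{LemComonoidal}.
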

\begin{sketchproof} If $T$ has a compatible comonoidal structure $(T_{2},T_{0})$, then we define the monoidal structure on $\mathcal{C}^{T}$ as 
\begin{equation}\label{EqBimonadTnAction}
(M,r)\otimes (N,s) := \left( M\otimes N, \xymatrix@C-0.2cm{T(M\otimes N)\ar[rr]^-{T_{2}(M,N)} &&T(M)\otimes T(N)\ar[r]^-{r\otimes s}& M\otimes N }\right) \end{equation} 
with $\un_{\mathcal{C}^{T}}= (\un, T_{0})$ as the unit object. Conditions \eqref{EqBimonad1} and \eqref{EqBimonad2} ensure that \eqref{EqBimonadTnAction} is a well-defined object in $\mathcal{C}^{T}$. Conditions \eqref{EqBimonad3} and \eqref{EqBimonad4} ensure that $T_{0}$ is a well-defined $T$-action on $\un$ so that $\un_{\mathcal{C}^{T}}$ becomes an object in $\mathcal{C}^{T}$. 

In the converse direction, let us assume that $\mathcal{C}^{T}$ has a well-defined monoidal structure $\overline{\otimes}$ such that $U_{T}$ is strict monoidal, defined by $(M,r)\overline{\otimes} (N,s) = \left( M\otimes N, {r \overline{\otimes} s} \right) $. Then consider the arbitrary free $T$-modules $(T(M),\mu_{M})$ and $(T(N),\mu_{N})$ and define $T_{2}(N,M)$ as the composite 
\begin{equation}\label{EqT2fromMonoidal}
\xymatrix@+2pc{T_{2}(M,N): T(M\otimes N)\ar[r]^-{T(\eta_{M}\otimes\eta_{N})}& T(T(M)\otimes T(N))\ar[r]^-{\mu_{M} \overline{\otimes} \mu_{N}}  & T(M)\otimes T(N) } \end{equation}
It should be clear that due to the functoriality of $T$ and $\mu T\eta =\id_{T}$ holding, this process gives the inverse of the one describe above. For further details of this proof, we refer the reader to Theorem 7.1 in \cite{moerdijk2002monads}.\end{sketchproof}

Now we reflect on the manner in which bimonads generalise braided bialgebras. Recall from Section~\ref{SMonads} that for any algebra $(B,m,\eta)$ in a braided category $(\ct{C},\Psi)$ we obtain a monad $(B\tn - ,m\tn - , \eta\tn -)$, which we will denote by $\mathbb{B}$. It is well-known that the category of modules over a bialgebra lifts the monoidal structure of the base category, via its coalgebra structure. From the perspective of Theorem~\ref{TBimonad}, the coalgebra structure of a bialgebra provides a comonoidal structure on the functor $B\tn -$ which turns $\mathbb{B}$ into a bimonad. Explicitly, the comonoidal structure is defined by 
\begin{equation}
\mathbb{B}_{2}(M,N)= (\id_{B}\tn \Psi_{B,M}\tn \id_{N}) (\Delta\tn \id_{M\tn N}), \quad \mathbb{B}_{0}= \epsilon
\end{equation} 
and the conditions in Definition~\ref{DefBimon} translate exactly to the bialgebra axioms. In this way, we recover precisely the action of a bialgebra on the tensor product of its modules from \eqref{EqBimonadTnAction}.

As we recalled in Section~\ref{SMonads}, the theory of monads and adjunctions are intertwined and it is natural to expect that adjunctions which giving rise to bimonads should have a corresponding structure. These adjunctions appeared first in Section 2.4 of \cite{bruguieres2007hopf}: 
\begin{defi}\label{DComonoidalAdj} An adjunction $F\dashv U: \mathcal{D}\leftrightarrows \mathcal{C}$ between monoidal categories is called a \emph{comonoidal adjunction} if $U:\mathcal{D}\rightarrow \mathcal{C}$ is strong monoidal. 
\end{defi} 
\begin{lemma}[Theorem 2.6 \cite{bruguieres2007hopf}]\label{LemComonoidal} If $F\dashv U: \mathcal{D}\leftrightarrows \mathcal{C}$ is an adjunction between monoidal categories then TFAE 
\begin{enumerate}[label=(\Roman*)]
\item $U$ is strong comonoidal. 
\item $F$ and $U$ are both comonoidal and the unit and counit of the adjunction are comonoidal natural transformations.
\end{enumerate}
Furthermore, if the above conditions hold then $T=UF$ becomes a bimonad.
\end{lemma}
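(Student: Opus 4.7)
The plan is to exploit the \emph{mate correspondence} (Kelly's doctrinal adjunction principle): for an adjunction $F \dashv U$ between monoidal categories, adjoint transposition sets up a bijection between monoidal structures on $U$ and comonoidal structures on $F$. In the direction (I)~$\Rightarrow$~(II), I use the monoidal structure inverse to the strong comonoidal one on $U$ to build comonoidal data on $F$; in the direction (II)~$\Rightarrow$~(I), I use the comonoidal structure of $F$ to construct an inverse to the comonoidal structure of $U$.

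For (I)~$\Rightarrow$~(II), since $U$ is strong comonoidal, its comonoidal data $(U_2, U_0)$ admits inverses $(U_2^{-1}, U_0^{-1})$ which make $U$ strong monoidal. Define
\begin{align*}
F_2(X,Y) &:= \epsilon_{FX \tn FY}\, F(U_2^{-1}(FX, FY))\, F(\eta_X \tn \eta_Y), \\
F_0 &:= \epsilon_{\un_\ct{D}}\, F(U_0^{-1}).
\end{align*}
The comonoidal axioms \eqref{EqComonoidalCoass} and \eqref{EqComonoidalCoun} for $(F_2, F_0)$ follow from the corresponding axioms for $(U_2^{-1}, U_0^{-1})$ using naturality of $\eta, \epsilon$ and the triangle identities. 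The comonoidality of $\eta$, namely the identity $(\eta_X \tn \eta_Y) = U_2(FX, FY)\, U F_2(X,Y)\, \eta_{X \tn Y}$, is precisely the adjoint transpose relation that defines $F_2$; similarly for $F_0$. Comonoidality of $\epsilon$ follows after expanding $F_2$ by using naturality of $U_2$ to commute it past $U(\epsilon \tn \epsilon)$, then applying the triangle identity together with $U_2 \cdot U_2^{-1} = \id$.

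For (II)~$\Rightarrow$~(I), given comonoidal data $(U_2, U_0)$ on $U$ (not a priori invertible), I form the candidate inverses as the mate of $(F_2, F_0)$:
\begin{align*}
\widetilde{U}_2(M,N) &:= U(\epsilon_M \tn \epsilon_N)\, U F_2(UM, UN)\, \eta_{UM \tn UN}, \\
\widetilde{U}_0 &:= U F_0\, \eta_{\un_\ct{C}}.
\end{align*}
The key computation is $U_2 \cdot \widetilde{U}_2 = \id$ and $\widetilde{U}_2 \cdot U_2 = \id$. For the first, apply naturality of $U_2$ to commute it past $U(\epsilon \tn \epsilon)$; recognise the resulting composite $U_2(FUM, FUN) \cdot U F_2(UM, UN)$ as the comonoidal structure $(UF)_2(UM, UN)$; invoke comonoidality of $\eta$ to rewrite $(UF)_2\, \eta_{UM \tn UN}$ as $\eta_{UM} \tn \eta_{UN}$; and finish with the triangle identity $U\epsilon \cdot \eta U = \id$. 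The reverse composition is symmetric: use naturality of $\eta$ to pull $U_2$ across $\eta_{UM \tn UN}$; recognise the composite inside $U(-)$ as $(\epsilon \tn \epsilon) \cdot (FU)_2$; invoke comonoidality of $\epsilon$ to rewrite this as $\epsilon_{M \tn N}$; and finish with the triangle identity $\epsilon F \cdot F\eta = \id$. The analogous identities for $\widetilde{U}_0$ follow from the unit halves of the same comonoidality conditions. Hence $(U_2, U_0)$ are invertible and $U$ is strong comonoidal.

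The bimonad claim is then essentially formal: as a composite of two comonoidal functors $T = UF$ inherits a comonoidal structure $T_2(X,Y) = U_2(FX, FY)\, UF_2(X, Y)$ and $T_0 = U_0\, UF_0$. The axioms \eqref{EqBimonad2} and \eqref{EqBimonad4} are precisely the comonoidality of $\eta$, already given in (II), while \eqref{EqBimonad1} and \eqref{EqBimonad3} are the comonoidality of $\mu = U\epsilon_F$, which is obtained by whiskering the comonoidal natural transformation $\epsilon$ on either side by the comonoidal functors $U$ and $F$. The main technical obstacle is the verification in (II)~$\Rightarrow$~(I) that $\widetilde{U}_2$ inverts $U_2$, which is exactly where the comonoidality of both $\eta$ \emph{and} $\epsilon$ is essential; everything else reduces to triangle identities and naturality.
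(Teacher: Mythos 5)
Your proof is correct and follows essentially the same route as the paper: your formulas for $F_{2},F_{0}$ and for the inverse of $(U_{2},U_{0})$ coincide with the paper's \eqref{EqF2ComodAdj} and \eqref{EqU2ComodAdj}, and the mate/doctrinal-adjunction packaging is just a name for the same adjoint transposition. You merely spell out the naturality and triangle-identity computations that the paper's sketch leaves implicit.
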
 
\begin{sketchproof} Given a comonoidal adjunction $F\dashv U$, we obtain a comonoidal structure on $F$ as follows 
\begin{equation}\label{EqF2ComodAdj} \xymatrix@C+5pc{F(X\otimes Y)\ar@{-->}[r]^{F_{2}(X,Y)}\ar[d]_-{F(\eta_{X} \otimes\eta_{Y})}& F(X)\otimes F(Y)\\F(UF(X)\otimes UF(Y))\ar[r]_{\cong}^-{FU_{2}^{-1}(F(X),F(Y))} &FU(F(X)\otimes F(Y))\ar[u]_-{\epsilon_{F(X)\otimes F(Y)}} }
\end{equation}
and $F_{0}=\epsilon_{\un}F(U_{0}^{-1})$, where $\eta$ and $\epsilon$ denote the unit and counit of the adjunction. It then follows by definition that $\eta$ and $\epsilon$ are comonoidal natural transformations. 

In the converse direction if $F$ and $U$ are both comonoidal and the $\eta$ and $\epsilon$ respect these comonoidal structures, then $U$ is strong comonoidal and the inverse of $U_{2}(X,Y)$ is given by  
\begin{equation}\label{EqU2ComodAdj} \xymatrix@C+5pc{U(X)\otimes U(Y) \ar@{-->}[r]^-{U_{2}^{-1}(X,Y)}\ar[d]_-{\eta_{U(X)\otimes U(Y)} } & U(X\otimes Y)
\\UF(U(X)\otimes U(Y))\ar[r]^-{UF_{2}(U(X),U(Y))} &U(FU(X)\otimes FU(Y))\ar[u]_-{U(\epsilon_{F(X)}\otimes \epsilon_{F(Y)})} }
\end{equation} 
Consequently, for a comonoidal adjunction $F\dashv U$ the monad $T=UF$ obtains a comonoidal structure by the composition of functors and the conditions in Definition~\ref{DefBimon} follow from $\eta$ and $\epsilon$ respecting this comonoidal structure.\end{sketchproof}

In a symmetric manner to Lemma~\ref{LemComonoidal}, given a bimonad $T$ we obtain a comonoidal adjunction, namely the free/forgetful adjunction $F_{T}\dashv U_{T}: \mathcal{C}^{T}\leftrightarrows \mathcal{C}$. We have already seen in Theorem~\ref{TBimonad}, that $U_{T}$ is strict monoidal and carries a trivial strong comonoidal structure. Note that when the monoidal structure on $\ct{C}$ is cartesian, any adjunction $F\dashv U:\ct{D}\leftrightarrows \ct{C}$ is comonoidal, since $U$ preserves all limits and is, thereby, a strong monoidal functor.

We should also note that in the circumstances of Lemma~\ref{LemComonoidal}, the comparison functor $K: \ct{D}\rightarrow \ct{C}^{T}$ obtains a natural strong monoidal structure such that equalities $U_{T}K=U$ and $KF=F_{T}$ hold as equalities of comonoidal functors. Explicitly $K_{2}(X,Y)= U_{2}$ and $K_{0}=U_{0}$. For further details we refer the reader to the proof of Theorem 2.6 in \cite{bruguieres2007hopf}. 

\textit{Dual Notion:} The appropriate dualisation of the notion of bimonads is that of \emph{bicomonads} or \emph{monoidal comonads}. While a bimonad consists of a monad and a comonoidal structure, a bicomonad refers to a comonad with a compatible monoidal structure, and these compatibility conditions are obtained by reversing the morphisms in Definition~\ref{DefBimon}. By symmetric arguments, one can prove that the monoidal structure of the base category lifts to the category of comodules over the bicomonad. Given a bialgebra $B$ in a braided category $\ct{C}$, we also obtain a natural bicomonad structure on the endofunctor $B\tn-$ where the coalgebra structure of $B$ appears in the comonad structure and the algebra structure in the monoidal structure. This is a unique case, and in general we do not expect a single endofunctor to admit both a bimonad and a bicomonad structure. 

After reviewing the connection between comonoidal monads and their lifting properties for their Eilenberg-Moore categories, there are two natural questions which arise. 1) What happens if we consider a monoidal structure on a monad? 2) Which monads lift the monoidal structure of the base category to their Kleisli categories? 

\emph{Monoidal Monads:} A monoidal monad is a monad $(T,\mu,\eta)$ with a monoidal structure $(T_{2},T_{0})$ such that $\mu$ and $\eta$ become monoidal morphisms. If suitable coequalizers exist, then the Eilenberg-Moore category of a monoidal monad obtains a natural monoidal structure $\tn_{T}$ defined as the following coequalizer:
\begin{equation}\label{EqMonoidalMonad}
\xymatrix@C+1cm{T(T(X)\tn T(Y)) \ar@<+1ex>[rr]^{T(r\tn s) }\ar@<-1ex>[rr]_{\mu_{M\tn N}TT_{2}(M,N)} &&T( X\tn Y) \ar[r] &(X,r)\tn_{T} (Y,s) }
\end{equation}
for a pair of $T$-modules $(X,r)$ and $(Y,s)$. In this setting, the free module functor $F_{T}$ becomes strong monoidal. Consequently, the Kleisli category which can be identified as the subcategory of free $T$-modules of the monad lifts the monoidal structure of the base category. This monoidal structure generalises the tensor product in the category of $R$-modules for a commutative ring $R$. Lastly, the reader should note that $U_{T}F_{T}$ becomes a monoidal comonad on $\ct{C}^{D}$. In a symmetric way, any comonoidal comonad on $\ct{C}$ with suitable equalizers would give rise a comonoidal monad on $\ct{C}^{T}$. We will see a refined form of this relation in Section~\ref{SInducedCHpfMnd} for Hopf monads. 

\begin{rmk}\label{RCommMonad} The theory of monoidal monads is embedded in the theory of commutative monads within the literature. A functor on a monoidal category can be equipped with two natural transformations called a strength and a costrength, which were introduced in \cite{kock1970monads}. If the base category is symmetric monoidal, we can ask for these morphisms to be compatible via the symmetry and a monad equipped with such compatible structures is called \emph{commutative}. In fact, any monoidal monad obtains a strength and a costrength in a natural way, and monoidal monads which respect the symmetric structure are in bijection with commutative monads \cite{kock1972strong}.
\end{rmk} 
\subsection{Hopf Monads on Rigid Monoidal Categories}\label{SHopfonRig}
In this section we review the first definition of Hopf monads based on \cite{bruguieres2007hopf}. As we will see, the definition provided in \cite{bruguieres2007hopf} is only sensible when the base category is rigid and was later generalised to arbitrary monoidal categories in \cite{bruguieres2011hopf}.

Bialgebras in a braided monoidal category are known to lift the monoidal structure of the base category to their categories of modules. As we saw in the previous section, bimonads generalise this aspect of the theory of bialgebras. In the same direction, a bialgebra admitting an antipode and becoming a Hopf algebra implies that the duality morphisms in the base category lift to its category of modules. More concretely, let $B$ be a braided bialgebra in $(\ct{C},\Psi)$, and $\pr{X}$ denote the left dual of an object $X$ in $\ct{C}$, with duality morphisms $\evl$ and $\cvl$. If $B$ admits an antipode $S$ then for any $B$-action on $X$, $\triangleright:B\tn X\rightarrow X$, we obtain a natural $B$-action on $\pr{X}$ by $ (\evl \tn \id_{\pr{X}} )(\id_{\pr{X}}\tn \triangleright (S\tn\id_{X})\tn \id_{\pr{X}}) (\Psi_{B, \pr{X}} \tn \cvl) $. In particular, $\evl $ and $\cvl$ respect these $B$-actions and become duality morphisms in $_{B}\ct{C}$. When $B$ admits an invertible antipode, we obtain a similar action on right dual objects using $S^{-1}$. The first Hopf condition introduced in \cite{bruguieres2007hopf} ensures that the dualities in a rigid monoidal category lift to the Eilenberg-Moore category of the bimonad. Let us recall the definition of antipodes for bimonads from Section 3.3 of \cite{bruguieres2007hopf}.
\begin{defi}\label{DefAntip1} A bimonad $T$ on a left rigid monoidal category $\mathcal{C}$ is called a \emph{left Hopf} monad if there exists a natural transformation $s^{l}_{X}: T(\pr{T(X)})\rightarrow \pr{X} $ satisfying
\begin{align*}
T_{0}T(\ev_{X}(\pr{\eta_{X}}\otimes \id_{X}))=\evl_{T(X)}(s^{l}_{T(X)}T(\pr{\mu_{X}})\otimes \id_{T(X)} )T_{2}(\pr{T(X)},X):T(\pr{T(X)}\otimes X)\rightarrow \un
\\(\mu_{X} \otimes s^{l}_{X})T_{2}(T(X),\pr{T(X)})T(\cvl_{T(X)})=(\eta_{X}\otimes \id_{\pr{X}}) \cvl_{X}T_{0}:T(\un) \rightarrow T(X)\tn \pr{X}
\end{align*}
Symmetrically, a bimonad $T$ on a right rigid monoidal category is called a \emph{right Hopf} monad if there exists a natural transformation $s^{r}_{X}: T(T(X)^{\vee})\rightarrow  X^{\vee}$ satisfying
\begin{align*}
T_{0}T(\evr_{X}(\id_{X}\tn\eta_{X}^{\vee}))= \evr_{T(X)}(\id_{T(X)}\otimes s^{r}_{T(X)}T(\mu_{X}^{\vee}))T_{2}(X,T(X)^{\vee}):T(X\otimes T(X)^{\vee})\rightarrow \un
\\ (\id_{X^{\vee}}\otimes \eta_{X})\cvr_{X} T_{0}=(s^{r}_{X} \otimes \mu_{X})T_{2}(T(X)^{\vee},T(X))T(\cvr_{T(X)}) :T(\un )\rightarrow X^{\vee}\otimes T(X)
\end{align*}
We say a bimonad $T$ on a rigid category is a \emph{Hopf} monad if it is both left Hopf and right Hopf. The natural transformation $s^{l}$ (resp. $s^{r}$) is called a left (right) \emph{antipode} for the bimonad $T$. 
\end{defi}
This notion of antipode generalises the notion of antipode for Hopf algebras in the following way: If $\ct{C}$ is a rigid braided monoidal category and $B$ a Hopf algebra in it, then its corresponding bimonad $\mathbb{B}$, obtains a left and right antipode: 
\begin{align*}
s^{l}_{X}&= (\id_{\pr{X}}\tn \evl_{B})(\id_{\pr{X}\tn \pr{B}}\tn S )(\Psi_{B,\pr{X}\tn \pr{B}} )\longleftrightarrow S= (s^{l}_{\un}\tn \id_{B})(\id_{B}\tn \Psi^{-1}_{B,\pr{B}}\cvl_{B})
\\ s^{r}_{X}&=(\id_{X^{\vee}}\tn \evr_{B}(S^{-1}\tn \id_{B^{\vee}}))  ( \Psi_{B,X^{\vee}}\tn \id_{B^{\vee}})\longleftrightarrow S^{-1}=(s^{r}_{\un}\tn \id_{B} )( \id_{B}\tn \cvr_{B})
\end{align*}
where we have used the isomorphisms $\pr{(B\tn X)}\cong\pr{X}\tn \pr{B}$ and $(B\tn X)^{\vee} \cong X^{\vee}\tn B^{\vee}$. The conditions in Definition~\ref{DefAntip1} simply require $S$ to satisfy the usual antipode conditions. Note that for a braided bialgebra $B$ in a rigid category, the bimonad $\mathbb{B}$ being only right Hopf implies the existence of an \emph{opantipode} $S'$ in the place of $S^{-1}$. An opantipode is simply a map $S':B\rightarrow B$ satisfying the basic properties which the inverse of an antipode would satisfy i.e. $m(S'\tn \id_{B})\Psi^{-1}_{B,B}\Delta = \eta \epsilon= m(\id_{B}\tn S')\Psi_{B,B}^{-1}\Delta$.

The reader should note that because the definitions of Hopf monads with antipodes require the base category to be left or right rigid, this theory generalises the theory of \emph{dualisable} braided Hopf algebras and \emph{finite-dimensional} Hopf algebras. For any Hopf algebra $B$ in a category $\ct{C}$, the endofunctor $B\tn -$ does not restrict to an endofunctor on the (left or right) rigid subcategory of $\ct{C}$ unless $B$ itself is (left or right) dualisable. In particular, in the case of $\ct{C}=\Vecs$, we need $H$ to be finite-dimensional to obtain an endofunctor on $\fdVecs$.  

Now we recall how the existence of left and right antipodes relates to the lifting of left and right dualities to $\mathcal{C}^{T}$:
\begin{thm}[Theorem 3.8 \cite{bruguieres2007hopf}]\label{ThmAntip1} If $\ct{C}$ is left (right) rigid, and $T$ a bimonad on $\ct{C}$ then $T$ is left (right) Hopf if and only if $\ct{C}^{T}$ is left (right) rigid.
\end{thm}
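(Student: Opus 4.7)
The plan is to set up a correspondence between the antipode data on $T$ and the left duality data on $\ct{C}^{T}$, treating the left case (the right case is symmetric). Throughout, the underlying dual object in $\ct{C}^{T}$ will turn out to be the dual $\pr{X}$ of $X$ in $\ct{C}$, equipped with a suitable $T$-action.

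For the forward direction ($T$ left Hopf $\Rightarrow \ct{C}^T$ left rigid), given a $T$-module $(X,r)$, I would define $r^{\flat} := s^{l}_{X}\circ T(\pr{r}) : T(\pr{X}) \to \pr{X}$ and propose $(\pr{X}, r^{\flat})$ with the base-category duality morphisms $\evl_{X}$ and $\cvl_{X}$ as the left dual of $(X,r)$ in $\ct{C}^{T}$. Three verifications are needed: (i) $(\pr{X}, r^{\flat})$ satisfies the module axioms, which I would deduce from naturality of $s^{l}$, the module identities $r\eta_{X}=\id_{X}$ and $r\mu_{X}=rT(r)$, and consequences of both antipode axioms; (ii) $\evl_{X}:(\pr{X}, r^{\flat})\tn(X,r)\to(\un, T_{0})$ is $T$-equivariant, which after substituting $r^{\flat}$, applying naturality of $T_{2}$, and using the sliding identity $\evl_{X}(\pr{r}\tn\id_{X})=\evl_{T(X)}(\id\tn r)$, reduces to the first equation of Definition~\ref{DefAntip1}; (iii) $\cvl_{X}$ is $T$-equivariant by the symmetric argument using the second antipode axiom. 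The triangle identities are inherited from $\ct{C}$.

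For the backward direction ($\ct{C}^{T}$ left rigid $\Rightarrow T$ left Hopf), since $U_{T}$ is strict monoidal it preserves left duals (as noted in Section~\ref{SMonoidal}), so the chosen left dual of any $T$-module $(X,r)$ may be represented as $(\pr{X}, \rho_{(X,r)})$ with the base duality morphisms. Applied to each free module $(T(X), \mu_{X})$, this yields an action $\rho_{X}: T(\pr{T(X)})\to \pr{T(X)}$ for which $\evl_{T(X)}$ and $\cvl_{T(X)}$ lift to $\ct{C}^{T}$. I would then define
\[
s^{l}_{X} := \pr{\eta_{X}}\circ \rho_{X} : T(\pr{T(X)}) \to \pr{X}.
\]
Naturality of $s^{l}$ in $X$ follows because, for any $f:X\to Y$, $T(f):(T(X),\mu_{X})\to(T(Y),\mu_{Y})$ is a $T$-module map and hence (by uniqueness of duals in $\ct{C}^{T}$) $\pr{T(f)}\rho_{Y}=\rho_{X}T(\pr{T(f)})$; combining this with $\pr{f}\pr{\eta_{Y}}=\pr{\eta_{X}}\pr{T(f)}$ (the dual of $\eta_{Y}f=T(f)\eta_{X}$) gives $\pr{f}s^{l}_{Y}=s^{l}_{X}T(\pr{T(f)})$. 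The two axioms of Definition~\ref{DefAntip1} are then precisely the $T$-equivariance of $\evl_{T(X)}$ and $\cvl_{T(X)}$, rewritten using $s^{l}_{X}=\pr{\eta_{X}}\rho_{X}$ and the sliding identity.

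The main obstacle is step (i) in the forward direction: extending the antipode axioms, which are stated only on free modules, to prove that $r^{\flat}$ is a $T$-action for an arbitrary module $(X,r)$. Concretely, one must show that the identity $r^{\flat}\mu_{\pr{X}}=r^{\flat}T(r^{\flat})$, which on free modules is encoded by both antipode axioms, descends along $r:(T(X),\mu_{X})\to(X,r)$; this uses naturality of $s^{l}$ applied to $r$ to show that $\pr{r}:(\pr{X},r^{\flat})\to(\pr{T(X)},\mu_{X}^{\flat})$ is itself a $T$-module map, after which the general case reduces to the free one. Once (i) is in hand, the remaining verifications in both directions are essentially direct unpacking of the definitions.
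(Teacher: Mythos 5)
Your proposal follows essentially the same route as the paper's (sketched) proof: in the forward direction you equip $\pr{X}$ with the action $s^{l}_{X}T(\pr{r})$ and check that the base-category duality morphisms lift, and in the converse you recover the antipode as $\pr{\eta_{X}}\rho_{F_{T}(X)}$ from the action on the dual of a free module. You also correctly isolate the one genuinely delicate point (verifying that $s^{l}_{X}T(\pr{r})$ is a $T$-action for a non-free module), which is exactly the step the paper itself defers to Theorem 3.8 of Brugui\`eres--Virelizier.
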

\begin{sketchproof} If $X$ is a $T$-module with action $r:T(X)\rightarrow X$, then the left and right antipodes, if they exist, provide the following actions on $\pr{X}$ and $X^{\vee}$, receptively:
\begin{equation}\label{EqHpfMndAntpodeAction}
s^{l}_{X}T\left(\pr{r}\right):T(\pr{X})\rightarrow \pr{X}, \quad \quad s^{r}_{X}T(r^{\vee}):T(X^{\vee}) \rightarrow X^{\vee}
\end{equation}
Furthermore, with these actions the usual evaluation and coevaluation morphisms lift to $\ct{C}^{T}$ as $T$-module morphisms. For further details on why the above morphisms are $T$-actions we refer the reader to Theorem 3.8 of \cite{bruguieres2007hopf}. 

In the converse direction, assume an arbitrary object $(A,r)$ in $\ct{C}^{T}$ has a dual $(\pr{A},\rho_{(A,r)})$ with a pair of duality morphisms $\evl$ and $\cvl$. Since $U$ is strict monoidal then $\pr{A}$ is a left dual of $A$ in $\ct{C}$. With this notation one can show that $\rho$ is natural and the left antipode is recovered as $\pr{\eta_{A}}\rho_{F_{T}(A)} : T(\pr{T(A)})\rightarrow \pr{A}$. See the proof of Theorem 3.8 of \cite{bruguieres2007hopf} for more details.
\end{sketchproof}

We should note that the actions described in Theorem~\ref{ThmAntip1} are unique for each choice of duality morphisms. In particular, in Lemma 3.9 of \cite{bruguieres2007hopf} it is proved that the described action on $\pr{X}$ is the unique $T$-action on $\pr{X}$ which makes both $\cvl_{X}$ and $\evl_{X}$ into $T$-module morphisms. 

As in the case of comonoidal adjunctions, one can define \emph{Hopf adjunctions} between rigid categories. An adjunction $F\dashv U :\mathcal{D}\rightarrow \mathcal{C}$ is said to be (left or right) Hopf, if $U$ is strong monoidal and both categories are (left or right) rigid. By Theorem~\ref{ThmAntip1}, for a (left or right) Hopf monad $T$, the free/forgetful adjunction $F_{T}\dashv U_{T}$ becomes such an adjunction. In the converse direction, if $F\dashv U$ is a (left or right) Hopf adjunction, in Theorem 3.14 of \cite{bruguieres2007hopf} it is shown that $UF$ becomes a (left or right) Hopf monad with the antipodes being obtained in a similar fashion to the proof of Theorem~\ref{ThmAntip1}. In the next section, we will encounter a more general notion of Hopf adjunction.

Given any functor $F: \mathcal{C}\rightarrow \mathcal{D}$ between rigid monoidal categories, we can form a pair of functors $F^{!}$ and $^{!}F$ by 
\begin{equation}\label{Eq!Def}
F^{!}(X)=F(\pr{X})^{\vee}, \hspace{2cm} \prescript{!}{}{F(X)}=\pr{F(X^{\vee})}
\end{equation}
Observe that if $F$ is strong monoidal then $^{!}F\cong F\cong F^{!}$. Furthermore, it is not hard to see that if $F\dashv U$, then $U^{!}\dashv F^{!}$ and $\prescript{!}{}{U}\dashv \prescript{!}{}{F}$. With these observations in mind, consider the adjunction $F_{T}\dashv U_{T}$ for a Hopf monad $T$ on a rigid monoidal category. By Theorem~\ref{ThmAntip1}, $\mathcal{C}^{T}$ is rigid and thereby there exists an adjunction $U_{T}^{!}\dashv F_{T}^{!}$. Since $U_{T}\cong U_{T}^{!}$ then $U_{T}$ admits a right adjoint. Furthermore, since $T^{!}=  U_{T}^{!} F_{T}^{!}$ and $F_{T}\dashv U_{T}\cong U_{T}^{!}\dashv F_{T}^{!}$, we obtain an adjunction $T\dashv T^{!}$. An alternate description of this adjunction comes directly from the antipodes: 
\begin{thm}[Proposition 3.11 \cite{bruguieres2007hopf}]\label{ThmT!Adjunction} If $T$ is a Hopf monad on a rigid monoidal category $\mathcal{C}$, with antipodes $s^{l}$ and $s^{r}$ then $s^{r}_{\pr{T(X)}}T((s^{l}_{X})^{\vee}) =\id_{T(X)}=s^{l}_{T(X)^{\vee}}T(\pr{(s^{r}_{X})}) $. In particular, $(s^{l}_{-})^{\vee}$ and $s^{r}_{\pr{(-)}}$ provide the unit and counit for the adjunction $T\dashv T^{!}$.
\end{thm}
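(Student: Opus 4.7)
First, I would note that in the rigid category $\mathcal{C}$ the canonical identification $(\pr{X})^\vee \cong X$ turns $(s^l_X)^\vee$ into a morphism $X \to T(\pr{TX})^\vee = T^!T(X)$ and $s^r_{\pr{Y}}$ into a morphism $T(T(\pr{Y})^\vee) = TT^!(Y) \to Y$, so these families have the right shape to be the unit $\eta^T$ and counit $\epsilon^T$ of an adjunction $T \dashv T^!$. Naturality of both is immediate from naturality of $s^l, s^r$ together with functoriality of $\pr{(-)}$ and $(-)^\vee$. Unpacking the triangle identities for this putative adjunction, the first is literally $\epsilon^T_{TX} \circ T(\eta^T_X) = \id_{TX}$, which is the first displayed equation; the second, $T^!(\epsilon^T_Y) \circ \eta^T_{T^!Y} = \id_{T^!Y}$, becomes (after dualizing and using faithfulness of $(-)^\vee$) the second displayed equation with $X = \pr{Y}$. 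Since every object of $\mathcal{C}$ arises as $\pr{Y}$ for some $Y$, the two displayed identities together are exactly the triangle identities, so the whole statement reduces to producing an adjunction $T \dashv T^!$ with the stated unit and counit.

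For the existence of the adjunction I would use the rigid-category trick recalled just before the statement. Since $\mathcal{C}^T$ is rigid by Theorem~\ref{ThmAntip1}, applying the construction \eqref{Eq!Def} to $F_T \dashv U_T$ produces $U_T^! \dashv F_T^!$. Because $U_T$ is strict monoidal it preserves left duals on the nose, giving a natural isomorphism $U_T^!(X,r) = \pr{X}^\vee \cong X = U_T(X,r)$; transferring along this iso yields $U_T \dashv F_T^!$, and composing with $F_T \dashv U_T$ gives $T = U_T F_T \dashv U_T F_T^! = T^!$.

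The main remaining step is to match the unit and counit of this composite adjunction with the stated formulas. Writing the counit as $\epsilon^T_Y = \epsilon^2_Y \circ U_T(\epsilon^1_{F_T^!Y})$, the counit $\epsilon^1$ of $F_T \dashv U_T$ at $F_T^!(Y) = (T(\pr{Y})^\vee, s^r_{T(\pr{Y})} T(\mu_{\pr{Y}}^\vee))$ is just its $T$-action (by \eqref{EqHpfMndAntpodeAction}), while $\epsilon^2$ is computed by tracing the construction of $U_T^! \dashv F_T^!$ out of $F_T \dashv U_T$ (the counit of the former is produced from the unit $\eta^1$ of the latter by dualizing) and transporting along $U_T \cong U_T^!$. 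Combining these and invoking naturality of $s^r$ plus the monad identity $\mu \circ T\eta = \id$ collapses the composite to $s^r_{\pr{Y}}$; a symmetric argument with left duals gives $\eta^T_X = (s^l_X)^\vee$.

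The hard part will be the bookkeeping of the iso $U_T \cong U_T^!$ and ensuring that, after transfer, the counit really is $s^r_{\pr{Y}}$ rather than some related map differing by a canonical identification. An alternative that sidesteps this altogether is a direct diagram chase: plug $(s^l_X)^\vee$ and $s^r_{\pr{T(X)}}$ into the two displayed identities and expand using Definition~\ref{DefAntip1}, the bimonad axioms \eqref{EqBimonad1}--\eqref{EqBimonad4}, and naturality of $T_2, \mu, \eta$. This is conceptually routine but calculationally long, so I would keep it as a fallback rather than the main route.
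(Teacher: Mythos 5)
Your proposal follows the same route as the paper: the adjunction $T\dashv T^{!}$ is obtained by applying $(-)^{!}$ to $F_{T}\dashv U_{T}$ using the rigidity of $\ct{C}^{T}$ (Theorem~\ref{ThmAntip1}) and the identification $U_{T}\cong U_{T}^{!}$, exactly as in the paragraph preceding the statement, with the displayed identities then recognised as the triangle identities for the unit $(s^{l}_{-})^{\vee}$ and counit $s^{r}_{\pr{(-)}}$. The paper itself defers the verification that the unit and counit of this composite adjunction coincide with the antipode formulas to \cite{bruguieres2007hopf}, which is precisely the bookkeeping step you correctly flag as the remaining work.
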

\begin{corollary}\label{CHpfColim} Any Hopf monad $T$ on a rigid category admits a right adjoint and hence preserves colimits.
\end{corollary}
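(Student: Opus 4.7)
The plan is to simply combine Theorem~\ref{ThmT!Adjunction} with a standard fact from category theory, so there is essentially no obstacle to overcome. The bulk of the work has already been done in constructing the functor $T^{!}$ via \eqref{Eq!Def} and in Theorem~\ref{ThmT!Adjunction}, which explicitly exhibits $T \dashv T^{!}$ by producing the unit $(s^{l}_{-})^{\vee}$ and counit $s^{r}_{\pr{(-)}}$ together with the identities $s^{r}_{\pr{T(X)}}T((s^{l}_{X})^{\vee}) = \id_{T(X)} = s^{l}_{T(X)^{\vee}}T(\pr{(s^{r}_{X})})$ playing the role of the triangle identities.

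Concretely, I would write the proof in two sentences. First, by Theorem~\ref{ThmT!Adjunction} the endofunctor $T$ on the rigid monoidal category $\ct{C}$ admits $T^{!}$ as a right adjoint. Second, any functor that is a left adjoint preserves all colimits that exist in its domain (this is the standard result that left adjoints are cocontinuous, e.g.\ Theorem 5.5.1 of \cite{riehl2017category}), so $T$ preserves colimits.

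The only point that is worth being a little careful about is making sure that $T^{!}$ as defined in \eqref{Eq!Def} is indeed well-defined on the rigid category $\ct{C}$, which is immediate since $\ct{C}$ being rigid provides both $\pr{(-)}$ and $(-)^{\vee}$ as endofunctors of $\ct{C}$, and then $T^{!} = (-)^{\vee} \circ T \circ \pr{(-)}$ is a composition of endofunctors of $\ct{C}$. No further computation is needed, and in particular we do not need to reverify the adjunction identities since Theorem~\ref{ThmT!Adjunction} already records them. Hence the corollary is essentially a one-line consequence of the preceding theorem.
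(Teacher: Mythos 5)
Your proposal is correct and follows essentially the same route as the paper: the adjunction $T\dashv T^{!}$ is exactly what the discussion preceding Theorem~\ref{ThmT!Adjunction} establishes (via the rigidity of $\ct{C}^{T}$ and the $!$-construction applied to $F_{T}\dashv U_{T}$), and the corollary is then the standard cocontinuity of left adjoints. Your remark that $T^{!}$ is well-defined because $\ct{C}$ is rigid is a sensible sanity check but adds nothing beyond what the paper already assumes.
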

First note that when working over rigid monoidal abelian (or triangulated) categories, Corollary~\ref{CHpfColim} allows one to classify Hopf monads using variations of the Eilenberg-Watts Theorem. The reader should also keep in mind the well-known fact that the forgetful functor $U_{T}$ of a colimit preserving monad $T$ creates (rather than just preserves) colimits. We also observe that a symmetric argument to Theorem~\ref{ThmT!Adjunction} can be made to prove that $T\dashv \prescript{!}{}{T}$ via $\prescript{!}{}{U_{T}}\dashv \prescript{!}{}{F_{T}}$ and since adjoints are unique upto isomorphism, then $ \prescript{!}{}{T}\cong T^{!}$. 

\emph{Dual Notion:} Based on the adjunction $T\dashv T^{!}$ and Theorem~\ref{TAdjMonadComonad} the functor $T^{!}$ obtains a comonad structure. The right adjoint $T^{!}$ also obtains a monoidal structure as in Equation \eqref{EqF2ComodAdj}. In particular, $T^{!}$ has a Hopf comonad structure, with a dual notion of antipodes, and its category of comodules is isomorphic to $\mathcal{C}^{T}$. By a dual argument, we can send any Hopf comonad $T$ to a Hopf monad $^{!}T$. Consequently, we see that for a rigid monoidal category $\ct{C}$, $T\leftrightarrow T^{!}$ provides a correspondence between Hopf monads and Hopf comonads on $\ct{C}$. We should also note that bicomonads correspond to \emph{monoidal adjunctions} where the left adjoint functor is strong monoidal . 

\subsection{Hopf Monads on General Monoidal Categories}\label{SHpfMndGen}
In \cite{bruguieres2011hopf}, a new Hopf condition for bimonads was introduced which can be applied to arbitrary monoidal categories instead of rigid ones. In this section, we briefly review this definition and its properties. 
\begin{defi}\label{DFusHopf} Given a bimonad $T$ on a monoidal category $\ct{C}$, one obtains a pair of natural transformations called the left and right \emph{fusion operators}, denoted by $H^{l}$ and $H^{r}$, respectively: 
\begin{align}
&\xymatrix@C+1cm{H^{l}_{X,Y}:T( X\otimes T(Y))\ar[r]^-{T_{2}(X,T(Y))} & T( X)\tn TT(Y) \ar[r]^{\mathrm{id}_{T(X)}\otimes\mu_{Y}}&   T(X)\otimes T(Y) } \label{EqleftFusion}
\\ &\xymatrix@C+1cm{H^{r}_{X,Y}: T( T(X)\otimes Y)\ar[r]^-{T_{2}(T(X),Y)} &TT(X)\tn T(Y)\ar[r]^-{\mu_{X}\otimes \id_{T(Y)}} &  T(X)\otimes T(Y)}\label{EqrightFusion}
\end{align}
The bimonad is said to be \emph{left (right) Hopf}, if the left (right) fusion operator is invertible. 
\end{defi} 
Observe that for the bimonad $\mathbb{B}$ corresponding to a bialgebra $B$ in a braided category, we always have isomorphisms $\mathbb{B}(X\otimes \mathbb{B}(Y))\cong \mathbb{B}(X)\otimes \mathbb{B}(Y)$ and $\mathbb{B}(\mathbb{B}(X)\otimes Y)\cong \mathbb{B}(X)\otimes \mathbb{B}(Y)$ via the braiding. However, these are due to the form of the functor and most importantly these isomorphisms do not provide inverses for the fusion operators. Translating the conditions of Definition~\ref{DFusHopf} for the bimonad $\mathbb{B}=B\tn-$, we see that the left and right fusion operators being invertible imply the invertibility of maps $\mathsf{H}_{1}=(\id_{B}\tn m)(\Delta\tn \id_{B})= H^{l}_{\un,\un}$ and $\mathsf{H}_{2}=(m\tn \id_{B}) (\id_{B}\tn \Psi_{B,B})(\Delta \tn \id_{B})=H^{r}_{\un,\un}$, respectively. In turn, the invertibility of $\mathsf{H}_{1}$ and $\mathsf{H}_{2}$ correspond to $B$ admitting an antipode $S$ and an opantipode $S'$, respectively: 
\begin{align*}
&S= (\epsilon \tn \id_{B})\mathsf{H}_{1}^{-1} (\id_{B}\tn \eta )\longleftrightarrow  \mathsf{H}_{1}^{-1}= (\id_{B}\tn m)(\id_{B}\tn S\tn \id_{B})(\Delta\tn \id_{B})
\\&S'= (\epsilon \tn  \id_{B})\mathsf{H}_{2}^{-1} ( \eta\tn \id_{B} )\longleftrightarrow \mathsf{H}_{2}^{-1}= (\id_{B}\tn m)(\id_{B}\tn S'\tn \id_{B})(\Psi_{B,B}^{-1} \Delta\tn \id_{B})\Psi_{B,B}^{-1}
\end{align*}
As far as the author is aware, this interpretation of the map $\mathsf{H}_{1}$ first appeared in \cite{street1998fusion}, where it was shown that $\mathsf{H}_{1}$ satisfies the \emph{fusion equation}. Due to the above correspondence, the invertibility of $\mathsf{H}_{1}$ and $\mathsf{H}_{2}$ becomes equivalent to the invertibility of the left and right fusion operators of $\mathbb{B}$:
\begin{align*}
&(H^{l}_{M,N})^{-1}= (\id_{B\tn M}\tn m(S\tn \id_{B})\tn \id_{N})(\id_{B}\tn \Psi_{B,M}\tn \id_{B\tn N})(\Delta\tn \id_{M\tn B\tn N})
\\&(H^{r}_{M,N})^{-1}= \big((\id_{B}\tn m)(\id_{B}\tn S'\tn \id_{B})(\Psi_{B,B}^{-1} \Delta\tn \id_{B})\tn \id_{M\tn N}\big)(\Psi_{B\tn M,B}^{-1}\tn \id_{N})
\end{align*}
As in previous sections, we now recall the appropriate notion for a comonoidal adjunction to be Hopf. For a comonoidal adjunction $F\dashv U :\mathcal{D}\leftrightarrows \mathcal{C}$, we can define analogous \emph{fusion operators}: 
\begin{align}
&\xymatrix@C+1cm{\overline{H}^{l}_{X,Y}: F( X\otimes U(Y))\ar[r]^-{F_{2}(X,U(Y))} &F(X)\tn FU(Y)\ar[r]^-{\id_{F(X)}\otimes\epsilon_{Y}} &  F(X)\otimes Y}\label{EqleftFusionAdj}
\\ &\xymatrix@C+1cm{\overline{H}^{r}_{X,Y}: F( U(X)\otimes Y)\ar[r]^-{F_{2}(U(X),Y)} & FU( X)\tn F(Y) \ar[r]^-{\epsilon_{X}\otimes \id_{F(Y)}}&   X\otimes F(Y)}\label{EqrightFusionAdj}
\end{align}
where $X$ and $Y$ are objects of $\ct{C}$ and $\ct{D}$, respectively, and $\epsilon$ is the counit of the adjunction. Such an adjunction with an invertible left (right) fusion operator is called a left (right) \emph{Hopf} adjunction. Observe that the fusion operators of the adjunction are families of morphisms in $\mathcal{D}$, whereas the fusion operators of the bimonad are morphisms of $\mathcal{C}$. 

For any bimonad $T=UF$, we have equalities
$$H^{l}_{X,Y}=U_{2}(F(X),F(Y))U(\overline{H}^{l}_{X,F(Y)}), \hspace{1cm} H^{r}_{X,Y}=U_{2}(F(X),F(Y))U(\overline{H}^{r}_{F(X),Y})$$
Since $U$ is strong monoidal and $U_{2}$ is invertible, then $H^{l}$ (resp. $H^{r}$) being invertible follows from $\overline{H}^{l}$ (resp. $\overline{H}^{r}$) being invertible. Hence, a left (right) Hopf adjunction in this sense gives rise to a left (right) Hopf monad. In the converse direction, a Hopf monad gives rise to such an adjunction as well: 
\begin{thm}[Theorem 2.15 \cite{bruguieres2011hopf}]\label{PropFus} A bimonad $T$ is a (left or right) Hopf monad, if and only if the adjunction $F_{T}\dashv U_{T} :\mathcal{C}^{T}\leftrightarrows \mathcal{C}$ is a (left or right) Hopf adjunction.
\end{thm}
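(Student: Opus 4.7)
One direction (from Hopf adjunction to Hopf monad) is already noted in the paragraph just before the statement: the identity $H^l_{X,Y}= U_{2}(F(X),F(Y))\,U(\overline{H}^l_{X,F(Y)})$, combined with the fact that $U_T$ is strict monoidal so that $U_2$ is the identity and $U_T$ preserves isomorphisms, forces $H^l_{X,Y}$ to be invertible whenever $\overline{H}^l$ is invertible on all pairs (in particular on pairs of the form $(X,F_T(Y))$). So the content of the theorem I actually need to address is the converse: assuming $H^l_{X,Y}$ is invertible for every $X,Y\in\ct{C}$, I must show $\overline{H}^l_{X,(Y,r)}$ is invertible for every $T$-module $(Y,r)$ and every $X\in\ct{C}$.

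My plan is to exploit the standard split coequalizer presentation of a $T$-module. For any $(Y,r)\in\ct{C}^T$, the parallel pair $\mu_Y,\,Tr:TTY\rightrightarrows TY$ has $r:TY\to Y$ as a split coequalizer in $\ct{C}$, with splittings $\eta_Y$ and $\eta_{TY}$ (using $r\eta_Y=\id_Y$, $\mu_Y\eta_{TY}=\id_{TY}$ and the naturality identity $Tr\,\eta_{TY}=\eta_Y r$). Since split coequalizers are absolute, applying $T(\id_X\tn -)$ on the left and $(\id_{TX}\tn -)$ on the right produces two split coequalizer diagrams with common tails $T(X\tn Y)$ and $TX\tn Y$ respectively. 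I would then verify, using naturality of $T_2$ together with the associativity and naturality of $\mu$, that $H^l_{X,Y}:T(X\tn TY)\to TX\tn TY$ intertwines the two parallel pairs, i.e.\ $H^l_{X,Y}\cdot T(\id_X\tn \mu_Y)=(\id_{TX}\tn \mu_Y)\cdot H^l_{X,TY}$ and similarly for $Tr$. The universal property of the coequalizer then produces a unique induced morphism $T(X\tn Y)\to TX\tn Y$, and a short computation using the module axiom $r\mu_Y=r\,Tr$ together with the naturality of $T_2$ identifies this induced morphism as precisely $\overline{H}^l_{X,(Y,r)}=(\id_{TX}\tn r)T_2(X,Y)$.

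The inverse $(H^l_{X,Y})^{-1}$ automatically inherits the intertwining property, being the two-sided inverse of a morphism that intertwines the parallel pairs, and hence descends by the same universal property to a unique morphism $TX\tn Y\to T(X\tn Y)$. By uniqueness of morphisms induced on coequalizers, the two descended maps are mutually inverse, so $\overline{H}^l_{X,(Y,r)}$ is invertible. The right Hopf case is entirely symmetric, replacing the left fusion operator and the presentation $TTY\rightrightarrows TY\to Y$ by their counterparts involving the right fusion operator and the tensoring on the other side.

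The principal obstacle I anticipate is the bookkeeping in verifying the two intertwining identities for $H^l_{X,Y}$ and then recognising the coequalizer-induced morphism as $\overline{H}^l_{X,(Y,r)}$. Each such step is a straightforward blend of naturality of $T_2$ and $\mu$, the monad axioms, and the $T$-module axiom for $(Y,r)$, but the sheer number of composable natural transformations involved makes these chases easy to misstate and worth writing out in full diagrammatic detail.
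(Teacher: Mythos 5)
Your argument is correct, and it reaches the conclusion by a genuinely different route from the paper. The paper's sketch (deferring to Lemmas 2.18 and 2.19 of \cite{bruguieres2011hopf}) simply writes down the candidate inverse $T(\id_{X}\tn r)\circ (H^{l}_{X,Y})^{-1}\circ(\id_{T(X)}\tn \eta_{Y})$ and verifies by direct computation that it is a two-sided inverse and a $T$-module map. You instead derive the inverse from the split coequalizer presentation $TTY\rightrightarrows TY\rightarrow Y$ of the module $(Y,r)$: since split coequalizers are absolute, $T(X\tn -)$ and $TX\tn -$ both preserve it; your two intertwining identities do hold (the first by naturality of $T_{2}$ plus associativity of $\mu$, the second by naturality of $T_{2}$ plus naturality of $\mu$), so $H^{l}_{X,TY}$ and $H^{l}_{X,Y}$ form a morphism of forks; and the induced map on coequalizers is $\overline{H}^{l}_{X,(Y,r)}=(\id_{TX}\tn r)T_{2}(X,Y)$ because both agree after precomposition with the split epimorphism $T(X\tn r)$. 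Precomposing your descended inverse with the splitting $\id_{TX}\tn\eta_{Y}$ recovers exactly the paper's formula, so the two proofs produce the same morphism; what your approach buys is that the two-sided-inverse verification is absorbed into the universal property of the coequalizer instead of being checked by hand. One small point you should make explicit: your coequalizers are computed in $\ct{C}$, so a priori you have only inverted $U_{T}(\overline{H}^{l}_{X,(Y,r)})$, whereas the Hopf adjunction condition requires invertibility in $\ct{C}^{T}$. This is harmless because $U_{T}$ is conservative (the $\ct{C}$-inverse of a $T$-module morphism is automatically a $T$-module morphism), but it is precisely the issue the paper flags when it insists that the explicit inverses be checked to be module maps, so a sentence to that effect would close the argument.
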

\begin{sketchproof} We have already discussed one direction of the proof. For the second part, assume that $T$ is either left or right Hopf. We will simply provide the inverses to the fusion operators of $F_{T}\dashv U_{T} :\mathcal{C}^{T}\leftrightarrows \mathcal{C}$. Let $X$ and $(M,r)$ be objects of $\ct{C}$ and $\ct{C}^{T}$, respectively, and denote $\overline{H}^{l}_{X,(M,r)}=f$ and $\overline{H}^{r}_{(M,r),X}=g$. Depending on whether $ H^{r}_{M,X}$ and $ H^{r}_{M,X}$ are invertible, we obtain inverses for $f$ and $g$ by
\begin{align*}
&\xymatrix@C+0.6cm{f^{-1}\!: T(X)\otimes M \ar[r]^-{\id_{T(X)}\otimes\eta_{M}} & T(X)\otimes T(M)\ar[r]^{(H^{l}_{X,M})^{-1}}&  T(X\otimes T(M))\ar[r]^-{T(\id_{X}\otimes r)} & T(X\otimes M)}
\\&\xymatrix@C+0.6cm{g^{-1}\!: M\otimes T(X)\ar[r]^-{\eta_{M}\otimes \id_{F(Y)}}& T(M)\otimes T(X)\ar[r]^-{(H^{r}_{M,X})^{-1}}&T(T(M)\otimes X)\ar[r]^-{T(r\otimes \id_{X})}& T(M\otimes X) }
\end{align*}
Since the above morphisms are in $\mathcal{C}^{T}$, one does not only need to check that the above expressions provide the inverses for the fusion operators, but also that they are $T$-module maps. We refer the reader to Lemmas 2.18 and 2.19 in \cite{bruguieres2011hopf} for a detailed proof of these facts.
\end{sketchproof}

In \cite{bruguieres2011hopf}, the authors also present the notion of left (right) \emph{pre-Hopf} monads which are bimonads with invertible $H^{l}_{\un,-}$ (resp. $H^{r}_{-,\un}$). Several of the results proved in \cite{bruguieres2011hopf} only require this weaker condition rather than the full Hopf condition. We will briefly discuss when a bimonad being pre-Hopf is equivalent to it being Hopf in Section~\ref{SInducedCHpfMnd}. An example of a pre-Hopf monad which is not Hopf is provided in Example 2.8 of \cite{bruguieres2011hopf}.
\subsection{Hopf Monads on Closed Monoidal Categories}\label{SHpfMndClosed}
In this section, we will look at the case where the base category $\ct{C}$ is monoidal closed and describe the relation between the fusion operators and the lifting of the closed structure of $\mathcal{C}$ to $\mathcal{C}^{T}$. To do this, we need to recall the theory of liftings from Section 3.5 of \cite{bruguieres2011hopf}. In \cite{bruguieres2011hopf} a new set of binary antipodes were also introduced, in addition to the ordinary antipodes of \cite{bruguieres2007hopf}. Without delving too much into the details of proofs we will present the definitions of these notions. 

First, let us recall how an ordinary Hopf algebra $H$ with an invertible antipode lifts the closed structure of $\Vecs$ to its category of modules. Given any pair of $H$-modules $(V,\triangleright ) $ and $(W,\triangleright')$, we obtain two $H$-actions $\triangleright^{l}$ and $\triangleright^{r}$ on $\Hom_{\field} (V,W)$ defined by $h\triangleright^{l}f= h_{(1)}\triangleright' f(S(h_{(2)})\triangleright -)$ and $h\triangleright^{r}f= h_{(1)}\triangleright' f(S^{-1}(h_{(2)})\triangleright -)$ for $f\in \Hom_{\field} (V,W)$. In this way, the unit and counit of $\Hom_{\field}(V,-) \dashv -\tn_{\field} V\cong V\tnK - $ become $H$-module morphisms so that endofunctors $(\Hom_{\field}(V,-), \triangleright^{l})$ and $(\Hom_{\field}(V,-), \triangleright^{r})$ on $\lmod{H}$ become right adjoint to $- \tnK (V,\triangleright ) $ and $ (V,\triangleright ) \tnK - $, respectively. A similar statement can be made for braided Hopf algebras in arbitrary closed braided categories.

Now we recall how adjunctions on a base category can lift to the category of modules over a monad. In full generality, given monads $(T,\mu ,\eta)$ and $(T',\mu ' ,\eta ')$ on categories $\ct{C}$ and $\ct{C}'$, respectively, a \emph{lift} of functor $L:\ct{C}\rightarrow \ct{C}'$ along $(T,T')$ is a functor $\ov{L}:\ct{C}^{T}\rightarrow\ct{C}'^{T'}$ such that $U_{T'}\ov{L}=LU_{T}$ holds. It is a well-known fact that lifts $\ov{L}$ correspond to natural transformations $\theta :T'L\rightarrow LT$ which satisfy $\theta\mu '_{L}= G(\mu )\theta_{T} T'(\theta) $ and $\theta\eta '_{L}=L(\eta) $. Such a natural transformation $\theta$ is referred to as a \emph{lifting datum} and defines $\ov{L}$ by $\ov{L}(M,r)=(L(M), (Lr)\theta_{M})$, where $(M,r)$ is an object of $\ct{C}^{T}$. In the converse direction, if $\ov{L}$ exists and sends any free module $(T(M),\mu_{M})$ to a $T'$-module $(LT(M), \rho_{M})$, we obtain $\theta= \rho (T'L\eta)$.

If in the above scenario, $L$ admits a right adjoint $R:\ct{C}'\rightarrow\ct{C} $, then liftings $\ov{R}$ of $R$ which are right adjoint to $\ov{L}$ together with a unit and counit which lift the unit and counit of the original adjunction $L\dashv R$, are in bijection with special lifting data $\xi :TR\rightarrow RT'$ which satisfy additional compatibility conditions (Equations (3a)-(3f) \cite{bruguieres2011hopf}). In Theorem 3.13 of \cite{bruguieres2011hopf}, it is proved that such a $\xi$ exists if and only if the lifting datum $\theta$ of $L$ is invertible. If so, then $\xi$ is uniquely determined by the expression $\xi = RT'(e)R(\theta^{-1}_{R})h_{TR} $, where $h$ and $e$ denote the unit and counit of $L\dashv R$. 

The case which concerns us is when $\ct{C}=\ct{C}'$ and $T=T'$ is a bimonad on $\mathcal{C}$. In this situation, any $T$-module $(M,r)$ provides a lifting $\ov{L}=-\otimes (M,r)$ of $L=-\otimes M$. In particular, the left fusion operator of $F_{T}\dashv U_{T}$ provides a lifting datum $ \ov{H}_{-,(M,r)} :F_{T}(-\otimes M) \longrightarrow F_{T}(-)\otimes M$. If $\ct{C}$ is left closed and the adjunction in consideration is that of $L=-\otimes M$ and $R=[M,-]^{l}$, then the mentioned theory shows that the left fusion operator is invertible if and only if $\ov{L}=-\otimes (M,r)$ has a right adjoint $\ov{R}$ lifting $[M,-]^{l}$. We denote $\ov{R}$ by $[(M,r),-]^{l}$ and observe that
\begin{equation}\label{EqLiftedLeftHom}
[(M,r),(N,t) ]^{l}=\left([M,N]^{l}, \xymatrix@C+0.2cm{T[M,N]^{l}\ar[r]^-{\xi^{(M,r)}_{N}}& [M,T(N)]^{l} \ar[r]^-{[M,t]^{l}}& [M,N]^{l}}\right)
\end{equation}
where $\xi^{(M,r)}_{N}$ itself decomposes as
$$\xymatrix@C+6pc{T[M,N]^{l}\ar@{-->}[r]^{\xi^{(M,r)}_{N}}\ar[d]_{\cvl^{M}_{T[M,N]^{l}}} & [M, T(N)]^{l}
\\[M, T[M,N]^{l}\otimes M]^{l}\ar[r]^{\left[M,\left(\ov{H}_{N,(M,r)}\right)^{-1}\right]^{l}}& [M, T([M,N]^{l}\otimes M)]^{l}\ar[u]_{[M,T(\evl^{M}_{N})]^{l}}}$$
Hence, over closed monoidal categories the invertibility of the fusion operators for a bimonad become equivalent to the lifting of the closed structures of the base category to the category of modules over the monad.
\begin{thm}[Theorem 3.6 \cite{bruguieres2011hopf}]\label{ThmHopfClsd} If $T$ is a bimonad on a left (resp. right) closed monoidal category $\ct{C}$, then $T$ is left (resp. right) Hopf if and only if the category $\mathcal{C}^{T}$ is left (resp. right) closed and $U_{T}$ preserves the closed structure.
\end{thm}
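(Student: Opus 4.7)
The plan is to reduce the statement to the lifting-theoretic machinery recalled just before the theorem. The key observation is that for any $T$-module $(M,r)$, the endofunctor $-\otimes (M,r)$ on $\ct{C}^{T}$ is a lift of the endofunctor $-\otimes M$ on $\ct{C}$ along $(T,T)$, and its associated lifting datum $\theta:T(-\otimes M)\rightarrow T(-)\otimes M$ is exactly the adjunction fusion operator $\overline{H}^{l}_{-,(M,r)}=(\id\tn r)F_{T,2}(-,M)$. By Theorem~\ref{PropFus}, $T$ being left Hopf is equivalent to $\overline{H}^{l}_{X,(M,r)}$ being invertible for all $X\in\ct{C}$ and all $(M,r)\in\ct{C}^{T}$, so the entire statement will follow once we translate invertibility of this lifting datum into liftability of the right adjoint.

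For the forward direction, I assume $T$ is left Hopf and fix $(M,r)\in\ct{C}^{T}$. Since the lifting datum of $-\otimes (M,r)$ is invertible, the discussion surrounding Theorem 3.13 of \cite{bruguieres2011hopf} produces a right adjoint $\overline{R}$ of $-\otimes (M,r)$ in $\ct{C}^{T}$ whose underlying functor is $R=[M,-]^{l}$, with unit and counit lifting those of $-\otimes M \dashv [M,-]^{l}$. Setting $[(M,r),(N,t)]^{l}:=\overline{R}(N,t)$, one reads off the explicit formula \eqref{EqLiftedLeftHom} from the general prescription $\xi= RT(e)R(\theta^{-1}_{R})h_{TR}$. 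The relation $U_{T}\overline{R}=RU_{T}$ is exactly the statement that $U_{T}$ preserves the closed structure.

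For the converse, I assume $\ct{C}^{T}$ is left closed and that $U_{T}$ preserves the closed structure. Then for each $(M,r)\in\ct{C}^{T}$, the adjunction $-\otimes (M,r) \dashv [(M,r),-]^{l}$ lifts the base adjunction $-\otimes M \dashv [M,-]^{l}$; in particular, because $U_{T}$ is strict monoidal, the unit and counit of the lifted adjunction can be chosen to lift those of the base. The characterisation cited above then runs in the reverse direction and forces the lifting datum of the left adjoint, namely $\overline{H}^{l}_{-,(M,r)}$, to be invertible. Applying $U_{T}$ and using the identity $H^{l}_{X,Y}=U_{T,2}(F_{T}X,F_{T}Y)\,U_{T}(\overline{H}^{l}_{X,F_{T}Y})$ recalled before Theorem~\ref{PropFus}, the bimonad fusion operator $H^{l}_{X,Y}$ is invertible for all $X,Y$, so $T$ is left Hopf. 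The right closed case is entirely symmetric.

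The main obstacle is the bookkeeping in the converse direction: one must check that the hypothesis ``$U_{T}$ preserves the closed structure'' supplies not only a functorial identification $U_{T}[(M,r),-]^{l}=[M,-]^{l}U_{T}$, but also that the units and counits of the two adjunctions match under $U_{T}$, since Theorem 3.13 of \cite{bruguieres2011hopf} requires this compatibility to conclude invertibility of the lifting datum. Once this is established (which is where the strictness of $U_{T,2}$ and uniqueness of adjoints are used), the rest of the argument is essentially formal.
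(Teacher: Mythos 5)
Your proposal is correct and follows essentially the same route as the paper: both identify the adjunction fusion operator $\overline{H}^{l}_{-,(M,r)}$ with the lifting datum of $-\otimes(M,r)$ along $(T,T)$, invoke the lifting criterion (Theorem 3.13 of \cite{bruguieres2011hopf}) in both directions, and isolate the same residual subtlety, namely that ``$U_{T}$ preserves the closed structure'' is equivalent to the adjunctions $-\otimes M\dashv[M,-]^{l}$ genuinely lifting with compatible unit and counit. The paper likewise defers that last compatibility check to Lemma 3.15 of \cite{bruguieres2011hopf}, so your treatment matches it in both strategy and level of detail.
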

\begin{sketchproof} The only subtlety which we have not addressed is that $U_{T}$ preserving the closed structure of $\mathcal{C}^{T}$, in the sense described in Section~\ref{SMonoidal}, is truly equivalent to the adjunctions $-\tn M\dashv [M,-]^{l}$ lifting to $\ct{C}^{T}$. If the latter holds  then it is clear that $\ct{C}^{T}$ becomes left closed and $U_{T}$ preserves this structure with $U_{T}^{l}=\id$. In the other direction, $U_{T}$ preserving the closed structure provides an isomorphism of bifunctors $U_{T}^{l} : U_{T}[,]_{\ct{C}^{T}}^{l}\rightarrow [U_{T}, U_{T}]^{l}$ and thereby we can define a left closed structure on $\ct{C}^{T}$ with $[,]_{\ct{C}^{T}}^{l}=[U_{T}, U_{T}]^{l}$. For more details on this correspondence, see Lemma 3.15 and more generally Section 3 of \cite{bruguieres2011hopf}.
\end{sketchproof}

In the particular case where $\ct{C}$ is closed, Section 3.3 of \cite{bruguieres2011hopf} provides an equivalent formulation of the Hopf condition for bimonads. First note that $\xi$ is natural in term $(M,r)$ as well i.e. $\xi $ can be viewed as a natural transformation between bifunctors $\xi: T[U_{T},\id_{\ct{C}}]^{l}\rightarrow [U_{T},T]^{l}$. Using the adjunction $F_{T}\dashv U_{T}$, we obtain a bijection between ${\rm Mor}_{\ct{C}^{T}} ( T[U_{T},N]^{l},[U_{T},T(N)]^{l})$ and ${\rm Mor}_{\ct{C}} ( T[T,N]^{l},[\id_{\ct{C}},T(N)]^{l})$ for any object $N$ in $\ct{C}$. The corresponding morphism to $\xi_{N}$ under this bijection is $[\eta_{-},N]^{l}\xi^{F_{T}(-)}_{N}:T [T(-),N]^{l}\rightarrow [-, T (N)]^{l}$ and is denoted by $\ov{s}^{l}_{-,N}$. In this way, $\xi^{(M,r)}_{N}=\ov{s}^{l}_{M,N}T[r,N]^{l}$. The family of morphisms $\ov{s}^{l}$ are called the \emph{left binary antipodes} and satisfy certain compatibility conditions (Equations (1a) and (1b) \cite{bruguieres2011hopf}), which can be written without reference to the fusion operators. A notion of right binary antipode $\ov{s}^{r}_{M,N}:T [T(M),N]^{r}\rightarrow [M, T (N)]^{r} $ can also be defined in a symmetric manner. 

In the case where $T$ corresponds to an ordinary Hopf algebra $H$, for an arbitrary $H$-module $(M,\triangleright)$ the binary antipode and $\xi^{(M,\triangleright)}_{N}$ take the following forms:
\begin{align*}
\ov{s}^{l}_{M,N}(h\tnK g )=&  \big(m\mapsto  h_{(1)} \tnK g(S( h_{(2)})\tnK m)\big) \in \Hom_{\field}( M,H\tnK N)
\\\xi^{(M,\triangleright)}_{N}(h\tnK f) =&\big( m\mapsto h_{(1)} \tnK f(S( h_{(2)})\triangleright m)\big)\in \Hom_{\field}( M,H\tnK N)
\end{align*}
where $h,h'\in H$, $f\in \Hom_{\field}(M,N)$ and $g\in \Hom_{\field}(H\tnK M,N)$.

Since $\ov{s}^{l}$ was determined as the unique map corresponding to $\xi$ under the adjunction, it should be clear that for a closed monoidal category, the existence of the binary antipodes is equivalent to the fusion operators being invertible. For a detailed proof of this correspondence we refer the reader to Theorem 3.6 of \cite{bruguieres2011hopf}. Here we will simply recall how the fusion operators and binary antipodes can be described in terms of each other from Proposition 3.9 of \cite{bruguieres2011hopf}:
\begin{align}
(H_{M,N}^{l})^{-1}= &T(M\otimes \mu_{N})\evl^{T(N)}_{T(M\otimes TT(N))}\left( s^{l}_{T(N),M\otimes TT(N)}T\big(\cvl_{M}^{TT(N)}\big)\otimes \id_{T(N)}\right)
\\( H_{M,N}^{r})^{-1}=& T(\mu_{M}\otimes N)\evr^{T(M)}_{T(TT(M)\otimes N)}\left(\id_{T(M)}\otimes s^{r}_{T(M),TT(M)\otimes N}T\big(\cvr_{N}^{TT(M)}\big)\right)
\\ s^{l}_{M,N} =& \left[ M, \evl^{T(M)}_{N}\right]^{l}\left[\eta_{M},(H^{l}_{M,[T(M),N]^{l}})^{-1} \right]^{l}\cvl_{T[T(M),N]^{l}}^{T(M)}
\\s^{r}_{M,N} =& \left[M, \evr^{T(M)}_{N} \right]^{r}\left[\eta_{M},(H^{r}_{M,[T(M),N]^{l}})^{-1} \right]^{r}\cvr_{T[T(M),N]^{r}}^{T(M)}
\end{align}
The particular benefit of considering the binary antipodes is that the $T$-actions on the inner-homs are simplified. In particular, the $T$-action on $[M,N]^{l}$ for $T$-modules $(M,r)$ and $(N,t)$, described in \eqref{EqLiftedLeftHom}, becomes $ [M,t]^{l}\ov{s}^{l}_{M,N} T[r,N]^{l}$.  

Finally, we should comment on why the notions of Hopf monads on rigid categories from Definition~\ref{DefAntip1} and those defined for general monoidal categories in Definition~\ref{DFusHopf} agree when the base monoidal category is rigid: 
\begin{thm}\label{ThmFusRig} If $\mathcal{C}$ is a left (right) rigid category and $T$ a bimonad on it, then $T$ is a left (right) Hopf monad as in Definition~\ref{DefAntip1} if and only if it is a left (right) Hopf monad as in Definition~\ref{DFusHopf}. 
\end{thm}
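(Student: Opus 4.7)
The plan is to reduce the equivalence to the two characterizations established earlier in the section. Namely, by Theorem~\ref{ThmAntip1} the existence of a left antipode in the sense of Definition~\ref{DefAntip1} is equivalent to $\ct{C}^{T}$ being left rigid; and since $\ct{C}$ left rigid implies $\ct{C}$ left closed (via $[X,Y]^{l}\cong Y\tn\pr{X}$), Theorem~\ref{ThmHopfClsd} applies and gives that the left fusion operator is invertible if and only if $\ct{C}^{T}$ is left closed with $U_{T}$ preserving the closed structure. So the task reduces to showing: over a left rigid base, $\ct{C}^{T}$ is left rigid if and only if $\ct{C}^{T}$ is left closed with $U_{T}$ preserving the closed structure.

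For the forward direction I would argue as follows. Assuming $\ct{C}^{T}$ is left rigid, one obtains $\ct{C}^{T}$ left closed automatically via $[(M,r),(N,t)]^{l}\cong (N,t)\tn\pr{(M,r)}$. Because $U_{T}$ is strict monoidal, hence strong monoidal, it preserves left duals (as noted in Section~\ref{SMonoidal}): $U_{T}\pr{(M,r)}\cong\pr{M}$ canonically. Combining this with the analogous formula $[M,N]^{l}\cong N\tn\pr{M}$ in $\ct{C}$, I would check that the comparison morphism $U_{T}^{l}$ introduced in Section~\ref{SMonoidal} becomes an isomorphism, so $U_{T}$ preserves the closed structure; Theorem~\ref{ThmHopfClsd} then concludes this direction.

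For the converse I would take $(M,r)\in\ct{C}^{T}$ and set $\pr{(M,r)}:=[(M,r),\un_{\ct{C}^{T}}]^{l}$. Preservation of the closed structure gives $U_{T}\pr{(M,r)}\cong[M,\un]^{l}\cong\pr{M}$ in $\ct{C}$. The evaluation morphism is taken to be the counit $\evl^{(M,r)}_{\un_{\ct{C}^{T}}}$ of the closed adjunction in $\ct{C}^{T}$, which by preservation is sent by $U_{T}$ to the usual $\evl_{M}$. For the coevaluation I would use that $\ct{C}$ being left rigid forces the canonical morphism $(M,r)\tn\pr{(M,r)}\rightarrow [(M,r),(M,r)]^{l}$ to be an isomorphism in $\ct{C}^{T}$: after applying $U_{T}$ it becomes the analogous isomorphism in $\ct{C}$, and $U_{T}$ reflects isomorphisms. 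The morphism in $\ct{C}^{T}$ corresponding to $\id_{(M,r)}$ under the closed adjunction then factors through this isomorphism to yield $\cvl_{(M,r)}\colon\un_{\ct{C}^{T}}\rightarrow (M,r)\tn\pr{(M,r)}$. The snake identities hold in $\ct{C}$ after applying $U_{T}$, and hence in $\ct{C}^{T}$ by faithfulness of $U_{T}$. Thus $\ct{C}^{T}$ is left rigid and Theorem~\ref{ThmAntip1} produces the antipode. The right Hopf case is entirely symmetric.

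The main obstacle I anticipate is verifying that the canonical comparison $(M,r)\tn\pr{(M,r)}\rightarrow [(M,r),(M,r)]^{l}$ is actually a morphism in $\ct{C}^{T}$ (and not only in $\ct{C}$ after applying $U_{T}$), so that the coevaluation lands in the right target; this requires unwinding the explicit construction of the closed structure on $\ct{C}^{T}$ from the proof of Theorem~\ref{ThmHopfClsd} and invoking conservativity of $U_{T}$. A more direct alternative would be to construct the inverse of the fusion operator out of the antipode $s^{l}$ together with the rigid duality morphisms, mirroring the classical formula $(H^{l})^{-1}=(\id\tn m)(\id\tn S\tn\id)(\Delta\tn\id)$ for a Hopf algebra in a braided rigid category, but the categorical route above is more conceptual and avoids redoing the calculations hidden in Theorems~\ref{ThmAntip1} and \ref{ThmHopfClsd}.
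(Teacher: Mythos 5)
Your proposal is correct and follows essentially the same route as the paper's own proof, which deduces the statement from Theorems~\ref{ThmAntip1} and \ref{ThmHopfClsd} together with the fact that a left (right) rigid category is left (right) closed with $[M,N]^{l}=N\otimes\pr{M}$; you simply spell out the rigid-versus-closed comparison for $\ct{C}^{T}$ that the paper leaves implicit. Even your suggested alternative, relating the fusion-operator inverse directly to the antipode, matches the paper's second remark, where the binary and ordinary antipodes are written in terms of each other.
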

\begin{sketchproof} This statement follows simply by Theorems \ref{ThmAntip1} and \ref{ThmHopfClsd} and the fact that any left (right) rigid category is left (right) closed with $[M,N]^{l}=N\otimes \pr{M}$ (resp. $[M,N]^{r}=M^{\vee}\otimes N $). Alternatively, one can prove Theorem~\ref{ThmFusRig} by directly showing how the binary and ordinary antipodes are related:
\begin{align} \ov{s}^{l}_{M,N}&=(T(N)\otimes s^{l}_{M})T_{2}(N,\pr{T(M)}), & s^{l}_{X}&=(T_{0}\otimes\pr{X})\ov{s}^{l}_{X,\un}
\\ \ov{s}^{r}_{M,N}&=(s^{r}_{M}\otimes T(N))T_{2}(T(M)^{\vee},N), & s^{r}_{X}&=(X^{\vee}\otimes T_{0})\ov{s}^{r}_{X,\un}
\end{align} 
The relation between these antipodes is presented in Remark 3.11 of \cite{bruguieres2011hopf} with some minor mistakes, which we have corrected here.\end{sketchproof} 

\emph{Cartesian Case:} Adjunctions between cartesian closed categories $L\dashv R:\ct{D}\leftrightarrows \ct{C}$ where the right adjoint $R$ is also cartesian closed had been considered long before the appearance of Hopf monads. It was also well-known that $L$ being closed becomes equivalent to the invertibility of the fusion operator \eqref{EqleftFusionAdj} (see Lemma A1.5.8 of \cite{Johnstone1}). The latter condition in the cartesian setting is often referred to as the \emph{Frobenius reciprocity law} for $L\dashv R$ in the literature.

\section{Examples}\label{SExamplesHopfMnds}
In this section we present various examples of Hopf monads. In \cite{bohm2018hopf} several structures such as ordinary Hopf algebras, Hopf algebroids and bimonoids in duoidal categories are described as examples of bimonads. Here we review some of these examples and provide some novel examples of our own.   

\subsection{Central Hopf Algebras as Hopf monads}\label{SCentralHopf}
In this section we review the theory of augmented Hopf monads from \cite{bruguieres2011hopf} and recall a criteria for telling when a Hopf monad arises from a braided Hopf algebra. 
 
So far we have described how any bialgebra (Hopf algebra) $B$ in a braided category $\ct{C}$ gives rise to a bimonad (left Hopf monad) on $\ct{C}$. If the category is monoidal but does not admit a braiding, then a bialgebra $(B,\tau )$ in the lax center of the category $Z_{l,\lax}(\ct{C})$ would also produce a bimonad structure on the endofunctor $B\tn -$ in the same way. The underlying monad structure will still be defined by the algebra structure of $B$, while for defining the comonoidal structure of $B\tn -$ we simply replace the use of the braiding of the category $\Psi_{B,-}$ by the braiding $\tau_{-}$ which $(B,\tau)$ comes equipped with. We follow the notation of \cite{bruguieres2011hopf} and call $(B,\tau)$ a \emph{central (Hopf) bialgebra} and denote its corresponding (Hopf) bimonad by $B\otimes_{\tau}-$. Note that any braided bialgebra $B$ in a braided category $(\ct{C},\Psi)$ can be viewed as a central bialgebra $(B,\Psi_{B,-})$.

In \cite{bruguieres2011hopf} the authors provided a characterisation for Hopf monads which arise from central Hopf algebras. An arbitrary bimonad $T$ on a monoidal category is called \emph{augmented} if there exists a bimonad morphism $\epsilon : T\rightarrow \id_{C}$. Given a central bialgebra $(B,\tau)$, the bimonad $B\otimes_{\tau}-$ automatically becomes augmented via the counit map $\epsilon\otimes \id_{\ct{C}} : T=B\otimes - \rightarrow -$. Without acknowledging this augmentation morphism, the counit is hidden in the comonoidal structure, as a morphism from $T(\un)$ to $\un$.
\begin{thm}[Theorem 5.7 \cite{bruguieres2011hopf}]\label{TAugHpfMnd} Let $\ct{C}$ be a monoidal category. There is an equivalence of categories between the category of central Hopf algebras in $\mathcal{C}$ and that of augmented left Hopf monads on $\mathcal{C}$.
\end{thm}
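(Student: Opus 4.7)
The plan is to exhibit mutually quasi-inverse functors between the two categories. The functor $F$ from central Hopf algebras to augmented left Hopf monads has already been described in the paragraph preceding the theorem: it sends $(B,\tau)$ to $(B\otimes_\tau -,\,\epsilon_B\otimes\id)$, and a morphism $f:(B,\tau)\to(B',\tau')$ to the bimonad morphism $f\otimes\id$, which visibly respects the augmentation. So the work lies in producing a quasi-inverse $G$ on the level of objects, and then checking that morphisms correspond.

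Given an augmented left Hopf monad $(T,\mu,\eta,T_2,T_0,\epsilon)$, I set $B:=T(\un)$ and equip it with the unit $\eta_\un$, the counit $\epsilon_\un$, the comultiplication $\Delta:=T_2(\un,\un)$, and the multiplication
$$m := \mu_\un\circ (H^l_{\un,\un})^{-1}\colon B\otimes B\longrightarrow T(T(\un))\longrightarrow B,$$
which is available because $T$ is left Hopf, so $H^l_{\un,\un}\colon T(T(\un))\to T(\un)\otimes T(\un)$ is invertible. The remaining data (central braiding and antipode) will be read off through the natural transformation
$$\omega_X := (\id_B\otimes\epsilon_X)\,T_2(\un,X)\colon T(X)\longrightarrow B\otimes X.$$
The key technical step is to show that $\omega_X$ is a natural isomorphism with inverse
$$\alpha_X := T(\epsilon_X)\circ (H^l_{\un,X})^{-1}\circ(\id_B\otimes\eta_X).$$
Checking $\omega\alpha=\id$ and $\alpha\omega=\id$ combines the monad laws, the comonoidal-naturality of $\epsilon$ (as a bimonad morphism $T\to\id_\ct{C}$), and the invertibility of the fusion operators; this is the main obstacle.

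Granted that $\omega$ is a natural iso, the right-handed version $\rho_X:=(\epsilon_X\otimes\id_B)\,T_2(X,\un)\colon T(X)\to X\otimes B$ supplies the central braiding
$$\tau_X := \rho_X\circ\alpha_X\colon B\otimes X\longrightarrow X\otimes B.$$
The hexagon axioms \eqref{EqBrai1}--\eqref{EqBrai2} for $\tau$ reduce to the coassociativity and counit axioms \eqref{EqComonoidalCoass}--\eqref{EqComonoidalCoun} for $(T_2,T_0)$; the bialgebra axioms \eqref{EqBialgebraAxiom}--\eqref{EqBialgebraRest} for $(B,m,\eta_\un,\Delta,\epsilon_\un)$ follow from the bimonad compatibilities \eqref{EqBimonad1}--\eqref{EqBimonad4} transported through $\omega$. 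Invertibility of $H^l_{\un,\un}$, transported along $\omega$, is precisely invertibility of $\mathsf{H}_1=(\id_B\otimes m)(\Delta\otimes\id_B)$, which as recalled in Section~\ref{SHpfMndGen} is equivalent to the existence of an antipode for $B$. Thus $(B,\tau)$ is a central Hopf algebra, and by construction $\omega$ provides an isomorphism $T\cong B\otimes_\tau -$ of augmented left Hopf monads.

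The remaining verifications are formal. An augmented bimonad morphism $\theta\colon T\to T'$ commutes with all of $(T_2,\mu,\eta,\epsilon)$, so evaluating at $\un$ yields a morphism $\theta_\un\colon T(\un)\to T'(\un)$ that intertwines the structures constructed above; one checks via $\omega$ and $\omega'$ that this assignment is inverse to $f\mapsto f\otimes\id$. Together with the identification $GF(B,\tau)=(B,\tau)$ and the natural iso $\omega$ witnessing $FG\cong\id$, this completes the equivalence.
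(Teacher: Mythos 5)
Your construction is, in substance, the paper's: your $\omega$ and $\rho$ are precisely the natural transformations $u^{\epsilon}$ and $v^{\epsilon}$ appearing in the sketch, your $\alpha$ is the inverse of $u^{\epsilon}$ given by Lemma 5.16 of \cite{bruguieres2011hopf}, and $\tau=\rho\circ\alpha$ is $v^{\epsilon}(u^{\epsilon})^{-1}$. Deferring the verification that $\omega\alpha=\id$ and $\alpha\omega=\id$ to a fusion-operator computation is also what the paper does (it cites Lemmas 5.15 and 5.16 of \cite{bruguieres2011hopf}), so no complaint there.

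There is, however, a concrete error in your explicit formula for the multiplication. You set $m:=\mu_{\un}\circ(H^{l}_{\un,\un})^{-1}$. Test this on the model case $T=B\otimes_{\tau}-$ for a central Hopf algebra $(B,\tau)$: there $H^{l}_{\un,\un}=\mathsf{H}_{1}=(\id_{B}\otimes m_{B})(\Delta\otimes\id_{B})$, so $(H^{l}_{\un,\un})^{-1}(b\otimes c)=b_{(1)}\otimes S(b_{(2)})c$, while $\mu_{\un}=m_{B}$; hence $\mu_{\un}\circ(H^{l}_{\un,\un})^{-1}(b\otimes c)=b_{(1)}S(b_{(2)})c=\epsilon_{B}(b)c$. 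That is $\epsilon_{B}\otimes\id_{B}$, not $m_{B}$ --- in particular it is not unital, so the bialgebra axioms you propose to transport through $\omega$ cannot hold for this $m$. The source of the slip is that $(H^{l}_{\un,\un})^{-1}\colon B\otimes B\to TT(\un)$ is not the inverse of $\omega_{T(\un)}=u^{\epsilon}_{T(\un)}$: both $H^{l}_{\un,\un}$ and $u^{\epsilon}_{T(\un)}$ are maps $TT(\un)\to B\otimes B$ obtained by postcomposing $T_{2}(\un,T(\un))$ with something in the second tensor factor, but the former uses $\mu_{\un}\colon TT(\un)\to T(\un)$ and the latter uses $\epsilon_{T(\un)}$, and these differ. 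Consistently with your own strategy, the multiplication must be the transport of $\mu_{\un}$ along $\omega$, i.e.\ $m=\mu_{\un}\circ\alpha_{T(\un)}=\mu_{\un}\circ T(\epsilon_{T(\un)})\circ(H^{l}_{\un,T(\un)})^{-1}\circ(\id_{B}\otimes\eta_{T(\un)})$, which does recover $m_{B}$ in the model case. With that correction the remainder of your argument goes through exactly as in the paper.
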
 
\begin{sketchproof} We have already mentioned one direction of the argument. For the converse direction, the difficulty is showing that for an augmented left Hopf monad $T$, $T(\un )$ becomes a central Hopf algebra and that its corresponding left Hopf monad is isomorphic to $T$. First observe that given an augmentation $\epsilon :T\rightarrow \id_{\ct{C}}$ we can reconstruct the following natural transformations: 
\begin{align}
u^{\epsilon} &= (T(\un) \tn  \epsilon)T_{2}(\un\tn \id_{\ct{C}}) : T\rightarrow T(\un) \tn \id_{\ct{C}}
\\ v^{\epsilon} &= (\epsilon \tn T(\un) )T_{2}(\id_{\ct{C}}\tn \un) : T\rightarrow \id_{\ct{C}}\tn T(\un)
\end{align}
In Lemma 5.16 of \cite{bruguieres2011hopf}, it is shown that when $T$ is left Hopf, then $u^{\epsilon}$ is invertible and its inverse is given by $T(\epsilon)(H^{l}_{\un,-})^{-1}(T(\un)\tn \eta)$. In Lemma 5.15 \cite{bruguieres2011hopf}, it is demonstrated that $\tau:=v^{\epsilon}(u^{\epsilon})^{-1}: T(\un)\tn \id_{\ct{C}}\rightarrow \id_{\ct{C}}\tn T(\un)$ provides a lax braiding for $T(\un)$ and moreover that $u^{\epsilon}: T\rightarrow T(\un)\tn_{\tau}-$ is an isomorphism of bimonads. One can thereby define the central Hopf algebra structure on $T(\un)$ via this isomorphism. 
\end{sketchproof}

First observe that a symmetric argument can prove a similar statement for augmented right Hopf monads and bialgebras in the lax right center $Z_{r,\lax}(\ct{C})$ which admit opantipodes. Secondly, we note that the use of Lemma 5.16 from \cite{bruguieres2011hopf} is essential and without knowing that the left fusion operator is invertible, we can not provide an inverse for $u^{\epsilon}$. In \cite{bruguieres2011hopf}, augmented bimonads $(T,\epsilon)$ for which $u^{\epsilon}$ is invertible are called \emph{left regular}. By the same argument as above, we can describe an equivalence between central bialgebras in $\ct{C}$ and left regular augmented bimonads on $\ct{C}$ [Theorem 5.17 \cite{bruguieres2011hopf}]. We should also note that for the corresponding bimonad $B\tn_{\tau}-$ of a central bialgebra $(B,\tau)$, we have $u^{\epsilon}= \id_{B}\tn \id_{\ct{C}}$ and $v^{\epsilon}= \tau$. 

Observe that for an arbitrary central bialgebra $(B,\tau)$ in a monoidal category $\mathcal{C}$, the module category $\mathcal{C}^{B\otimes_{\tau}-}$ can not be viewed as the module category over a bialgebra in $\ct{C}$ itself. Hence, although these bimonads correspond to braided bialgebras in some braided category, namely $Z_{l,\lax}(\ct{C})$, they are the first genuine examples of bimonads on a base category $\ct{C}$ that go beyond the study of bialgebras within $\ct{C}$ itself. In the next sections we will discuss examples of Hopf monads which can not be augmented, but first we can look at an application of this theory which was described in \cite{bruguieres2013doubles}.

It is well-known that for any given Hopf algebra $B$ in a braided category $(\ct{D},\Psi )$, the center of its category of modules $Z_{l,\lax}({}_{B}\ct{D})$ has an equivalent formulation as the category of \emph{Yetter-Drinfeld} modules $\Yetter{B}$ over $B$. The category $\Yetter{B}$ consists of triples $(M,\triangleright, \delta)$ where $M$ is an object of $\ct{D}$, $\triangleright: B\tn M\rightarrow M$ is a $B$-action and $\delta : M\rightarrow B\tn M$ a $B$-coaction satisfying the following compatibility condition:
\begin{equation}\label{EqYetterDrinfeld}
(m\tn \triangleright)(\id_{H}\tn \Psi_{H,H}\tn \id_{M}\!)(\Delta\tn \delta)\! =\! (m\tn \id_{M}\!)(\id_{H}\tn\Psi_{M,H})(\delta\triangleright\tn \id_{M}\!)(\id_{H}\tn \Psi_{H,M})(\Delta\tn \id_{M}\!) 
\end{equation}
The equivalence between $Z_{l,\lax}({}_{B}\ct{D})$ and $\Yetter{B}$ is given by sending a Yetter-Drinfeld module $(M,\triangleright, \delta)$ to $(M,\triangleright)$ with the lax braiding $\tau_{(N,r)}= (r\tn \id_{M})(\id_{H}\tn\Psi_{M,N})(\delta\tn \id_{N})$. In the converse direction, any $B$-module $(M,\triangleright)$ with a lax braiding $\tau$ is sent to $(M,\triangleright , \tau_{H}(\id_{M}\tn\eta))$. The resulting braiding on the category $\Yetter{B}$ is given by $(\triangleright_{N}\tn \id_{M})(\id_{H}\tn\Psi_{M,N})(\delta_{M}\tn \id_{N}) $ for a pair of objects $(M,\triangleright_{M}, \delta_{M})$ and $(N,\triangleright_{N}, \delta_{N})$. Note that when the antipode of $B$ is invertible then $Z_{l,\lax}({}_{B}\ct{D})= Z({}_{B}\ct{D})$. We refer the reader to \cite{bespalov1998hopf} for additional details on Yetter-Drinfeld modules in braided categories. 

If we start with a monoidal category $\ct{C}$ and pick a braided Hopf algebra $B$ in $Z_{l,\lax}(\ct{C})$, it is an interesting question whether we can express the category $\Yetter{B}\left(Z_{l,\lax}(\mathcal{C})\right)\equiv Z_{l,\lax}\left({}_{B}Z_{l,\lax}(\mathcal{C})\right)$ as the center of a category of modules over a bialgebra-like structure in $\ct{C}$ itself. This gap is precisely filled by the theory of Hopf monads: 
\begin{thm}\label{ThmYD}[Proposition 2.13 \cite{bruguieres2013doubles}] If $(B,\tau )$ is a braided Hopf algebra in the center of $\mathcal{C}$, then there exists a braided monoidal isomorphism $\Yetter{(B,\tau)}\left(Z_{l,\lax}(\mathcal{C})\right)\cong Z_{l,\lax}\left(\mathcal{C}^{B\otimes_{\tau}-}\right)$. 
\end{thm}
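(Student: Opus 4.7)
The plan is to exhibit explicit mutually inverse functors between the two categories, driven by the same formula that relates Yetter--Drinfeld coactions to half-braidings in the classical case, now upgraded to the lax-center setting. The braided monoidal structure will then fall out essentially for free from the construction.

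\textbf{Forward functor.} Given a Yetter--Drinfeld module $((M,\sigma),\triangleright,\delta)$ over $(B,\tau)$ internal to $Z_{l,\lax}(\mathcal{C})$, I would send it to the pair $((M,\triangleright),\rho)$ where $(M,\triangleright)$ is the underlying $B\otimes_\tau-$ module and, for any such module $(N,s)$,
\[
\rho_{(N,s)} := (s\otimes \id_M)(\id_B\otimes \sigma_N)(\delta\otimes \id_N)\colon M\otimes N\longrightarrow N\otimes M.
\]
Three items require verification: naturality of $\rho$ in $(N,s)$, which follows from naturality of $\sigma$ and of $\delta$; the $B$-linearity of each $\rho_{(N,s)}$, which translates precisely into the Yetter--Drinfeld compatibility \eqref{EqYetterDrinfeld} internal to $Z_{l,\lax}(\mathcal{C})$ (where the ambient braiding between $B$ and $M$ is the half-braiding component $\tau_M$); and the hexagon axioms \eqref{EqBrai1}--\eqref{EqBrai2} for $\rho$, which follow from coassociativity and counitality of $\delta$ together with the half-braiding axioms for $\sigma$.

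\textbf{Reverse functor.} Given $((M,\triangleright),\rho)$ in $Z_{l,\lax}(\mathcal{C}^{B\otimes_\tau-})$, I would exploit the fact that the central Hopf monad is augmented by $\epsilon_B\otimes \id_{\mathcal{C}}$ and test $\rho$ against free modules $F(N)=(B\otimes N,\mu\otimes \id_N)$. Define
\[
\sigma_N := (\epsilon_B\otimes \id_{N\otimes M})\,\rho_{F(N)}\,(\id_M\otimes \eta\otimes \id_N),\qquad \delta := \rho_{F(\un)}(\id_M\otimes \eta),
\]
yielding a candidate half-braiding $\sigma$ for $M$ over $\mathcal{C}$ and a coaction $\delta\colon M\to B\otimes M$. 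The hexagon axioms for $\rho$ restricted to free modules unpack into the half-braiding axioms for $\sigma$, the coalgebra compatibilities for $\delta$, and, together with $B$-linearity of $\rho$, the Yetter--Drinfeld condition. The left-Hopf property of $B\otimes_\tau-$ is what guarantees that $\rho_{(N,s)}$ is completely determined by its restriction to free modules (by Theorem~\ref{PropFus} the free/forgetful adjunction is Hopf, so every module is a reflexive coequalizer of free ones in a compatible way), so that the pair $(\sigma,\delta)$ recaptures all of $\rho$.

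\textbf{Inverse and structure preservation.} That the two functors are mutually inverse follows by plugging the definitions into each other and simplifying with the bimonad identities $\mu(\eta\otimes \id)=\id$ and the counit/augmentation behaviour; in particular, $\rho_{F(N)}$ and its unique $B$-linear extension to arbitrary modules are pinned down by $\sigma$ and $\delta$. The monoidal product of Yetter--Drinfeld modules uses the diagonal action and coaction twisted by $\tau$; a direct comparison with the tensor product formula \eqref{EqDualMonoidalStrc} on $Z_{l,\lax}(\mathcal{C}^{B\otimes_\tau-})$ shows that the forward functor is strict monoidal. The braidings on both sides are, in each case, given by evaluating the half-braiding of one argument on the other, so the equivalence preserves braiding tautologically. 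The main obstacle is the back-and-forth translation between the Yetter--Drinfeld axiom in $Z_{l,\lax}(\mathcal{C})$, where the braiding is encoded through the half-braidings $\tau_M$ and $\sigma_B$, and the $B$-linearity of $\rho_{(N,s)}$ in $\mathcal{C}$, where the action $s$ on a general test module intervenes; bookkeeping which half-braiding plays which role, and verifying that the reverse-direction reconstruction really needs only the free-module data --- which is exactly what the left Hopf condition supplies --- is where most of the technical work lies.
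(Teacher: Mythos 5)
Your strategy is the right one, and it coincides with the standard argument: the paper itself does not prove this statement (it cites Proposition 2.13 of Brugui\`eres--Virelizier), but the two assignments you write down are exactly the lax-center upgrades of the correspondence between $\Yetter{B}$ and $Z_{l,\lax}({}_{B}\ct{D})$ recorded in the paragraph preceding the theorem, with the ambient braiding replaced by the appropriate half-braidings $\tau_{M}$ and $\sigma_{N}$. The identification of the Yetter--Drinfeld axiom with $B$-linearity of $\rho_{(N,s)}$, the strict monoidality via \eqref{EqDualMonoidalStrc}, and the tautological matching of the braidings are all as they should be.

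One justification in your reverse direction is wrong, although the fact it is meant to support is true. You claim that the left Hopf property of $B\otimes_{\tau}-$ (via Theorem~\ref{PropFus} and presentations by reflexive coequalizers) is what guarantees that $\rho$ is determined by its restriction to free modules. It is not: this follows from naturality of $\rho$ alone. The action $s\colon F_{T}(N)\to (N,s)$ is a morphism of $T$-modules, so naturality gives $\rho_{(N,s)}(\id_{M}\otimes s)=(s\otimes\id_{M})\rho_{F_{T}(N)}$, and precomposing with $\id_{M}\otimes\eta\otimes\id_{N}$ (where $\eta$ is the unit of $B$) and using $s(\eta\otimes\id_{N})=\id_{N}$ yields $\rho_{(N,s)}=(s\otimes\id_{M})\rho_{F_{T}(N)}(\id_{M}\otimes\eta\otimes\id_{N})$. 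No antipode and no colimit argument enters; indeed the lax-center statement already holds for a central bialgebra, the Hopf hypothesis being what upgrades lax centers to genuine centers by inverting the half-braidings, as remarked after \eqref{EqYetterDrinfeld}. To complete your reconstruction cleanly you should also record the module isomorphism $F_{T}(N)\cong F_{T}(\un)\otimes(N,\epsilon\otimes\id_{N})$ and the identity $\sigma_{N}=\rho_{(N,\epsilon\otimes\id_{N})}$, which reduce both the half-braiding axioms for $\sigma$ and the recovery of $\rho_{F_{T}(N)}$ from the pair $(\sigma,\delta)$ to instances of \eqref{EqBrai1}.
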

Note that ${}_{(B,\tau)}Z_{l,\lax}(\mathcal{C})$ and $\mathcal{C}^{B\otimes_{\tau}-}$ will generally not be the equivalent, while the above result tells us that their centres are. 
\subsection{Hopf Monads on $\mathbb{N}_{0}$}\label{SNHpfMnds} 
In this section we will classify Hopf monads on the monoidal category $(\mathbb{N}_{0},+,0)$. The category $\mathbb{N}_{0}$ has natural numbers (including $0$) as objects and morphisms are given by the natural order $\leq$ on $\mathbb{N}_{0}$. Explicitly, for two numbers $m,n\in \mathbb{N}_{0}$, there exists a unique arrow $\mathrm{min}(m,n)\rightarrow \mathrm{max}(m,n)$. Recall that more generally, any poset $(P,\leq)$ can be viewed as a category in the same way. The category $\mathbb{N}_{0}$ also obtains a symmetric monoidal structure via addition $+$ and $0$ acting as the monoidal unit. 

It is well-known that monads $(T,\mu,\eta)$ on poset categories $(P,\leq)$ correspond to \emph{closure operators} on $(P,\leq)$ i.e. an order-preserving map $T:P\rightarrow P$ such that $p\leq T(p) $ and $T(p)=T^{2}(p)$ for any $p\in P$ (Example 5.1.7 \cite{riehl2017category}). 

For the particular case of $\mathbb{N}_{0}$, closure operators correspond to infinite subsets of $\mathbb{N}_{0}$. Given a closure operators $T$, since $n\leq T(n)$, the set $S_{T}:=\lbrace T(n)\mid n\in \mathbb{N}_{0}\rbrace$ defines an infinite subset of $\mathbb{N}_{0}$. Additionally, for any pair of numbers $m$ and $k$ which satisfy $m\leq m+k\leq T(m)$, we have that $T(m) \leq T(m+k)\leq T(m)$ and thereby $T(m+k)=T(m)$. Hence $T(n)$ for any $n\in \mathbb{N}_{0}$ is defined solely by $m_{n}:=\mathrm{min}\lbrace m\in S_{T}\mid n\leq m \rbrace $: By definition $T(n)\leq T(m_{n})=m_{n}$ and since $T(n)\in \lbrace m\in S_{T}\mid n\leq m \rbrace $, then $T(n)= m_{n}$. In the converse direction, we obtain a closure operator $T$ for any infinite subset $S$ of $\mathbb{N}_{0}$ defined by $T(n)=\mathrm{min}\lbrace m\in S\mid n\leq m \rbrace $.

\begin{lemma}\label{LbimonadN} Bimonads on $(\mathbb{N}_{0},+,0)$ correspond to non-trivial (not $\lbrace 0\rbrace$) submonoids of $ (\mathbb{N}_{0},+)$. \end{lemma}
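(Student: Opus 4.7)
The plan is to reduce the bimonad axioms to inequalities in $(\mathbb{N}_0, \leq)$ and then identify these inequalities with the submonoid condition. Since $(\mathbb{N}_0, +, 0)$ is a posetal (thin) monoidal category, there is at most one morphism between any two objects, so every diagram of morphisms commutes automatically. Consequently, all four axioms \eqref{EqBimonad1}--\eqref{EqBimonad4} of Definition~\ref{DefBimon} hold as soon as the underlying morphisms $T_2(m,n)\colon T(m+n)\to T(m)+T(n)$ and $T_0\colon T(0)\to 0$ exist, i.e.\ as soon as
\[
T(m+n)\leq T(m)+T(n) \quad\text{and}\quad T(0)\leq 0.
\]
Similarly, asking that $\mu$ and $\eta$ be comonoidal natural transformations imposes no further constraint, since both equalities live in the poset.

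Next I would use the correspondence recalled just before the lemma, namely that monads on $(\mathbb{N}_0,\leq)$ are in bijection with infinite subsets $S\subseteq \mathbb{N}_0$ via $T\leftrightarrow S_T=\{T(n):n\in\mathbb{N}_0\}$, with the inverse sending $S$ to $T_S(n)=\min\{m\in S:n\leq m\}$. Under this bijection, the condition $T(0)\leq 0$ becomes $0\in S_T$. For the multiplicativity condition, assume $m,n\in S_T$; then $T(m)=m$ and $T(n)=n$, so the inequality $T(m+n)\leq T(m)+T(n)=m+n$ combined with the closure property $m+n\leq T(m+n)$ forces $T(m+n)=m+n$, i.e.\ $m+n\in S_T$. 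Hence $S_T$ is a submonoid of $(\mathbb{N}_0,+)$ containing at least one positive element (as it is infinite), so it is non-trivial.

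For the converse, given a non-trivial submonoid $S\subseteq(\mathbb{N}_0,+)$, pick any positive $k\in S$; then $\{nk:n\in\mathbb{N}_0\}\subseteq S$ shows $S$ is infinite, so the formula $T_S(n)=\min\{m\in S:n\leq m\}$ gives a well-defined closure operator, hence a monad. Since $0\in S$, we have $T_S(0)=0$, supplying $T_0$. For $T_2$, observe that $T_S(m)+T_S(n)\in S$ by the submonoid property, and $m+n\leq T_S(m)+T_S(n)$, so by minimality $T_S(m+n)\leq T_S(m)+T_S(n)$, supplying $T_2(m,n)$. These two morphisms, together with posetal triviality of all coherence axioms, make $T_S$ a bimonad.

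The only real content is the observation that, in the thin monoidal setting, the entire bimonad structure collapses to two inequalities; everything else is automatic. The non-triviality clause simply records that $S=\{0\}$ is excluded because $T$ must be defined on every $n\in\mathbb{N}_0$, which requires $S$ to be unbounded, equivalently (for submonoids of $\mathbb{N}_0$) to contain a positive element. These two constructions $T\mapsto S_T$ and $S\mapsto T_S$ are mutually inverse by the same argument already used for monads, establishing the claimed bijection.
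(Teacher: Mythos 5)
Your proof is correct and follows essentially the same route as the paper's: both reduce the bimonad axioms to the two inequalities $T(0)\leq 0$ and $T(m+n)\leq T(m)+T(n)$ (all coherence being automatic in a thin category), derive closure under addition by evaluating at elements of $S_T$, and verify the converse via minimality/idempotence. The only addition is your explicit remark that a non-trivial submonoid is infinite, which the paper leaves implicit; this is a welcome clarification rather than a divergence.
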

\begin{proof} Assume $(T,\mu,\eta)$ is defined by an infinite subset $S\subseteq \mathbb{N}_{0}$ and has a compatible comonoidal structure $(T_{2},T_{0})$. Firstly note that the existence of $T_{0}$ implies that $0\leq T(0)\leq 0$, and thereby $T(0)=0\in S$. Secondly, $T_{2}$ implies that $T(m+n)\leq T(m)+T(n)$ for all pairs $m,n\in \mathbb{N}_{0}$. In particular, for $m',n'\in S$ we have that $m'+n'\leq T(m'+n')\leq m'+ n'$. Thereby $S$ must be closed under addition. This condition is in fact necessary and sufficient since for any $m,n\in \mathbb{N}_{0}$ we have $m+n \leq T(m)+T(n)$ and thereby $T(m+n)\leq T(T(m)+T(n))=T(m)+T(n)$. The compatibility conditions of Definition~\ref{DefBimon} do not need to be checked, since there exists a unique morphism between any two objects in a poset category.
\end{proof}

\begin{thm}\label{THpfMndN} Hopf monads on $(\mathbb{N}_{0},+,0)$ are in bijection with submonoids $\langle n\rangle$ of $ (\mathbb{N}_{0},+)$ which are generated by a single non-zero element $n\in \mathbb{N}_{0}$. 
\end{thm}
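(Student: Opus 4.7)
The plan is to translate the Hopf condition into a statement about the submonoid $S := S_T \subseteq \mathbb{N}_0$ which corresponds to the bimonad $T$ by Lemma~\ref{LbimonadN}. Recall that $T(k) = \min\{s \in S : k \leq s\}$, so $T$ restricted to $S$ is the identity. Because $(\mathbb{N}_0,\leq)$ is a poset, every hom-set has at most one element and a morphism is invertible if and only if its source and target coincide; thus the left fusion operator $H^l_{m,n}\colon T(m + T(n)) \to T(m) + T(n)$ is invertible exactly when $T(m + T(n)) = T(m) + T(n)$. Since $T(n)$ takes all values of $S$ as $n$ varies, and since commutativity of $+$ makes $H^r_{n,m}$ match $H^l_{m,n}$, both Hopf conditions reduce to the single identity
\begin{equation*}
T(m + t) = T(m) + t \qquad \text{for all } m \in \mathbb{N}_0,\ t \in S.
\end{equation*}

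The forward direction, namely that any $S = \langle n \rangle$ with $n \geq 1$ satisfies this identity, is a direct calculation. Writing $m = an + b$ with $0 \leq b < n$ and $t = kn \in S$, one checks that $T(m) = an$ if $b = 0$ and $T(m) = (a+1)n$ otherwise; the same case split applied to $m + t = (a+k)n + b$ yields $T(m+t) = T(m) + t$ in both cases.

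For the converse I would argue by contradiction. Let $n$ denote the smallest positive element of $S$, which exists since $S$ is infinite and contains $0$. Suppose $S \neq \langle n \rangle$ and choose $s \in S \setminus \langle n \rangle$, written $s = qn + r$ with $0 < r < n$. Apply the displayed identity with $m := n - r$ (so $0 < m < n$, whence $T(m) = n$ because $n$ is the least positive element of $S$) and $t := s$: on the one hand $m + t = (n - r) + (qn + r) = (q+1)n \in S$, so $T(m + t) = (q+1)n$; on the other hand $T(m) + t = n + qn + r = (q+1)n + r$. The identity forces $r = 0$, contradicting the choice of $s$, so $S = \langle n \rangle$.

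The only conceptual step is realising that the Hopf condition in a poset category degenerates to an equality of natural numbers; once this reformulation is in place, no serious obstacle remains, and the classification follows from the clean numerical argument above.
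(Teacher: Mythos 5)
Your proof is correct and takes essentially the same approach as the paper: both reduce invertibility of the fusion operators in the poset $(\mathbb{N}_{0},\leq)$ to the numerical identity $T(m+t)=T(m)+t$ for $t\in S_{T}$, and then deduce that consecutive elements of $S_{T}$ are spaced by a fixed positive number. The only cosmetic difference is that the paper obtains $S_{T}=\langle T(1)\rangle$ directly by applying the identity with $m=1$ to successive elements of $S_{T}$, whereas you argue by contradiction from the minimal positive element of $S_{T}$; the content is the same.
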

\begin{proof} It is easy to show that any Hopf monad $T$ on $(\mathbb{N}_{0},+,0)$ is uniquely determined by $T(1)$. The left fusion operator being invertible implies that $T(T(n)+m)=T(n)+T(m)$. For any $n\in S_{T}$, $T(n+1)= \mathrm{min}\lbrace m\in S_{T}\mid n\lneq m\rbrace$. By the Hopf condition $T(n+1 ) =n+T(1)$. Consequently, $S_{T}= \lbrace 0, T(1), 2T(1),\ldots \rbrace=\langle T(1)\rangle$. 

In the converse direction, assume that $S_{T}$ has the mentioned form and for any $m,n \in \mathbb{N}_{0}$, there exist $k,l\in \mathbb{N}_{0}$ such that $T(m)= kT(1)$ and $T(n)=lT(1)$. In this case, $T(kT(1) + n) = k'T(1)$ where $k'$ is minimal so that $kT(1) + n\leq k'T(1)$. Consequently, $l= k'-k$ and $T(T(n)+m)=T(n)+T(m)$ holds.
\end{proof}

Another interpretation of the above results comes from looking at the Eilenberg-Moore category. Any $T$-module $n$ for a closure operator must satisfy $T(n)=n$. Therefore $\ct{C}^{T}$ is precisely the set $S_{T}$ defined above. From this perspective Lemma \ref{LbimonadN} is a consequence of Theorem \ref{TBimonad}. Also note that the Hopf monads from Theorem~\ref{THpfMndN} are only augmented if $T(1)=1$ and $S_{T}=\mathbb{N}_{0}$.
\subsection{Hopf Monads on $\Set$}\label{SSet}
In this section we will classify finitary Hopf monads on the category $(\Set ,\times, 1)$. It is well-known that \emph{finitary} monads $(T,\mu, \eta)$, meaning monads where $T$ preserves filtered-colimits, on $\Set$ correspond to \emph{Lawvere theories} or \emph{algebraic theories}. The latter were introduced in \cite{lawvere1963functorial} and we will refer to them as \emph{theories} here. We also base our notation on the modern treatment of this topic in \cite{hyland2007category} which focuses on the correspondence between theories and monads. A classification of theories whose categories of models are cartesian closed has already appeared in \cite{johnstone1990collapsed}. Here, we will extend this work and show that any theory whose category of models lifts the cartesian closed structure of $\Set$ must correspond to the theory of $G$-sets for a group $G$. 

First, we will recall the basics of algebraic theories from Section 2 of \cite{hyland2007category}. Let $\aleph_{0}$ denote the skeleton of the category of full subcategory of finite sets. Hence, $\aleph_{0}$ has the natural numbers as its objects (including $0$ instead of $\emptyset$) and morphisms $\alpha:m\rightarrow n$ correspond to morphisms $\alpha:\lbrace 1,\ldots m\rbrace\rightarrow \lbrace 1,\ldots n\rbrace$. A Lawvere theory $\mathbb{T}$ consists of a small category with finite products and a strict product-preserving identity-on-objects functor $t:\aleph_{0}^{\op}\rightarrow \mathbb{T}$ where every object is a power of $t(1)$. We will abuse notation and denote the images of $t(n)=t(1)\times \cdots \times t(1)$ by $n$.

Models over the theory $\mathbb{T}$ are limit-preserving functors $\mathbb{T}$ into $\Set$. In terms of classical universal algebra language, any model $M:\mathbb{T} \rightarrow \Set$ corresponds to the choice of a set $M(1)$ and maps $M(m)=M(1)^{m}\rightarrow M(1)^{n}=M(n)$ corresponding to morphisms in $\mathbb{T}$, which compose in a functorial manner. To ease notation, we will discuss $\mathbb{T}$-models by referring to $M(1)$ and the operations acting on $f_{M(1)}=M(f)$ instead of the functor $M$. The inclusion of maps $\alpha: \lbrace 1,\ldots m\rbrace \rightarrow \lbrace 1,\ldots n\rbrace$ via $t:\aleph_{0}^{\op}\rightarrow \mathbb{T}$ gives rise to elementary operations $\ov{\alpha}:X^{n}\rightarrow X^{m}$, where $\ov{\alpha}(x_{1},\ldots, x_{n}) =(x_{\alpha (1)},\ldots, x_{\alpha (m)})$ for any $\mathbb{T}$-model on the set $X$. In particular, all the diagonal maps $\Delta^{n}:X\rightarrow X^{n}$ sending $x\mapsto (x,\ldots ,x)$ act on each model. 

Throughout this section we assume that $\mathbb{T}$ is a \emph{non-degenerate} theory, meaning that not all its models are trivial (have a singleton set as their underlying set). 

Every theory $\mathbb{T}$ described above gives rise to a finitary monad $T$ and vice-versa. In particular, the category of $T$-modules is equivalent to the category of $\mathbb{T}$-models. If $U: \mathbb{T}{\rm -Mod}\leftrightarrows \Set$ denotes the forgetful functor sending a model $M$ to its underlying set $U(M)=M(1)$, then its left adjoint $F$ is defined as 
\begin{equation}\label{Eq:leftAdjTheories}
F(X)=\int^{n\in \aleph_{0}^{\op}} \mathbb{T}(n,1)\times X^{n}
\end{equation}
and $(T=UF, \mu ,\eta)$ denotes the corresponding monad to the theory. The equivalences between $\mathbb{T}$-models and $T$-modules commute with the forgetful functors to $\Set$ so we can work with either notion interchangeably. Starting with a finitary monad, its Kleisli category can be viewed as a Lawvere theory and recovers $T$ by the described procedure, upto isomorphism. See Section 4 of \cite{hyland2007category} for more details. 

We will call the elements of $\mathbb{T}(n,1)$, $n$-ary operations. Note that $F(X)$ becomes a quotient of the coproduct $\amalg_{n\in \mathbb{N}_{0},\ \rho\in \mathbb{T}(n,1)}X^{n}$. Hence an arbitrary element in $F(X)$ can be written as the image of an element $(\rho;x_{1},\ldots,x_{n})$ where $x_{i}\in X$ and $\rho$ is an $n$-ary operation and if $p=q\ov{\alpha}$ in $\mathbb{T}$ for a morphism $\alpha$ in $\aleph_{0}$, then the terms $(p;x_{1},\ldots,x_{n}) $ and $ (q;x_{\alpha (1)}, \ldots, x_{\alpha (m)})$ are identified in $F(X)$. If we denote the set with $m$ elements by $m$ in $\Set$, then its easy to check that $F(m)\cong \mathbb{T}(m,1)$ becomes the set of all possible $n$-ary operations: Any element in $\amalg_{n\in \mathbb{N}_{0},\ \rho\in \mathbb{T}(n,1)}m^{n}$ can be written in the form $(\rho ; \ov{\alpha}(1,\ldots n) )$ for some morphism $\alpha\in \aleph_{0}$, which thereby gets identified with $(\rho\ov{\alpha} ; 1,\ldots m )$ in $F(m)$, where $\rho\ov{\alpha}$ is an $m$-ary operation. Hence, we will write elements $(\rho; \alpha (1) , \ldots, \alpha (n))$ of $F(m)$ as $m$-ary $\rho\ov{\alpha}\in \mathbb{T}(m,1)$ instead.

It is also well-known that if $A$ and $B$ are $\mathbb{T}$-models, then $U(A)\times U(B)$ obtains a natural $\mathbb{T}$-model structure define by 
$$\rho ( (a_{1},b_{1}),\ldots ,(a_{n},b_{n})) = \big(\rho(a_{1},\ldots a_{n}), \rho(b_{1},\ldots b_{n})  \big) $$ 
for an arbitrary $n$-ary operation $\rho$. By Theorem~\ref{TBimonad}, the corresponding monad $T$ of the theory $\mathbb{T}$ becomes a bimonad. In particular, the comonoidal structure $F_{2}$ on $F$ is given by:
$$\big(\rho ; (a_{(1)},b_{(1)}), \ldots ,(a_{(n)},b_{(n)}) \big)\mapsto \big( (\rho ;a_{1},\ldots a_{n}) , (\rho ;b_{1},\ldots b_{n})\big)$$ 

Now, we look at when the corresponding bimonad of a theory is Hopf. First recall from Proposition~\ref{P:SetCenter} that $Z (\Set)= \Set$, and thereby the only possible augmented Hopf monads on $\Set$ arise from Hopf algebras in $\Set$ i.e. groups $G$. The corresponding Hopf monad $T$ of group $G$, will be of the form $G\times -$ and preserves all colimits. In particular, the corresponding algebraic theory $\mathbb{T}$ to this monad is the category generated by $\mathbb{T} (1,1)=G$ and all other morphisms from $\aleph_{0}^{\op}$. The category of models over the theory are $G$-sets. Here, we will demonstrate that any algebraic theory whose corresponding bimonad is Hopf must be of this form. 

Assume that $\mathbb{T}$ is a theory whose corresponding monad $T$ becomes Hopf. Hence, the category of $\mathbb{T}$-models becomes cartesian closed and for any $T$-algebra $A$, the following morphism ($\ov{H}^{l}_{\un,A}$) is bijective:
\begin{equation}\label{Eq:FusionAlgTheories}
\xymatrix@R-1cm{ \Psi_{A}:FU(A)\ar[rr]&&F(1)\times A
\\ (\rho;a_{1},\ldots a_{|\rho |} )\ar[rr]&&\big( \rho^{\Delta}, \rho_{A}(a_{1},\ldots a_{|\rho |} )\big) } 
\end{equation}
where $|\rho | $ denotes the arity of $\rho$ and $\rho^{\Delta}= \rho \Delta^{|\rho |}$ is the unary operation associated to $\rho$. 

Recall that a unary operation $u\in F(1)$ is called a pseudo-constant if $u(x)=u(y)$ in all $\mathbb{T}$-models. 
\begin{thm}[Theorem 1.2 \cite{johnstone1990collapsed}]\label{T:noPseudo} If the category of $\mathbb{T}$-models is cartesian closed, then $\mathbb{T}$ has no pseudo-constants.
\end{thm}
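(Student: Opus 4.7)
The plan is to exploit two standard consequences of cartesian closedness—strict initial objects and distributivity of products over coproducts—to force the underlying set of the initial $\mathbb{T}$-model $F(0)$ to be empty, and then identify the collection of pseudo-constants as a subset of $U(F(0))$.

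First I would reinterpret pseudo-constants via an equalizer: under the isomorphism $F(m)\cong\mathbb{T}(m,1)$ recalled in the excerpt, the two coproduct inclusions $\iota_1,\iota_2\colon F(1)\rightrightarrows F(1)+F(1)=F(2)$ correspond on underlying sets to post-composition with the projections $\pi_1,\pi_2\colon 2\to 1$ in $\mathbb{T}$, so $u\in U(F(1))=\mathbb{T}(1,1)$ is a pseudo-constant precisely when $u\pi_1=u\pi_2$ in $\mathbb{T}(2,1)$, i.e.\ when $u$ lies in the equalizer $E$ of $\iota_1,\iota_2$. Since $U$ preserves limits, the set of pseudo-constants equals $U(E)$.

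Next I would show $U(F(0))=\emptyset$ and conclude. Cartesian closedness makes $F(0)$ strict initial: as $X\times -$ has a right adjoint and therefore preserves the empty colimit, $X\times F(0)\cong F(0)$, and pairing $(\id_X,f)$ with any $f\colon X\to F(0)$ together with the unique morphism $F(0)\to X$ produces mutually inverse morphisms, so $X\cong F(0)$. If $U(F(0))$ were non-empty, the identification $\Hom(F(1),F(0))\cong U(F(0))$ would supply a morphism $F(1)\to F(0)$, forced to be an isomorphism by strict initiality. But the canonical map $\phi\colon U(F(0))\to U(F(1))$ induced by $F(0)\to F(1)$ is injective (if $\phi(c_1)=\phi(c_2)$, applying $U(g)$ for any $g\colon F(1)\to F(0)$ and using the naturality of the constant $c_i\colon 1\to U$ recovers $c_1=c_2$), and its image sits inside the pseudo-constants of $F(1)$; consequently it misses the generator $\id_1=\eta_1(1)$, since were $\id_1$ a pseudo-constant then $x=\id_M(x)=\id_M(y)=y$ would hold in every $\mathbb{T}$-model, contradicting non-degeneracy. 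Thus $|U(F(1))|>|U(F(0))|$, contradicting $F(1)\cong F(0)$. Finally, a cartesian closed category with finite coproducts is distributive and hence has disjoint coproducts, so the pullback of $\iota_1,\iota_2$ is $F(0)$; since $E$ sits inside this pullback via the diagonal of $F(1)$, we obtain $U(E)\hookrightarrow U(F(0))=\emptyset$, and no pseudo-constants exist.

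The main obstacle is translating the abstract CCC property of strict initiality into the concrete vanishing $U(F(0))=\emptyset$: this step uses non-degeneracy in an essential way, through the size comparison $|U(F(1))|>|U(F(0))|$ afforded by the observation that the identity operation cannot be a pseudo-constant. The remaining ingredients—the equalizer characterisation of pseudo-constants and disjointness of coproducts in any distributive category—are routine categorical bookkeeping.
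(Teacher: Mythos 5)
The survey does not actually prove this statement — it is imported from Johnstone's paper \cite{johnstone1990collapsed} — so your argument has to stand on its own. Its first two stages do: identifying the pseudo-constants with $U(E)$, where $E$ is the equalizer of the two coprojections $\iota_1,\iota_2\colon F(1)\rightrightarrows F(1)+F(1)\cong F(2)$, is correct (using that $U$, being a right adjoint, preserves equalizers and that elements of $\mathbb{T}(2,1)$ are separated by their actions on models), and the argument that $U(F(0))=\emptyset$ — strict initiality of $F(0)$ in a cartesian closed category, together with the observation that the underlying map of the unique morphism $F(0)\to F(1)$ lands in the pseudo-constants and hence misses $\id$ — is a correct proof that $\mathbb{T}$ has no constants. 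Note, though, that this last fact is only the remark the survey makes immediately \emph{after} the theorem, where it is deduced as a consequence of the theorem rather than serving as a step towards it.

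The final step contains a genuine gap. A cartesian closed category with finite coproducts is indeed distributive, but distributivity does \emph{not} imply that coproducts are disjoint: any Heyting algebra, for instance the three-element chain $0<a<1$, is cartesian closed with finite coproducts, yet the pullback of the two coprojections $a\to a\vee a=a$ is $a$ rather than the initial object $0$. (Disjointness of coproducts is the defining feature of \emph{extensive} categories, a strictly smaller class than distributive ones.) Worse, in the present situation the disjointness you invoke is exactly equivalent to the conclusion: writing $P$ for the pullback of $\iota_1$ and $\iota_2$, one has $U(P)=\lbrace (u,v)\in\mathbb{T}(1,1)^{2} \mid u\pi_1=v\pi_2\rbrace$, and this set is non-empty precisely when $\mathbb{T}$ admits a pseudo-constant; so asserting $P\cong F(0)$ begs the question. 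Passing from ``no constants'' to ``no pseudo-constants'' is the real content of the theorem and requires a further, genuine use of cartesian closedness (for example an analysis of how a pseudo-constant would have to act on exponentials $B^{A}$), which your argument does not supply.
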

Note that if a theory has no pseudo constants, it also has no \emph{constants} i.e. operations of arity 0. if $\lambda \in T(0,1)$, then we would obtain a pseudo-constant operation $\lambda \ov{\alpha}$, where $\alpha$ is the unique morphism $ \emptyset \rightarrow 1$. 
\comment{It follows directly from Theorem~\ref{T:noPseudo} and the bijectivity of \eqref{Eq:FusionAlgTheories}, that $\mathbb{T}$ can not have any constant operations i.e. operations with $0$-arity. Let $\lambda$ be such an operation. Since $\mathbb{T}$ is non-degenerate then $\id \neq \lambda_{F(1)}$ in $F(1)$. Consequently, if $\Psi_{A}^{-1}(\lambda_{F(1)}, \id)= (\rho; u_{1},\ldots,u_{|\rho|})$, for some $u_{i}\in F(1)$, then $\rho$ cannot be $\lambda$ since $\id\neq \lambda_{F(1)}$ and therefore $\rho^{\Delta}$ must have arity $1$ and satisfy $\rho^{\Delta}=\lambda_{F(1)}$ making it a psuedo-constant, which contradicts Theorem~\ref{T:noPseudo}.} 
\begin{lemma}\label{L:UnaryInject} For any unary $u$ operation in $\mathbb{T}$ and $\mathbb{T}$-model $A$, the map $u_{A}$ is injective.
\end{lemma}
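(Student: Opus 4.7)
The plan is to derive injectivity of $u_{A}$ from the invertibility of the fusion operator $\Psi_{A}$ (the Hopf condition) by combining it with Theorem~\ref{T:noPseudo}. The underlying intuition: if $u_{A}(a)=u_{A}(b)$ for distinct $a,b$, then the pair $(u;a),(u;b)$ would have to collapse to a single element of $FU(A)$, and unwinding what that collapse means universally will force $u$ to behave as a pseudo-constant, contradicting the theorem.

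Concretely, I would first note that since $u$ is unary, $u^{\Delta}=u\,\Delta^{1}=u$. Applying \eqref{Eq:FusionAlgTheories} to $(u;a),(u;b)\in FU(A)$ yields $\Psi_{A}(u;a)=(u,u_{A}(a))$ and $\Psi_{A}(u;b)=(u,u_{A}(b))$. Under the hypothesis $u_{A}(a)=u_{A}(b)$, these images coincide, so the Hopf assumption on $T$ (hence injectivity of $\Psi_{A}$) gives $(u;a)=(u;b)$ in $FU(A)$. Identifying $FU(A)$ with the free $\mathbb{T}$-model on the underlying set $U(A)$ via the coend formula \eqref{Eq:leftAdjTheories}, the universal property says that for every $\mathbb{T}$-model $M$ and every function $h\colon U(A)\to U(M)$ the unique extension $\tilde h\colon F(U(A))\to M$ sends $(u;x)\mapsto u_{M}(h(x))$; consequently $u_{M}(h(a))=u_{M}(h(b))$ for every such $M$ and $h$.

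Now suppose for contradiction that $a\neq b$. Given any model $M$ and any prescribed pair $x,y\in U(M)$, choose $h$ with $h(a)=x$ and $h(b)=y$; then the equation above forces $u_{M}(x)=u_{M}(y)$. Ranging over all $M$, $x$, $y$, this exhibits $u$ as a pseudo-constant, contradicting Theorem~\ref{T:noPseudo}, which applies because the Hopf hypothesis combined with Theorem~\ref{ThmHopfClsd} makes $\Set^{T}\simeq\mathbb{T}\text{-}\mathrm{Mod}$ cartesian closed. Therefore $a=b$, proving $u_{A}$ injective.

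The main obstacle I anticipate is the bookkeeping around the coend \eqref{Eq:leftAdjTheories}, specifically justifying both the identification $FU(A)=F(U(A))$ as the free $\mathbb{T}$-model and the explicit formula $\tilde h(u;x)=u_{M}(h(x))$ for the extension; once these are cleanly in place, the remainder of the argument is the short chain $\Psi_{A}$ injective $\Rightarrow$ $(u;a)=(u;b)$ $\Rightarrow$ $u$ is a pseudo-constant $\Rightarrow$ contradiction.
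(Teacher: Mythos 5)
Your proposal is correct and follows essentially the same route as the paper: use bijectivity of $\Psi_{A}=\ov{H}^{l}_{\un,A}$ to collapse $(u;a)$ and $(u;b)$ in $FU(A)$, transport this identity along arbitrary maps $U(A)\to U(M)$ (the paper phrases this via $F(f)$ followed by the structure map $UFU(B)\to U(B)$, which is exactly your extension $\tilde h$), and conclude that $u$ would be a pseudo-constant, contradicting Theorem~\ref{T:noPseudo}. No gaps.
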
 
\begin{proof} Since $\mathbb{T}$ is non-degenerate we can pick an arbitrary $\mathbb{T}$-model $A$ with at least two elements $a\neq b\in U(A)$. If $u$ is a unary operation in $\mathbb{T}$ and $u_{A}(a)= u_{A}(b)$, then $\Psi_{A}((u;a))=\Psi_{A}((u;b))$. Since $\Psi_{A}$ is a bijection, it means that $(u;a)=(u;b)$ in $FU(A)$. 

Now consider any $\mathbb{T}$-model $B$ and arbitrary elements $c,d\in U(B)$. We can define a map $f:U(A)\rightarrow U(B)$ such that $f(a)=c$ and $f(b)=(d)$. Consequently, $F(f): FU(A)\rightarrow FU(B)$ is an algebra morphism and, by definition, $F(f)((u;a))=(u;c)$ and $F(f)((u;b))=(u;d)$ hold and $(u;c)=(u;d)$ in $FU(B)$. Since the $\mathbb{T}$-model structure of $B$ is given by an action $UFU(B)\rightarrow U(B)$, then $u_{B}(c)=u_{B}(d)$ in $B$. Since, $B$, $c$ and $d$ were chosen arbitrarily, we conclude that $u$ is pseudo-constant in all $\mathbb{T}$-models, which contradicts Theorem~\ref{T:noPseudo}. 
\end{proof}

\begin{lemma}\label{L:UnaryBiject} Any unary operation in $\mathbb{T}$ is invertible and its inverse is another unary operation of the theory. 
\end{lemma}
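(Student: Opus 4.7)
The plan is to exploit the bijection $\Psi_{F(1)}: FU(F(1)) \to F(1) \times F(1)$ together with its naturality in the algebra argument and the injectivity established in Lemma~\ref{L:UnaryInject}. Fix a unary operation $u \in F(1)$. First, I extract a preimage $(\rho; w_1, \ldots, w_n) \in FU(F(1))$ of $(u, \id)$ under $\Psi_{F(1)}$: by the explicit form of the fusion operator in \eqref{Eq:FusionAlgTheories} this produces an $n$-ary operation $\rho$ and unary operations $w_1, \ldots, w_n \in \mathbb{T}(1,1)$ satisfying $\rho^\Delta = u$ and $\rho \circ \langle w_1, \ldots, w_n\rangle = \id$ in $\mathbb{T}(1,1)$.

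Next, let $\phi_u: F(1) \to F(1)$ denote the unique $\mathbb{T}$-model morphism sending $\id \mapsto u$, so that on underlying sets $\phi_u(w) = w \circ u$. Naturality of $\Psi^{-1}$ together with $(\id_{F(1)} \times \phi_u)(u, \id) = (u, u)$ gives $\Psi_{F(1)}^{-1}(u, u) = (\rho; w_1 \circ u, \ldots, w_n \circ u)$, while a direct computation yields $\Psi_{F(1)}(u; \id) = (u^\Delta, u_{F(1)}(\id)) = (u, u)$ and hence $\Psi_{F(1)}^{-1}(u, u) = (u; \id)$. These two representatives therefore coincide in the free model $FU(F(1)) = F(\mathbb{T}(1,1))$. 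If $|F(1)| = 1$ then $u = \id$ and the claim is vacuous, so I pick $v \in F(1) \setminus \{\id\}$, assume for contradiction that $w_i \circ u \ne \id$ for every $i$, and consider the set map $f: F(1) \to F(1)$ defined by $f(\id) = v$ and $f(w) = w$ for $w \ne \id$. The induced $\mathbb{T}$-algebra morphism $\tilde f: F(\mathbb{T}(1,1)) \to F(1)$ sends $(u; \id) \mapsto u \circ v$ and $(\rho; w_1 \circ u, \ldots, w_n \circ u) \mapsto \rho \circ \langle w_1, \ldots, w_n\rangle \circ u = u$. Hence $u \circ v = u = u \circ \id$, and applying Lemma~\ref{L:UnaryInject} to $u_{F(1)}$ forces $v = \id$, contradicting the choice of $v$. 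Therefore some $w_i$ satisfies $w_i \circ u = \id$, giving a left unary inverse $v := w_i$ of $u$.

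To upgrade this to a two-sided inverse, apply the same argument to $v$ to obtain a unary $v'$ with $v' \circ v = \id$; then $v' = v' \circ (v \circ u) = (v' \circ v) \circ u = u$, so $u \circ v = v' \circ v = \id$ as well, proving that $v$ is a two-sided unary inverse of $u$. The main difficulty lies in the equality step: one must unpack that equality in $F(\mathbb{T}(1,1))$ means every set map $F(1) \to U(B)$ lifts to an algebra morphism under which the two representatives agree, and then verify that the auxiliary $f$ is well-defined precisely under the standing hypothesis that no $w_i \circ u$ collides with $\id$. Lemma~\ref{L:UnaryInject} is then exactly what converts the derived identity $u \circ v = u$ into the contradiction $v = \id$.
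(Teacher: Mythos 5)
Your proof is correct and follows essentially the same route as the paper's: both extract $(\rho; w_1,\ldots,w_n)=\Psi_{F(1)}^{-1}((u,\id))$, identify it with $(u;\id)$ after evaluating at $u$ (you via naturality of $\Psi$, the paper by direct computation in a model), and then use a set map separating $\id$ from the elements $w_i\circ u$ together with Lemma~\ref{L:UnaryInject} to force $w_i\circ u=\id$ for some $i$. The only cosmetic difference is the final upgrade to a two-sided inverse: the paper applies the injectivity of $(w_i)_{F(1)}$ once more to $w_i\circ(u\circ w_i)=w_i=(w_i)_{F(1)}(\id)$, whereas you re-run the left-inverse argument on $w_i$ and conclude by associativity; both are valid.
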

\begin{proof} Let $u$ be a unary operation in $\mathbb{T}$. First note that $\Psi_{F(1)}$ is a bijection and consider $\Psi_{F(1)}^{-1}((u,\id ))$. There must exist an operation $\rho$ of arity $n$ and a family of unaries $u_{1},\ldots, u_{n}\in F(1)$, such that $\rho^{\Delta}=u$ and $\rho(u_{1},\ldots, u_{n})\Delta^{n}= \id$, so that $\Psi_{F(1)}( (\rho; u_{1},\ldots ,u_{n}))= (u,\id )$.

Now consider a non-trivial $\mathbb{T}$-model $A$ and $a\in A$. By definition $\Psi_{A}((u;a)) =(u,u_{A}(a))$. However, we also know that $\Psi_{A}(( \rho; b_{1},\ldots b_{n}))= (u,u_{A}(a))$, where $b_{i}:=(u_{i})_{A}u_{A}(a)$. Since $\Psi_{A}$ is a bijection, we conclude that $(u;a)= ( \rho; b_{1},\ldots b_{n})$ in $FU(A)$.  

If for all $i$, we have that $b_{i}\neq a$, then we can consider any pair $x\neq y\in B$ in any non-trivial $\mathbb{T}$-model $B$ and conclude that $u_{B}(x)=u_{B}(y)$: This is because there exist a pair of functions $f_{1},f_{2}:U(A)\rightarrow U(B)$ which map $b_{i}$ to the same collection of elements but where $f_{1}$ maps $a$ to $x$ and $f_{2}$ maps $a$ to $y$. Hence, by considering the images of the algebra maps $F(f_{1})$ and $F(f_{2})$ we see that 
$$(u;x) = (\rho ;f_{1}(b_{1}), \ldots,f_{1}(b_{n}))= (\rho ;f_{2}(b_{1}), \ldots,f_{2}(b_{n}))= (u;y)  \in FU(B)$$
Since the $\mathbb{T}$-model structure of $B$ is defined by a map from $FU(B)$ to $U(B)$, then $u_{B}(x)=u_{B}(y)$, which contradicts Lemma~\ref{L:UnaryInject}. Hence, there must be an $1\leq m\leq n$ for which $a= (u_{m})_{A}u_{A}(a)$.

Now consider the case where $A=F(1)$ and $a= \id$. From the argument above, there must exist an $m$ such that $\id= (u_{m})_{F(1)}u_{F(1)}(\id)= u_{m}u$. Consequently, we obtain the following equality in $F(1)$:
$$(u_{m})_{F(1)} (uu_{m}) = u_{m}uu_{m} = u_{m}=(u_{m})_{F(1)}(\id) $$
But by Lemma~\ref{L:UnaryInject}, $(u_{m})_{F(1)}$ is injective and thereby, $uu_{m}=\id$ as well. Hence, $u$ is invertible and $u_{m}u= \id=uu_{m}$. Before concluding the proof, we note as a consequence $(\rho; u_{1},\ldots, u_{n})= (u;u_{m})$ holds in $FUF(1)$. \end{proof}

It follows immediately from Lemma~\ref{L:UnaryBiject}, that the set of unary operations in $\mathbb{T}$ form a group and as a consequence any $\mathbb{T}$-model is a $F(1)$-set. Now we show that the theory $\mathbb{T}$ is fully generated by unary operations.
\begin{thm}\label{T:noHigherArityOperations} For any operation $\rho$ in $\mathbb{T}$ of arity $n$, there exists an $1\leq m\leq n$ and a unary operation $u$ such that $\rho= u\sigma_{m}$, where $\sigma_{m}(1,\ldots n)= m$. 
\end{thm}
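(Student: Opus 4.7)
The plan is to mimic the proof of Lemma~\ref{L:UnaryBiject} by exploiting the bijectivity of the map $\Psi_A$ from \eqref{Eq:FusionAlgTheories} at well-chosen models. Set $u := \rho^{\Delta}$, which by Lemma~\ref{L:UnaryBiject} is an invertible unary operation with unary inverse $u^{-1}$; the aim is then to identify $\rho$ with $u\sigma_m$ for a suitable $m$.

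First I would observe that for any model $A$ and arbitrary elements $a_1, \ldots, a_n \in U(A)$, the definition of $\Psi_A$ gives $\Psi_A((\rho; a_1, \ldots, a_n)) = (u, \rho_A(a_1, \ldots, a_n))$. Setting $c := \rho_A(a_1, \ldots, a_n)$ and $b := u^{-1}_A(c)$, the same pair $(u,c)$ is also $\Psi_A((u;b))$, so bijectivity of $\Psi_A$ forces the equality $(\rho; a_1, \ldots, a_n) = (u; b)$ in $FU(A)$.

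The key step is then to show $b$ must lie in $\{a_1, \ldots, a_n\}$, by the same naturality trick used in the proof of Lemma~\ref{L:UnaryBiject}. Supposing instead $b \notin \{a_1, \ldots, a_n\}$, pick any non-trivial model $B$ with two distinct elements $y_1 \neq y_2$ (which exists by non-degeneracy of $\mathbb{T}$) and construct a pair of maps $f_1, f_2 : U(A) \to U(B)$ that coincide on $\{a_1, \ldots, a_n\}$ but send $b$ to $y_1$ and $y_2$ respectively. Applying $F(f_i)$ to the previous equality gives $(u; y_1) = (u; y_2)$ in $FU(B)$, and passing this through the $\mathbb{T}$-structure map of $B$ produces $u_B(y_1) = u_B(y_2)$. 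Since $u_B$ is injective by Lemma~\ref{L:UnaryInject}, this contradicts $y_1 \neq y_2$, so $b = a_m$ for some $1 \leq m \leq n$.

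Finally I would specialise to $A = F(n)$ with canonical generators $x_1, \ldots, x_n$ (so $a_i = x_i$). Here $c = \rho_{F(n)}(x_1, \ldots, x_n) = \rho$ inside $U(F(n)) \cong \mathbb{T}(n,1)$, so $b = u^{-1}\rho$, and the conclusion $b = x_m = \sigma_m$ translates directly to $\rho = u\sigma_m$, as required. The main subtlety I anticipate is carefully justifying how the equality sits inside the coend $FU(A)$ and tracks through $F(f_1)$ and $F(f_2)$; the remaining content is a direct combination of the bijectivity of $\Psi$ and the injectivity supplied by Lemma~\ref{L:UnaryInject}.
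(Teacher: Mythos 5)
Your proposal is correct and follows essentially the same route as the paper: both use bijectivity of $\Psi_A$ to identify $(\rho;a_1,\ldots,a_n)$ with $(\rho^{\Delta};b)$ for $b=(\rho^{\Delta}_A)^{-1}\rho_A(a_1,\ldots,a_n)$, force $b\in\{a_1,\ldots,a_n\}$ via the two-set-maps naturality trick, and then specialise to $A=F(n)$ with the projections $\sigma_i$. The only cosmetic difference is that you derive the contradiction directly from the injectivity statement of Lemma~\ref{L:UnaryInject}, whereas the paper phrases it as $\rho^{\Delta}$ being pseudo-constant against Theorem~\ref{T:noPseudo}; these are equivalent.
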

\begin{proof} Let $A$ be an arbitrary $\mathbb{T}$-model and $\rho$ an $n$-ary operation in $A$. Since $\Psi_{A}$ is bijective, we have that
$$(\rho; a_{1},\ldots ,a_{n})= \big(\rho^{\Delta}; (\rho^{\Delta}_{A})^{-1}\rho_{A} (a_{1},\ldots , a_{n})\big)\in FU(A)$$
If $b:=(\rho^{\Delta}_{A})^{-1}\rho (a_{1},\ldots , a_{n})\neq a_{i}$ for all $1\leq i\leq n$, then we can consider maps $U(A)\rightarrow U(B)$ for arbitrary models $B$, which send $b$ to various elements of $A$, but send $a_{i}$ to a fixed set of elements. Similar to the argument in the proof of Lemma~\ref{L:UnaryBiject}, this implies that $\rho^{\Delta}$ must be a pseudo-constant which contradicts Theorem~\ref{T:noPseudo}. Therefore, $(\rho^{\Delta}_{A})^{-1}\rho (a_{1},\ldots , a_{n})= a_{i}$ for some $i$.

Now let $A=F(n)$ and let $a_{i}=\sigma_{i}\in F(n)$ where $\sigma_{i}(x_{1},\ldots, x_{n})=x_{i}$. By the above argument, we see that for some $i$ the following equality holds
$$(\rho; \sigma_{1},\ldots ,\sigma_{n})= \big(\rho^{\Delta}; \sigma_{i}\big)\in FUF(n)$$
Looking at the image of the two elements under $\Psi_{F(n)}$, we observe that $\rho(x_{1},\ldots, x_{n})= \rho^{\Delta}(x_{i})$. 
\end{proof}

By Theorem~\ref{T:noHigherArityOperations} and the construction of $F$ as a coend, we observe that $F(X)\cong F(1)\times X$ for any set $X$. In particular, the monad structure on $T$ is given by the group structure on $F(1)$. 

\begin{thm}\label{T:FinitaryHopf} If $T$ is a finitary Hopf monad on $\Set$, then there exists a group $G$ such that $T\cong G\times -$. In particular, $T$ must be augmented. 
\end{thm}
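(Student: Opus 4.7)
The plan is to combine Theorem~\ref{T:noHigherArityOperations} with the coend formula \eqref{Eq:leftAdjTheories} to compute $F$ explicitly, identify it with $G\times -$ for the group $G=F(1)$, and then exhibit an augmentation. Most of the structural work has already been done in the preceding lemmas; what remains is to translate the ``every operation is essentially unary'' statement into an isomorphism of bimonads.

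First, I would construct a natural bijection $\Phi_X:F(X)\to G\times X$, where $G=F(1)$, by the rule $[(\rho;x_1,\ldots,x_n)]\mapsto(\rho^{\Delta},x_m)$ with $m$ chosen so that $\rho=\rho^{\Delta}\overline{\sigma_m}$ (such an $m$ exists by Theorem~\ref{T:noHigherArityOperations}). Well-definedness has two aspects. First, the choice of $m$ for a given $\rho$ must be essentially unique: if $\rho^{\Delta}\overline{\sigma_m}=\rho^{\Delta}\overline{\sigma_{m'}}$ with $m\neq m'$, then testing this equality in models on inputs that agree outside positions $m,m'$ forces $\rho^{\Delta}$ to be a pseudo-constant, contradicting Theorem~\ref{T:noPseudo}. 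Second, the coend identification $(p;x_1,\ldots,x_n)\sim(q;x_{\alpha(1)},\ldots,x_{\alpha(m)})$ for $p=q\overline{\alpha}$ is respected by $\Phi_X$: both sides have the same diagonal and the index $m$ for $p$ transforms to $\alpha(l)$ (where $l$ is the index for $q$), so $\Phi_X$ sends them to the same pair. Surjectivity is clear, and injectivity follows by the same coend relations.

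Next, I would verify that $\Phi$ intertwines the structures. Since $G$ is a group by Lemma~\ref{L:UnaryBiject}, I expect the unit of $T$ to become $\eta_X(x)=(1_G,x)$ (coming from the identity unary operation), the multiplication to become $\mu_X(g,h,x)=(gh,x)$ (coming from composition of unary operations in the coend), and the comonoidal structure to become $T_2(g,(x,y))=((g,x),(g,y))$ and $T_0(g)=\star$, matching the symmetric-monoidal formula $F_2$ recalled earlier. This identifies $T$ with the bimonad $G\otimes_{\mathrm{flip}}-$ associated to $G$ viewed as a Hopf algebra in $(\Set,\times,\un)$.

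Augmentation then falls out either by direct verification that projection $\epsilon_X:G\times X\to X$ is a bimonad morphism (compatibility with $\eta,\mu$ uses that $1_G$ is the identity, and comonoidality follows from the diagonal form of $T_2$), or more elegantly by invoking Theorem~\ref{TAugHpfMnd}: augmented left Hopf monads on $\Set$ correspond to central Hopf algebras in $\Set$, and by Proposition~\ref{P:SetCenter} these are precisely groups. The main obstacle is really the well-definedness argument for $\Phi_X$, since the index $m$ in the factorisation $\rho=\rho^{\Delta}\overline{\sigma_m}$ is not obviously canonical; this is precisely where the absence of pseudo-constants (Theorem~\ref{T:noPseudo}) is essential. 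Once that subtlety is handled, the rest is bookkeeping.
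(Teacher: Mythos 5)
Your proposal is correct and follows the same route as the paper, which disposes of this theorem in the two sentences preceding its statement: by Theorem~\ref{T:noHigherArityOperations} and the coend formula \eqref{Eq:leftAdjTheories} one observes $F(X)\cong F(1)\times X$, with the monad structure given by the group structure on $F(1)$ from Lemma~\ref{L:UnaryBiject}. Your write-up supplies exactly the bookkeeping the paper leaves implicit --- the uniqueness of the index $m$ in $\rho=\rho^{\Delta}\overline{\sigma_m}$ via the absence of pseudo-constants, compatibility of $\Phi_X$ with the coend identifications, and the identification of $\mu$, $\eta$, $T_2$, $T_0$ with the group Hopf-algebra bimonad --- and the augmentation argument via Theorem~\ref{TAugHpfMnd} together with Proposition~\ref{P:SetCenter} is the intended one.
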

\comment{Without loss of generality, we can assume that $b_{i}=a$ for $i\leq m$ for some $1\leq m\leq n$ and $b_{i}\neq a$ for $m<i$. In a symmetric manner to the argument before, we observe that for any list of elements $b'_{m+1},\ldots b'_{n}\in A$, there exists a map $A\rightarrow A$ which sends $b_{i}$ to $b'_{i}$ for all $m+1\leq i\leq n$ but sends $a$ to $a$, and therefore 
$$(\rho ;a,\ldots, a, b_{m+1}, \ldots,b_{n})=(u;a)= (\rho; a,\ldots ,a,  b'_{m+1},\ldots b'_{n})\in FU(A)$$
Hence, we can say $\rho$ is 'independent' of its entries in the $m+1$ to $n$-th positions. 

In these arguments $a$ was an arbitrarily chosen element of $A$ and by repeating the arguments above for all elements of $A$, we conclude that there exists an $m$ such that $a= (u_{i})_{A}u_{A}(a)$ for $1\leq i\leq m$ and any $a\in A$, and for any list of elements $b_{m+1},\ldots b_{n}$ the equality
$$(\rho ;a,\ldots, a, b_{m+1}, \ldots,b_{n})=(u;a)\in FU(A)$$
holds.

Now consider another element $a\neq a'$ of $A$. By the arguments made above, there must be a subset of $(u_{1})_{A},\ldots, (u_{n})_{A}$ satisfying $(u_{i})_{A}u_{A}(a')=a'$ and $\rho$ is independent of its entries in all other positions. If $(u_{i})_{A}u_{A}(a')=a'$ does not hold for any of $1\leq i\leq m$ ($m$ was decided by $a$), then we could consider a map $U(A)\rightarrow U(A)$ which sends $(u_{i})_{A}u_{A}(a')$ to $a$ for $1\leq i\leq m$, but sends $a'$ to itself. Consequently, we obtain the following equality in $FU(A)$:
$$(u;a)= (\rho ;a,\ldots, a, (u_{{m+1}})_{A}u_{A}(a'), \ldots,(u_{{n}})_{A}u_{A}(a'))= (u;a')\in FU(A) $$
which contradicts Lemma~\ref{}. Hence, by repeating this procedure for all $a\in A$, we conclude that there exists an $m\leq n$ such that $(u_{i})_{A}u_{A}(a)=a $ for all $a\in A$ and $1 \leq i\leq m$.

Hence for $1\leq i\leq m$ we have that $(u_{i})_{A}u_{A}=\id$ and we know from Lemma~\ref{} that $(u_{i})_{A}$ are injective. Hence, $(u_{i})_{A}$ are invertible with $u_{A}$ as their inverse. So far, we have demonstrated that any unary operation $u_{A}$ on an arbitrary $\mathbb{T}$-model is invertible, with some of the $(u_{i})_{A}$ as its inverse. However, we have not demonstrated that the  the choice of inverse in the unary operations are not unique to $A$. }

\subsection{Galois and Ore Extensions of Bialgebras}\label{SGalois}
In this section, we will assume $B$ and $H$ are a pair of bialgebras and $f:B\rightarrow H$ a bialgebra injection and discuss when the induced adjunction $ \lmod{H}\leftrightarrows \lmod{B}$ is left (pre-) Hopf. We will first show that the pre-Hopf condition corresponds to $H$ being a Galois extension of $B$ in the sense of \cite{schauenburg2005generalized} and then provide a more general Galois condition \eqref{EqGenerGaloisExt} for the bimonad to be Hopf. Finally, we show that any suitable Ore extension of a bialgebra provides a left Hopf monad in this way. 

Any algebra morphism $f$ gives rise to a pair of adjoint functors, $H\tn_{B}- \dashv U: \lmod{H}\leftrightarrows \lmod{B}$ where $U$ denotes the restriction of scalars via $f$ and $H$ has a natural $B$-bimodule structure via $f$. If $f$ is a bialgebra morphism then $U$ becomes naturally strong monoidal since for any pair of $H$-modules $M$ and $N$, $B$ acts on $U(M\tnK N)$ by $f(b)_{(1)}.m\tnK f(b)_{(2)}.n$ while the $B$-action on $U(M)\tnK U(N)$ is given by  $f(b_{(1)}).m\tnK f(b_{(2)}).n$. Consequently, if $f$ is a bialgebra morphism then $H\tn_{B}- \dashv U$ is a comonoidal adjunction. Alternatively, one can look directly at the induced monad on $\lmod{B}$ which is given by $T={}_{B}H\tn_{B}-$ where we consider the natural $B$-bimodule structure induced by $f$. The comonoidal structure $T_{2}(M,N)$ on the monad is given by $h \tn_{B} (m \tnK n) \mapsto (h_{(1)} \tn_{B} m)\tnK (h_{(2)} \tn_{B} m)  $ which is well-defined because $f$ is bialgebra map. Hence, we have a commuting diagram of strong monoidal functors:
$$\xymatrix{ \lmod{H}\ar[dr]_{\forg}\ar[r]^{U} & \lmod{B} \ar[d]^{\forg}
\\ & \Vecs}$$
If $B$ and $H$ are both Hopf algebras then $f$ automatically commutes with the antipodes since the antipodes, when they exist, are uniquely determined by the bialgebra structures. In this case, the forgetful functors in the above diagram both become left closed and $U$ also becomes left closed since $f$ respects the antipode and, thereby, $U$ respects the actions on the left inner-homs. 

When $B$ and $H$ are not Hopf, we can still consider the problem of when the bimonad $T={}_{B}H\tn_{B}-$ is left Hopf. This can be done in terms of the fusion operator but also in terms of the closed structure. While $\lmod{B}$ is does not lift the closed structure of $\Vecs$, it is still a closed monoidal category and $T={}_{B}H\tn_{B}-$ being Hopf would mean that $\lmod{H}$ is lifting the inner-homs of $\lmod{B}$. 

Now assume $f: B\rightarrow H$ is a bialgebra injection and consider the subset $I=\lbrace f(b)-\epsilon_{B} (b).1_{H}\mid b\in B \rbrace $ in $H$. It is straightforward to check that this subset is a coideal i.e. satisfies $\Delta_{H} ( I)\subset I\tnK H+ H\tnK I$. Hence, we obtain a quotient coalgebra structure on $C:= H/I$ and we denote the coalgebra projection $H\twoheadrightarrow C$ by $\pi$. Note that $H$ becomes a left $C$-comodule via $\delta := (\pi \tnK \id_{H}) \Delta$. Moreover, $H$ becomes a monoid in $\lcomod{C}$ (a $C$-\emph{comodule algebra}) with this coaction. With this structure, we can identify $f(B)$ with the space of $C$-coinvariants ${}^{C}H$ i.e. elements $h$ satisfying $\delta( h)= \pi(1_{H})\tnK h$:
\begin{align*}
\delta( h)= \pi(1_{H})\tnK h\Rightarrow \pi (h_{(1)})\epsilon(h_{(2)}) = \pi (1_{H})\epsilon(h) \Rightarrow \pi ( h-\epsilon (h).1_{H})= 0 \Rightarrow h\in f(B)
\end{align*}
Note that for a general comodule algebra the space of coinvariants are formed by elements $h$ satisfying $\delta( h'h)= \pi(h'_{(1)})\tnK h'_{(2)}h$ for any $h'\in H$ but this is equivalent to $\delta( h)= \pi(1_{H})\tnK h$ in our case. In this setting, we call $H$ a $C$-\emph{Galois extension} of $B$ if the map $\beta$ is a bijection:
\begin{equation}\label{EqCGaloisExt}
\xymatrix@R-0.9cm{\beta:H\tn_{B} H\longrightarrow C\tnK H \\ \hspace{0.5cm}  h\tn_{B} h' \longmapsto \pi (h_{(1)})\tnK h_{(2)}h' } 
\end{equation}
See Definition 1.5 of \cite{schauenburg2005generalized} for comparison. The notion of $C$-Galois extension also appeared earlier in \cite{brzezinski1998coalgebra,brzezinski1999coalgebra} for entwined algebras and coalgebras under the name of \emph{principal }$C$-\emph{bundles}. With this notation, we can prove the following result:
\begin{thm}\label{ThmHpfGaloisMonad} If $f:B\rightarrow H$ is a bialgebra injection as above, then the bimonad $T={}_{B}H\tn_{B} - $ on $\lmod{B}$ is left pre-Hopf if and only if $H$ is a $C$-Galois extension of $B$. 
\end{thm}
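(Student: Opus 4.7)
The plan is to compute the left fusion operator $H^l_{\un,Y}$ of the induced bimonad $T={}_BH\tn_B-$ on $\lmod{B}$ explicitly, reduce the pre-Hopf condition to invertibility at the single free module $Y=B$ by a right-exactness argument, and then recognize the resulting map as the Galois map $\beta$ of \eqref{EqCGaloisExt}.

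First I will extract the comonoidal structure of $T$: since $f$ is a coalgebra map, $T_{2}(M,N):h\tn_B(m\tnK n)\mapsto (h_{(1)}\tn_B m)\tnK(h_{(2)}\tn_B n)$, and the multiplication of $T$ is $\mu_{Y}:h\tn_B h'\tn_B y\mapsto hh'\tn_B y$. Composing these as in \eqref{EqleftFusion}, and using the canonical $\un\tn T(Y)\cong T(Y)$, one obtains
\begin{equation*}
H^l_{\un,Y}:H\tn_B H\tn_B Y\longrightarrow T(\un)\tnK(H\tn_B Y),\qquad h\tn h'\tn y\longmapsto \overline{h_{(1)}}\tnK(h_{(2)}h'\tn y),
\end{equation*}
where $\overline{(-)}$ denotes the canonical projection $H\twoheadrightarrow T(\un)$, which, using that $I$ is a coideal, I will identify with $\pi\colon H\twoheadrightarrow C$.

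Next, both source and target of $H^l_{\un,-}$ are right exact functors of $Y$, since each is of the form $X\tn_B-$ for a fixed $(B,B)$-bimodule $X$ (namely $X=H\tn_B H$ on the source and $X=T(\un)\tnK H$ on the target). Any natural transformation between two such functors is determined by its value at the free module $Y=B$ and is invertible for all $Y$ if and only if it is invertible there. Under the canonical isomorphism $H\tn_B B\cong H$ sending $h\tn b\mapsto hf(b)$, the map $H^l_{\un,B}$ becomes the assignment $h\tn h'\mapsto \pi(h_{(1)})\tnK h_{(2)}h'$, namely $\beta$. The left pre-Hopf condition is therefore equivalent to $\beta$ being a bijection, which is the $C$-Galois condition.

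The main obstacle I expect is the identification of $T(\un)=H\tn_B\K$ with the quotient coalgebra $C=H/I$ in the first step: as vector spaces these are a priori different quotients of $H$ (by the left ideal generated by $I$ versus by $I$ alone), with only a canonical surjection $C\twoheadrightarrow T(\un)$ in place. Showing that the two projections nonetheless fit compatibly into the formula for $H^l_{\un,-}$ will rely on the coideal condition on $I$ together with the coinvariance $f(B)={}^{C}H$, which combine to give the Sweedler-wise identity $\pi(h_{(1)}f(b_{(1)}))=\epsilon_B(b_{(1)})\pi(h_{(1)})$. Only once this compatibility is in place does $H^l_{\un,B}$ coincide literally with $\beta$, rather than with $\beta$ followed by a further quotient.
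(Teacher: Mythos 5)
Your proof is correct, and the forward implication is exactly the paper's: the paper likewise reads off $\beta$ as $H^{l}_{\field,B}$ after identifying $T(\field)\cong C$ and $T(B)\cong H$. Where you genuinely diverge is the converse. The paper exhibits an explicit inverse of $H^{l}_{\field,M}$ for \emph{every} $B$-module $M$, namely $c\tnK (h\tn_{B}m)\mapsto c_{(+)}\tn_{B}(c_{(-)}.h\tn_{B}m)$ in terms of the translation map $\beta^{-1}(c\tnK 1)=c_{(+)}\tn_{B}c_{(-)}$, leaving the well-definedness check to the reader. You instead note that both $Y\mapsto T(\un\tn T(Y))$ and $Y\mapsto T(\un)\tn T(Y)$ are of the form $X\tn_{B}-$ for fixed $(B,B)$-bimodules $X$, so naturality in $Y$ (applied to the left $B$-module maps $b\mapsto by$) forces $H^{l}_{\un,Y}=H^{l}_{\un,B}\tn_{B}\id_{Y}$, whence invertibility at $Y=B$ propagates to all $Y$. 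This Eilenberg--Watts-style reduction trades the explicit translation-map formula and its well-definedness verification for an abstract naturality argument; both routes are sound, and yours has the virtue of never touching $\beta^{-1}$ on elements.

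Concerning the obstacle you flag: you are right that $T(\un)=H\tn_{B}\K$ is the quotient of $H$ by the left ideal $Hf(B)^{+}$ generated by $I$, not by the subspace $I$ alone, and the paper is loose on this point. But observe that the Galois map \eqref{EqCGaloisExt} is only well defined on $H\tn_{B}H$ when $\pi(hf(b))=\epsilon_{B}(b)\pi(h)$ for all $h\in H$, $b\in B$, i.e.\ when $\ker\pi\supseteq Hf(B)^{+}$; so the very statement of the theorem forces $C$ to be read as $H/Hf(B)^{+}=H\tn_{B}\K$, which is also the reading used in the paper's proof. With that reading your ``Sweedler-wise identity'' is the definition of the quotient rather than something to be deduced from coinvariance, the surjection $C\twoheadrightarrow T(\un)$ you worry about is the identity, and your argument closes without further work.
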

\begin{proof} First assume that $T={}_{B}H\tn_{B} - $ is left pre-Hopf. Now consider $\field$ with its natural $B$-action induced by $\epsilon$ and $B$ with its trivial left $B$-action. Note that $T(\field)=H\tn_{B} \field\cong C$ and $T(B)=H\tn_{B}B \cong H$ as left $B$-modules and $T(\field \tnK T(B))\cong H\tn_{B} H$. By these isomorphisms we recover $\beta$ as $H^{l}_{\field, B}$ and if $H^{l}_{\field, B}$ is invertible then so is $\beta$.

In the converse direction, we assume $H$ is a $C$-Galois extension of $B$ and denote $\beta (c\tnK 1)= c_{(+)}\tn_{B} c_{(-)}$. Then we can define the inverse of $H^{l}_{\field, M}$ for any $B$-module $M$ by $c\tnK (h\tn_{B} m)\mapsto c_{(+)}\tn_{B} (c_{(-)}.h\tn_{B} m)$. It is easy to check that this map is indeed well-defined and provides an inverse for $H^{l}_{\field, M}$.
\end{proof}
For the induced bimonad of $f:B\rightarrow H$ to be left Hopf we need a more generalised version of $\beta$:
\begin{thm}\label{ThmBialgExtHpfMonad} If $f:B\rightarrow H$ is a bialgebra injection as above, then the bimonad $T={}_{B}H\tn_{B} - $ on $\lmod{B}$ is left Hopf if and only if the following map is a bijection:
\begin{equation}\label{EqGenerGaloisExt}
\xymatrix@R-0.9cm{\Gamma:H\tn_{B} (B\tnK H)\longrightarrow H\tnK H \\ \hspace{0.5cm}  h\tn_{B} (b\tnK h') \longmapsto h_{(1)}f(b) \tnK h_{(2)}h' } 
\end{equation}
Here we consider $B\tnK H$ as the tensor product of two left $B$-modules. 
\end{thm}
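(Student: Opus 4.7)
The plan is to identify $\Gamma$ with a specific instance of the left fusion operator of $T$ and then reduce the general left Hopf condition to the invertibility of this single map by a standard colimit/density argument. By Theorem~\ref{PropFus}, $T$ is left Hopf if and only if every $H^l_{X,Y}$ is invertible, where for our $T = {}_{B}H\tn_{B}-$ the general fusion operator works out to
\begin{equation*}
H^{l}_{X,Y}\!:\ h\tn_{B}\!\bigl(x\tn(h'\tn_{B}y)\bigr)\longmapsto (h_{(1)}\tn_{B}x)\tn(h_{(2)}h'\tn_{B}y).
\end{equation*}

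Step one is to specialise to $X=Y=B$, the left regular module. Using the canonical isomorphism $T(B)=H\tn_{B}B\cong H$, $h\tn_{B}b\mapsto hf(b)$, together with the diagonal $B$-action $b\cdot(b'\tn h)=b_{(1)}b'\tn f(b_{(2)})h$ on $X\tn T(Y)=B\tn_{\field}H$, one verifies by direct substitution that $H^{l}_{B,B}$ coincides with $\Gamma$. This already settles the forward direction: if $T$ is left Hopf, then $\Gamma$ is in particular bijective. Conversely, suppose $\Gamma$ is bijective. Since $\Gamma$ is a $B$-linear map by construction (the same check that shows it is well defined) and $B$-linear bijections are $B$-linear isomorphisms, we may regard $H^{l}_{B,B}$ as an iso in $\lmod{B}$.

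Step two is to propagate invertibility from $(B,B)$ to all pairs. Both bifunctors $(X,Y)\mapsto T(X\tn T(Y))$ and $(X,Y)\mapsto T(X)\tn T(Y)$ preserve colimits in each variable: $T$ is a left adjoint, $\tn_{\field}$ is exact, and the diagonal tensor product on $\lmod{B}$ preserves colimits in each entry (colimits in $\lmod{B}$ are created by the forgetful functor to $\Vecs$). Every left $B$-module admits a presentation by free $B$-modules, and each free $B$-module $B\tn V$ is the coproduct of copies of $B$ indexed by a basis of $V$. Fixing $Y=B$ and varying $X$, the naturality of $H^{l}$ together with colimit preservation in the first variable upgrades the invertibility of $H^{l}_{B,B}$ to that of $H^{l}_{X,B}$ for every $X$. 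Fixing such an $X$ and varying $Y$, the same argument in the second variable yields the invertibility of $H^{l}_{X,Y}$ for every $Y$. Hence $T$ is left Hopf. The main technical point is the bookkeeping in step one, tracking the diagonal action and the balancing over $B$ so that $(\id_{T(B)}\tn\mu_{B})T_{2}(B,T(B))$ really does reduce to the formula defining $\Gamma$; the colimit propagation in step two is then entirely routine.
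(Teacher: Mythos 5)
Your proof is correct, and your forward direction (identifying $\Gamma$ with $H^{l}_{B,B}$ under the isomorphisms $T(B)\cong H$ and $T(B\tn T(B))\cong H\tn_{B}(B\tnK H)$) is exactly the paper's. For the converse you take a genuinely different route. The paper writes $\Gamma^{-1}(h\tnK 1)=h_{(+)}\tn_{B}(h_{(+-)}\tnK h_{(-)})$ and exhibits an explicit two-sided inverse of $H^{l}_{M,N}$ for arbitrary $M,N$, namely $(h\tn_{B}m)\tnK (h'\tn_{B}n)\mapsto h_{(+)}\tn_{B}\bigl(h_{(+-)}.m\tnK (h_{(-)}h'\tn_{B}n)\bigr)$, leaving to the reader the verification that this is well defined over the various balanced tensor products and genuinely inverse to the fusion operator. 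You instead propagate invertibility from the single pair $(B,B)$ by density: both $T(-\tn T(-))$ and $T(-)\tn T(-)$ preserve colimits in each variable, every left $B$-module is a coequalizer of coproducts of copies of $B$, and a natural transformation between colimit-preserving functors that is invertible on such a generating family is invertible everywhere; doing this first in $X$ with $Y=B$ fixed and then in $Y$ is exactly right. Both arguments are complete. The paper's version buys an explicit translation-map-style formula for $(H^{l}_{M,N})^{-1}$ (useful downstream, e.g.\ for writing the lifted actions on inner-homs), at the cost of a nontrivial well-definedness check; yours is cleaner and more portable, applying verbatim to any bimonad on a module-type category where $T$ and $\tn$ preserve colimits and the monoidal unit is a dense generator, but it produces no formula. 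One small point of hygiene: your free $B$-module written $B\tn V$ must be read as $B\tnK V$ with $B$ acting on the left tensorand only, not as the diagonal tensor product of $\lmod{B}$; with that reading the coproduct decomposition into copies of $B$ and the rest of the argument go through.
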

\begin{proof} As in the proof of Theorem~\ref{ThmHpfGaloisMonad}, we observe that $T(B\tnK T(B))\cong H \tn_{B} (B\tnK H)$ and recover $\Gamma$ from $H^{l}_{B,B}$. Hence, if $T$ is left Hopf then $\Gamma$ becomes invertible. In the converse direction, assume $\Gamma$ is invertible and denote $\Gamma (h\tnK 1)= h_{(+)}\tn_{B} (h_{(+-)}\tnK h_{(-)} ) $. Note that the component $h_{(+-)} $ belongs to $B$. With this notation we can define the inverse of $H^{l}_{M,N}$ for two arbitrary $B$-modules $M$ and $N$ by
\begin{align*}
(H\tn_{B}M)\tnK (H\tn_{B} N)& \longrightarrow H\tn_{B}(M \tnK (H\tn_{B} N)) 
\\ (h\tn_{B}m)\tnK (h'\tn_{B} n) &\longmapsto h_{(+)}\tn_{B}(h_{(+-)}.m \tnK (h_{(-)}h'\tn_{B} m)) 
\end{align*}
It follows in a straightforward manner that this map is well-defined and provides an inverse for $H^{l}_{M,N}$. 
\end{proof}
The invertibility of $\Gamma$ is a difficult problem to check. Here we will provide an example of such an extension by directly checking that $H$ lifts the left inner-homs of $\lmod{B}$. 

For any bialgebra, the category $\lmod{B}$ is left closed with $[M,N]^{l}= \Hom_{B} (B\tnK M, N)$, where $\Hom_{B}$ denotes the space of left $B$-module morphisms and $\Hom_{B} (B\tnK M, N)$ has a left $B$-action defined by $(b.f)(b'\tnK m)= f(  b'.b\tnK m)$. The unit and counit of $-\tnK M\dashv [M,-]^{l}$ are defined by 
\begin{equation}\label{EqClosedBialg}
\xymatrix@R-0.9cm{\cvl^{M}_{N} : N\rightarrow [M,N\tnK M]^{l} \\ \quad n\mapsto (b\tnK m \mapsto b.n\tnK m)}\quad  \xymatrix@R-0.9cm{\evl^{M}_{N} : [M,N]^{l} \tnK M\rightarrow  N\\ \quad f\tnK m \mapsto f( 1_{B}\tnK m)}
\end{equation}
This closed structure was observed in \cite{schauenburg2000algebras} at the level of bialgebroids. By Theorem~\ref{ThmHopfClsd}, the bimonad $T$ being Hopf is equivalent to the functor $U$ lifting the adjunction above to $\lmod{H}$. Hence, whenever $H$ has a well-defined action on $[M,N]^{l}$ so that $\cvl^{M}_{N}$ and $\evl^{M}_{N}$ become $H$-module morphisms then $T$ becomes a left Hopf monad. We will present one family of such examples here:
\begin{thm}\label{ThmOreHpfMnd} Let $B$ be a bialgebra and $\mathsf{d}  : B\rightarrow B$ a derivation on $B$ satisfying $\Delta(\mathsf{d} (b)) = \mathsf{d} (b_{(1)})\tnK b_{(2)} +b_{(1)}\tnK \mathsf{d} (b_{(2)})$ and $\epsilon (\mathsf{d})= 0$. If $H=B[x;\mathsf{d}]$ denotes the \emph{Ore extension} of $B$, then $H$ is a bialgebra and the natural algebra map $f:B\rightarrow H$ is a bialgebra map and induces a left Hopf monad ${}_{B}H\tn_{B}-$ on $\lmod{B}$. 
\end{thm}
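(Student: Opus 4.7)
The plan breaks into two parts. First, I extend $\Delta_B$ and $\epsilon_B$ from $B$ to $H = B[x;\mathsf{d}]$ by declaring $x$ to be primitive, i.e.\ $\Delta(x) = x\tnK 1 + 1\tnK x$ and $\epsilon(x) = 0$. The only nontrivial check is compatibility with the Ore relation $xb = bx + \mathsf{d}(b)$: expanding $\Delta(xb) - \Delta(bx)$ using multiplicativity and $\Delta(b) = b_{(1)}\tnK b_{(2)}$ produces $\mathsf{d}(b_{(1)})\tnK b_{(2)} + b_{(1)}\tnK \mathsf{d}(b_{(2)})$, which equals $\Delta(\mathsf{d}(b))$ by hypothesis, while compatibility of $\epsilon$ is immediate from $\epsilon\circ\mathsf{d} = 0$. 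Since $f$ is now a bialgebra injection, Section~\ref{SGalois} yields a bimonad $T = {}_{B}H\tn_{B}-$ on $\lmod{B}$.

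For the left Hopf property of $T$, I invoke Theorem~\ref{ThmHopfClsd}: it suffices to equip $[M,N]^{l} = \Hom_{B}(B\tnK M, N)$ with an $H$-action extending the $B$-action of \eqref{EqClosedBialg}, for every pair of $H$-modules $M, N$, so that $\cvl^{M}_{N}$ and $\evl^{M}_{N}$ become $H$-equivariant. Motivated by the primitivity of $x$ and the need to respect the Ore relation, I propose the formula
\[
(x.f)(b\tnK m) := x.f(b\tnK m) - f(b\tnK x.m) - f(\mathsf{d}(b)\tnK m),
\]
where the first two terms mimic the derivation-style action of a primitive element on a hom-space, and the third corrects for the twist introduced by $xb = bx + \mathsf{d}(b)$.

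The remaining work is a sequence of Sweedler-calculus verifications. One checks: (i) $x.f$ is still $B$-linear with respect to the diagonal action on $B\tnK M$, where the cocycle identity $\Delta(\mathsf{d}(b)) = \mathsf{d}(b_{(1)})\tnK b_{(2)} + b_{(1)}\tnK \mathsf{d}(b_{(2)})$ supplies the exact cancellation; (ii) the assignment satisfies the Ore relation $(xb - bx).f = \mathsf{d}(b).f$ on $[M,N]^{l}$, which follows from the Leibniz rule for $\mathsf{d}$; (iii) $\evl^{M}_{N}$ is $H$-equivariant, using only $\mathsf{d}(1) = 0$; and (iv) $\cvl^{M}_{N}$ is $H$-equivariant, a direct calculation in which the correction term cancels precisely against the extra $\mathsf{d}(b).n\tnK m$ that arises from expanding $x.(b.n\tnK m)$ via primitivity and $xb = bx + \mathsf{d}(b)$. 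The main obstacle is pinning down the correct formula for $x.f$: once it is in hand, each of the four checks is routine, but each of the three hypotheses on $\mathsf{d}$ is used exactly once, which confirms that the formula is natural rather than ad hoc. A more conceptual alternative would be to recognise $H$ as the smash product bialgebra $B\#\field[x]$ with $\field[x]$ acting on $B$ by the derivation $\mathsf{d}$, and then to lift the closed structure by transporting the antipode $S(x) = -x$ of the Hopf algebra $\field[x]$.
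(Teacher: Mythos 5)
Your proposal is correct and follows essentially the same route as the paper: the same formula $(x.f)(b\tnK m) = x.f(b\tnK m) - f(\mathsf{d}(b)\tnK m) - f(b\tnK x.m)$ for the lifted action on $\Hom_{B}(B\tnK M,N)$, and the same four verifications (well-definedness, compatibility with the Ore relation, and $H$-equivariance of the unit and counit of the adjunction), with the paper citing Panov for the bialgebra structure on $B[x;\mathsf{d}]$ where you verify it directly. The closing aside about realising $H$ as a smash product is not needed and would require more care (the resulting cross product monad would live on $\Vecs$ or $\lmod{\field[x]}$ rather than $\lmod{B}$), but it does not affect the validity of the main argument.
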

\begin{proof} Recall from Chapter 2 of \cite{goodearl2004introduction} that $H$ is defined as the quotient of $B\langle x\rangle $ by the ideal $\langle x.b-b.x- \mathsf{d}(b)\mid b\in B\rangle$. It was already shown in \cite{panov2003ore} that when $\mathsf{d}$ satisfies the additional compatibility condition with $\Delta$ and $\epsilon$, then $H$ obtains a natural bialgebra structure with $\Delta (x) = x\tnK 1+ 1\tnK x$ and $\epsilon (x)= 0$ extending the coalgebra structure defined on elements of $B$. Hence, the natural map $f: B \rightarrow H$ defined by $b\mapsto b$ becomes a bialgebra morphism.

Here we will define an action of $H$ on $\Hom_{B} (B\tnK M, N)$ such that the unit and counit in \eqref{EqClosedBialg} lift to $\lmod{H}$. For a pair of $H$-modules $M$ and $N$, we extend the $B$-action on $\Hom_{B} (B\tnK M, N)$ to an $H$-action by defining $(x.f) (b\tnK m)= x.f(b\tnK m)- f(\mathsf{d}(b)\tnK m) -f(b\tnK x.m)$. First note that this action is well-defined i.e. $x.f\in \Hom_{B} (B\tnK M, N)$: 
\begin{align*}
b.\big( (x.f)(b'\tnK m)\big)=&b.x.f(b'\tnK m)- b.f(\mathsf{d}(b')\tnK m) -b.f(b'\tnK x.m)
\\ =& b.x.f(b'\tnK m)- f(b_{(1)}\mathsf{d}(b')\tnK b_{(2)}.m) -f(b_{(1)}b'\tnK b_{(2)}.x.m)
\\=&  x.f(b_{(1)}b'\tnK b_{(2)}.m)- \mathsf{d}(b).f(b'\tnK m) -f(b_{(1)}\mathsf{d}(b')\tnK b_{(2)}.m) \\&-f(b_{(1)}b'\tnK x.b_{(2)}.m)+ f(b_{(1)}b'\tnK \mathsf{d}(b_{(2)}).m)
\\=&  x.f(b_{(1)}b'\tnK b_{(2)}.m)- f(\mathsf{d}(b_{(1)})b'\tnK b_{(2)}.m)
\\&-f(b_{(1)}\mathsf{d}(b')\tnK b_{(2)}.m) -f(b_{(1)}b'\tnK x.b_{(2)}.m)
\\=&(x.f)(b_{(1)}b'\tnK b_{(2)}.m) 
\end{align*}
Secondly, note that this gives rise to a well-defined action of $H$ since
\begin{align*}
(x.b'.f) (b\tnK m)=& x.(b'.f)(b\tnK m) - (b'.f)(\mathsf{d}(b)\tnK m) -(b'.f)(b\tnK x.m)
\\=& x.f(bb'\tnK m) - f(\mathsf{d}(b)b'\tnK m) -f(bb'\tnK x.m)
\\ (b'.x.f) (b\tnK m)=&  (x.f)(bb'\tnK m)= x.f(bb'\tnK m)- f(\mathsf{d}(bb')\tnK m) -f(bb'\tnK x.m) 
\\(\mathsf{d}(b').f) (b\tnK m)=& f (b\mathsf{d}(b')\tnK m)
\end{align*}
Now we must check that $\cvl^{M}_{N}$ and $\evl^{M}_{N}$ become $H$-module morphisms: 
\begin{align*}
[x.\cvl^{M}_{N}(n )]( b\tnK m )=&x.\big( \cvl^{M}_{N}(n )(b\tnK m )\big) - \cvl^{M}_{N}(n )(\mathsf{d}(b)\tnK m ) 
\\&- \cvl^{M}_{N}(n )(b\tnK x.m )
\\=& x. (b.n\tnK m) - (\mathsf{d}(b).n\tnK m)-(b.n\tnK x.m) 
\\=&  (x.b.n\tnK m)+ (b.n\tnK x.m) - (\mathsf{d}(b).n\tnK m)-(b.n\tnK x.m) 
\\=& (b.x.n\tnK m)=[\cvl^{M}_{N}(x.n )]( b\tnK m )
\end{align*}
For $\evl^{M}_{N}$ the calculation follows in a trivial manner from $\mathsf{d}(1)=0$ and its verification is left to the reader.
\end{proof}
Hence any suitable Ore extension $f: B\rightarrow B[x;\mathsf{d}]$ gives rise to a left Hopf monad on $\lmod{B}$. Additionally, note that this induced left Hopf monad admits an augmentation precisely when $f$ admits a retraction, which cannot happen unless $\mathsf{d}$ is an inner derivation.

\subsection{Hopf Monads from Pivotal Pairs}\label{SMnd}
In this section, we review our construction of Hopf monads from \emph{pivotal pairs} in monoidal categories, which appeared in \cite{ghobadi2021pivotal}. We will be assuming that the base category $\ct{C}$ is closed and admits countable colimits. Thereby, $\tn$ commutes with colimits and the category of endofunctors $\mathrm{End}(\ct{C})$ also has countable colimits. First we will recall the notion of pivotal pairs and their categories of intertwined objects.

\textbf{Notation:} Throughout this section, we write $X$ instead of the morphism $\mathrm{id}_{X}$ for brevity.

Recall from \cite{ghobadi2021pivotal}, that we call a pair of objects $(P,Q)$ a \emph{pivotal pair} in $\ct{C}$ if there exist a quadruple of morphisms $\cvl :\un\rightarrow P\tn Q$, $\evl : Q\tn P \rightarrow \un$ and $\cvr :\un\rightarrow Q\tn P$, $\evr : P\tn Q \rightarrow \un$, making $Q$ a left and right dual of $P$, respectively. In fact, whenever we talk about a pivotal pair, we always make an implicit choice for such morphisms.

Given a pivotal pair $(P,Q)$, we defined the category of $P$ and $Q$ \emph{intertwined objects}, denoted by $\ct{C}(P,Q)$, as the category whose objects are pairs $(X,\sigma)$, where $X$ is an object of $\ct{C}$ and $\sigma: X\tn P \rightarrow P\tn X$ an invertible morphism in $\ct{C}$ such that 
\begin{align}
(\mathrm{ev}\otimes X\otimes Q)(Q\otimes\sigma\otimes Q)(Q\otimes X\otimes \mathrm{coev}): Q\otimes X\rightarrow X\otimes Q \label{Eqovsig1} &
\\ (Q\otimes X\otimes \evr )(Q\otimes\sigma^{-1}\otimes Q)(\cvr\otimes X\otimes Q): X\otimes Q \rightarrow Q\otimes X \label{Eqovsig2}&
\end{align}
are inverses. Morphisms between objects $(X,\sigma )$, $(Y,\tau)$ of $\ct{C}(P,Q)$ are morphisms $f:X\rightarrow Y$ in $\ct{C}$, which satisfy $\tau (f\tn P)= (P\tn f) \sigma$. For an object $(X,\sigma)$ in $\ct{C}(P,Q)$, we call $\sigma$ a $P$-\emph{intertwining} and denote the induced morphisms \eqref{Eqovsig1} and \eqref{Eqovsig2}, by $\ov{\sigma}$ and $\ov{\sigma}^{-1}$, respectively, and call them \emph{induced} $Q$\emph{-intertwinings}.

The category $\ct{C}(P,Q)$ obtains a natural monoidal structure by defining $(X,\sigma )\tn (Y,\tau)$ to be the pair $(X\tn Y, (\sigma\tn Y)(X\tn\tau))$ [Theorem 4.1 \cite{ghobadi2021pivotal}]. It was already noted in \cite{ghobadi2021pivotal} that the category $\ct{C}(P,Q)$ can be viewed as the dual of a strong monoidal functor in the sense of Section \ref{SDual}. The choice of a pivotal pair $(P,Q)$ in a monoidal category $\ct{C}$ corresponds to the choice of a strict monoidal functor from the monoidal category generated by a single pivotal object, which we denote by $\mathrm{Piv}(1)$. The category $\mathrm{Piv}(1)$ is the monoidal category generated by two objects $+$ and $-$ and two pairs of duality morphisms making $-$ both the left and right dual of $+$. It should be clear that given any pivotal pair in $\ct{C}$, we have a strict monoidal functor $\omega_{(P,Q)}:\mathrm{Piv}(1)\rightarrow \ct{C}$ which sends $+$ to $P$ and $-$ to $Q$ and the relevant duality morphisms to the duality morphisms in $\ct{C}$. 

\begin{thm}\label{TC(P,Q)Dual} If $(P,Q)$ is a pivotal pair in $\ct{C}$ and $\omega: \mathrm{Piv}(1)\rightarrow \ct{C}$ its corresponding functor, then there is a monoidal isomorphism $\ct{C}(P,Q)\cong \dual{\mathrm{Piv}(1)}{\omega}{\ct{C}}$ which commutes with the forgetful functors from each category to $\ct{C}$.
\end{thm}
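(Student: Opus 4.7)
The plan is to construct mutually inverse functors between the two categories, both acting as the identity on underlying objects of $\ct{C}$, from which commutation with the forgetful functors is automatic. The key structural observation is that $\mathrm{Piv}(1)$ is freely generated as a strict monoidal category by the single object $+$ with chosen left and right dual $-$ together with the four duality morphisms. Consequently, since $\omega$ is strict monoidal (so that $U_{2}$ and $U_{0}$ are identities in axioms \eqref{EqBrai1} and \eqref{EqBrai2}), any natural transformation $\tau: X\tn \omega(-)\to \omega(-)\tn X$ satisfying \eqref{EqBrai1} is uniquely determined by its components $\tau_{+}: X\tn P\to P\tn X$ and $\tau_{-}: X\tn Q\to Q\tn X$, and naturality against the duality morphisms $\evl, \cvl, \evr, \cvr$ of $\mathrm{Piv}(1)$ forces $\tau_{-}$ to coincide with the two induced $Q$-intertwinings \eqref{Eqovsig1} and \eqref{Eqovsig2} built from $\tau_{+}$.

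First I would define $F:\ct{C}(P,Q)\to \dual{\mathrm{Piv}(1)}{\omega}{\ct{C}}$ on objects by $F(X,\sigma)=(X,\tau^{\sigma})$, where $\tau^{\sigma}_{+}:=\sigma$, $\tau^{\sigma}_{-}:=\ov{\sigma}$, and the remaining components are prescribed inductively by \eqref{EqBrai1}; on morphisms $F$ acts as the identity. The checks required are: (i) $\tau^{\sigma}$ is well-defined and natural on all of $\mathrm{Piv}(1)$, which reduces to naturality against the four duality morphisms and is encoded precisely by the relation between $\sigma$ and $\ov{\sigma}$ given in \eqref{Eqovsig1}; (ii) $\tau^{\sigma}$ is invertible, which follows from the invertibility of $\sigma$ together with the hypothesis in $\ct{C}(P,Q)$ that \eqref{Eqovsig1} and \eqref{Eqovsig2} are mutually inverse (so that $(\tau^{\sigma}_{-})^{-1}$ is supplied by \eqref{Eqovsig2}); (iii) axiom \eqref{EqBrai2} holds trivially, since $\tau^{\sigma}_{\un}=\id_{X}$. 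Conversely, define $G(X,\tau)=(X,\tau_{+})$; this pair lies in $\ct{C}(P,Q)$ because $\tau_{+}$ is invertible and, by the forced identities above, the induced $Q$-intertwinings built from $\tau_{+}$ are exactly $\tau_{-}$ and $\tau_{-}^{-1}$. Mutual inverseness of $F$ and $G$ is then immediate.

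To upgrade to a monoidal isomorphism, I would compare \eqref{EqDualMonoidalStrc} with the monoidal structure of $\ct{C}(P,Q)$: the $+$-component of the tensor in $\dual{\mathrm{Piv}(1)}{\omega}{\ct{C}}$ reads $(\sigma\tn Y)(X\tn \tau)$, which is precisely the $P$-intertwining defining $(X,\sigma)\tn (Y,\tau)$ in $\ct{C}(P,Q)$, and the units match on the nose. The main obstacle, though not conceptually deep, is the delicate bookkeeping of step (i): one must verify that the two expressions \eqref{Eqovsig1} and \eqref{Eqovsig2} for $\tau_{-}$ and $\tau_{-}^{-1}$ really are forced by naturality of $\tau$ against the left versus the right duality morphisms of $\mathrm{Piv}(1)$, via a diagram chase that plays the snake relations of $(P,Q)$ in $\ct{C}$ off against those in $\mathrm{Piv}(1)$. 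It is exactly this reconciliation that explains why the invertibility condition on the induced $Q$-intertwinings built into the definition of $\ct{C}(P,Q)$ is precisely what is required for $\tau$ to be a natural isomorphism on all of $\mathrm{Piv}(1)$, rather than on the object $+$ alone.
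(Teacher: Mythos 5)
Your proposal is correct in substance and follows essentially the same route as the paper: both functors are the identity on underlying objects, the forward map extends $\sigma$ to a braiding over all of $\mathrm{Piv}(1)$ via the induced $Q$-intertwinings, the backward map restricts to the component at $+$, and the crux — that mutual invertibility of \eqref{Eqovsig1} and \eqref{Eqovsig2} is exactly what makes $\tau$ natural against the duality morphisms — is the diagram chase the paper writes out explicitly. One small correction: since $\tau$ has type $X\tn\omega(-)\to\omega(-)\tn X$, its component at $-$ must be a morphism $X\tn Q\to Q\tn X$, so you should set $\tau^{\sigma}_{-}=\ov{\sigma}^{-1}$ (the morphism \eqref{Eqovsig2}) rather than $\ov{\sigma}$ (which points the other way); the paper makes this choice, and with that relabelling the rest of your argument goes through unchanged.
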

\begin{proof} In one direction, any object $(A,\sigma)$ of $\ct{C}(P,Q)$ has a natural braiding $\mathbf{\sigma}: A \tn \id_{\omega}\rightarrow \id_{\omega} \tn A$ defined by $\mathbf{\sigma}_{P}=\sigma$ and $\mathbf{\sigma}_{Q}=\ov{\sigma}^{-1}$. It follows by definition that the duality morphisms between $P$ and $Q$ commute with these braidings and that $(A,\mathbf{\sigma} )$ belongs to $\dual{\mathrm{Piv}(1)}{\omega}{\ct{C}}$. Conversely, for any object $(A,\mathbf{\sigma})$ in $\dual{\mathrm{Piv}(1)}{\omega}{\ct{C}}$ the pair $(A,\mathbf{\sigma}_{P})$ is an object in $\ct{C}(P,Q)$, since $\ov{\mathbf{\sigma}_{P}}^{-1}$ and $\ov{\mathbf{\sigma}_{P}}$ become inverses because of $\sigma$ commuting with the duality morphisms e.g.
\begin{align*}
\ov{\mathbf{\sigma}_{P}}\ \ov{\mathbf{\sigma}_{P}}^{-1}=& (\mathrm{ev}  \tn X \tn  Q)(Q\tn  \mathbf{\sigma}_{P}  \tn Q)(Q \tn  X \tn  \mathrm{coev}) (Q \tn  X  \tn \evr )(Q \tn \mathbf{\sigma}_{P}^{-1}  \tn Q)
\\&(\cvr \tn  X \tn  Q)
\\= &(\mathrm{ev}  \tn X \tn  Q)(Q \tn \mathbf{\sigma}_{P}  \tn Q)(Q \tn  X  \tn \mathrm{coev})\sigma_{Q} \sigma_{Q}^{-1}(Q\tn   X  \tn \evr )(Q\tn  \mathbf{\sigma}_{P}^{-1}\tn   Q)
\\&(\cvr \tn  X \tn  Q)
\\= &(\mathrm{ev} \tn  X  \tn Q)(Q \tn \mathbf{\sigma}_{P}  \tn Q)(  \sigma_{Q}\tn   P\tn Q )(X \tn  Q \tn  \mathrm{coev})(X  \tn Q \tn  \evr )
\\&(\sigma_{Q}^{-1}  \tn P  \tn Q) (Q \tn \mathbf{\sigma}_{P}^{-1} \tn  Q)(\cvr\tn   X \tn  Q)
\\= &( X \tn  \mathrm{ev} \tn  Q)(X \tn  Q \tn  \mathrm{coev})(X  \tn Q  \tn \evr )( X  \tn \cvr  \tn Q )= \id_{X\tn Q}
\end{align*} 
It should be clear that the described correspondence is a monoidal isomorphism of categories preserving the underlying object of the pairs in each category.\end{proof}

Consequently, all the properties of the dual of a monoidal functor which we discussed in Section~\ref{SDual} hold for $\ct{C}(P,Q)$. For example, by Theorem~\ref{TSchauenburg} we can conclude:

\begin{corollary}\label{CCld} If $\ct{C}$ is a left (right) closed monoidal category, then $\ct{C}(P,Q)$ has a unique left (right) closed monoidal structure lifting that of $\ct{C}$, such that the forgetful functor $U$ becomes left (right) closed.
\end{corollary}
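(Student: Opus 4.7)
The plan is to derive Corollary~\ref{CCld} as a direct consequence of Theorem~\ref{TSchauenburg} via the monoidal isomorphism of Theorem~\ref{TC(P,Q)Dual}. The key preliminary observation is that $\mathrm{Piv}(1)$ is rigid by construction: its generators $+$ and $-$ are declared to be mutually left and right dual, and every object is a tensor word in $\pm$ whose dual on either side is obtained by reversing the word and swapping signs. In particular, $\omega : \mathrm{Piv}(1) \to \ct{C}$ is a strong monoidal functor out of a category that is both left and right rigid.

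For the left closed case, Theorem~\ref{TSchauenburg} applies verbatim with $\ct{D} = \mathrm{Piv}(1)$ and $U = \omega$: the category $\dual{\mathrm{Piv}(1)}{\omega}{\ct{C}}$ inherits a left closed structure such that $\ov{\omega}$ is left closed. Since the isomorphism $\ct{C}(P,Q) \cong \dual{\mathrm{Piv}(1)}{\omega}{\ct{C}}$ of Theorem~\ref{TC(P,Q)Dual} is monoidal and commutes with the forgetful functors to $\ct{C}$, transporting this structure endows $\ct{C}(P,Q)$ with a left closed structure for which $U : \ct{C}(P,Q) \to \ct{C}$ is left closed. The right closed case is the mirror image, invoking the symmetric variant of Theorem~\ref{TSchauenburg} (left rigid domain, right closed target) which is likewise available because $\mathrm{Piv}(1)$ is also left rigid.

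For uniqueness, note that $U : \ct{C}(P,Q) \to \ct{C}$ is strict monoidal, so demanding that $U$ be left closed forces the canonical comparison $U[X,Y]^l_{\ct{C}(P,Q)} \to [U(X), U(Y)]^l_{\ct{C}}$ to be an isomorphism. This pins down the underlying $\ct{C}$-object of every inner-hom up to canonical isomorphism. The $P$-intertwining on $[U(X), U(Y)]^l_{\ct{C}}$ is then uniquely determined by the requirement that both the unit $\cvl^X$ and counit $\evl^X$ of the adjunction $-\tn X \dashv [X,-]^l$ lift to morphisms in $\ct{C}(P,Q)$; either compatibility alone already characterises it given the intertwinings on $X$ and $Y$.

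The main technical obstacle, if one chose to unfold Theorem~\ref{TSchauenburg} instead of quoting it, lies in writing down the intertwining on $[U(X), U(Y)]^l$ explicitly. The natural recipe uses $Q$ as a right dual of $P$ to move $P$ through the inner-hom via the hom-tensor adjunction, interpolating the intertwinings $\sigma$ of $X$ and $\tau$ of $Y$. Verifying that the resulting morphism is invertible and genuinely an intertwining in the sense of \eqref{Eqovsig1}--\eqref{Eqovsig2} is the delicate step, and it is precisely the induced $Q$-intertwinings $\ov{\sigma}$ and $\ov{\tau}$ together with the duality between $P$ and $Q$ that make the calculation go through — this is the content hidden inside the rigidity hypothesis of Theorem~\ref{TSchauenburg}.
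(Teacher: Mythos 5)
Your proposal is correct and follows exactly the paper's route: the corollary is obtained by applying Theorem~\ref{TSchauenburg} (and its mirror) to the strong monoidal functor $\omega:\mathrm{Piv}(1)\rightarrow\ct{C}$, whose domain is rigid on both sides, and transporting the result along the monoidal isomorphism of Theorem~\ref{TC(P,Q)Dual}. Your additional remarks on uniqueness and on the explicit form of the intertwining on the inner-homs only flesh out details the paper leaves implicit (deferring them to Section 4 of \cite{ghobadi2021pivotal}), so nothing essential differs.
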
 

By Corollary~\ref{CCld}, we know that when $\ct{C}$ is closed we obtain a $P$-intertwining on $[A,B]^{l(r)}$ corresponding to every pair of objects $(A,\sigma_{A}) $ and $(B,\sigma_{B}) $ in $\ct{C}(P,Q)$. The precise definition of the induced $P$-intertwinings on the inner-homs in $\ct{C}(P,Q)$ can be found in Section 4 of \cite{ghobadi2021pivotal}.

Next we recall how we can construct a Hopf monad whose Eilenberg-Moore category recovers $\ct{C}(P,Q)$ from Section~5 of \cite{ghobadi2021pivotal}. We will call a morphism $F(X)\rightarrow X$, for any endofunctor $F:\ct{C}\rightarrow\ct{C}$, an \emph{action} of $F$ on an object $X$. Although the functors in question will not carry any monad structures, we will build a monad on the colimit of a diagram of these functors, so that these \emph{actions} induce a genuine module structure over the resulting monad, thereby justifying our terminology. 

Observe that for a pair $(X,\sigma)$ in $\ct{C}(P,Q)$, we can view $\sigma$ and $\sigma^{-1}$ as certain actions of the functors $Q\tn - \tn P$ and $P\tn - \tn Q$ on $X$: 
\begin{align*}
\xymatrix@C+43pt{Q\tn X \tn P\ar[r]^-{(\evl \tn X )(Q\tn \sigma)} & X  } \quad\quad \xymatrix@C+43pt{ P\tn X \tn Q\ar[r]^-{(X\tn \evr) (\sigma^{-1}\tn Q)}& X }
\end{align*}
Moreover, for any pair $(X,\sigma)$ in $\ct{C}(P,Q)$, we can translate the mentioned actions in terms of the induced $Q$-intertwinings, since $(X\tn \evl ) ( \ov{\sigma}\tn P) =(\evl \tn X )(Q\tn \sigma)$ and $(\evr \tn X)(P\tn \ov{\sigma}^{-1}) =(X\tn \evr) (\sigma^{-1}\tn Q)$.

Conversely, when provided with two morphisms $\alpha: Q\tn X \tn P\rightarrow X$ and $\beta :P\tn X \tn Q\rightarrow X$, we can recover right and left $P$-intertwinings as set out below: 
\begin{align*}
\xymatrix@C+55pt{ X \tn P\ar[r]^-{(P\tn \alpha)(\cvl \tn X \tn P )} & P\tn X } \quad \xymatrix@C+55pt{ P\tn X \ar[r]^-{(\beta\tn Q)(P\tn X\tn \cvr) }& X \tn P}
\end{align*}
If we want the induced $P$-intertwinings of $\alpha$ and $\beta$ to be inverses, we need the following equalities to hold: 
\begin{align}
\evl \tn X  = \alpha(Q\tn \beta\tn P)(Q\tn P \tn X\tn \cvr ) : Q\tn P\tn X\rightarrow X \label{EqQPX}
 \\X\tn \evr =   \beta ( P\tn \alpha \tn Q ) ( \cvl\tn X\tn P\tn Q ) : X\tn P\tn Q\rightarrow X \label{EqXPQ}
\end{align}
Similarly, $\alpha$ and $\beta$ induce $Q$-intertwinings, \eqref{Eqovsig1} and \eqref{Eqovsig2} which can be written as 
\begin{align*}
\xymatrix@C+55pt{ X \tn Q\ar[r]^-{(P\tn \beta)(\cvr \tn X \tn Q)} & X \tn Q} \quad \xymatrix@C+55pt{ Q\tn X \ar[r]^-{(\alpha\tn Q)(Q\tn X\tn \cvl) }& X \tn Q}
\end{align*}
In order for the induced $Q$-intertwinings to be inverses, we require the following equalities to hold:
\begin{align}
\evr \tn X  = \beta(P\tn \alpha\tn Q)(P\tn Q \tn X\tn \cvl ) : P\tn Q \tn X\rightarrow X  \label{EqPQX}
 \\X\tn \evl =   \alpha ( Q\tn \beta \tn P ) ( \cvr \tn X\tn Q\tn P ) : X\tn Q\tn P\rightarrow X \label{EqXQP}
\end{align}
With this view of $P$-intertwinings in mind, we construct the left adjoint functor to the forgetful functor $U:\ct{C}(P,Q)\rightarrow \ct{C}$. 

Define the endofunctors $F_{+},F_{-}:\ct{C} \rightarrow \ct{C}$ by $F_{+}(X)=Q\tn X \tn P $ and $ F_{-} (X)= P\tn X\tn Q  $. Let the endofunctor $F^{\star}$ be defined as the coproduct 
$$F^{\star} = \coprod_{n\in \mathbb{N}_{0}, (i_{1}, i_{2}, \dots, i_{n}) \in \lbrace -, + \rbrace^{n} } F_{i_{1}}F_{i_{2}} \cdots F_{i_{n}} $$
where the term $F_{i_{1}}F_{i_{2}} \cdots F_{i_{n}} $ at $n=0$, is just the identity functor $\id_{\ct{C}} $. For arbitrary $ n\in \mathbb{N}$ and $ (i_{1}, i_{2}, \dots, i_{n}) \in \lbrace -, + \rbrace^{n} $, we denote $F_{i_{1}}F_{i_{2}} \cdots F_{i_{n}}$ by $F_{i_{1},i_{2},\dots , i_{n}}$ and the respective natural transformations $ F_{i_{1},i_{2},\dots , i_{n}}\Rightarrow F^{\star}$ by $\iota_{i_{1},i_{2},\dots , i_{n}}$. We denote the additional natural transformation $\id \Rightarrow F^{\star}$ by $\iota_{0}$. Hence, for any $F_{i_{1},i_{2},\dots , i_{n}}$ we have four parallel pairs: 
\begin{align} 
 \xymatrix@C+5cm{P\tn Q \tn F_{i_{1},i_{2},\dots , i_{n}} \ar@<0.5ex>[r]^-{\iota_{-,+, i_{1},i_{2},\dots , i_{n}}(P\tn Q \tn F_{i_{1},i_{2},\dots , i_{n}} \tn \cvl )}\ar@<-0.5ex>[r]_-{ \iota_{i_{1},i_{2},\dots , i_{n}}(\evr \tn F_{i_{1},i_{2},\dots , i_{n}})}& F^{\star}  }  \label{EqPPPQX}
\\ \xymatrix@C+5cm{ F_{i_{1},i_{2},\dots , i_{n}}\tn Q\tn P \ar@<0.5ex>[r]^-{\iota_{+,-, i_{1},i_{2},\dots , i_{n}}(\cvr \tn F_{i_{1},i_{2},\dots , i_{n}} \tn Q\tn P )}\ar@<-0.5ex>[r]_-{ \iota_{i_{1},i_{2},\dots , i_{n}}( F_{i_{1},i_{2},\dots , i_{n}}\tn \evl ) }& F^{\star}  }  \label{EqPPXQP}
\\  \xymatrix@C+5cm{Q\tn P \tn F_{i_{1},i_{2},\dots , i_{n}} \ar@<0.5ex>[r]^-{\iota_{+,-, i_{1},i_{2},\dots , i_{n}}(Q\tn P \tn F_{i_{1},i_{2},\dots , i_{n}} \tn \cvr )}\ar@<-0.5ex>[r]_-{ \iota_{i_{1},i_{2},\dots , i_{n}}(\evl \tn F_{i_{1},i_{2},\dots , i_{n}})}& F^{\star}  } \label{EqPPQPX}
\\ \xymatrix@C+5cm{ F_{i_{1},i_{2},\dots , i_{n}}\tn P\tn Q \ar@<0.5ex>[r]^-{\iota_{-,+, i_{1},i_{2},\dots , i_{n}}(\cvl \tn F_{i_{1},i_{2},\dots , i_{n}} \tn P\tn Q )}\ar@<-0.5ex>[r]_-{ \iota_{i_{1},i_{2},\dots , i_{n}}( F_{i_{1},i_{2},\dots , i_{n}}\tn \evr ) }& F^{\star}  }  \label{EqPPXPQ}
\end{align} 
Consider the diagram in $\mathrm{End}(\ct{C})$ which the described parallel pairs create. We denote the colimit of this diagram by $T$, the unique natural transformation $F^{\star} \Rightarrow T$, by $\psi$, and the compositions $\psi \iota_{i_{1},i_{2},\dots , i_{n}}$ and $\psi\iota_{0}$, by $ \psi_{i_{1},i_{2},\dots , i_{n}}$ and $\psi_{0}$, respectively.

Since $\tn$ commutes with colimits, the family of morphisms $ \psi_{+, i_{1},i_{2},\dots , i_{n}} : Q\tn F_{i_{1},i_{2},\dots , i_{n}}\tn P  \rightarrow T $ induce a unique morphism $ \alpha : Q\tn T \tn P \rightarrow T $ such that $ \alpha (Q\tn \psi_{i_{1},i_{2},\dots , i_{n}} \tn P)= \psi_{+, i_{1},i_{2},\dots , i_{n}}$. Similarly, the family of morphisms $ \psi_{-, i_{1},i_{2},\dots , i_{n}} : P\tn F_{i_{1},i_{2},\dots , i_{n}}\tn Q  \rightarrow T $ induce a morphism $ \beta : P\tn T \tn Q \rightarrow T $ such that $ \beta (P\tn \psi_{i_{1},i_{2},\dots , i_{n}} \tn Q)= \psi_{-, i_{1},i_{2},\dots , i_{n}}$. As mentioned at the start of the section, such actions $\alpha$ and $\beta$ provide us with the necessary $P$-intertwinings. 

Let us denote the natural transformation $(P\tn \alpha)(\cvl \tn T \tn P ): T \tn P \Rightarrow P\tn T $ by $\sigma^{T}$. In Lemma~5.1 of \cite{ghobadi2021pivotal}, we demonstrated that indeed for any object $X$ in $\ct{C}$, the pair $F(X):=( T(X), \sigma^{T}_{X}) $ belongs to $\ct{C}(P,Q)$. Additionally, the assignment $F:\ct{C}\rightarrow \ct{C}(P,Q)$ is functorial by construction where with $F(f)=T(f) $ for morphisms $f$ of $\ct{C}$. Finally, in Theorem~5.2 of \cite{ghobadi2021pivotal}, we showed that $F$ is left adjoint to the relevant forgetful functor $U:\ct{C}(P,Q)\rightarrow \ct{C}$. 

The unit of the adjunction $F\dashv U$ has already appeared as $\nu:=\psi_{0}:\id_{\ct{C}} \Rightarrow UF=T$. For the counit, consider a pair $(X,\sigma )$ in $\ct{C}(P,Q)$ and denote its induced actions $ (\evl \tn X )(Q\tn \sigma)$ and $ (X\tn \evr) (\sigma^{-1}\tn Q)$ by $\alpha_{\sigma}: F_{+}(X)\rightarrow X$ and $ \beta_{\sigma}:F_{-}(X)\rightarrow X$, respectively. One can then define $\theta_{ i_{1},i_{2},\dots , i_{n}}: F_{i_{1},i_{2},\dots , i_{n}} (X)\rightarrow X$, for arbitrary $ n\in \mathbb{N}$ and $ (i_{1}, i_{2}, \dots, i_{n}) \in \lbrace -, + \rbrace^{n} $, by applying $\alpha_{\sigma}$ and $\beta_{\sigma} $ iteratively so that $\theta_{+, i_{1},i_{2},\dots , i_{n}}= \alpha_{\sigma}(Q\tn\theta_{ i_{1},i_{2},\dots , i_{n}}\tn P)$ and $\theta_{-, i_{1},i_{2},\dots , i_{n}}= \beta_{\sigma}(P\tn \theta_{ i_{1},i_{2},\dots , i_{n}}\tn Q)$, where $\theta_{+}= \alpha_{\sigma}$ and $\theta_{-}= \beta_{\sigma}$. Together with $\theta_{0}=\mathrm{id}_{X}$, we obtain a family of morphisms from $F_{i_{1},i_{2},\dots , i_{n}}(X)$ to $X$, which must factorise through $F^{\star}(X)$. In Theorem~5.2 of \cite{ghobadi2021pivotal}, we denoted the unique morphism $F^{\star}(X)\rightarrow X$ by $\theta^{\star}$ and observed that the family of morphisms described commute with the parallel pairs \eqref{EqPPPQX}, \eqref{EqPPXQP}, \eqref{EqPPQPX} and \eqref{EqPPXPQ}, in a way that $\theta^{\star}$ must factorise through $T(X)$. Hence, there exists a unique morphism $\theta_{(X,\sigma )} :T(X)\rightarrow X$ such that $\theta_{(X,\sigma)} \psi_{ i_{1},i_{2},\dots , i_{n}} = \theta_{ i_{1},i_{2},\dots , i_{n}}$. It then follows that $\theta $ is a morphism between $(T(X),\sigma^{T}_{X}) $ and $(X,\sigma)$ in $\ct{C}(P,Q)$, and $(P\tn \theta_{(X,\sigma)} )\sigma^{T}_{X}= \sigma(\theta_{(X,\sigma)}\tn P)$ holds with $\theta$ becoming a natural transformation $\theta : FU \Rightarrow \mathrm{id}_{\ct{C}(P,Q)}$ acting as the counit of $F\dashv U$.

Now let us reflect on the the bimonad structure on the functor $T:\ct{C} \rightarrow \ct{C}$. First, we observe the monad structure. By definition, for any pair $F(X)=(T(X),\sigma^{T}_{X})$, the multiplication of the monad $\theta_{F(X)} :TT(X) \rightarrow T(X)$ will be the unique morphism such that 
$$ \theta_{F(X)} (\psi_{i_{1},\dots , i_{n}})_{T(X)} F_{ i_{1},\dots , i_{n}}\big( (\psi_{j_{1},\dots , j_{m}})_{X}\big)= (\psi_{i_{1},\dots , i_{n},j_{1},\dots , j_{m}})_{X}$$
for arbitrary non-negative integers $n,m$ and $i_{1},i_{2},\dots , i_{n}, j_{1},j_{2},\dots , j_{m}\in \lbrace +,-\rbrace$. Meanwhile, the unit of the monad was provided in the definition of $T$ as $\psi_{0}:\id_{\ct{C}} \Rightarrow T$. The comonoidal structure on $T$ arises directly from the monoidal structure of $\ct{C}(P,Q)$. Observe that for pairs $(X,\sigma)$ and $(Y,\tau)$ the induced action $\alpha_{\sigma\tn\tau}$ on $A\tn B$ is the composition $( \alpha_{\sigma}\tn \alpha_{\tau} )(P\tn X \tn \cvr \tn Y\tn Q)$. Consequently, we can obtain the comonoidal structure of $T$, $T_{2}: T(-\tn -)\rightarrow T(-)\tn T(-)$, by Theorem~\ref{TBimonad} as the unique morphism satisfying 
$$T_{2}\psi_{i_{1},\dots , i_{n}}=\big( \psi_{i_{1},\dots , i_{n}}\tn \psi_{i_{1},\dots , i_{n}}\big) F_{i_{1},\dots , i_{n}} ( -\tn \cvl_{i_{1},\dots , i_{n}}\tn -) $$
where $\cvl_{i_{1},\dots , i_{n}}: \un \rightarrow F_{-i_{n},\dots , -i_{1}}(\un)$ are iteratively defined by $\cvl_{+} =\cvl  $ and $\cvl_{-} =\cvr $ and $\cvl_{\pm,i_{1},i_{2},\dots , i_{n}} = F_{-i_{n},\dots , -i_{1}}( \cvl_{\pm} )\cvl_{i_{1},i_{2},\dots , i_{n}}$. Note that the $P$-intertwining making $\un $ the unit of the monoidal structure in $\ct{C}(P,Q)$ is simply the identity morphism $\mathrm{id}_{P}$ and its induced actions are $\alpha_{\mathrm{id}_{P}}= \evl $ and $\beta_{\mathrm{id}_{P}} = \evr$. Hence, we obtain $T_{0}: T(\un ) \rightarrow \un $ as the unique morphism satisfying $T_{0} \psi_{i_{1},i_{2},\dots , i_{n}}= \evl_{i_{1},i_{2},\dots , i_{n}}$, where $\evl_{i_{1},\dots , i_{n}}: F_{i_{1},\dots , i_{n}} (\un)\rightarrow \un$ is defined iteratively by $\evl_{\pm ,i_{1},\dots , i_{n}} = \evl_{\pm} F_{\pm}(\evl_{i_{1},i_{2},\dots , i_{n}})$ with $\evl_{+}= \evl$ and $\evl_{-}=\evr$. 

In Corollary 5.3 of \cite{ghobadi2021pivotal}, we noted that since $\ct{C}(P,Q)$ lifts the closed monoidal structure of $\ct{C}$ via $U$ when $\ct{C}$ is closed, then $T$ will be a Hopf monad under our base assumptions. Equivalently, since $\ct{C}(P,Q)$ can be viewed as the dual of a strong monoidal functor, this is an application of Theorem~\ref{TSchauenburg}. However, this is the only point where we needed  $\ct{C}$ to be closed in \cite{ghobadi2021pivotal}. The other application of this condition was the fact that $-\tn -$ would preserve colimits in both entries as a consequence. Here we will show that $T$ will be a Hopf monad under more relaxed conditions:

\begin{thm}\label{TPivHpfMndGen} Let $\ct{C}$ be a monoidal category (not necessarily closed) where $\tn$ preserves colimits in both entries and $(P,Q)$ be a pivotal pair and assume suitable colimits exist so that the bimonad $T$ can be constructed as before. In this case, $T$ is a Hopf monad and the fusion operators of Definition~\ref{DFusHopf} are invertible.
\end{thm}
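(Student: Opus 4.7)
The plan is to invoke Theorem~\ref{PropFus}, which identifies the left/right Hopf property of $T$ with invertibility of the adjunction-level fusion operators $\overline{H}^l_{X,(M,\sigma)}: F(X\tn M) \to F(X) \tn (M,\sigma)$ and $\overline{H}^r_{(M,\sigma),X}: F(M \tn X) \to (M,\sigma) \tn F(X)$ in $\ct{C}(P,Q)$. Under our hypotheses the monad $T$ and the defining diagrams still exist, and since $-\tn-$ preserves colimits in each entry, the underlying object $T(X) \tn M$ of $F(X) \tn (M,\sigma)$ is canonically $\mathrm{colim}_{\vec{\imath}}\, F_{\vec{\imath}}(X) \tn M$, where $\vec{\imath}=(i_{1},\dots,i_{n})\in\{+,-\}^{n}$.

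The key construction is an invertible natural family $\lambda_{\vec{\imath}}: F_{\vec{\imath}}(X) \tn M \to F_{\vec{\imath}}(X \tn M)$, obtained by iteratively transporting $M$ leftwards through the $P$'s and $Q$'s separating it from $X$: at each $P$ we apply $\sigma^{-1}$ (inverting $\sigma:M\tn P\to P\tn M$), and at each $Q$ we apply $\ov{\sigma}:Q\tn M\to M\tn Q$. Each $\lambda_{\vec{\imath}}$ is invertible by construction, and the compositions $\psi_{\vec{\imath},X\tn M}\,\lambda_{\vec{\imath}}$ assemble into a candidate map $\lambda: T(X)\tn M \to T(X\tn M)$, provided these compositions coequalise the four parallel pairs \eqref{EqPPPQX}--\eqref{EqPPXPQ} tensored with $M$ on the right.

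The main obstacle is precisely this coequalisation check. For \eqref{EqPPPQX} and \eqref{EqPPXPQ}, the verification reduces to the zig-zag identity for the duality $(\cvl,\evr)$ together with the naturality of $\sigma$: transporting $M$ past a $P\tn Q$ pair introduced by $\cvl$ and then annihilating with $\evr$ recovers the untransported configuration. For \eqref{EqPPQPX} and \eqref{EqPPXQP} we use the zig-zag identity for $(\cvr,\evl)$; here the transport switches between a $\sigma^{-1}$-step and an $\ov{\sigma}$-step across the coevaluation, and the resulting equality is exactly the content of conditions \eqref{Eqovsig1}--\eqref{Eqovsig2} asserting that $\ov{\sigma}$ and its partner are mutually inverse, i.e. the defining property of $(M,\sigma)$ as an object of $\ct{C}(P,Q)$. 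This is the most delicate point, and it is where the hypothesis that $\sigma$ is a $P$-intertwining in the sense of $\ct{C}(P,Q)$ is essential.

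Having produced $\lambda$, I would then check that it is a morphism in $\ct{C}(P,Q)$, which is automatic because $\sigma^{T}$ and the tensor product $P$-intertwining on $F(X)\tn(M,\sigma)$ are both assembled from the same $\psi_{\vec{\imath}}$ and from $\sigma$ itself; and that $\lambda$ is two-sided inverse to $\overline{H}^l_{X,(M,\sigma)}$, which can be verified on each generator $F_{\vec{\imath}}(X)\tn M$ since each $\lambda_{\vec{\imath}}$ is invertible in a manner compatible with the action $\alpha_{\sigma},\beta_{\sigma}$. The right fusion operator $\overline{H}^r_{(M,\sigma),X}$ is handled symmetrically, transporting $M$ rightwards through $P$'s and $Q$'s using $\sigma$ and $\ov{\sigma}^{-1}$ in place of $\sigma^{-1}$ and $\ov{\sigma}$. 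Combining both invertibility statements and applying Theorem~\ref{PropFus} yields that $T$ is a Hopf monad on $\ct{C}$, as claimed.
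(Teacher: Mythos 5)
Your proposal is correct, but it is organised quite differently from the paper's argument, so it is worth comparing the two. The paper stays entirely in $\ct{C}$ and inverts the monad-level fusion operator $H^{l}_{X,Y}$ directly: writing $T(X\tn T(Y))$ and $T(X)\tn T(Y)$ as colimits indexed by pairs of words, it defines $(H^{l}_{X,Y})^{-1}$ on the component $F_{i_{1},\dots,i_{n}}(X)\tn F_{j_{1},\dots,j_{m}}(Y)$ by splicing in the coevaluation $\cvl_{-i_{n},\dots,-i_{1}}$ and absorbing the concatenated word into the second $T$-factor via $\psi_{-i_{n},\dots,-i_{1},j_{1},\dots,j_{m}}$, then checks both composites against iterates of the parallel pairs \eqref{EqPPPQX}--\eqref{EqPPXPQ}. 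You instead invert the adjunction-level operator $\overline{H}^{l}_{X,(M,\sigma)}$ for an \emph{arbitrary} intertwined object $(M,\sigma)$ by transporting $M$ through the word with $\sigma^{-1}$ and $\ov{\sigma}$, and then pass to the monad via the implication ``Hopf adjunction $\Rightarrow$ Hopf monad''; note that you only need this one (easy) direction of Theorem~\ref{PropFus}, which holds for any comonoidal adjunction and requires no monadicity, so the appeal is legitimate, and since $U$ is strict monoidal and conservative the inverse need only be produced in $\ct{C}$. On free modules the two inverses coincide, because $\sigma^{T}$ is itself assembled from the coevaluations and the $\psi$'s; the combinatorial core --- descending the componentwise maps along the four parallel pairs using the zig-zag identities and the mutual invertibility of \eqref{Eqovsig1}--\eqref{Eqovsig2} --- is therefore the same in both proofs. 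What your version buys is the slightly stronger intermediate statement that the factorisation holds against every $(M,\sigma)$, which is morally the assertion that $\ct{C}(P,Q)$ lifts the absent closed structure; what the paper's version buys is a self-contained formula needing no detour through the module category. One bookkeeping remark: with $\ov{\sigma}$ as in \eqref{Eqovsig1}, the pair \eqref{EqPPXQP} is handled by the identity $(M\tn\evl)(\ov{\sigma}\tn P)=(\evl\tn M)(Q\tn\sigma)$ together with $\sigma\sigma^{-1}=\id$, whereas it is \eqref{EqPPXPQ} that genuinely needs $\ov{\sigma}^{-1}\ov{\sigma}=\id$, i.e.\ the defining condition of $\ct{C}(P,Q)$; the roles are the reverse of what you wrote, though all four verifications do go through.
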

\begin{proof} We will discuss this for the left fusion operator and leave the computations for the right fusion operator to the reader. For a pair of arbitrary objects $X$ and $Y$ in $\ct{C}$, we first note that because $\tn$ preserves colimits in each entry and, thereby, functors $F_{i_{1},\dots , i_{n}}$ also preserve colimits, then $T( X\tn T(Y))$ and $T(X)\tn T(Y)$ both become colimits with respect to the following families of morphisms:
\begin{align*}
(\psi_{i_{1},\dots , i_{n}})_{X\tn T(Y)} F_{i_{1},\dots , i_{n}}(X\tn(\psi_{j_{1},\dots , j_{m}})_{Y}):& F_{i_{1},\dots , i_{n}}( X\tn F_{j_{1},\dots , j_{m}}(Y)) \rightarrow T(X\tn T(Y))
\\(\psi_{i_{1},\dots , i_{n}})_{X} \tn (\psi_{j_{1},\dots , j_{m}})_{Y}:& F_{i_{1},\dots , i_{n}}( X)\tn F_{j_{1},\dots , j_{m}}(Y)  \rightarrow T(X)\tn T(Y)
\end{align*}
We define the inverse $ Q_{X,Y}$ of $H^{l}_{X,Y}$ as the unique morphism satisfying 
\begin{align*}
 Q_{X,Y} \big( (\psi_{i_{1},i_{2},\dots , i_{n}})_{X} \tn (\psi_{j_{1},j_{2},\dots , j_{m}})_{Y}\big)=& (\psi_{i_{1}, \dots ,i_{n}})_{X\tn T(Y)}
 \\F_{i_{1},\dots , i_{n}}\big(X \tn (\psi_{-i_{n},\dots , -i_{1}, j_{1},\dots, j_{m}} )_{Y} \big)&(F_{i_{1},\dots , i_{n}}(X) \tn F_{j_{1},\dots , j_{m}}(Y)\tn \cvl_{-i_{n},\dots , -i_{1}} )
\end{align*} 
Using this definition and the universal property of $T$ with respect to iterations of \eqref{EqPPXQP} and \eqref{EqPPXPQ}, we now show that $Q_{X,Y}H^{l}_{X,Y}=\id_{T(X\tn T(Y))}$: 
\begin{align*}
Q&_{X,Y}H^{l}_{X,Y}(\psi_{i_{1},\dots , i_{n}})_{X\tn T(Y)} F_{i_{1},\dots , i_{n}}(X\tn(\psi_{j_{1},\dots , j_{m}})_{Y})
\\ =&Q_{X,Y}(\id_{T(X)} \tn \theta_{F(Y)}) T_{2}(X,T(Y) )(\psi_{i_{1},\dots , i_{n}})_{X\tn T(Y)} F_{i_{1},\dots , i_{n}}(X\tn(\psi_{j_{1},\dots , j_{m}})_{Y})
\\ = &Q_{X,Y}(\id_{T(X)} \tn \theta_{F(Y)})\big( (\psi_{i_{1},\dots , i_{n}})_{X}\tn (\psi_{i_{1},\dots , i_{n}})_{T(Y)}\big) 
\\&F_{i_{1},\dots , i_{n}} ( X\tn \cvl_{i_{1},\dots , i_{n}}\tn (\psi_{j_{1},\dots , j_{m}})_{Y})
\\ = &Q_{X,Y}\big( (\psi_{i_{1},\dots , i_{n}})_{X}\tn (\psi_{i_{1},\dots , i_{n}, j_{1},\dots ,j_{m}})_{Y}\big) F_{i_{1},\dots , i_{n}} ( X\tn \cvl_{i_{1},\dots , i_{n}}\tn F_{j_{1},\dots , j_{m}}(Y))
\\ = &(\psi_{i_{1}, \dots ,i_{n}})_{X\tn T(Y)} F_{i_{1},\dots , i_{n}}\big(X \tn (\psi_{-i_{n},\dots , -i_{1},i_{1},\dots, i_{n},j_{1},\dots, j_{m}} )_{Y} \big)
\\&(F_{i_{1},\dots , i_{n}}(X) \tn F_{i_{1},\dots , i_{n}, j_{1},\dots , j_{m}}(Y)\tn \cvl_{-i_{n},\dots , -i_{1}} )
\\&F_{i_{1},\dots , i_{n}} ( X\tn \cvl_{i_{1},\dots , i_{n}}\tn F_{j_{1},\dots , j_{m}}(Y))
\\ =& (\psi_{i_{1}, \dots ,i_{n}})_{X\tn T(Y)} F_{i_{1},\dots , i_{n}}\big(X \tn (\psi_{j_{1},\dots, j_{m}} )_{Y} \big)F_{i_{1},\dots , i_{n}}(X \tn  F_{j_{1},\dots , j_{m}}(Y)\tn \evl_{-i_{n},\dots , -i_{1}} )
\\&\big(F_{i_{1}, \dots ,i_{n}}( X\tn F_{j_{1},\dots , j_{m}}(Y))\tn \cvl_{-i_{n},\dots , -i_{1}}\big)
\\=& (\psi_{i_{1}, \dots ,i_{n}})_{X\tn T(Y)} F_{i_{1},\dots , i_{n}}\big(X \tn (\psi_{j_{1},\dots, j_{m}} )_{Y} \big)
\end{align*}
By the universal property of $T(X\tn T(Y))$ we conclude that $Q_{X,Y}H^{l}_{X,Y}=\id_{T(X\tn T(Y))}$. By a similar argument we see that $H^{l}_{X,Y}Q_{X,Y}=\id_{T(X)\tn T(Y)}$:
\begin{align*}
H&^{l}_{X,Y}Q_{X,Y}\big( (\psi_{i_{1},\dots , i_{n}})_{X} \tn (\psi_{j_{1},\dots , j_{m}})_{Y}\big)= (\id_{T(X)} \tn \theta_{F(Y)}) T_{2}(X,T(Y) )(\psi_{i_{1}, \dots ,i_{n}})_{X\tn T(Y)}
 \\&F_{i_{1},\dots , i_{n}}\big(X \tn (\psi_{-i_{n},\dots , -i_{1}, j_{1},\dots, j_{m}} )_{Y} \big)(F_{i_{1},\dots , i_{n}}(X) \tn F_{j_{1},\dots , j_{m}}(Y)\tn \cvl_{-i_{n},\dots , -i_{1}} )
 \\=& (\id_{T(X)} \tn \theta_{F(Y)})\big((\psi_{i_{1}, \dots ,i_{n}})_{X}\tn (\psi_{i_{1}, \dots ,i_{n}})_{ T(Y)}\big)  F_{i_{1}, \dots ,i_{n}} ( X\tn\cvl_{i_{1}, \dots ,i_{n}} \tn T(Y))
 \\&F_{i_{1},\dots , i_{n}}\big(X \tn (\psi_{-i_{n},\dots , -i_{1}, j_{1},\dots, j_{m}} )_{Y} \big)(F_{i_{1},\dots , i_{n}}(X) \tn F_{j_{1},\dots , j_{m}}(Y)\tn \cvl_{-i_{n},\dots , -i_{1}} )
 \\=& \big((\psi_{i_{1}, \dots ,i_{n}})_{X}\tn (\psi_{i_{1}, \dots ,i_{n},-i_{n},\dots , -i_{1}, j_{1},\dots, j_{m}})_{Y}\big)  
 \\&F_{i_{1}, \dots ,i_{n}} ( X\tn\cvl_{i_{1}, \dots ,i_{n}} \tn F_{-i_{n},\dots , -i_{1}, j_{1},\dots, j_{m}}(Y))
 \\&(F_{i_{1},\dots , i_{n}}(X) \tn F_{j_{1},\dots , j_{m}}(Y)\tn \cvl_{-i_{n},\dots , -i_{1}} )
 \\=& \big((\psi_{i_{1}, \dots ,i_{n}})_{X}\tn (\psi_{ j_{1},\dots, j_{m}})_{Y}\big)  \big( F_{i_{1}, \dots ,i_{n}} ( X) \tn \ev_{i_{1}, \dots ,i_{n}}\tn F_{ j_{1},\dots, j_{m}}(Y)\big)
 \\&\big( F_{i_{1}, \dots ,i_{n}} ( X\tn\cvl_{i_{1}, \dots ,i_{n}}) \tn F_{ j_{1},\dots, j_{m}}(Y)\big)= (\psi_{i_{1}, \dots ,i_{n}})_{X}\tn (\psi_{ j_{1},\dots, j_{m}})_{Y}
\end{align*}
Here we used the universal property of $T$ with respect to iterations of \eqref{EqPPPQX} and \eqref{EqPPQPX}. The inverse of the right fusion operator is the unique map satisfying:
\begin{align*}
&(H^{r}_{X,Y})^{-1} \big( (\psi_{i_{1},i_{2},\dots , i_{n}})_{X} \tn (\psi_{j_{1},i_{2},\dots , j_{m}})_{Y}\big)= (\psi_{i_{1}, \dots ,i_{n}})_{T(X)\tn Y}
\\ & F_{j_{1},\dots , j_{m}}((\psi_{-j_{m},\dots , -j_{1},i_{1},\dots , i_{n}})_{X}\tn Y )(\cvl_{-j_{m},\dots , -j_{1}}\tn F_{i_{1},\dots , i_{n}}(X) \tn F_{j_{1},\dots , j_{m}}(Y) )
\end{align*}
and the corresponding computation follows in a symmetric manner. 
\end{proof}

Finally, we recall the criterion for when the constructed Hopf monad is augmented from \cite{ghobadi2021pivotal}. We say a pivotal pair $(P,Q)$ in $\ct{C}$ \emph{lifts to} $Z(C)$, if there exist braidings $\lambda : P\tn \id_{\ct{C}}  \Rightarrow  \id_{\ct{C}} \tn P $ and $\chi : Q\tn \id_{\ct{C}}    \Rightarrow  \id_{\ct{C}}    \tn Q $, such that $(P,\lambda)$ and $(Q,\chi )$ are objects in $Z(\ct{C})$ and $\cvl,\evl,\cvr$ and $\evr$ become morphisms in $Z(\ct{C})$, making $(P,\lambda)$ and $(Q,\chi )$ a pivotal pair in $Z(\ct{C})$.
\begin{thm}[Theorem 5.4 \cite{ghobadi2021pivotal}]\label{ThmTPivAug} The Hopf monad $T$ is augmented if and only if the pivotal pair $(P,Q)$ lifts to $Z(\ct{C})$.
\end{thm}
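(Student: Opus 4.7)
The plan is to use the universal property of $T$ as a colimit in $\End(\ct{C})$ to translate augmentations of $T$ into data on the generating functors $F_{+}, F_{-}$, which in turn will correspond to half-braidings on $P$ and $Q$.

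First I would handle the forward direction. Suppose $\epsilon:T\Rightarrow \id_{\ct{C}}$ is a bimonad morphism. Setting $\alpha_X:=\epsilon_X(\psi_+)_X: Q\tn X\tn P\to X$ and $\beta_X:=\epsilon_X(\psi_-)_X: P\tn X\tn Q\to X$, I would define
\begin{align*}
\chi_X&:=(\alpha_X\tn Q)(Q\tn X\tn \cvl):Q\tn X\to X\tn Q,\\
\lambda_X&:=(\beta_X\tn P)(P\tn X\tn \cvr):P\tn X\to X\tn P.
\end{align*}
Naturality is immediate from naturality of $\alpha, \beta$. For invertibility, I would use that the actions $\alpha^T, \beta^T$ built into the definition of $\sigma^T$ satisfy relations \eqref{EqQPX}--\eqref{EqXQP}, and these relations are preserved upon composition with $\epsilon$; the pivotal identities then supply inverses for $\chi$ and $\lambda$. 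The hexagon axioms for $\lambda, \chi$ and the centrality of the duality morphisms fall out of $\epsilon$ being comonoidal: the condition $(\epsilon\tn \epsilon)T_2 = \epsilon$ applied to the generating functors $F_{\pm}$ gives precisely the hexagon relations for $\lambda$ and $\chi$, while compatibility with $T_0$ forces $\cvl, \evl, \cvr, \evr$ to commute with them.

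For the converse, suppose $(P,Q)$ lifts to $Z(\ct{C})$ with half-braidings $\lambda, \chi$. I would define a system of natural transformations $\epsilon_{i_1,\ldots,i_n}:F_{i_1,\ldots,i_n}\Rightarrow \id_{\ct{C}}$ indexed by words in $\{+,-\}$, by setting $\epsilon_0=\id$ and
\begin{align*}
(\epsilon_+)_X &= (X\tn \evl)(\chi_X\tn P),\qquad (\epsilon_-)_X = (X\tn \evr)(\lambda_X\tn Q),\\
\epsilon_{i,i_1,\ldots,i_n} &= \epsilon_i \, F_i(\epsilon_{i_1,\ldots,i_n}),\qquad i\in\{+,-\}.
\end{align*}
The crux is to verify that these assemble into a cocone: that is, they must equalize the four families of parallel pairs \eqref{EqPPPQX}--\eqref{EqPPXPQ}. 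This is precisely where the centrality of $\cvl, \evl, \cvr, \evr$ in $Z(\ct{C})$ enters, since it is what allows these morphisms to be pulled past the nested applications of $\lambda$ and $\chi$ inside the $\epsilon_w$'s. The universal property of $T$ then yields a unique $\epsilon:T\Rightarrow \id_{\ct{C}}$ with $\epsilon\psi_{i_1,\ldots,i_n}=\epsilon_{i_1,\ldots,i_n}$. I would conclude by checking that $\epsilon$ is a bimonad morphism: the unit axiom is immediate from $\epsilon\psi_0=\id$; compatibility with the multiplication follows because the recursion $\epsilon_{iw}=\epsilon_i F_i(\epsilon_w)$ mirrors the factorisation of $\theta_{F(X)}$ through iterated applications of the $\psi$'s; and comonoidality translates back into the hexagon axioms for $\lambda, \chi$.

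The main technical obstacle will be the equalisation step in the backward direction, which amounts to an inductive bookkeeping argument tracking how braidings and duality morphisms interact at every level of nesting. Once that is in place, the two assignments are easily seen to be mutually inverse: the $\alpha, \beta$ recovered from the $\epsilon$ built out of $(\lambda, \chi)$ are by construction $(X\tn \evl)(\chi_X\tn P)$ and $(X\tn\evr)(\lambda_X\tn Q)$, and these feed back into the formulas for $\chi, \lambda$ to reproduce the originals via \eqref{EqQPX}--\eqref{EqXQP}.
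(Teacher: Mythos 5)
Your proposal is correct and follows essentially the same route as the paper's proof: in the forward direction you produce $\lambda$ and $\chi$ from $\epsilon\psi_{-}$ and $\epsilon\psi_{+}$ exactly as in Theorem 5.4 of \cite{ghobadi2021pivotal} (with the same formulas for the inverses coming from the relations \eqref{EqQPX}--\eqref{EqXQP}), and in the converse direction you build the same inductively defined cocone $\epsilon_{i_{1},\dots,i_{n}}=\epsilon_{i_{1}}F_{i_{1}}(\epsilon_{i_{2},\dots,i_{n}})$ and invoke the universal property of $T$. Your accounting of where each hypothesis enters (comonoidality of $\epsilon$ for the hexagons, compatibility with $T_{0}$ and the parallel pairs for centrality of the duality morphisms) matches the paper's sketch.
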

\begin{sketchproof} ($\Rightarrow$) Assuming $T$ is augmented with a Hopf monad morphism $\epsilon: T\Rightarrow \id_{\ct{C}} $, we obtain a braiding on $P$ by $\lambda := (\epsilon \psi_{-} \tn P)( P\tn \id_{\ct{C}}  \tn \cvr ) $. Additionally, $\lambda$ is invertible and $ \lambda ^{-1}:=(P\tn \epsilon \psi_{+} )( \cvl \tn \id_{\ct{C}}  \tn P ) $ . In a symmetric manner, one can introduce $\chi := (\epsilon \psi_{+} \tn Q)( Q\tn \id_{\ct{C}}  \tn \cvl ) $ with $ \chi ^{-1}:=(Q\tn \epsilon \psi_{-} )( \cvr \tn \id_{\ct{C}}  \tn Q ) $ as its inverse and show that $\cvl,\evl,\cvr$ and $\evr$ become morphisms in $Z(\ct{C})$ and commute with the braidings of $\un$, $P\tn Q$ and $Q\tn P$. 

($\Leftarrow$) Assuming there exist braidings $\lambda : P\tn \id_{\ct{C}}  \Rightarrow  \id_{\ct{C}}  \tn P $ and $\chi : Q\tn \id_{\ct{C}}  \Rightarrow  \id_{\ct{C}}  \tn Q $ making $(P,\lambda)$ and $(Q,\chi )$ objects in $Z(\ct{C})$, such that $\cvl,\evl,\cvr$ and $\evr$ are morphisms in $Z(\ct{C})$, we can iteratively define the natural transformations $\epsilon_{i_{1}, \dots i_{n}}: F_{i_{1}, \dots i_{n}} \Rightarrow \id_{\ct{C}} $ by $\epsilon_{+,i_{1}, \dots i_{n}}=  ( \id_{\ct{C}} \tn \evl) (\chi \tn P)F_{+}(\epsilon_{i_{1}, \dots i_{n}})$ and $\epsilon_{-,i_{1}, \dots i_{n}}=  (\id_{\ct{C}} \tn \evr ) ( \lambda\tn Q)F_{-}(\epsilon_{i_{1}, \dots i_{n}})$ where $\epsilon_{0}=\id_{\ct{C}} $. Since $\evl$ and $\evr$ commute with the braidings, then $\epsilon_{+}= (\evl \tn  \id_{\ct{C}} )(Q\tn \lambda^{-1})$ and $\epsilon_{-}=(\evr\tn \id_{\ct{C}}  ) (P\tn \chi^{-1})$. It is straightforward to check that $\epsilon_{i_{1}, \dots i_{n}}$ commute with the parallel pairs \eqref{EqPPPQX}, \eqref{EqPPXQP}, \eqref{EqPPQPX} and \eqref{EqPPXPQ}, and therefore induce a unique morphism $\epsilon :T \rightarrow \id_{\ct{C}} $.
and from the universal properties of $TT$ and $T(-\tn - )$, we can conclude that $\epsilon$ is a bimonad morphism.
\end{sketchproof} 

In Examples 5.6 of \cite{ghobadi2021pivotal}, we looked at the case when $\ct{C}=\Vecs$. In this case, pivotal pairs in $\Vecs$ correspond to invertible matrices and given any such matrix, we obtain an involutive Hopf algebra $H$ such that $T\cong H\tn_{\field}-$. On the other hand, as we will see in Theorem~\ref{TSzl}, additive Hopf monads on $\bim$ which admit a right adjoint correspond to Hopf algebroids over $A$, in the sense of Schauenburg \cite{schauenburg2000algebras}. In Theorem~4.3 of \cite{ghobadi2020hopf}, we constructed the relevant Hopf algebroids which arise in this way by considering a pivotal pair $(P,Q)$ in $\bim$. 
\subsection{Bialgebroids and Hopf Algebroids}\label{SBial}
In this section we will review the theory of bialgebroids and Schauenburg's notion of Hopf algebroids as examples of bimonads and Hopf monads on the category of bimodules $\bim$. The notation used for describing Hopf algebroids varies quite a bit depending on the reference, but here we adapt the notation from Chapter 5 of \cite{bohm2018hopf}. Throughout this section $A$ denotes a fixed $\K$-algebra and $\tn$ denotes $\tn_{A}$, unless it appears with a different subscript.

For an algebra $A$, the \emph{opposite algebra} $A^{\op}$ is the algebra structure defined on $A$ by $(\ov{a})(\ov{b})=\ov{ba}$, where we denote elements of the opposite algebra with a line above i.e $a,b\in A$ and $\ov{a},\ov{b}\in A^{\op}$. It is a well-known fact that $A$-bimodules correspond to left $A\otimes_{\field} A^{\op}$-modules, where $A^{e}=A\otimes_{\field} A^{\op}$ is called the \emph{enveloping algebra} of $A$. More concretely, there exists an isomorphism of categories, between the category of $A$-bimodules $\prescript{}{A}{\ct{M}}_{A} $ and that of left $A^{e}$-modules $\prescript{}{A^{e}}{\ct{M}} $. Hence, we use $\lmod{A^{e}}$ and $\bim$ interchangeably. We will denote elements of $A^{e}=A\otimes_{\field}A^{\op}$ by $a\ov{b}$ where $a\in A$ and $\ov{b}\in A^{\op}$. The reader should note, that other authors often prefer the notation $a\ov{b}= \source (a) \target (b)$, where the algebra morphisms $A\hookrightarrow A^{e}\rightarrow H$ and $A^{\op}\hookrightarrow A^{e}\rightarrow H$ are denoted by $\source$ and $\target$ and called \emph{source} and \emph{target}, respectively.

For an $A^{e}$-bimodule $B$ we denote the functor $\prescript{}{A^{e}}{B}\otimes_{A^{e}} -  $ by $B\boxtimes -:\bim \rightarrow \bim $. This functor absorbs the bimodule structure of its input via its right $ A^{e} $-action and produces a new bimodule actions via its left $ A^{e} $-action. Explicitly, for an $A$-bimodule $M$ we have
\begin{align*}
B \boxtimes M = B\otimes_{\field} M/ \lbrace (br\ov{s})\otimes_{\field} m &- b\otimes_{\field} (rms)\mid m\in M,\  r,s\in A,\   b\in B \rbrace
\\ r (b\boxtimes m)s= ( r\ov{s}b ) \boxtimes m &\quad  \forall m\in M,\  \forall r,s\in A,\  \forall b\in B
\end{align*}
Note that any $A^{e}$-bimodule $B$ can be considered as an $A$-bimodule either by its right or left $A^{e}$-action, and we denote the latter $A$-bimodule by $|B$. We continue to adapt the notation of \cite{bohm2018hopf} and recall the following definitions from Chapter 5. 
\begin{defi}\label{Dalgebroid} Let $A$ be an algebra and $B$ an $A^{e}$-bimodule. 
\begin{enumerate}[label=(\Roman*)]
\item An $A^{e}$-\emph{ring} structure on $B$ consists of a $\field$-algebra structure $(\mu ,1_{B})$ on $B$ with an algebra homomorphism $\eta : A^{e}\rightarrow B$, such that the $A^{e}$-bimodule structure on $B$ is induced by the algebra homomorphism  i.e. $ \mu(\eta\otimes_{\field} \mathrm{id}_{B})$ coincides with the left action of $A^{e}$ and $ \mu(\mathrm{id}_{B}\otimes_{\field}\eta )$ with the right action of $A^{e}$.
\item An $A|A$-\emph{coring} structure on $B$ consists of bimodule maps $\Delta :|B\rightarrow |B\otimes |B$ and $\epsilon :|B\rightarrow A$ satisfying
\begin{align} b_{(1)}\otimes(b_{(2)})_{(1)}\otimes(b_{(2)})_{(2)}=&(b_{(1)})_{(1)}\otimes(b_{(1)})_{(2)}\otimes b_{(2)} \label{EqDelAss} \\ 
\epsilon (b_{(1)})b_{(2)} =b&= \ov{\epsilon (b_{(2)})}b_{(1)} \label{EqCoun}\\ 
\Delta (br\ov{s})=& b_{(1)} r\otimes b_{(2)} \ov{s} \label{EqDelrs}  \\ 
\epsilon (br) =& \epsilon (b\ov{r})\label{EqCounrs}
\end{align} 
for any $b\in B$ and $r,s\in A$, where $\Delta (b)= b_{(1)}\otimes b_{(2)}$ is denoted by Sweedler's notation as in the case of Hopf algebras. Conditions \eqref{EqDelAss} and \eqref{EqCoun} are equivalent to $(|B,\Delta, \epsilon )$ being a comonoid in the category of $A$-bimodules.
\item A \emph{left} $A$-\emph{bialgebroid} structure on $B$ consists of an $A^{e}$-ring structure $(\mu ,\eta)$ and an $A|A$-coring structure $(\Delta, \epsilon )$ on $B$ satisfying  
\begin{align}
(bb')_{(1)}\otimes (bb')_{(2)} &=b_{(1)}b_{(1)}' \otimes b_{(2)}b_{(2)}' \label{EqDelMul}
\\ \Delta (1_{B})&=1_{B}\otimes 1_{B}\label{Eq:B5} 
\\\epsilon (1_{B} )&=1_{A}\label{Eq:B6} 
\\ \epsilon (b b')= \epsilon &( b\epsilon (b') )=\epsilon \left( b\ov{\epsilon (b')} \right)\label{EqCounMul}
\end{align}
for any $b,b'\in B$, where $1_{B}=\eta (1_{A^{e}})$.
\end{enumerate}
\end{defi}
First note that an $A^{e}$-ring structure as defined above is equivalent picking $A^{e}$-bimodule maps $\mu_{A^{e}}: B\otimes_{A^{e}} B\rightarrow B$ and $\eta_{A^{e}}: A^{e} \rightarrow B$, which provide $B$ with the structure of a monoid in the monoidal category of $A^{e}$-bimodules. Secondly, we can deduce from the definition of an $A|A$-coring $B$ that the image of $\Delta$ lands in 
\begin{equation*}\label{Takeuchi}
B\times_{A} B:=\left\lbrace \sum_{i} b_{i}\otimes b'_{i} \in |B\otimes |B \middle\vert \sum_{i} b_{i}\ov{a}\otimes b'_{i} =\sum_{i} b_{i}\otimes b'_{i}a,\  a\in A \right\rbrace\subset |B\otimes |B
\end{equation*}
Bialgebroids are often defined with reference to $B\times_{A} B$, the \emph{Takeuchi $\times$-product} \cite{takeuchi1977groups}, and therefore called $\times$-bialgebras. The equivalence of the above definition and the more popular variation is present in both \cite{bohm2018hopf,bohm2004hopf}.

It is easy to check that $A^{e}$-ring structures on $B$ correspond to monad structures on the endofunctor $B\boxtimes -$ (Proposition 5.2 \cite{bohm2018hopf}) since there is a monoidal embedding of ${}_{A^{e}}\ct{M}_{A^{e}}$ into $\End (\lmod{A^{e}})$. In particular, by restriction of scalars along $\eta$, any $B$-module is equipped with an $A$-bimodule structure and there exists a forgetful functor $U: \prescript {}{B}{\ct{M}} \rightarrow \bim$ with its free left adjoint functor also denoted by $B\boxtimes -$. In this way we consider left actions $B\otimes_{\field} M\rightarrow M$ of $B$ on $A$-bimodules $M$ which factor through an $A$-bimodule map $B\boxtimes M\rightarrow M$.

Similarly, $A|A$-coring structures on $B$ correspond to comonoidal structures on the functor $B\boxtimes-$ (Proposition 5.5 \cite{bohm2018hopf}). In our notation if $T=B\boxtimes-$, then $T_{2}(M,N)=\Delta_{M,N}$ is defined as the map 
\begin{equation}\label{EqDelMN}
\xymatrix@R-27pt{\Delta_{M,N}:B\boxtimes (M\otimes N)\longrightarrow (B\boxtimes M)\otimes (B\boxtimes N)
\\ \hspace{1.85cm}b\boxtimes (m\otimes n) \longmapsto (b_{(1)}\boxtimes m)\otimes (b_{(2)}\boxtimes n )}
\end{equation}
which is well-defined and a bimodule map for any pair of $A$-bimodules $M$ and $N$ by condition \eqref{EqDelrs}. Additionally, by \eqref{EqCounrs} the map $\epsilon$ factorizes through a map $T_{0}: B\boxtimes A \rightarrow A$. Furthermore, conditions \eqref{EqDelAss} and \eqref{EqCoun} assure that \eqref{EqComonoidalCoass} and \eqref{EqComonoidalCoun} hold, respectively.

Given the above correspondences, the conditions in Definition~\ref{Dalgebroid} (III) become equivalent to the conditions in Definition~\ref{DefBimon} and bimonad structures on $B\boxtimes-$ correspond precisely to left $A$-bialgebroid structures on $B$ (Theorem 5.9 \cite{bohm2018hopf}). From this point of view the category of $B$-modules lifts the monoidal structure of $\bim$: If $(M,\triangleright_{M})$ and $(N,\triangleright_{N})$ are $B$-modules, the $B$-action on $M\otimes N$ given by \eqref{EqBimonadTnAction} is defined by the composition $(\triangleright_{M}\otimes\triangleright_{N})\Delta_{M,N}$.

We must point out that the theory described above is not symmetric. A \emph{right} $A$-bialgebroid structure on $B$ arises when we ask the category of right $B$-modules to be monoidal so that the forgetful functor $\ct{M}_{B}\rightarrow \bim $ becomes strong monoidal. In other words, right $A$-bialgebroid structures are those which make the functor $-\boxtimes B= -\tn_{A^{e}} B_{A^{e}}$ a bimonad. From here onwards an $A$-bialgebroid structure always refers to a left $A$-bialgebroid structure.

There have been several variations of the Hopf condition for bialgebroids to mimic the Hopf condition for bialgebras. The choice which interests us is the condition which makes the corresponding bimonad of a bialgebroid $B$ into a Hopf monad. This is the case for Schauenburg's Hopf algebroids which were introduced in \cite{schauenburg2000algebras}.
\begin{defi}\label{DHgebroid} A \emph{Schauenburg Hopf algebroid} or $\times$-\emph{Hopf algebra} structure on $B$ consists of an $A$-bialgebroid structure as above, such that the induced maps 
\begin{align}
\xymatrix@R-26pt{\beta:B\otimes_{A^{\op}} B\longrightarrow  B\diamond B & \vartheta:B\odot B\longrightarrow  B\diamond B
\\ \hspace{1.2cm}b\otimes_{A^{\op}}b' \mapsto b_{(1)}\diamond b_{(2)}b' &\hspace{1.2cm}b\odot b' \mapsto b_{(1)}b'\diamond b_{(2)} }
\end{align}
are invertible, where we define the tensor products $\otimes_{A^{\op}}$, $\odot$ and  $\diamond$  as follows:
\begin{align*}B\otimes_{A^{\op}} B&= B\otimes_{\field} B/ \lbrace b\ov{s}\otimes_{\field}b'- b\otimes_{\field}\ov{s} b' \mid b, b'\in B,\ov{s}\in A^{\op} \rbrace  
\\B\odot B&= B\otimes_{\field} B/ \lbrace br\otimes_{\field}b'- b\otimes_{\field}r b' \mid b, b'\in B, r\in A \rbrace 
\\ B\diamond B&= B\otimes_{\field} B/ \lbrace \ov{s}b \otimes_{\field}b'- b\otimes_{\field}s b' \mid b, b'\in B, s\in A \rbrace  
\end{align*} 
\end{defi}

Once one writes down the fusion operators for the bimonad $T=B\boxtimes -$, it is easy to see that the maps $\beta$ and $\vartheta$ being invertible are equivalent to the bimonad $T=B\boxtimes -$ being left and right Hopf, respectively. Consequently, if $B$ is a Schauenburg Hopf algebroid and $\beta, \vartheta$ are invertible, we usually denote $\beta^{-1} (b \diamond 1) =b_{(+)}\otimes_{A^{\op}} b_{(-)}$ and $\vartheta^{-1} (1\diamond b) =b_{[+]}\odot b_{[-]} $ and thereby observe that the closed structure of $\bim$ is lifted to $\prescript{}{B}{\ct{M}}$ via the following $B$-actions:
\begin{equation}\label{EqClosHpf}
\xymatrix@R-26pt@C-10pt{B\boxtimes \mathrm{Hom}_{A}(M,N)\rightarrow \mathrm{Hom}_{A}(M,N) & B\boxtimes \prescript{}{A}{\mathrm{Hom}}(M,N)\rightarrow \prescript{}{A}{\mathrm{Hom}}(M,N)
\\ b\boxtimes f\mapsto (m\mapsto b_{(+)}f( b_{(-)}m)) & b\boxtimes g\mapsto (m\mapsto b_{[+]}g( b_{[-]}m))}
\end{equation}
for any pair of $B$-bimodules $M$ and $N$. 

Finally, we refer the reader to Chapter 5 of \cite{bohm2018hopf} and \cite{bohm2004hopf} for further details on these facts. We conclude by presenting the following observation. The Eilenberg-Watts theorem \cite{watts1960intrinsic} tells us that any additive left adjoint functor $F: \lmod{A^{e}}\rightarrow \lmod{A^{e}} $ is isomorphic to a functor $\prescript{}{A^{e}}{B}\otimes_{A^{e}} -  $, where $B$ is an $A^{e}$-bimodule. Using this result one can classify additive left adjoint bimonads and Hopf monads on $\bim$:
\begin{thm}[\cite{szlachanyi2003monoidal}]\label{TSzl} For an algebra $A$ and an abelian monoidal category $\ct{C}$, if $G:\ct{C}\rightarrow \bim$ is an additive functor with a left adjoint $F$, such that $GF:\bim\rightarrow \bim$ has a right adjoint , then $G$ is (closed) strong monoidal if and only if $\ct{C}$ is equivalent to $\prescript {}{B}{\ct{M}}$ for a left (Hopf) bialgebroid $B$. 
\end{thm}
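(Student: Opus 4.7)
}
The plan is to identify the monad $T = GF$ on $\bim$ via the Eilenberg-Watts theorem, then use Beck's monadicity theorem (Theorem~\ref{TBecksThm}) to transfer the equivalence $\ct{C}\simeq \bim^T$, and finally translate the (closed) monoidal content via the correspondences reviewed in Sections~\ref{SBimonad}, \ref{SHpfMndClosed} and \ref{SBial}. Concretely, I would first exploit the hypothesis that $T=GF$ admits a right adjoint: together with the additivity of $F$ and $G$, this means $T$ is an additive colimit-preserving endofunctor of $\bim=\lmod{A^e}$, so Watts' theorem produces an $A^e$-bimodule $B$ and a natural isomorphism $T\cong B\boxtimes -$. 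Transporting the monad structure $(\mu,\eta)$ along this isomorphism yields an $A^e$-ring structure on $B$ by the correspondence recalled just after Definition~\ref{Dalgebroid}.

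Next, I would establish that $G$ is monadic, i.e.\ that the comparison functor $K:\ct{C}\to \bim^T$ (with $U_T K=G$ and $KF=F_T$) is an equivalence. Since $\ct{C}$ is abelian and $G$ has a left adjoint, $G$ is a right adjoint, hence preserves all limits; the added hypothesis that $T$ has a right adjoint makes $T=U_T F_T$ cocontinuous, so $U_T$ creates all colimits (cf.\ the remarks after Corollary~\ref{CHpfColim}). The plan is then to verify the hypothesis of Beck's theorem: for a parallel pair $f,g:X\rightrightarrows Y$ whose image under $G$ has a split coequalizer, the split structure lifts by $F$ to give a coequalizer in $\ct{C}$ (using that $\ct{C}$ is cocomplete as an abelian category), and $G$ preserves this coequalizer because both the split structure and $T$ itself preserve the relevant diagram. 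Faithfulness/conservativity of $G$ in the additive setting follows from the fact that an $A^e$-bimodule map $Gf=0$ forces $f=0$ via the counit of $F\dashv G$ once one knows $F$ is generated in a suitable sense; this is the step I expect to be delicate.

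The main obstacle is exactly this monadicity step: neither hypothesis directly tells us that $G$ reflects isomorphisms or preserves the required coequalizers, and one has to extract both properties from the Eilenberg-Watts presentation of $T$ together with the abelian structure of $\ct{C}$ and the existence of the right adjoint to $T$. Once this is cleared, the remainder is bookkeeping: composing with the equivalence $\ct{C}\simeq \lmod{B}$ reduces the statement to the correspondence of Theorem~\ref{TBimonad} and Theorem 5.9 of \cite{bohm2018hopf}.

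Finally, to conclude the (Hopf) monoidal characterisation: by Lemma~\ref{LemComonoidal}, $G$ being strong monoidal is equivalent to the adjunction $F\dashv G$ being comonoidal, which endows $T=GF$ with a bimonad structure; under the identification $T\cong B\boxtimes-$, this is precisely a left $A$-bialgebroid structure on $B$. For the closed variant, Theorem~\ref{ThmHopfClsd} identifies Hopf monad structures on $T$ with lifts of the closed structure of $\bim$ along $U_T$; translating through the equivalence $\ct{C}\simeq \lmod{B}$ and using the explicit $B$-actions on inner-homs displayed in \eqref{EqClosHpf}, these correspond to the invertibility of the Galois maps $\beta$ and $\vartheta$ of Definition~\ref{DHgebroid}, i.e.\ to $B$ being a Schauenburg Hopf algebroid.
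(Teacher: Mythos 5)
The paper does not actually prove this statement: Theorem~\ref{TSzl} is quoted from \cite{szlachanyi2003monoidal}, and the surrounding text only records the Eilenberg--Watts reduction $GF\cong B\boxtimes -$ and the remark that invertibility of $\beta$ and $\vartheta$ matches the left and right Hopf conditions. Your skeleton --- Eilenberg--Watts to produce the $A^{e}$-bimodule $B$, transport of the monad structure to an $A^{e}$-ring structure, Lemma~\ref{LemComonoidal} plus the correspondences of Section~\ref{SBial} to match the comonoidal structure with an $A|A$-coring structure and the bimonad axioms with the bialgebroid axioms, and Theorem~\ref{ThmHopfClsd} together with \eqref{EqClosHpf} to match the closed condition on $G$ with the invertibility of $\beta$ and $\vartheta$ --- is exactly the standard route, and those portions of your argument are sound bookkeeping.

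The gap you flag at the monadicity step is, however, genuine and cannot be closed from the hypotheses as transcribed. Nothing in ``$G$ additive with a left adjoint, $GF$ has a right adjoint, $G$ strong monoidal'' forces $G$ to be monadic or even conservative. For instance, take $\ct{C}=\bim\times\ct{D}$ for any nontrivial abelian monoidal category $\ct{D}$, with the componentwise monoidal structure, and let $G$ be the projection onto the first factor: $G$ is additive and strict monoidal, has left adjoint $F(X)=(X,0)$, and $GF=\id$ certainly admits a right adjoint, yet $\ct{C}$ is not equivalent to $\lmod{A^{e}}$. So the ``only if'' direction fails outright without an extra hypothesis, and your plan to extract conservativity of $G$ and Beck's coequalizer condition from the counit of $F\dashv G$ cannot succeed in this generality. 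The repair is either to add monadicity of $G$ (equivalently, that the comparison functor $K:\ct{C}\rightarrow \bim^{GF}$ of Theorem~\ref{TBecksThm} is an equivalence) as a hypothesis, or to weaken the conclusion to a statement about the Eilenberg--Moore category of $GF$ --- which is precisely how \cite{bruguieres2011hopf} phrases the result: colimit-preserving bimonads (respectively Hopf monads) on $\bim$ correspond to left bialgebroids (respectively Schauenburg Hopf algebroids) over $A$. Once monadicity is granted, the remainder of your argument goes through as written.
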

This result was translated into the language of Hopf monads in \cite{bruguieres2011hopf} and simply states that colimit-preserving (Hopf monad) bimonads on $\bim$ correspond to (Schauenburg Hopf algebroids) left bialgebroids over $A$. 
\begin{rmk}\label{RweakHpf} We should warn the reader that the notion of a Hopf algebroid over an algebra $A$ described here is not the same as the notion of a \emph{weak Hopf algebra}. However, any weak Hopf algebra structure defined over a field $\field$ is equivalent to a Hopf algebroid structure over a separable Frobenius $\field$-algebra $A$. The details of this correspondence are explained in Chapter 6 of \cite{bohm2018hopf}. 
\end{rmk}

\section{Combining Hopf monads and Bimonads}\label{SCombine}
In this section, we review two ways in which we can combine Hopf monads to obtain new Hopf monads: Bosonisation or cross product, and distributive laws. 
\subsection{Bosonisation and Cross Products}\label{SBoz}
In this section we review the theory of cross products or bosonisation for Hopf monads which describes how we can compose a Hopf monad with a secondary Hopf monad on its module category. This in turn generalises Radford's biproduct construction and Majid's bosonisation in the theory of ordinary Hopf algebras. 

Assume $T$ is a monad on $\ct{C}$ and $P$ is a monad on $\ct{C}^{T}$. Then we can consider the composition of the following adjunctions
$$\xymatrix{\left(\ct{C}^{T}\right)^{P}\ar@/_1pc/[r]_{U_{P}} &\ct{C}^{T}\ar@/_1pc/[l]_{F_{P}}  \ar@/_1pc/[r]_{U_{T}}&\ct{C}\ar@/_1pc/[l]_{F_{T}}} $$
In this situation, the endofunctor $U_{T}PF_{T}$ is called the \emph{cross product} of $T$ by $P$ and is denoted by $P\rtimes T$. If $\eta$ and $\epsilon$ denote the unit and counit of the adjunction $F_{T}\dashv U_{T}$ and $\eta '$ and $\mu'$ denote the unit and multiplication providing the monad structure on $P$, then the monad structure $(P\rtimes T,q,\nu)$ is defined as follows 
\begin{equation}\label{EqCrossMonads}
q=(U_{T}\mu '_{F_{T}})(U_{T}P\epsilon_{ PF_{T}}):U_{T}PF_{T}U_{T}PF_{T}\rightarrow U_{T}PF_{T}, \quad \nu = (U_{T}\eta '_{F_{T}} )\eta\end{equation} 
Observe that although $P\rtimes T$ is the corresponding monad of the adjunction $F_{P}F_{T}\dashv U_{T}U_{P}$, which is the composition of two monadic adjunctions, $F_{P}F_{T}\dashv U_{T}U_{P}$ itself is not necessarily monadic. In other words the comparison functor $K: \left(\ct{C}^{T}\right)^{P}\rightarrow \ct{C}^{P\rtimes T}$ might not be an equivalence. 
\begin{thm}[Proposition 5.1 \cite{barr2000toposes}]\label{ThmCrs} If $T$ is a monad on $\ct{C}$ and $P$ a monad on $\ct{C}^{T}$ which preserves reflexive coequalizers then the adjunction $F_{P}F_{T}\dashv U_{T}U_{P}$ is monadic. Moreover, the comparison functor $K: \left(\ct{C}^{T}\right)^{P}\rightarrow \ct{C}^{P\rtimes T}$ is an isomorphism of categories. 
\end{thm}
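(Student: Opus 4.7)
The plan is to establish the stronger statement that the comparison functor $K$ is an isomorphism of categories, from which the monadicity of $F_P F_T \dashv U_T U_P$ follows immediately. Rather than verifying Beck's theorem (Theorem~\ref{TBecksThm}) for the composite adjunction directly --- which would require producing $U_T U_P$-split coequalizers in $(\ct{C}^T)^P$ and handling delicate absoluteness arguments --- I would construct an inverse functor $L: \ct{C}^{P \rtimes T} \to (\ct{C}^T)^P$ explicitly. The hypothesis that $P$ preserves reflexive coequalizers is precisely what makes this construction possible, since it allows the canonical presentation of a $T$-module as a reflexive coequalizer of free modules to be transported through $P$.

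Given $(X, r) \in \ct{C}^{P \rtimes T}$ with $r: U_T P F_T X \to X$, I would first equip $X$ with a $T$-module structure by setting
\begin{equation*}
s := r \circ U_T \eta'_{F_T X} : TX \longrightarrow X,
\end{equation*}
whose axioms follow from those of $r$ via the formulas \eqref{EqCrossMonads} defining the unit $\nu$ and multiplication $q$ of $P \rtimes T$. I would then exploit the standard reflexive coequalizer presentation
\begin{equation*}
F_T T X \overset{\mu_X}{\underset{T s}{\rightrightarrows}} F_T X \xrightarrow{\;s\;} (X, s)
\end{equation*}
in $\ct{C}^T$, whose reflexivity is witnessed by the common section $T \eta_X$. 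By hypothesis, $P$ sends this to a coequalizer $P F_T T X \rightrightarrows P F_T X \to P(X, s)$ in $\ct{C}^T$. Interpreting $r$ as a morphism $P F_T X \to (X, s)$ in $\ct{C}^T$ and verifying that it coequalizes the transported pair, the universal property yields a unique $P$-action $\rho: P(X, s) \to (X, s)$, and I would set $L(X, r) := ((X, s), \rho)$, extending to morphisms in the obvious way.

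The hard part will be the diagrammatic verifications behind this construction: checking that $r$ is indeed a $T$-module map $P F_T X \to (X, s)$, that it coequalizes $P \mu_X$ and $P(Ts)$, and that the resulting $\rho$ satisfies the associativity and unit axioms of a $P$-action. Each of these reduces to the $P \rtimes T$-module axioms for $r$ combined with the naturality of $\eta'$, $\mu'$, and the counit $\epsilon: F_T U_T \Rightarrow \id_{\ct{C}^T}$, but unwinding \eqref{EqCrossMonads} makes the computations dense. Once $L$ is shown to be functorial, the identities $LK = \id$ and $KL = \id$ will follow from the uniqueness clause of the relevant coequalizers and from the fact that both composites act as the identity on the underlying object in $\ct{C}$.
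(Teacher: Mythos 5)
The paper does not actually prove this statement --- it is quoted from Barr--Wells \cite{barr2000toposes} without proof --- so there is no internal argument to compare against. Your proposal is correct, and it takes the ``direct'' route: rather than verifying Beck's criterion (Theorem~\ref{TBecksThm}) for the composite right adjoint $U_TU_P$, as the cited reference does, you build the quasi-inverse $L$ of the comparison functor by hand from the canonical reflexive presentation $F_TTX\rightrightarrows F_TX\to (X,s)$ of a $T$-module. This buys you the stronger conclusion (an isomorphism of categories, on the nose) immediately, and it isolates exactly where the hypothesis on $P$ enters; the price is the dense unwinding of \eqref{EqCrossMonads} that you already anticipate. I have checked the key steps and they all go through: $s=r\circ U_T\eta'_{F_TX}$ is a $T$-action (associativity follows from the module law $r\circ q_X=r\circ(P\rtimes T)(r)$ together with naturality of $\eta'$ and the unit laws of $P$); $r$ is a $T$-module map $PF_TX\to(X,s)$ and coequalizes $P\epsilon_{F_TX}$ and $PF_Ts$; and $L$ and $K$ are mutually inverse because both act as the identity on underlying objects and the relevant coequalizer maps are epic.

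Two points you should make explicit when writing this up. First, the unit and associativity axioms for $\rho$ are obtained by precomposing with $P(s)$, $PP(s)$ and $P(\epsilon_{PF_TX})$ and then cancelling; these maps are epimorphisms not for a generic reason but because each is $P$ applied to a \emph{reflexive} coequalizer (the pair $F_TTX\rightrightarrows F_TX$ has common section $F_T\eta_X$, and this reflexivity is preserved by $P$), so its image is again a coequalizer and hence epic --- this is a second, less visible, use of the hypothesis on $P$. Second, in verifying associativity one needs the intermediate identity $\rho\circ P(r)=r\circ\mu'_{F_TX}$, which is not literally the $P\rtimes T$-module axiom for $r$ (since $P(r)$ applies $P$ to $r$ as a morphism of $\ct{C}^T$, whereas $(P\rtimes T)(r)=U_TPF_T(U_Tr)$ inserts an extra $F_TU_T$); the two are reconciled by precomposing with $P(\epsilon_{PF_TX})$ and using naturality of $\epsilon$ in the form $s\circ F_TU_T(r)=r\circ\epsilon_{PF_TX}$. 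With these spelled out the argument is complete.
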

If $\ct{C}$ is monoidal and $T$ and $P$ are both bimonads then $U_{T}$ and $U_{P}$ are both strong monoidal and thereby so is their composition $U_{T}U_{P}$. Therefore the cross product of two bimonads also becomes a bimonad, where $P\rtimes T$ obtains a canonical comonoidal structure as the composition of comonoidal functors. In fact one can show that the composition of Hopf monads also satisfies the Hopf condition: 
\begin{thm}[Proposition 4.4 \cite{bruguieres2011hopf}]\label{ThmCrsHpf} The cross product of two (left or right) Hopf monads is also a (left or right) Hopf monad.
\end{thm}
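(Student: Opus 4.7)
The plan is to leverage Theorem~\ref{PropFus}, which identifies left Hopf bimonads with those bimonads whose free/forgetful adjunction is a left Hopf adjunction, and to reduce the statement to the observation that a composition of left Hopf adjunctions is again left Hopf. By the discussion preceding the theorem, the bimonad $P\rtimes T$ arises from the composite comonoidal adjunction $F_{P}F_{T}\dashv U_{T}U_{P}:(\ct{C}^{T})^{P}\leftrightarrows \ct{C}$, whose right adjoint $U_{T}U_{P}$ is strong monoidal since both $U_{T}$ and $U_{P}$ are. I will treat only the left Hopf case; the right case will follow by a symmetric argument with right fusion operators.

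The key step is to factor the left fusion operator $\overline{H}^{l}$ of the composite adjunction through the left fusion operators $\overline{H}^{l,T}$ and $\overline{H}^{l,P}$ of the two component adjunctions. Writing $\epsilon_{T}$ and $\epsilon'_{P}$ for their respective counits, the counit of the composite at $Y\in(\ct{C}^{T})^{P}$ is $(\epsilon'_{P})_{Y}\cdot F_{P}(\epsilon_{T})_{U_{P}Y}$, and the induced comonoidal structure on $F_{P}F_{T}$ decomposes as $(F_{P})_{2}(F_{T},F_{T})\cdot F_{P}((F_{T})_{2})$. Substituting these into \eqref{EqleftFusionAdj} and applying the naturality of $(F_{P})_{2}$ to the morphism $\id_{F_{T}X}\otimes(\epsilon_{T})_{U_{P}Y}$, I expect to obtain the factorisation
\begin{equation*}
\overline{H}^{l}_{X,Y}=\overline{H}^{l,P}_{F_{T}X,Y}\cdot F_{P}\left(\overline{H}^{l,T}_{X,U_{P}Y}\right).
\end{equation*}
Both factors are isomorphisms: $\overline{H}^{l,T}_{X,U_{P}Y}$ by the left Hopf hypothesis on $T$, and $\overline{H}^{l,P}_{F_{T}X,Y}$ by the left Hopf hypothesis on $P$, applying Theorem~\ref{PropFus} in each case. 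Since $F_{P}$ preserves isomorphisms, the factorisation shows $\overline{H}^{l}_{X,Y}$ is invertible for all $X\in\ct{C}$ and $Y\in(\ct{C}^{T})^{P}$.

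To conclude, I will transport this invertibility to the fusion operator $H^{l}$ of the bimonad $P\rtimes T$ itself using the identity $H^{l}_{X,Y}=(U_{T}U_{P})_{2}(F_{P}F_{T}X,F_{P}F_{T}Y)\cdot U_{T}U_{P}(\overline{H}^{l}_{X,F_{P}F_{T}Y})$ recalled just before Theorem~\ref{PropFus}, which holds for any adjunction realising a given bimonad. The strong monoidality of $U_{T}U_{P}$ makes $(U_{T}U_{P})_{2}$ an isomorphism, and applying $U_{T}U_{P}$ to an isomorphism yields an isomorphism; hence $H^{l}$ is invertible and $P\rtimes T$ is left Hopf. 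The only non-routine step I foresee is the careful bookkeeping in verifying the factorisation identity for $\overline{H}^{l}$; beyond that, the argument amounts to functorial preservation of isomorphisms and the observation that the monoidal comparison maps of a strong monoidal functor are invertible.
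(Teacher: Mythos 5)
Your proposal is correct and follows essentially the same route as the paper: the sketch of proof given for Theorem~\ref{ThmCrsHpf} establishes precisely the factorisation $\overline{H}^{l}_{X,Y}=\overline{H}^{l,P}_{F_{T}X,Y}\circ F_{P}\bigl(\overline{H}^{l,T}_{X,U_{P}Y}\bigr)$ via the commuting rectangle \eqref{EqCrossFusionOp}, and then concludes invertibility exactly as you do. Your final paragraph transporting the result from the Hopf adjunction to the bimonad is a step the paper handles by the general discussion preceding Theorem~\ref{PropFus}, so nothing is missing.
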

\begin{sketchproof} Let us assume that $T$ and $P$ are both left Hopf monads and denote the units and counits of adjunctions $F_{T}\dashv U_{T}$ and $F_{P}\dashv U_{P}$ by $\eta$, $\epsilon$ and $\eta'$, $\epsilon'$, respectively. We observe that the left fusion operator of the adjunction $F_{P}F_{T}\dashv U_{T}U_{P}$ appears in the left edge of the commuting rectangle below:
\begin{equation}\label{EqCrossFusionOp}
\xymatrix@R+0.3cm@C+4pc{F_{P}F_{T}( X\otimes U_{T}U_{P}(Y))\ar[d]|-{(F_{P})_{2}(F_{T}(X), F_{T}U_{T}U_{P}(Y))(F_{T})_{2}(X, U_{T}U_{P}(Y))} \ar[r]^-{F_{P}(F_{T})_{2}(X, U_{T}U_{P}(Y))} &F_{P}(F_{T}( X)\otimes F_{T}U_{T}U_{P}(Y))\ar[d]|-{F_{P}\left(\id_{F_{T}(X)}\otimes\epsilon_{U_{P}(Y)}\right)} 
\\ F_{P}F_{T}( X) \otimes F_{P}F_{T}U_{T}U_{P}(Y)  \ar[d]|-{(\id_{F_{P}F_{T}(X)}\otimes \epsilon '_{Y}F_{P}\epsilon_{U_{P}(Y)})} & F_{P}(F_{T}( X)\otimes U_{P}(Y))\ar[d]|-{(F_{P})_{2}(F_{T}( X), U_{P}(Y))}
\\ F_{P}F_{T}( X) \otimes Y & F_{P}F_{T}(X)\otimes F_{P}U_{P}(Y) \ar[l]_-{\id_{F_{P}F_{T}(X)}\otimes \epsilon '_{Y}} }
\end{equation}
Since the above diagram commutes, the left fusion operator of the composed adjunction can be written as the composition of the two fusion operators of $T$ and $P$. Hence, if $T$ and $P$ both have invertible left fusion operators, then so does $P\rtimes T$. A symmetric argument can be applied to the right fusion operator. \end{sketchproof}

This notion of cross product for Hopf monads generalises Radford's biproduct and Majid's bosonisation for ordinary Hopf algebras. Given a bialgebra $(H,m_{H},1_{H},\Delta_{H},\epsilon_{H})$ and an $H$-module algebra $A$ i.e. an algebra $((A,\triangleright), m_{A},1_{A})$ in the category of left $H$-modules $\lmod{H}$, we can form a new algebra on the vector space $A\tn_{\field} H$ called the \emph{cross product algebra} $A\rtimes H$. This structure is exactly what we obtain by composing monads $T=H\otimes_{\field} - $ on $\Vecs$ and $P=A\otimes_{\field} - $ on $\lmod{H}$, where we obtain a monad structure on $P\rtimes T= A\otimes_{\field} H \otimes_{\field} -$ and an algebra structure on $A\tn_{\field} H$ with $1_{A}\tnK 1_{H}$ as its unit and its multiplication defined by
\begin{align*}
(m_{A}\tn\id_{H})(\id_{A}\tn \triangleright \tn m_{H})( \id_{A\tn H} \tn \Psi_{H,A}\tn \id_{H})(\id_{A}\tn \Delta \tn \id_{A\tn H}) 
\end{align*}
where $\triangleright :H\tnK A\rightarrow A$ denotes the $H$-module action on $A$ and $\Psi$ the symmetry on $\Vecs$.

Now let us assume $(H,m_{H},1_{H},\Delta_{H},\epsilon_{H},S_{H})$ is a Hopf algebra and that the category of $H$-modules is braided (in this case $H$ has a quasitriangular structure, see Section~\ref{SQuasitrig}). In this case, one can pick an $H$-module Hopf algebra $((A,\triangleright), m_{A},1_{A}, \Delta_{A},\epsilon_{A}, S_{A})$ i.e. a Hopf algebra in the braided category of $H$-modules, and as previously discussed $P=A\otimes_{\field} - $ will have a natural left Hopf monad structure. By Theorem~\ref{ThmCrsHpf} the bimonad $P\rtimes T= A\tn_{\field} H \otimes_{\field} -$ will also be left Hopf. Hence, we obtain a new Hopf algebra $A\bosonleft H$ where the coalgebra and antipode, now use the braiding $\Psi'$ of the category $\lmod{H}$:
\begin{align*}
\Delta_{A\bosonleft H}:=& (\id_{A}\tnK \Psi'_{A,H}\tnK \id_{H})(\Delta_{A}\tnK \Delta_{H})
\\S_{A\bosonleft H}:=& (\triangleright\tnK\id_{H})(\id_{H}\tnK \Psi_{H,A})(\Delta_{H}\tnK \id_{A})\Psi'_{A,H}(S_{A}\tnK S_{H})
\end{align*}
This procedure is called \emph{bosonisation} and was introduced by Majid in \cite{majid1994cross}. 

Even if the category of $H$-modules is not necessarily braided, but $(A,\tau)$ has a central Hopf algebra structure in $\lmod{H}$ or equivalently $A$ is a braided Hopf algebra in $\Yetter{H}$, then we obtain a natural left Hopf monad $P= A\tn_{\tau}-$ on $\lmod{H}$. By Theorem~\ref{ThmCrsHpf} we again obtain a Hopf algebra structure $A\bosonleft H$ on $A\tn H$, where the coalgebra and antipode are defined in the same way but the braiding $\Psi'_{A,H}$ is replaced with the braiding $\tau_{H}$. This construction is referred to as \emph{Radford's biproduct} and first appeared in \cite{radford1985structure} without any reference to braided Hopf algebras. Its interpretation in terms of braided Hopf algebras appeared in the Appendix of \cite{majid1993sklyanin}. While from the monadic point of view, the constructions look exactly the same, in bosonisation the braidings arise from the braiding on $\lmod{H}$ via a quasitriangular structure on $H$, while in Radford's biproduct the braiding $\tau$ can be written in terms of the $H$-comodule structure on $A$ making $A$ a Yetter-Drinfeld module and the formulas take a very different look in this way. We refer the reader to Section 9.4 of \cite{majid2000foundations} for further details on these constructions.
\begin{ex}[Cross product for Hopf algebroids]\label{EAlgebroidBoson} Consider a Hopf algebroid $H$ over a base algebra $A$ and its corresponding Hopf monad $T=H\boxtimes -$ on $\bim$. By the described theory of cross products, we can define an analogous procedure for Hopf algebroids if we are given a braided Hopf algebra $B$ in the center of $\lmod{H}$. This provides us with a Hopf monad $B\tn_{A} -$ on $\lmod{H}$ and, by Theorem~\ref{ThmCrsHpf}, we can compose these monads to obtain a new Hopf monad on $\bim$, which itself will correspond to a new Hopf algebroid over $A$, by Theorem~\ref{TSzl}. Let us briefly describe the new Hopf algebroid structure on $B\tn_{A} H$. 

First, we recall from Proposition 4.4 \cite{schauenburg2000algebras} that the lax left dual $\mathcal{Z}_{l,\lax}( \lmod{H})$ can be identified with the category of left \emph{Yetter-Drinfeld} modules over $H$, which are defined in a completely analogous way to classical Hopf algebras. First recall that over a bialgebroid $H$, a left $H$-\emph{comodule structure} means a left $A$-module $B$ morphism $\delta:{}_{A}B \rightarrow |H\tn_{A} B$ which atisfies $ \Delta (b_{(-1)}) \tn_{A} b_{(0)} = b_{(-1)}\tn_{A}\delta(b_{(0)})$ and $\epsilon_{H}(b_{(-1)}).b_{(0)}=b$, where we denote $\delta(b)= b_{(-1)}\tn_{A} b_{(0)}$. The left $A$-module $B$ obtains a natural $A$-bimodule structure with its right $A$-action defined by $ma= \epsilon_{H} (h_{(-1)}a).h_{(0)}$ so that $\delta$ factors through 
$$H\times_{A} B= \lbrace \sum_{i} h_{i}\tn_{A} b_{i}\in |H\tn_{A} B\mid \sum_{i} h_{i}\ov{a}\tn_{A} b_{i}= \sum_{i} h_{i}\tn_{A} b_{i}a \ \text{for}\ \forall a\in A\rbrace $$ A Yetter-Drinfeld module over $H$ is an $A$-bimodule $B$ with a left $H$-action $\triangleright$ and a compatible left $H$-coaction $\delta$ satisfying $h_{(1)}.b_{(-1)}\tn_{A} h_{(2)}\triangleright b_{(0)}=(h_{(1)}\triangleright b)_{(-1)}. h_{(2)}\tn_{A}(h_{(1)}\triangleright b)_{(0)}  $. The natural braiding obtained for such a Yetter-Drinfeld module is defined by $b\tn_{A} m \mapsto b_{(-1)}\triangleright' m\tn_{A} b_{(0)}$ where $m\in M$, for arbitrary $(M,\triangleright' )$ in $\lmod{H}$. We refer the reader to Section 4 of \cite{schauenburg2000algebras} for more details. 

Assume $(B,\triangleright, \delta) $ is equipped with a braided Hopf algebra structure in $\mathcal{Z}_{l,\lax}( \lmod{H})$ via $(\Delta_{B}, \epsilon_{B},S_{B})$. Note that these maps are all $A$-bimodule maps and respect the relevant $H$-actions and $H$-coactions. Additionally, the codomain of $\epsilon_{B}$ is the trivial bimodule $A$ which acts as the unit of $\bim$ and $\mathcal{Z}_{l,lax}( \lmod{H})$. By Theorem~\ref{ThmCrsHpf}, we obtain a Hopf algebroid structure on $B\tn_{A} {}_{\mathsf{s}} H$ where ${}_{\mathsf{s}} H$ denotes the left $A$-module structure of $|H$ and the $A^{e}$-bimodule structure on $B\tn_{A} {}_{\mathsf{s}} H$ is defined by $a_{1} \ov{a_{2}} (b\tn_{A} h)a_{3}\ov{a_{4}} = a_{1}b\tn_{A} \ov{a_{2}}h a_{3}\ov{a_{4}}$. Furthermore, the element $1_{B}\tn_{A}  1_{H}$ acts as the unit and the other structural morphisms can be obtained as follows: 
\begin{align*}
(b\tn_{A} h).(b'\tn_{A} h') =& b.(h_{(1)} \triangleright b')\tn_{A}  h_{(2)}.h' 
\\ (b\tn_{A} h)_{(1)}\tn_{A} (b\tn_{A} h)_{(2)}=& (b_{(1)}\tn_{A} b_{(2)(-1)}.h_{(1)} )\tn_{A} (b_{(2)(0)} \tn_{A}h_{(2)}) 
\\ \epsilon (b\tn_{A} h) =& \epsilon_{B}(b). \epsilon_{H}(h) 
\end{align*}
Similarly, the inverse of the left canonical map is obtained by observing \eqref{EqCrossFusionOp}: 
\begin{align*}
 (b\tn_{A}h)_{(+)} \tn_{A^{\op}}(b\tn_{A}h)_{(-)}= (b_{(1)}\tn_{A} b_{(2)(-1)}.h_{(+)} )\tn_{A^{\op}} (S_{B}( b_{(2)(0)}) \tn_{A} h_{(-)})
\end{align*}
With any result regarding Hopf algebroids, one needs to check that all the maps and compositions defined behave well with regard to the various $A$ and $A^{\op}$ actions. But in our case, since we are simply writing out the relevant morphisms after regarding the structures as Hopf monads, Theorem~\ref{ThmCrsHpf} guarantees that the morphisms will be well-defined with respect to the relevant actions. We will denote the obtained Hopf algebroid by $B\bosonleft H$. 
\end{ex}
\subsection{Distributive Laws}\label{SDist}
In this section we review the notion of distributive laws and how one can compose two (Hopf) bimonads with a distributive law between them. This construction can be viewed as a generalisation of the tensor product of two braided Hopf algebras.

Distributive laws were introduced by Beck in \cite{beck1969distributive} and determine when a monad (or an adjunction, as seen in Section~\ref{SHpfMndClosed}) can lift to the Eilenberg-Moore category of another monad. We recall the basic theory of distributive laws without providing proofs and refer the reader to \cite{wisbauer2008algebras} for detailed references and historical notes on the topic. Throughout this section $(T,\mu , \eta)$ and $(S, \nu , \iota )$ will denote a pair of monads on $\ct{C}$. 

A \emph{distributive law} or \emph{entwining} from a monad $T$ to a monad $S$ is a natural transformation $\lambda : TS \rightarrow ST$  such that the following diagrams commute
$$ \xymatrix{ &T\ar[dr]^{\iota_{T}} \ar[dl]_{T\iota}  & & TS\ar[rr]^{\lambda} &&ST& \\ 
TS\ar[rr]^{\lambda}& &ST & &S\ar[ul]^{\eta_{S}}\ar[ur]_{S\eta} & & }$$ 
$$\xymatrix{TTS\ar[r]^{T\lambda}\ar[d]^{\mu_{S}} &TST\ar[r]^{\lambda_{T}} &STT\ar[d]^{S\mu} & &TSS\ar[r]^{\lambda_{S}} \ar[d]^{T\nu} &STS\ar[r]^{S\lambda} &SST\ar[d]^{\nu_{T}}
\\ TS\ar[rr]^{\lambda} &&ST & & TS \ar[rr]^{\lambda}& &ST} $$ 
Such a distributive law $\lambda :TS\rightarrow ST$ from a monad $T$ to a monad $S$, allows $S$ to lift to $\ct{C}^{T}$ as $(\hat{S}, \hat{\nu}, \hat{\iota})$, where $\hat{S}: \ct{C}^{T}\rightarrow \ct{C}^{T} $, $\hat{\nu}$ and $ \hat{\iota}$ are defined as
$$\hat{S} (M,r) = (S(M), (Sr)\lambda_{M}: TS(M)\rightarrow ST(M) \rightarrow S(M) ), \quad \hat{\nu}_{(M,r)}=\nu_{M}, \quad \hat{\iota}_{(M,r)}=\iota_{M}  $$
for any $T$-module $(M,r)$, such that $U_{T}\hat{S}=SU_{T}$. The objects of $\left( \ct{C}^{T} \right)^{\hat{S}}$ have an equivalent description as triples $(M,r,\rho )$ where $M$ is an object of $\ct{C}$, $r:T(M)\rightarrow M$ is a $T$-action and $\rho :S(M)\rightarrow M$ is a $S$-action satisfying 
$$\xymatrix{TS(M)\ar[d]^{T\rho}\ar[rr]^{\lambda_{M}} &&ST(M)\ar[d]^{Sr}\\ T(M) \ar[r]^{r}& M &S(M)\ar[l]_{\rho} } $$
Note that the theory is not symmetric and that we cannot necessarily lift $T$ to $\ct{C}^{S}$ via $\lambda$. For this we require another distributive law $\xi :ST\rightarrow TS$. In particular, if $\lambda$ is invertible then we can lift $T$ via $\xi =\lambda^{-1}$ and in this case  $\left( \ct{C}^{T} \right)^{\hat{S}} \cong \left( \ct{C}^{S} \right)^{\hat{T}}$ since for any triple $(M,r,\rho )$  the conditions $\rho (Sr)\lambda_{M}= r (T\rho )$ and $\rho (Sr)= r (T\rho) \xi_{M}$ become equivalent by $\xi=\lambda^{-1}$.

Given a distributive law $\lambda :TS\rightarrow ST$, the endofunctor $ST$ obtains a natural monad structure via multiplication $\nu (SS\mu )(S\lambda_{T})$ and unit $\iota_{T}\eta$. We will follow the notation of \cite{bruguieres2011hopf} and denote this monad by $S\circ_{\lambda} T$. One can directly show that the monads $\hat{S}\rtimes T$ and $S\circ_{\lambda}T $ are isomorphic and that $\ct{C}^{\hat{S}\rtimes T}\cong\ct{C}^{S\circ_{\lambda} T}$. Furthermore, when $T$ and $S$ are bimonads the following observation can be made:

\begin{lemma}\label{LDist} If $T,S$ and $\lambda$ are as above and $T$ and $S$ carry bimonad structures such that $\lambda: TS\rightarrow ST$ is a comonoidal natural transformation, then $S\circ_{\lambda} T$ also becomes a bimonad.
\end{lemma}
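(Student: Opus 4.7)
The plan is to exhibit a comonoidal structure on the composite endofunctor $ST$ that turns the distributive-law monad $S\circ_\lambda T$ into a bimonad. Since $T$ and $S$ are bimonads with comonoidal structures $(T_2,T_0)$ and $(S_2,S_0)$, the composite $ST$ inherits the canonical comonoidal structure obtained by composing comonoidal functors (see Section~\ref{SMonoidal}), namely
\begin{equation*}
(ST)_2(X,Y):=S_2(TX,TY)\,S\bigl(T_2(X,Y)\bigr),\qquad (ST)_0:=S_0\,S(T_0).
\end{equation*}
With this structure fixed, checking the bimonad axioms \eqref{EqBimonad1}--\eqref{EqBimonad4} for $S\circ_\lambda T$ reduces to showing that its unit $\iota_T\eta$ and its multiplication $\nu_T(SS\mu)(S\lambda_T)$ are comonoidal natural transformations; the purely monadic axioms are already delivered by Beck's general theory of distributive laws.

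For the unit, comonoidality of $\iota_T\eta:\id_\ct{C}\Rightarrow ST$ is immediate from the horizontal composition of the comonoidal natural transformations $\eta:\id_\ct{C}\Rightarrow T$ and $\iota:\id_\ct{C}\Rightarrow S$ provided by the bimonad structures on $T$ and $S$. For the multiplication, I would decompose it into three stages
\begin{equation*}
STST\xrightarrow{\,S\lambda_T\,}SSTT\xrightarrow{\,SS\mu\,}SST\xrightarrow{\,\nu_T\,}ST,
\end{equation*}
equipping every intermediate functor with the induced composition comonoidal structure. The arrow $\nu_T$ is comonoidal because $\nu:SS\Rightarrow S$ is (by the bimonad structure on $S$), and whiskering on the right by the comonoidal functor $T$ preserves comonoidality. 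Similarly $SS\mu$ is comonoidal because $\mu:TT\Rightarrow T$ is (by the bimonad structure on $T$), whiskered on the left by the comonoidal functor $SS$. The arrow $S\lambda_T$ is comonoidal precisely by the hypothesis that $\lambda$ is a comonoidal natural transformation; this is the unique point at which this hypothesis enters the argument. Since a composition of comonoidal natural transformations is again comonoidal, the multiplication of $S\circ_\lambda T$ is comonoidal.

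The main obstacle, although conceptually mild, lies in the bookkeeping needed to confirm that the composition comonoidal structure on $STST$ simultaneously agrees with its reading as $(ST)\circ(ST)$ (so that $\nu_T(SS\mu)(S\lambda_T)$ can be tested as a transformation between the bimonad functor $ST$ composed with itself and $ST$) and with the composition structure on $STST$ making $S\lambda_T$ comonoidal in the sense of Section~\ref{SMonoidal}. Once this identification is made--a purely formal consequence of the associativity of composition of comonoidal functors--the remaining verification is a direct unpacking of the comonoidal conditions on $\mu$, $\nu$, and $\lambda$.
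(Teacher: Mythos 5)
Your proposal is correct and follows essentially the same route as the paper's sketch: equip $ST$ with the composite comonoidal structure $\bigl(S_{2}(T,T)\,S(T_{2}),\,S_{0}S(T_{0})\bigr)$, observe that the unit conditions are automatic from $T$ and $S$ being bimonads, and reduce the multiplication conditions to the comonoidality of $\mu$, $\nu$ and $\lambda$. Your three-stage factorisation of the multiplication through $S\lambda_{T}$, $SS\mu$ and $\nu_{T}$ is just a cleaner organisation of the same verification that the paper phrases as checking conditions \eqref{EqBimonad1} and \eqref{EqBimonad3} directly against the two comonoidality identities for $\lambda$.
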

\begin{sketchproof} The endofunctor $ST$ has a natural comonoidal structure with $(S_{2})_{T\tn T} S(T_{2}) $ and $S_{0}S(T_{0})$. Now observe that for $S\circ_{\lambda} T$, conditions \eqref{EqBimonad2} and \eqref{EqBimonad4} automatically follow from $S$ and $T$ being bimonads. Conditions \eqref{EqBimonad1} and \eqref{EqBimonad3} then follow from $\lambda$ being a comonoidal natural transformation or equivalently satisfying $(\lambda \tn \lambda)(T_{2})_{S\tn S} T(S_{2})=(S_{2})_{T\tn T} ST_{2}\lambda_{-\tn -} $ and $ T_{0}T(S_{0})=S_{0}S(T_{0})\lambda_{\un}$. 
\end{sketchproof}

Instead of verifying the bimonad conditions on $S\circ_{\lambda} T$, one can simply observe that $\lambda$ being a comonoidal natural transformation implies that $(S_{2},S_{0})$ lift to a well-defined comonoidal structure on $\hat{S}$. Consequently, $\hat{S}$ becomes a bimonad and by the results of Section \ref{SBoz}, so does $\hat{S}\rtimes T = S\circ_{\lambda} T$.
\begin{thm}[Corollary 4.7 \cite{bruguieres2011hopf}]\label{PropMonDis}  If $T$ and $S$ are Hopf monads on a monoidal category $\ct{C}$ and $\lambda : TS\rightarrow ST$ a comonoidal distributive law from $T$ to $S$, then the lifted monad $\hat{S}$, on $\ct{C}^{T}$, has a Hopf monad structure and the composition $S\circ_{\lambda} T$ defines a Hopf monad on $\ct{C}$. 
\end{thm}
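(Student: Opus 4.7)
The plan is to reduce the statement to Theorem~\ref{ThmCrsHpf} via the isomorphism $S \circ_{\lambda} T \cong \hat{S} \rtimes T$ noted immediately before Lemma~\ref{LDist}. Once $\hat{S}$ is shown to be a Hopf monad on $\ct{C}^{T}$, the cross-product theorem applied to the Hopf monad $T$ on $\ct{C}$ and the Hopf monad $\hat{S}$ on $\ct{C}^{T}$ yields the Hopf monad structure on $S \circ_{\lambda} T$. The bimonad structure on $\hat{S}$ is already at hand: as indicated in the sketch of Lemma~\ref{LDist}, the hypothesis that $\lambda$ is comonoidal is precisely what is needed so that the comonoidal structure $(S_{2},S_{0})$ lifts to a comonoidal structure on $\hat{S}$ with the equality $U_{T}\hat{S}=SU_{T}$ holding as an equality of comonoidal functors, and with $\hat{\nu},\hat{\iota}$ comonoidal.

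The central observation is that the forgetful functor $U_{T}: \ct{C}^{T}\to \ct{C}$ is strict monoidal (Theorem~\ref{TBimonad}) and conservative, and that the fusion operators of $\hat{S}$ reduce under $U_{T}$ to those of $S$. Concretely, for $T$-modules $(M,r)$ and $(N,s)$, unwinding the definitions of $\hat{S}((M,r)\otimes \hat{S}(N,s))$ and $\hat{S}(M,r)\otimes \hat{S}(N,s)$ using $U_{T}\hat{S}=SU_{T}$ and the strict monoidality of $U_{T}$, I would check that
\begin{equation*}
U_{T}\bigl(H^{l}_{\hat{S},\,(M,r),(N,s)}\bigr) = H^{l}_{S,\,M,N} : S(M\otimes SN)\longrightarrow SM\otimes SN,
\end{equation*}
and analogously for the right fusion operator. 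Since $S$ is left Hopf on $\ct{C}$, the right-hand side is invertible in $\ct{C}$. Because $U_{T}$ is conservative, the inverse, a priori only a morphism in $\ct{C}$, automatically lifts to a $T$-module morphism between the two objects of $\ct{C}^{T}$ in question, and hence $H^{l}_{\hat{S},(M,r),(N,s)}$ is invertible in $\ct{C}^{T}$. The symmetric argument (using that $S$ is right Hopf) handles the right fusion operator, so $\hat{S}$ is a Hopf monad on $\ct{C}^{T}$.

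Applying Theorem~\ref{ThmCrsHpf} to $T$ and $\hat{S}$ then produces the Hopf monad structure on $\hat{S}\rtimes T \cong S\circ_{\lambda}T$, completing the argument. The main obstacle I anticipate is the identification $U_{T}(H^{l}_{\hat{S}}) = H^{l}_{S}$, which is conceptually clear but requires a careful unpacking of how the lifted multiplication $\hat{\nu}$ and comonoidal structure $\hat{S}_{2}$ are defined in terms of $\nu$, $S_{2}$ and the distributive law $\lambda$; once the comonoidal naturality of $\lambda$ is used to commute $\lambda$ past $S_{2}$, both composites match on the nose, and conservativity of $U_{T}$ does the rest.
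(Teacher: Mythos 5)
Your proposal is correct and follows essentially the same route as the paper's own sketch: identify the image under the strict monoidal, conservative functor $U_{T}$ of the fusion operators of $\hat{S}$ with the fusion operators of $S$, conclude by conservativity that $\hat{S}$ is left (right) Hopf exactly when $S$ is, and then apply Theorem~\ref{ThmCrsHpf} to $\hat{S}\rtimes T\cong S\circ_{\lambda}T$. The only detail worth noting is that the anticipated ``main obstacle'' is exactly where the paper also leaves the verification implicit, so nothing is missing relative to the paper's argument.
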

\begin{sketchproof} By the observations made above its easy to see that the image of the left (right) fusion operator of $\hat{S}$ under $U_{T}$ becomes the left (right) fusion operator of $S$. Since $U_{T}$ is conservative, then $\hat{S}$ is left (right) Hopf if and only if $S$ is left (right) Hopf. Consequently, if $S$ is left (right) Hopf then by Theorem~\ref{ThmCrsHpf}, $\hat{S}\rtimes T= S\circ_{\lambda} T$ becomes left (right) Hopf. 
\end{sketchproof}

Composing Hopf monads via distributive laws can be viewed as a generalisation of tensoring Hopf algebras: If $(\ct{C},\Psi)$ is a braided category and $B$ and $C$ are two braided Hopf algebras in $\ct{C}$, then it is well-known that $C\tn B$ obtains a natural braided Hopf algebra structure by:
\begin{align*}
m_{C\tn B}:=(m_{C}\tn m_{B}&)(\id_{C}\tn \Psi_{B,C}\tn \id_{B}), \quad  \Delta_{C\tn B}:=(\id_{C}\tn \Psi_{C,B}\tn \id_{B})(\Delta_{C}\tn \Delta_{B})
\\\eta_{C\tn B}&:=\eta_{C}\tn\eta_{B},\quad\epsilon_{C\tn B}:=\epsilon_{C}\tn \epsilon_{B},\quad S_{C\tn B}:=S_{C}\tn S_{B}
\end{align*}
Viewing $B$ and $C$ as left Hopf monads via $T=B\tn- $ and $S=C\tn -$, we see that $\lambda=\Psi_{B,C}\tn -:TS\rightarrow ST$ defines a distributive law between the two monads. Moreover, $\lambda$ is comonoidal. All the necessary conditions simply follow because the structural morphisms of $B$ and $C$ respect the braiding of the category. It is then easy to see that the Hopf monad structure on $S\circ_{\lambda}T = C\tn B \tn -$ agrees with the described Hopf algebra structure on $C\tn B$.  

\section{Central Coalgebras and Hopf Monads}\label{SInducedCHpfMnd}
In this section we review the correspondence between Hopf monads and central cocommutative coalgebras, as presented in Section 6 of \cite{bruguieres2011hopf}. In summary, every (pre-)Hopf monad $T$ on a category $\ct{C}$ induces a central cocommutative coalgebra in $\ct{C}^{T}$. In the converse direction, for any central cocommutative coalgebra $(C,\tau)$ in a category $\ct{D}$, under suitable exactness conditions, $^{C}\ct{D}$ becomes monoidal and the adjunction $U_{C\otimes -}\dashv F_{C\otimes -}$ a comonoidal adjunction, inducing a Hopf monad on $^{C}\ct{D}$. Moreover, under additional assumptions discussed in Theorem~\ref{ThmBLVMain} these procedures are inverses to each other upto isomorphism. 
\subsection{From Central Coalgebras to Hopf Adjunctions}\label{SCC2Adj}
In this section we will recall how suitable central coalgebras $C$ in a category $\ct{D}$ induce a Hopf monads on their category of comodules ${}^{C}\ct{D}$.

Assume $(C,\Delta ,\epsilon)$ is a coalgebra in a monoidal category $\ct{D}$. The corresponding free/forgetful functors for the comonad $C\otimes -$, provide an adjunction $V\dashv R:\ct{D}\leftrightarrows \prescript{C}{}{\ct{D}} $, where the forgetful functor $V$ is left adjoint to the free functor $R$ defined by $R(M)=(C\tn M,\Delta\tn \id_{M})$. Hence, we obtain a monad $T=RV$ on $^{C}\ct{D} $. 

We say $(C, \tau )$ is a \emph{(lax) central coalgebra} if $\tau :C\otimes - \rightarrow -\otimes C$ is a (lax) braiding such that $(C, \tau )$ is a coalgebra in the (lax) center of $\ct{D}$ and thereby satisfies $(\tau \otimes \id_{C})(\id_{C}\otimes \tau )(\Delta \otimes \id_{\ct{D}})=(\id_{\ct{D}}\otimes \Delta)\tau $ and $ (\id_{\ct{D}}\otimes \epsilon)\tau=\epsilon \otimes \id_{\ct{D}}$. Furthermore, we say $C$ is a \emph{cocommutative (lax) central coalgebra} or (lax) \emph{CCC} if $\tau_{C}\Delta =\Delta$. 

In \cite{bruguieres2011hopf}, the authors discuss the conditions under which a CCC structure on $C$ provides a monoidal structure on the category of $C$-comodules. A lax CCC is called \emph{cotensorable} if for each pair of comodules $(M,\delta )$ and $ (N,\delta ')$, the pair 
\begin{equation}\label{EqCotensorable}\xymatrix@C+3pc{ M\otimes N\ar[r]_-{\id_{M}\otimes \delta '} \ar@<1ex>[r]^-{\tau_{M}\delta \otimes \id_{N}} &M\otimes C\otimes N}
\end{equation}
admits an equalizer and $C\otimes -$ preserves them. We denote the equalizer of this pair by $M\otimes_{C} N$. Note that such pairs are coreflexive via $ \id_{M}\tn \epsilon \tn \id_{N}$ and thereby if $\ct{D}$ admits CEs and $\tn$ preserves them, then any lax CCC is cotensorable. In particular, this theory generalises the theory of \emph{cotensors} for ordinary coalgebras in $\Vecs$ where $\tau$ is given by the symmetry of the category, see Section 10 of \cite{brzezinski2003corings}.
\begin{thm}[Theorem 6.4 \cite{bruguieres2011hopf}]\label{ThmCoten} If $C$ denotes a lax cotensorable CCC as above:  
\begin{enumerate}[label=(\Roman*),leftmargin=*]
\item The equalizers $M\otimes_{C} N$ have a natural $C$-coaction and the bifunctor $\otimes_{C}$ defines a monoidal structure on ${}^{C}\ct{D}$.
\item The free functor $R:\ct{D}\rightarrow {}^{C}\ct{D}$ is strong monoidal and $V\dashv R$ is a comonoidal adjunction.
\item The adjunction $V\dashv R$ is in fact left Hopf. If $\tau$ is invertible, then the adjunction is also right Hopf.
\end{enumerate}
\end{thm}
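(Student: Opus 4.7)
The plan is to treat the three parts in order, constructing the relevant data by the universal property of the equalizers and then verifying the axioms by invoking the central coalgebra axioms and the comodule axioms. Throughout, let $i_{M,N}\colon M\otimes_{C}N\hookrightarrow M\otimes N$ denote the equalizer morphism.

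For part (I), I would first define the coaction on $M\otimes_{C}N$ as the unique arrow $\delta\colon M\otimes_{C}N\to C\otimes(M\otimes_{C}N)$ satisfying $(\id_{C}\otimes i_{M,N})\delta=(\delta_{M}\otimes\id_{N})i_{M,N}$. Existence requires $(\delta_{M}\otimes\id_{N})i_{M,N}$ to factor through $C\otimes(M\otimes_{C}N)$, which is the equalizer of $C\otimes(-)$ applied to \eqref{EqCotensorable} by cotensorability; this factorisation follows from the comodule axiom for $\delta_{M}$, centrality of $\tau$ and cocommutativity $\tau_{C}\Delta=\Delta$. Coassociativity and counit for $\delta$ then follow by uniqueness from the corresponding properties of $\delta_{M}$. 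For the monoidal structure, the unit is $(C,\Delta)$, with unit isomorphisms $\lambda\colon C\otimes_{C}N\cong N$ and $\rho\colon M\otimes_{C}C\cong M$ induced by $\epsilon\otimes\id_{N}$ and $\id_{M}\otimes\epsilon$, invertible because of the counit axioms for $\delta_{M}$ and $\delta_{N}$ and the centrality condition $(\id\otimes\epsilon)\tau=\epsilon\otimes\id$. The associator is induced by the associator of $\otimes$ via universality once one checks the relevant equalizers match, which is a purely formal diagram chase.

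For part (II), I would define $R_{2}\colon R(M)\otimes_{C}R(N)\to R(M\otimes N)$ as the unique comodule map whose composition with the inclusion into $C\otimes M\otimes C\otimes N$ equals $\id_{C}\otimes\id_{M}\otimes\epsilon\otimes\id_{N}$, and construct a candidate inverse $R_{2}^{-1}$ induced by the arrow $(\id_{C}\otimes\tau_{M}\otimes\id_{N})(\Delta\otimes\id_{M\otimes N})\colon C\otimes M\otimes N\to C\otimes M\otimes C\otimes N$. The key check is that this candidate factors through the equalizer defining $R(M)\otimes_{C}R(N)$; this is where the centrality axiom \eqref{EqBrai1} for $(C,\tau)$ together with cocommutativity is used in an essential way. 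The identities $R_{2}R_{2}^{-1}=\id$ and $R_{2}^{-1}R_{2}=\id$ then reduce, on applying $i$, to the counit axioms for $\Delta$, the relation $(\id\otimes\epsilon)\tau=\epsilon\otimes\id$, and the equalizer condition satisfied by elements of $R(M)\otimes_{C}R(N)$. Given that $R$ is strong monoidal, Lemma~\ref{LemComonoidal} automatically upgrades $V\dashv R$ to a comonoidal adjunction.

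For part (III), the left fusion operator of the adjunction unpacks to $\overline{H}^{l}_{(X,\delta_{X}),Y}=(\id_{X}\otimes\epsilon\otimes\id_{Y})\circ i_{X,R(Y)}\colon X\otimes_{C}R(Y)\to X\otimes Y$, since the counit of $V\dashv R$ is $\epsilon\otimes\id_{Y}$ and the comonoidal structure of $V$ is the equalizer inclusion. I propose its inverse to be the unique arrow $X\otimes Y\to X\otimes_{C}(C\otimes Y)$ induced by $\tau_{X}\delta_{X}\otimes\id_{Y}\colon X\otimes Y\to X\otimes C\otimes Y$. That this factors through the equalizer uses the compatibility between $\delta_{X}$ and $\tau_{X}$ expressed by the axioms $(\Delta\otimes\id_{X})\delta_{X}=(\id_{C}\otimes\delta_{X})\delta_{X}$ together with the $\tau$-naturality and the central-coalgebra axiom. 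That the two composites are identities then follows from $(\id_{X}\otimes\epsilon)\tau_{X}\delta_{X}=(\epsilon\otimes\id_{X})\delta_{X}=\id_{X}$ and from reapplying the equalizer condition after projecting the middle $C$ by $\epsilon$. When $\tau$ is invertible, a symmetric construction using $\tau^{-1}$ produces the inverse of the right fusion operator, proving the adjunction is also right Hopf.

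The main obstacle is the factorisation arguments: constructing the coaction in (I), the inverse $R_{2}^{-1}$ in (II), and the inverse of the fusion operator in (III) each reduce to verifying a single equality between two arrows landing in $X\otimes C\otimes C\otimes Y$ (or the analogous object) and then invoking that the parallel pair is an equalizer. All three verifications hinge on the same core idea, namely that the braiding axioms for $(C,\tau)$ together with cocommutativity exactly match up the comultiplication of $C$ with the action of $\tau$; keeping track of which copy of $C$ gets comultiplied and which gets moved by $\tau$ is the delicate bookkeeping step.
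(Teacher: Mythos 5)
Your proposal is correct and follows essentially the same route as the paper's sketch: the coaction on $M\otimes_{C}N$ is induced from $\delta_{M}$ via the universal property of the $C\otimes(-)$-preserved equalizer, $R_{2}$ is the comparison map between the two equalizers $(C\otimes M)\otimes_{C}(C\otimes N)$ and $C\otimes M\otimes N$, and the inverse of $\overline{H}^{l}$ is induced by $\tau_{M}\delta\otimes\id$. The only cosmetic difference is that the paper dispatches the invertibility checks in (II) and (III) by exhibiting split equalizers of the same parallel pair, whereas you verify the factorisations and composite identities directly; both amount to the same computation.
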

\begin{sketchproof} For part (I), we observe that for a pair $(M,\delta )$ and $ (N,\delta ')$, the object $M\otimes_{C} N$ carries a natural $C$-comodule structure $t$, where $t$ is defined as the unique map satisfying
$$\xymatrix@C+0.5cm{M\otimes_{C} N \ar[r]^-{\pi} \ar@{-->}[dr]_{t}& M\otimes N \ar[r]^-{\delta\otimes \id_{M\otimes N}} & C\otimes M \otimes N
\\ &C\otimes M \otimes_{C} N \ar[ur]_{\id_{C}\otimes\pi}& } $$
where $\pi: M\tn_{C}N\rightarrow M\tn N$ is the equalizer of the pair \eqref{EqCotensorable}. Note that $t$ is determined uniquely because $\id_{C}\otimes\pi$ is an equalizer for the coreflexive pair $\id_{C}\otimes\tau_{M}\delta \otimes \id_{N}$ , $ \id_{C \tn M}\otimes \delta '$. It is then straightforward to show that $t$ defines a coaction.

For (II), first recall that the free functor $R$ takes an object $M$ in $\ct{D}$ to the free comodule $(C\otimes M, \Delta\otimes \id_{M} )$. Additionally, observe that the coproduct $\Delta$ forms a split equalizer for the pair $\Delta\otimes \id_{C}$ and $\id_{C}\otimes \Delta$, with sections $\id_{C}\tn \epsilon$ and $\id_{C\otimes C}\otimes\epsilon$. Hence, if we denote the equalizer of \eqref{EqCotensorable} for comodules $R(M)$ and $R(N)$ by $\pi'$, we obtain a natural isomorphism between two equalizers and a commutative diagram: 
$$\xymatrix@C-0.35cm{(C\otimes M)\otimes_{C} (C\otimes N) \ar[r]^-{\pi '} \ar@{-->}[d]_{\cong} &C\otimes M\otimes C\otimes N \ar[rrrrr]_-{\id_{C\otimes M}\otimes \Delta \otimes \id_{N}} \ar@<1ex>[rrrrr]^-{\tau_{C\tn M}(\Delta\otimes \id_{M} ) \otimes \id_{C\otimes N}}& &&&&C\otimes M\otimes C \otimes C \otimes N
\\  C\otimes M \otimes N \ar[ur]_{\hspace{1cm} (\id_{C}\tn \tau_{M})(\Delta\otimes \id_{M} ) \otimes \id_{N}}& & &} $$
Notice that $(C\otimes M)\otimes_{C} (C\otimes N)= R(M)\otimes_{C} R(N)$ and $C\otimes M \otimes N= R(M\otimes N)$ and thereby we obtain a natural strong monoidal structure on $R$. Of course one must also check whether the isomorphism respects the coactions, but this also follows easily. 

Now we demonstrate that $V\dashv R$ is left Hopf. Recall from Lemma~\ref{LemComonoidal} that a strong monoidal structure on $R$ provides a comonoidal structure on its left adjoint $V$. In the diagram below, the upper edges form the comonoidal structure of $V$ as described by \eqref{EqF2ComodAdj}:
$$\xymatrix@C+1.6pc{M\otimes_{C} N \ar[r]^-{\delta\otimes_{C}\delta '} \ar[dr]^{\pi}& (C\otimes M)\otimes_{C} (C\otimes N) \ar[r]^-{\cong} \ar[dr]^-{\pi '} & C\otimes M \otimes N  \ar[r]^-{\epsilon\otimes \id_{M\otimes N}}\ar[d]|-{\tau_{C\tn M}(\Delta\otimes \id_{M} ) \otimes \id_{N}}& M\otimes N 
\\ & M\otimes N \ar[r]^-{\delta\otimes\delta '} &C\otimes M\otimes C\otimes N\ar@/_1pc/[ur]_-{\ \epsilon\otimes \id_{M} \otimes \epsilon\otimes \id_{N}} & } $$ 
Since the diagram commutes, we can alternatively identify the comonoidal structure with the composition of the lower edges of the diagram. In particular, $V_{2}((M,\delta ), (N,\delta'))=\pi $ where $ V(M\otimes_{C} N, t)=M\otimes_{C} N $ and $V(M,\delta )\otimes V(N,\delta ') = M\otimes N$. Therefore, the left fusion operator of the adjunction $V\dashv R$ decomposes as  
$$\overline{H}^{l}_{(M,\delta ),d} = (\id_{M}\otimes\epsilon \otimes \id_{d})\pi: V((M,\delta)\otimes_{C} R(d)) \rightarrow V((M,\delta )) \otimes d$$ 
for an object $d$ in $\ct{D}$ and a $C$-comodule $(M,\delta )$. By a similar argument to the one above we can show that $\tau_{M}\delta\otimes \id_{d} : V((M,\delta )) \otimes d \rightarrow M\otimes C \otimes d$ becomes a split equalizer for the same parallel pair which $\pi :M\otimes_{C} R(d)\rightarrow M\otimes C \otimes d$ is an equalizer of. Thereby there exists a natural isomorphism such that the following diagram commutes: 
$$\xymatrix@C+0.2cm{M\otimes_{C} R(d) \ar[r]^{\pi}\ar[d]_{\cong}& M\otimes C \otimes d \ar@/^1pc/[dl]^-{(\id_{M}\otimes\epsilon \otimes \id_{d})} \ar@<1ex>[rr]^-{\tau_{M}\delta\otimes \id_{C\tn d}} \ar[rr]_-{\id_{M}\tn \Delta \tn \id_{d}}&& M\tn C\tn C \tn d 
\\ M\otimes d \ar[u] \ar[ur]^-{\tau_{M}\delta\otimes \id_{d}} & &&} $$
Hence, the left fusion operator is invertible. When $\tau$ is invertible a symmetric argument can be applied to show that the right fusion operator is invertible. \end{sketchproof}

The above result can provide us with many additional examples of Hopf monads. However, unlike Section~\ref{SExamplesHopfMnds} where we were looking at describing Hopf monads over a fixed base category $\ct{C}$, a suitable CCC in a category $\ct{D}$ provides us with a Hopf monad on ${}^{C}\ct{D}$ rather than the category $\ct{D}$ which we started with. 
\begin{ex} For any cocommutative coalgebra $(C,\Delta,\epsilon)$ in $\Vecs$, the category of left $C$-comodules $\lcomod{C}$ is monoidal via the cotensor product of comodules (Section 10 of \cite{brzezinski2003corings}) and by Theorem~\ref{ThmCoten} the free/forgetful adjunction $\forg\dashv (C\tnK-, \Delta\tnK -)  : \Vecs \leftrightarrows \lcomod{C}$ is Hopf. In particular the monad $T$ on $\lcomod{C}$  which sends every $C$-comodule $(M,\delta)$ to the free comodule $(C\tnK M, \Delta\tnK \id_{M})$ is Hopf. The multiplication and unit of the monad $T$ are given by $(\id_{C}\tnK \epsilon\tnK \id_{\lcomod{C}})$ and $\delta: (M,\delta) \rightarrow (C\tnK M,\Delta\tnK \id_{M})$, respectively. The comonoidal structure on $T$ is defined using $T((M,\delta))\tn_{C}T(( N,\delta'))\cong (C\tnK M\tnK N, \Delta\tnK\id_{M\tnK N})$: 
\begin{equation*}
\xymatrix@R-0.9cm{T_{2}((M,\delta),(N,\delta')): C\tnK (M\tn_{C} N) \rightarrow C\tnK M\tnK N
\\\hspace{3.5cm}c\tnK m\tn_{C} n\longmapsto c\tnK m\tnK n }
\end{equation*}
and $T_{0}= \id_{C}\tnK \epsilon: C\tnK C\rightarrow C$. 
\end{ex}
\subsection{Induced Central Coalgebra of a Hopf Adjunction}\label{SAdj2CC}
In this section, we review the construction of central coalgebras from Hopf adjunctions based on Section 6.2 of \cite{bruguieres2011hopf}. It is well-known that any braided Hopf algebra $B$ in $\ct{C}$ becomes an object of $\Yetter{B}$ via its \emph{left adjoint coaction} $ (m\tn \id_{B})(\id_{B}\tn \Psi_{B,B})(\Delta\tn S)\Delta$. In particular, this coaction respects the original coalgebra structure on $B$ and thereby $(B, \Delta, \epsilon)$ becomes a natural coalgebra object in $Z({}_{B}\ct{C})$. Below we will describe how any Hopf monad $T$ induces a coalgebra object in $Z(\ct{C}^{T})$. 

Given a comonoidal adjunction $F\dashv U:\ct{D}\leftrightarrows \ct{C}$, the object $\hat{C}:=F(\un)$ obtains  a coalgebra structure via $F_{2}(\un,\un)$ and $F_{0}$, and is referred to as the \emph{induced coalgebra} of the adjunction. Hence, we obtain three distinct comonads $\hat{T}=FU$, $\hat{C}\otimes -$, and $-\otimes\hat{C}$ on $\ct{D}$. Observe that the fusion operators of the adjunctions provide natural transformations between two of the three comonads as presented:
$$\xymatrix@C-0.3cm{\overline{H}^{l}_{\un,X}:& \hat{T}(X)=F( \un\otimes U(X))\ar[d]_{(\id_{F(\un)}\otimes\epsilon_{X})F_{2}(\un,U(X))} 
\\& \hat{C}\otimes X=F(\un)\otimes X }\quad  \quad \xymatrix@C-0.3cm{\overline{H}^{r}_{X,\un}:& \hat{T}(X)=F( U(X)\otimes \un)\ar[d]^{(\epsilon_{X}\otimes \id_{F(\un)})F_{2}(U(X),\un)} \\& X\otimes \hat{C}=X\otimes F(\un)} $$
In Lemma 6.5 of \cite{bruguieres2011hopf} it is demonstrated that $\overline{H}^{l}_{\un,-}$ and $\overline{H}^{r}_{-,\un}$ are comonad morphisms. 
\begin{thm}[Corollary 6.7 \cite{bruguieres2011hopf}]\label{TCCBraid} With notation as above, if $F\dashv U$ is a left pre-Hopf adjunction, then 
\begin{equation}
\tau_{X}:= \overline{H}^{r}_{X,\un}\left(\overline{H}^{l}_{\un,X}\right)^{-1} :\hat{C}\otimes X\rightarrow X\otimes \hat{C} 
\end{equation} 
defines a lax braiding in $\ct{D}$ satisfying $F_{2}(X,\un)=\tau_{F(X)}F_{2}(\un,X)$. In particular, $(C,\tau)$ becomes a lax CCC in $\ct{D}$. Additionally, If the adjunction is pre-Hopf, then $\tau$ is invertible. 
\end{thm}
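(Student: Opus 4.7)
The plan is to leverage Lemma 6.5 (which states that $\overline{H}^{l}_{\un,-}$ and $\overline{H}^{r}_{-,\un}$ are comonad morphisms from $\hat{T}=FU$ to $\hat{C}\otimes-$ and $-\otimes\hat{C}$ respectively). First, since the adjunction is left pre-Hopf, $\overline{H}^{l}_{\un,-}$ is invertible, so $\tau_{X}:=\overline{H}^{r}_{X,\un}\circ(\overline{H}^{l}_{\un,X})^{-1}$ is a well-defined natural transformation between the two indicated functors. The key auxiliary identity $F_{2}(X,\un)=\tau_{F(X)}F_{2}(\un,X)$ I would establish by showing that each comonoidal structure morphism factors through a fusion operator. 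Specifically, combining naturality of $F_{2}$ in each entry with the triangle identity $\epsilon_{F(Y)}\circ F(\eta_{Y})=\id_{F(Y)}$, one checks
\[
F_{2}(Y,\un)=\overline{H}^{r}_{F(Y),\un}\circ F(\eta_{Y}\otimes\id_{\un}),\qquad F_{2}(\un,Y)=\overline{H}^{l}_{\un,F(Y)}\circ F(\id_{\un}\otimes\eta_{Y}),
\]
after which the identity reduces to the definition of $\tau$.

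Cocommutativity $\tau_{\hat{C}}\Delta=\Delta$ is then immediate by specialising the above identity to $X=\un$ and recalling that $\Delta=F_{2}(\un,\un)$. For the lax braiding axioms \eqref{EqBrai1}--\eqref{EqBrai2} and for the fact that $\Delta$ and $\epsilon$ are morphisms in $Z_{l,\lax}(\ct{D})$, I would exploit the invertibility of $\overline{H}^{l}_{\un,-}$: since both $\overline{H}^{l}_{\un,-}$ and $\overline{H}^{r}_{-,\un}$ are comonad morphisms with common source $\hat{T}$, the composite $\tau=\overline{H}^{r}\circ(\overline{H}^{l})^{-1}$ inherits a comonad-morphism structure from $\hat{C}\otimes-$ to $-\otimes\hat{C}$. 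Counit compatibility of this morphism encodes the identity $(\id_{X}\otimes F_{0})\tau_{X}=F_{0}\otimes\id_{X}$, which together with \eqref{EqBrai2} expresses that $\epsilon$ respects $\tau$; comultiplication compatibility, combined with cocommutativity $\tau_{\hat{C}}\Delta=\Delta$ from the previous step, yields the hexagon \eqref{EqBrai1} as well as the fact that $\Delta$ is a morphism in the lax center.

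Finally, if the adjunction is (fully) pre-Hopf, then $\overline{H}^{r}_{-,\un}$ is also invertible, so $\tau$ is a composite of two invertible natural transformations and is therefore invertible. The main obstacle I anticipate is disentangling the comonad-morphism axiom for $\tau\colon\hat{C}\otimes-\Rightarrow-\otimes\hat{C}$ from the required hexagon \eqref{EqBrai1}: the comonad condition reads as an identity involving $(\id\otimes\Delta)\tau_{X}$ and $(\tau_{X}\otimes\id_{\hat{C}})\tau_{\hat{C}\otimes X}(\Delta\otimes\id_{X})$, and translating this into $\tau_{X\otimes Y}=(\id_{X}\otimes\tau_{Y})(\tau_{X}\otimes\id_{Y})$ requires using cocommutativity to collapse the $\Delta$-factors and invoking naturality of $\tau$ to split a tensor argument into its two halves. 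The remaining verifications are essentially formal manipulations of the fusion operators and triangle identities.
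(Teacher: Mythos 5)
Your opening moves are sound: the factorisations $F_{2}(\un,Y)=\overline{H}^{l}_{\un,F(Y)}\circ F(\id_{\un}\otimes\eta_{Y})$ and $F_{2}(Y,\un)=\overline{H}^{r}_{F(Y),\un}\circ F(\eta_{Y}\otimes\id_{\un})$ do follow from naturality of $F_{2}$ and the triangle identity, they immediately give $F_{2}(X,\un)=\tau_{F(X)}F_{2}(\un,X)$, cocommutativity drops out at $X=\un$, and invertibility of $\tau$ in the pre-Hopf case is clear. (Note that the survey itself offers no argument here -- it defers entirely to Section 6.2 of \cite{bruguieres2011hopf} -- so the comparison is with what that proof actually requires.)

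The genuine gap is exactly where you flag your ``main obstacle'', and the proposed resolution does not close it. The comonad-morphism property of $\tau:\hat{C}\otimes-\Rightarrow-\otimes\hat{C}$ yields the counit identity $(\id_{X}\otimes F_{0})\tau_{X}=F_{0}\otimes\id_{X}$ and the comultiplication identity
\[
(\id_{X}\otimes\Delta)\,\tau_{X}=\tau_{X\otimes\hat{C}}\,(\id_{\hat{C}}\otimes\tau_{X})\,(\Delta\otimes\id_{X}),
\]
but the latter only constrains $\tau_{X\otimes\hat{C}}$, i.e.\ the case where the second tensor factor is $\hat{C}$ itself. The half-braiding axiom \eqref{EqBrai1} demands $\tau_{X\otimes Y}=(\id_{X}\otimes\tau_{Y})(\tau_{X}\otimes\id_{Y})$ for \emph{arbitrary} $Y$, and naturality of $\tau$ in its single argument cannot ``split a tensor argument into its two halves'' -- that splitting is precisely the content of the axiom being proved, and nothing in the comonad-morphism axioms quantifies over a general second factor. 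What the actual proof needs is the two-variable coherence of the fusion operators with the comonoidal structure (the ``fusion equations'' of Proposition 2.6 of \cite{bruguieres2011hopf}, which encode \eqref{EqComonoidalCoass} through the adjunction): one expresses $\overline{H}^{l}_{\un,X\otimes Y}$ and $\overline{H}^{r}_{X\otimes Y,\un}$ in terms of the one-variable operators and $F_{2}$, and the hexagon reduces to those identities. Your sketch cites only Lemma 6.5 and never invokes these. Two smaller issues: your derivation of ``$\Delta$ is a morphism in the lax centre'' from comultiplication compatibility silently uses the hexagon at $Y=\hat{C}$, so as written that step is circular with the step it is meant to establish; and the unit axiom $\tau_{\un}=\id_{\hat{C}}$ from \eqref{EqBrai2} is never verified (it needs \eqref{EqComonoidalCoun} for $F$). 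The remaining claims -- the displayed identity, cocommutativity, invertibility -- are correct.
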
 
The proof of the above result is simply checking that the necessary conditions hold and we refer the reader to Section 6.2 of \cite{bruguieres2011hopf}. Note that $\overline{H}^{l}_{\un,X}$ being invertible tells us that $ \hat{T}=FU\cong \hat{C}\otimes -$ are isomorphic comonads. The above result also shows that given a pre-Hopf adjunction $\hat{C}\otimes -\cong -\otimes \hat{C}$ as comonads. Additionally, note that for the free/forgetful adjunction induced by a braided Hopf algebra $B$ it follows from the description of the fusion operators and their inverses in Section~\ref{SHpfMndGen} that we can recover the natural braiding on $B$, induced by its adjoint coaction, as $\tau$ in Theorem~\ref{TCCBraid}.

In \cite{bruguieres2011hopf}, the lax CCC $\hat{C}$ of a pre-Hopf adjunction $F\dashv U$ is called its \emph{induced} CCC. As seen in the last section, CCCs can induce comonoidal adjunctions and the next step is to understand the connection between this adjunction induced by $\hat{C}$ and the original adjunction $F\dashv U$. We will first recall the conditions which make the induced CCC cotensorable and the comparison functor between $\ct{C}$ and $\ct{D}_{\hat{T}}\cong \ct{D}^{\hat{C}}$ strong monoidal.
\begin{thm}[Proposition 6.9 \cite{bruguieres2011hopf}]\label{ThmAdjCC} Let $F\dashv U$ be a left pre-Hopf adjunction as above. If the adjunction is comonadic and satisfies
\begin{enumerate}[label=(\Roman*), leftmargin=*]
\item For any pair of objects $X, Y$ in $\ct{C}$, the fork below forms an equalizer:
\begin{equation}\label{EqCinducedCoten}
\xymatrix@C+1.4pc{F(X\otimes Y) \ar[r]^-{F_{2}(X,Y)}& F(X)\otimes F(Y)\ar[rr]_-{\id_{F(X)}\otimes F_{2}(\un,Y)} \ar@<1ex>[rr]^-{F_{2}(X,\un)\otimes \id_{F(Y)}} & & F(X)\otimes \hat{C}\otimes F(Y)}  
\end{equation} 
\item The above equalizers are preserved by $F(\un)\otimes -$
\end{enumerate} 
Then the induced coalgebra $(\hat{C},\tau )$ is cotensorable and the comparison functor between $\ct{C}$ and $\ct{D}^{\hat{C}}$ is strong monoidal. In particular, the adjunction is left Hopf. 
\end{thm}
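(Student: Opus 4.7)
The strategy is to use the comonadicity and left pre-Hopf hypotheses to reduce everything to the cotensorability criterion of Theorem~\ref{ThmCoten} applied to $(\hat{C},\tau)$, and then transport the resulting left Hopf structure from $V\dashv R$ back to $F\dashv U$ via the comparison functor.

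The first step is to pin down what $\hat{C}$-comodules look like. Since $F\dashv U$ is comonadic, the comparison functor $K:\ct{C}\to\ct{D}^{\hat{T}}$ is an equivalence, and since the adjunction is left pre-Hopf, $\overline{H}^{l}_{\un,-}:\hat{T}\to\hat{C}\tn-$ is an isomorphism of comonads, giving a further equivalence $\ct{D}^{\hat{T}}\simeq \ct{D}^{\hat{C}}$. Composing these, one checks by a short naturality-plus-triangle-identity calculation that the object $X\in\ct{C}$ transports to the $\hat{C}$-comodule $(F(X),F_{2}(\un,X))$; so every $\hat{C}$-comodule is, up to isomorphism, of this form. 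The next step is to match the cotensor equalizer against condition (I). By definition, the cotensor of $(F(X),F_{2}(\un,X))$ and $(F(Y),F_{2}(\un,Y))$ is the equalizer of the parallel pair $\tau_{F(X)}(F_{2}(\un,X)\tn \id_{F(Y)})$ and $\id_{F(X)}\tn F_{2}(\un,Y)$; by the compatibility $F_{2}(X,\un)=\tau_{F(X)}F_{2}(\un,X)$ of Theorem~\ref{TCCBraid}, this is exactly the parallel pair appearing in \eqref{EqCinducedCoten}. Thus condition (I) says precisely that this equalizer exists and is computed by $F(X\tn Y)\xrightarrow{F_{2}(X,Y)}F(X)\tn F(Y)$, while condition (II) is precisely the requirement that $\hat{C}\tn-$ preserve it. Combined with the previous step, this establishes that $(\hat{C},\tau)$ is cotensorable.

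Having verified cotensorability, I apply Theorem~\ref{ThmCoten}: the category $\ct{D}^{\hat{C}}$ acquires a monoidal structure via $\tn_{\hat{C}}$, the free functor $R:\ct{D}\to\ct{D}^{\hat{C}}$ becomes strong monoidal, and the adjunction $V\dashv R:\ct{D}\leftrightarrows \ct{D}^{\hat{C}}$ is left Hopf. The identification from the previous paragraph tells us that $K(X\tn Y)=(F(X\tn Y),F_{2}(\un,X\tn Y))$ agrees with $K(X)\tn_{\hat{C}}K(Y)$ via the map $F_{2}(X,Y)$, so $K_{2}(X,Y):=F_{2}(X,Y)$ endows the comparison functor $K:\ct{C}\to\ct{D}^{\hat{C}}$ with a strong monoidal structure; the coherence axioms for $K_{2}$ are inherited from those for $F_{2}$ on $F$, and compatibility with the coactions reduces to the same computation used to identify $K(X)$.

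Finally, because $VK=F$ as strong monoidal functors and $K$ is a monoidal equivalence, the left fusion operator of $F\dashv U$ is identified (through $K$) with that of $V\dashv R$, which is invertible by Theorem~\ref{ThmCoten}(III); hence $F\dashv U$ is itself left Hopf. The main obstacle I anticipate is the bookkeeping around the comodule-structure identification in the first step, especially verifying that the composite equivalence $\ct{C}\simeq \ct{D}^{\hat{C}}$ really does send $X$ to $(F(X),F_{2}(\un,X))$ and that the monoidal comparison $K_{2}=F_{2}$ is a morphism of $\hat{C}$-comodules; once that is cleanly set up, conditions (I) and (II) slot directly into Theorem~\ref{ThmCoten} and the remainder is formal.
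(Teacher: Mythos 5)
Your proposal is correct and follows essentially the same route as the paper's own sketch: identify the comparison functor as $X\mapsto (F(X),F_{2}(\un,X))$ via comonadicity and the pre-Hopf isomorphism $\hat{T}\cong\hat{C}\tn-$, match the cotensor parallel pair of Theorem~\ref{ThmCoten} against \eqref{EqCinducedCoten} using $F_{2}(X,\un)=\tau_{F(X)}F_{2}(\un,X)$, and then transport the left Hopf property of $V\dashv R$ back through the strong monoidal equivalence $K$. You simply make explicit several steps the paper leaves implicit (notably that every $\hat{C}$-comodule is isomorphic to one of the form $(F(X),F_{2}(\un,X))$, which is what lets conditions (I) and (II) exhaust the cotensorability requirement), and the argument is sound.
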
 
\begin{sketchproof} Note that the comparison functor $K:\ct{C}\rightarrow {}^{C}\ct{D}$ is defined by sending any object $X$ in $\ct{C}$ to $(F(X),F_{2}(\un ,X))$. Since $K$ is an equivalence and $F_{2}(X,\un)=\tau_{F(X)}F_{2}(\un,X)$, then the conditions for $\hat{C}$ being cotensorable become equivalent to the conditions above. Additionally, $K$ becomes a strong monoidal equivalence by \eqref{EqCinducedCoten}. Finally, we observe that by Theorem~\ref{ThmCoten} the free/forgetful adjunction for the comonad $\hat{C}\tn-$ is left Hopf, and since $U\dashv F$ is comonadic then it is also left Hopf. \end{sketchproof}

While the induced coalgebra in the theory of Hopf algebras is quite well-known, we will now look at the induced coalgebra for another examples of Hopf monads:
\begin{ex}\label{ExAlgebroidCCC} Let $H$ be a bialgebroid over a base algebra $A$ as in Section~\ref{SBial}. Recall that the free/forgetful adjunction induced on $\bim$ is a comonoidal adjunction with a bimonad structure on the functor $H\boxtimes -$. The induced CCC for this adjunction will be a natural coalgebra structure on $H\boxtimes A$ which is the quotient of $H$ by the left ideal generated by $\lbrace a-\ov{a} \mid a\in A\rbrace$. The vector space $H\boxtimes A$ has an $A$-bimodule structure by the left $A^{e}$-action on $H$. It follows by the above theory that $(\Delta, \epsilon)$ descend to a well-defined coalgebra structure on $H\boxtimes A$. This can also be seen directly from \eqref{EqDelrs} and \eqref{EqCounrs} and the fact that the image of $\Delta$ falls in the Takeuchi product. If $H$ is left Hopf, then by Theorem~\ref{TCCBraid} the coalgebra $H\boxtimes A$ obtains a lax braiding defined by
\begin{equation}\label{EqCCCHopfAlgebroid}
\xymatrix@R-0.8cm{(H\boxtimes A)\tn_{A} M \rightarrow M\tn_{A} (H\boxtimes A)
\\ (h\boxtimes a )\tn_{A} M \longmapsto h_{(+)(1)}ah_{(-)}\triangleright m \tn_{A} (h_{(+)}\boxtimes 1)}
\end{equation}
where $(M,\triangleright)$ is an arbitrary $H$-module. It is a difficult task to check that this map is indeed well-defined and a bimodule morphism, however we get this result for free from Theorem~\ref{ThmAdjCC}.
\end{ex} 
\subsection{Equivalence of Hopf Monads and Central Coalgebras} 
In this section, we review the equivalence between Hopf monads on a category $\ct{C}$, Hopf adjunctions $F\dashv U :\ct{D}\leftrightarrows\ct{C}$ and lax cotensorable CCCs in $\ct{D}$ from Section 6.6 of \cite{bruguieres2011hopf}. We then provide a simplification of this result for tensor categories. 
\begin{thm}[Theorem 6.14 \cite{bruguieres2011hopf}]\label{ThmBLVMain} If $\ct{C}$ and $\ct{D}$ are monoidal categories, then TFAE:
\begin{enumerate}[label=(\Roman*)]
\item A  conservative Hopf monad $T$ on $\ct{C}$, where $\ct{C}$ admits RCs and CEs, the monoidal product $\otimes$ preserves CEs and $T$ preserves RCs and CEs.
\item A Hopf adjunction $F\dashv U: \ct{D}\leftrightarrows \ct{C}$, where both $\ct{C}$ and $\ct{D}$ admit RCs and CEs, both $F$ and $U$ are conservative, $U$ preserves RCs and $F$ preserves CEs. 
\item A CCC $(C,\tau )$ in $\ct{D}$, where $\ct{D}$ admits RCs and CEs, the monoidal product $\otimes$ preserves CEs, the functor $C\otimes -$ is conservative and preserves RCs.
\end{enumerate}
\end{thm}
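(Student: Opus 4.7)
The plan is to establish two equivalences in tandem: (I)$\Leftrightarrow$(II) via a monadicity argument, and (II)$\Leftrightarrow$(III) via the dual comonadicity argument. The Hopf adjunction in (II) serves as the common bridge: it yields a Hopf monad on $\ct{C}$ by taking $UF$, and a central coalgebra in $\ct{D}$ by taking $F(\un)$ equipped with the braiding of Theorem~\ref{TCCBraid}. So the content is to set up the four construction maps, check that each delivers data of the correct type with the right exactness hypotheses, and confirm that the round trips are essentially identities.

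For (I)$\Rightarrow$(II), I would use the Eilenberg--Moore adjunction $F_T\dashv U_T:\ct{C}^T\leftrightarrows\ct{C}$, which is Hopf by Theorem~\ref{PropFus}. The exactness on $\ct{D}:=\ct{C}^T$ follows from the standard principle that when $T$ preserves RCs (resp.\ CEs), the forgetful $U_T$ creates them; so $\ct{C}^T$ inherits both, $U_T$ is conservative and preserves RCs, and $F_T$ is conservative and preserves CEs. For (II)$\Rightarrow$(I), set $T:=UF$; this is Hopf by Theorem~\ref{PropFus}. Conservativity of $T$ is immediate from that of $F$ and $U$; preservation of RCs follows because $F$ is a left adjoint and $U$ preserves RCs by hypothesis; preservation of CEs because $U$ is a right adjoint and $F$ preserves CEs. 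Crude monadicity---the variant of Beck's Theorem~\ref{TBecksThm} valid when the right adjoint is conservative and preserves RCs---then identifies $\ct{D}$ with $\ct{C}^T$ compatibly with the adjunctions, closing the loop.

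For (II)$\Rightarrow$(III), take $C:=F(\un)$ with its comonoidal-induced coalgebra structure and the braiding $\tau$ from Theorem~\ref{TCCBraid}. Because the adjunction is fully Hopf (not merely pre-Hopf), $\tau$ is invertible and $(C,\tau)$ is a genuine CCC. Conservativity and RC-preservation of $C\otimes-$ follow by transporting the corresponding properties of $FU$ across the isomorphism of comonads $\overline{H}^{l}_{\un,-}:FU\cong C\otimes-$ furnished by the Hopf condition. For (III)$\Rightarrow$(II), Theorem~\ref{ThmCoten} produces a Hopf adjunction $V\dashv R:\ct{D}\leftrightarrows{}^{C}\ct{D}$, so I take $\ct{C}:={}^{C}\ct{D}$. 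One verifies that ${}^{C}\ct{D}$ inherits RCs and CEs (RCs because $C\otimes-$ preserves them and $V$ is a left adjoint, CEs from the cotensor construction combined with the hypothesis that $\otimes$ on $\ct{D}$ preserves CEs), that $V$ is conservative and preserves CEs, and that $R$ is conservative and preserves RCs, the last two conditions coming directly from the assumptions on $C\otimes-$.

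The main obstacle is verifying that the round trips are mutually inverse up to the appropriate equivalence of structures. The trip (II)$\to$(I)$\to$(II) reduces to monadicity of $U$, handled by crude Beck above. The trip (II)$\to$(III)$\to$(II) requires the dual statement that $F$ is comonadic for the induced comonad $C\otimes-$, i.e.\ that the comparison functor $\ct{C}\to{}^{C}\ct{D}$ is an equivalence. By crude comonadicity this reduces to $F$ being conservative and preserving CEs, both hypothesized, once the comparison is identified. Identifying the comparison is the most delicate piece: it requires exhibiting $\ct{C}$ as the category of equalizers of the form \eqref{EqCinducedCoten} from Theorem~\ref{ThmAdjCC}, and the hypothesis that $\otimes$ preserves CEs is precisely what makes these equalizers exist and be preserved by $F(\un)\otimes-$. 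Ensuring that each list of exactness conditions is exactly what the relevant (co)monadicity theorem demands---no more and no less---and that the round trip (III)$\to$(II)$\to$(III) correctly recovers $C$ as $V(\un_{{}^{C}\ct{D}})$, is the real technical burden of the argument.
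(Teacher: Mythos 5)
Your strategy is the intended one: the survey does not prove this theorem itself but defers to Section 6.6 of \cite{bruguieres2011hopf}, and the proof there is assembled from exactly the ingredients you use --- Theorem~\ref{PropFus} plus crude Beck monadicity for (I)$\Leftrightarrow$(II), and Theorems~\ref{ThmCoten}, \ref{TCCBraid}, \ref{ThmAdjCC} plus crude comonadicity of $F$ for (II)$\Leftrightarrow$(III). Your verifications in the directions (I)$\Rightarrow$(II) and (III)$\Rightarrow$(II), and of the round trips, are correct (note only that RCs and CEs in $\ct{C}^{T}$, and RCs in ${}^{C}\ct{D}$, exist because the forgetful functors \emph{create} them, not because the left adjoints preserve them).

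There is, however, one concrete point you pass over: both directions out of (II) require the hypothesis that the monoidal products preserve coreflexive equalizers, and this is not listed in item (II) as stated here. For (II)$\Rightarrow$(III) you must verify that $\otimes_{\ct{D}}$ preserves CEs (it is an explicit condition in (III), and it is also what makes the forks \eqref{EqCinducedCoten} equalizers so that Theorem~\ref{ThmAdjCC} applies); for (II)$\Rightarrow$(I) you must verify that $\otimes_{\ct{C}}$ preserves CEs. Neither follows from the conditions of (II) as transcribed: conservativity and RC-preservation of $F$ and $U$ give you nothing about $\otimes$ and equalizers. In the original Theorem 6.14 of \cite{bruguieres2011hopf} these conditions are part of item (ii); you should either add them to (II) or note that the statement needs this correction. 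With that hypothesis restored, your argument closes: $\otimes_{\ct{C}^{T}}$ preserves CEs because $U_{T}$ creates them and is strict monoidal, and $\otimes_{C}$ on ${}^{C}\ct{D}$ preserves CEs because coreflexive equalizers commute with coreflexive equalizers and $\otimes_{\ct{D}}$ preserves them.
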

Note that when saying these structures are equivalent, we mean upto isomorphism and the processes of sending an adjunction to a CCC and a CCC to its corresponding adjunction are not strict inverses. Additionally, the emphasis on the existence and preservation of RCs has to do with the comonadicity of the adjunction corresponding to a CCC, see Remark 6.1 in \cite{bruguieres2011hopf}. For more details and the proof of this equivalence we refer the reader to to Section 6.6 of \cite{bruguieres2011hopf}. 

Recall that \emph{tensor categories}, see \cite{etingof2016tensor}, are rigid and abelian and thereby admit equalizers and coequalizers. Moreover, in rigid monoidal categories the tensor product preserves finite limits and colimits in both entries [Proposition 4.2.1 \cite{etingof2016tensor}]. Additionally, \emph{tensor functors}, as considered in \cite{etingof2016tensor}, are defined as being faithful exact strong monoidal functors between two tensor categories. Such functors are automatically conservative since abelian categories are balanced and any functor with a balanced domain which is faithful automatically becomes conservative. Finally, recall by Corollary~\ref{CHpfColim}, that any Hopf monad on a rigid monoidal category is colimit-preserving. Hence, we can re-write Theorem~\ref{ThmBLVMain} for the setting of tensor categories:
\begin{thm}\label{ThmBLVtensor} If $\ct{C}, \ct{D}$ are two tensor categories over a field $\field$, then TFAE:
\begin{enumerate}[label=(\Roman*)]
\item A conservative Hopf monad $T$ on $\ct{C}$, where $\ct{C}$ and $T$ preserves CEs.
\item A tensor functor $ U: \ct{D}\rightarrow \ct{C}$ with a conservative left adjoint $F$ which preserves CEs. 
\item A CCC $(C,\tau )$ in $\ct{D}$.
\end{enumerate}
\end{thm}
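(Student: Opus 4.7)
The plan is to derive Theorem~\ref{ThmBLVtensor} as a streamlined corollary of Theorem~\ref{ThmBLVMain}, by observing that in the tensor category setting most of the technical hypotheses on exactness, conservativity, and preservation of RCs/CEs become automatic. Before unwinding the three implications, I would record the following omnibus observations. First, every tensor category is rigid and abelian, hence admits all finite limits and colimits, and in particular both RCs and CEs. Second, in any rigid monoidal category the bifunctor $\otimes$ preserves finite limits and colimits in each variable, so $\otimes$ preserves CEs and RCs automatically. Third, any $\K$-linear faithful exact functor out of an abelian category is conservative (abelian categories are balanced), so every tensor functor is automatically conservative. Fourth, by Corollary~\ref{CHpfColim} any Hopf monad on a rigid monoidal category admits a right adjoint and hence preserves all colimits, in particular RCs.

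For (I) $\Rightarrow$ (II), I would take $\ct{D}=\ct{C}^{T}$ with adjunction $F_{T}\dashv U_{T}$. By Theorem~\ref{ThmFusRig}, $T$ is a Hopf monad in the sense of Definition~\ref{DefAntip1}, so Theorem~\ref{ThmAntip1} gives that $\ct{C}^{T}$ is rigid. The forgetful functor $U_{T}$ is strict monoidal by Theorem~\ref{TBimonad}; conservative because monadic forgetful functors always are; and creates colimits (because $T$ preserves them) and creates CEs (because $T$ does by hypothesis), so $\ct{C}^{T}$ is complete and cocomplete in the relevant senses, with $U_{T}$ exact. Combined with rigidity this makes $\ct{C}^{T}$ a tensor category and $U_{T}$ a tensor functor. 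Its left adjoint $F_{T}$ preserves all colimits (including CEs); its conservativity follows from the fact that $U_{T}F_{T}=T$ is conservative together with the standard argument that if $UF$ is conservative and $U$ is faithful then $F$ is conservative. For (II) $\Rightarrow$ (I), set $T=UF$; then $T$ is a Hopf monad by Lemma~\ref{LemComonoidal} and Theorem~\ref{PropFus}, $T$ preserves CEs since $F$ preserves all colimits and $U$ is exact, and $T$ is conservative since both $F$ and $U$ are.

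For (II) $\Leftrightarrow$ (III), I would appeal directly to the corresponding equivalence in Theorem~\ref{ThmBLVMain}. In one direction, given a tensor functor $U:\ct{D}\to\ct{C}$ with conservative left adjoint $F$, Theorem~\ref{TCCBraid} produces the induced central coalgebra $(\hat{C},\tau)=(F(\un),\tau)$ in $\ct{D}$, which is cocommutative by the construction recalled in Section~\ref{SAdj2CC}; cotensorability is free since CEs exist in $\ct{D}$ and $\otimes$ preserves them. Conversely, given a CCC $(C,\tau)$ in $\ct{D}$, Theorem~\ref{ThmCoten} equips $^{C}\ct{D}$ with a monoidal structure and exhibits the free/forgetful adjunction $V\dashv R$ as a left Hopf adjunction; in the tensor setting $R$ is automatically a tensor functor and $V$ is conservative, recovering an adjunction of the shape required by (II). The two constructions are mutually inverse up to isomorphism by Proposition 6.9 of \cite{bruguieres2011hopf}, whose equalizer condition \eqref{EqCinducedCoten} and preservation condition are automatic because $\otimes$ is biexact on rigid abelian categories.

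The main obstacle I anticipate is checking that $\ct{C}^{T}$ really inherits the full tensor category structure in step (I) $\Rightarrow$ (II): one must verify not merely that it is rigid, but that it is abelian with biexact tensor product, which in turn hinges on $T$ preserving enough limits and colimits to let $U_{T}$ create them. This is precisely where the hypothesis that $T$ preserves CEs becomes essential and cannot be deduced from rigidity alone, and is the reason the CE hypothesis survives in the statement of Theorem~\ref{ThmBLVtensor} even after the RC and conservativity hypotheses are absorbed.
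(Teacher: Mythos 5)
Your proposal is correct and follows the same route as the paper: the paper's entire argument is precisely your list of omnibus observations (tensor categories are rigid abelian so RCs/CEs exist and $\otimes$ is biexact, tensor functors are automatically conservative, and Corollary~\ref{CHpfColim} gives colimit-preservation of Hopf monads), after which Theorem~\ref{ThmBLVtensor} is read off as a specialisation of Theorem~\ref{ThmBLVMain}. Your explicit unwinding of the three implications and your remark on why the CE hypothesis on $T$ cannot be discharged are additional detail the paper leaves implicit, but they do not change the argument.
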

A dual statement to the above theorem can be made with regards to Hopf comonads and central cocommutative algebras. This dual statement is in part presented Section 6.1 of \cite{bruguieres2011exact}.

\section{Classical Hopf-algebraic results for Hopf monads}\label{SClassics}
In this section we review some classical results in the theory of Hopf algebras which have been generalised to the monadic setting.
\subsection{Radford's Biproduct Theorem}\label{SRadford}
Radford's biproduct Theorem which we alluded to in Section \ref{SBoz} states the following stronger result: Given a pair of Hopf algebras $H_{1}$ and $H_{2}$ and a Hopf algebra map $\pi:H_{2}\twoheadrightarrow H_{1}$ which splits by $\iota : H_{1} \hookrightarrow H_{2}$, we can find a braided Hopf algebra $B$ in $\Yetter{H_{1}}$, such that $H_{2}\cong B\bosonleft H_{1}$. In Section 4.4 of \cite{bruguieres2011hopf}, the authors discuss a more general version of this result which we will review here under the name of \emph{cross quotients}. The more general question is understanding when a Hopf monad can be obtained as a cross product from another Hopf monad.

First observe that for a pair of monads $(T,\mu,\eta)$ and $(Q,\mu',\eta')$ on a category $\ct{C}$, there exists a natural correspondence between monad morphisms and functors between $\ct{C}^{T}$ and $\ct{C}^{Q}$ which commute with the forgetful functors: Any monad morphism $\phi : T\rightarrow Q $ defines a functor $\phi^{*}:\ct{C}^{Q}\rightarrow \ct{C}^{T}$ by $\phi^{*} (M,r)= (M,r\phi ) $ which satisfies $U_{T}\phi^{*}=U_{Q}$. Conversely, from any functor $F:\ct{C}^{Q}\rightarrow \ct{C}^{T}$ satisfying $U_{T}F=U_{Q}$, we can recover a monad morphism $F^{*}: T\rightarrow Q $ defined by $F^{*}_{X}=\rho_{(Q(X),\mu'_{X})}T\eta'_{X}$ where $\rho$ is the natural transformation given by $F((M,r))=(M,\rho_{(M,r)})$. For further details on this correspondence we refer the reader to Lemma 1.7 of \cite{bruguieres2007hopf}. 

In \cite{bruguieres2011hopf} a monad morphism $\phi : T\rightarrow Q $ is called \emph{cross quotientable} if the functor $\phi^{*}$ is monadic. Such a functor, in turn, defines a monad on $\ct{C}^{T}$ which is called the cross quotient of $\phi$ and denoted by $Q\div T$. In this setting, it is easy to check that $\phi^{*}$ is monadic if and only if it has a left adjoint and if so $(\ct{C}^{T})^{Q\div T}$ becomes isomorphic to $\ct{C}^{Q}$, Lemma 4.9 of \cite{bruguieres2011hopf}. Moreover, it is shown that the left adjoint $G$ of $\phi^{*}$ exists precisely if for any $T$-module $(M,r)$, the pair $\mu'_{M}Q(\phi_{M}), Q(r): F_{Q}T(M)\rightrightarrows F_{Q}(M)$ admits a coequalizer $G(M,r)$ in $\ct{C}^{Q}$. 

\begin{lemma}[Proposition 4.11 (a) \cite{bruguieres2011hopf}]\label{LCrsQuo} If $\phi:T\rightarrow Q $ is a cross quotientable bimonad morphism, then $Q\div T$ is a bimonad.
\end{lemma}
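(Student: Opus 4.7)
The plan is to deduce the bimonad structure on $Q \div T$ by observing that $\phi^{*}$ inherits a strict monoidal structure from $\phi$ being a comonoidal natural transformation, and then invoking Lemma~\ref{LemComonoidal} applied to the adjunction $G \dashv \phi^{*}$.

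First, I would verify that $\phi^{*}\colon \ct{C}^{Q}\to\ct{C}^{T}$ is strict monoidal with respect to the monoidal structures on $\ct{C}^{T}$ and $\ct{C}^{Q}$ provided by Theorem~\ref{TBimonad}. For $Q$-modules $(M,r)$ and $(N,s)$, the tensor product in $\ct{C}^{Q}$ has underlying $Q$-action $(r\otimes s)Q_{2}(M,N)$. Applying $\phi^{*}$ produces the $T$-action $(r\otimes s)Q_{2}(M,N)\phi_{M\otimes N}$. On the other hand, $\phi^{*}(M,r)\otimes \phi^{*}(N,s)$ carries the $T$-action $(r\phi_{M}\otimes s\phi_{N})T_{2}(M,N)$. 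Using that $\phi$ is comonoidal, i.e.\ $Q_{2}(M,N)\phi_{M\otimes N}=(\phi_{M}\otimes\phi_{N})T_{2}(M,N)$, these two actions coincide, and the analogous statement at the monoidal unit follows from $T_{0}=Q_{0}\phi_{\un}$. Since $U_{T}\phi^{*}=U_{Q}$ and both forgetful functors are strict monoidal with respect to the structures of $\ct{C}$, the coherence isomorphisms match automatically. Hence $\phi^{*}$ is strict (in particular strong) monoidal.

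Second, since $\phi$ is cross quotientable by hypothesis, $\phi^{*}$ is monadic, so in particular it admits a left adjoint $G\colon \ct{C}^{T}\to \ct{C}^{Q}$ and by definition $Q\div T=\phi^{*}G$ as a monad on $\ct{C}^{T}$.

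Finally, I would appeal to Lemma~\ref{LemComonoidal}: the adjunction $G\dashv \phi^{*}\colon \ct{C}^{Q}\leftrightarrows \ct{C}^{T}$ has strong monoidal right adjoint $\phi^{*}$, hence is a comonoidal adjunction, so its induced monad $\phi^{*}G = Q\div T$ is a bimonad on $\ct{C}^{T}$. This also pins down the comonoidal structure on $Q\div T$: it is the one transported from $G$ along the comonoidal structure defined by \eqref{EqF2ComodAdj}. There is no real obstacle here beyond the bookkeeping in the first step; the heavy lifting has already been done by Theorem~\ref{TBimonad} and Lemma~\ref{LemComonoidal}, and the key insight is simply that a bimonad morphism $\phi$ induces a strict monoidal functor $\phi^{*}$ on the Eilenberg–Moore categories.
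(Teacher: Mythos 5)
Your proof is correct and follows the same route as the paper: you verify that the comonoidal conditions on $\phi$ make $\phi^{*}$ strict monoidal on the lifted monoidal structures of the Eilenberg--Moore categories, and then conclude via the comonoidal adjunction $G\dashv\phi^{*}$ that $Q\div T=\phi^{*}G$ is a bimonad. The only difference is that you spell out the appeal to Lemma~\ref{LemComonoidal} explicitly, which the paper's one-line proof leaves implicit.
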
 
\begin{proof} This statement follows from the fact that $\phi^{*}$ becomes strong monoidal when $\phi$ is a bimonad morphism. In this case the trivial comonoidal structure $\phi^{*}_{2}\big((M,r),(N,t)  \big)= \id_{M\tn N}$ is well-defined since $(r\tn t)Q_{2}(M,N)\phi_{M\tn N}= (r\tn t)(\phi_{M}\tn \phi_{N} )T_{2}(M,N) $. 
\end{proof}
At this point we should note that the operations `cross quotient' and `cross product' are inverses, Proposition 4.11 \cite{bruguieres2011hopf}. Fixing a (bi)monad $T$ on $\ct{C}$, we see that for every (bi)monad $Q$ equipped with a cross quotientable (bi)monad morphism  $\phi:T\rightarrow Q$, we have an isomorphism $Q\cong (Q\div T)\rtimes T$. In the converse direction, for any (bi)monad $P$ on $\ct{C}^{T}$, the unit $\eta''$ of $P$ provides a cross quotientable (bi)monad morphism $U_{T}\eta''_{F_{T}}$ from $T$ to $Q=P\rtimes T$. 
\begin{thm}[Proposition 4.13 \cite{bruguieres2011hopf}]\label{TCrsQuo} Let $T$ and $Q$ be left (right) Hopf monads on a monoidal category $\ct{C}$. Assume $\ct{C}$ has RCs, $T$ and $Q$ both preserve RCs and $\tn$ preserves RCs in the left (right) entry. In this case, any bimonad morphism $\phi: T\rightarrow Q$ is cross quotientable and $Q\div T$ is left (right) Hopf. 
\end{thm}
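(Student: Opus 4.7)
The plan is to first establish that $\phi$ is cross quotientable, and then derive the Hopf property from the cross-product/cross-quotient inversion (Proposition~4.11 of \cite{bruguieres2011hopf}). For cross quotientability, following the discussion preceding Lemma~\ref{LCrsQuo}, I would show that $\phi^{*}:\ct{C}^{Q}\to\ct{C}^{T}$ admits a left adjoint by producing, for each $T$-module $(M,r)$, a coequalizer in $\ct{C}^{Q}$ of the parallel pair
\begin{equation*}
\mu'_{M}Q(\phi_{M}),\ Q(r):F_{Q}T(M)\rightrightarrows F_{Q}(M).
\end{equation*}
This pair is reflexive via $Q(\eta_{M})$: the two required identities follow from $r\eta_{M}=\id_{M}$ and from $\phi\eta=\eta'$ (since $\phi$ is a monad morphism). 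Because $\ct{C}$ has RCs and $Q$ preserves them, Linton's theorem yields RCs in $\ct{C}^{Q}$, so the coequalizer exists. Monadicity of $\phi^{*}$ then follows from Beck's Theorem~\ref{TBecksThm} together with the conservativity of $\phi^{*}$ (which comes from $U_{T}\phi^{*}=U_{Q}$). Lemma~\ref{LCrsQuo} upgrades the resulting monad $P:=Q\div T$ to a bimonad on $\ct{C}^{T}$.

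For the Hopf property, the inversion of cross-product and cross-quotient provides a bimonad isomorphism $Q\cong P\rtimes T$. Under this identification, diagram \eqref{EqCrossFusionOp} decomposes the left fusion operator of the composite adjunction $F_{P}F_{T}\dashv U_{T}U_{P}$ (which is $F_{Q}\dashv U_{Q}$) as a composite whose outer pieces are built from $\overline{H}^{l}_{T}$, the comonoidal structure of $F_{P}$, and the counit $\epsilon'$, and whose inner piece is $U_{T}U_{P}(H^{l}_{P,F_{T}X,F_{T}Y})$. Since $T$ is left Hopf by hypothesis, the $T$-pieces are invertible; since $Q$ is also left Hopf, the total composite is invertible. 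Solving the decomposition for the inner layer shows that $U_{T}U_{P}(H^{l}_{P,F_{T}X,F_{T}Y})$ is invertible in $\ct{C}$ for every pair $X,Y\in\ct{C}$, and conservativity of $U_{T}$ then promotes this to invertibility of $H^{l}_{P,F_{T}X,F_{T}Y}$ in $\ct{C}^{T}$ on free modules.

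To extend invertibility from free to arbitrary $T$-modules $(M,r),(N,s)$, the plan is to present each such module as a reflexive coequalizer of free modules in $\ct{C}^{T}$ (which exists because $T$ preserves RCs), and then use the hypothesis that $\otimes$ preserves RCs in the left entry, together with the fact that $P$ preserves RCs (inherited from $Q$ preserving RCs through the defining coequalizer for $G$), to realise $H^{l}_{P,(M,r),(N,s)}$ as a map of coequalizer diagrams whose parallel arrows are the already-established invertible fusion operators on free modules. The right-handed case proceeds symmetrically with right fusion operators and the right-entry RC hypothesis on $\otimes$. I expect the main obstacle to be this last colimit-compatibility step: verifying that both the source $P((M,r)\otimes P(N,s))$ and the target $P(M,r)\otimes P(N,s)$ admit compatible coequalizer presentations and that the inverse assembled on free generators descends to a genuine $T$-module morphism. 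This is precisely where the combined preservation-of-RCs hypotheses on $T$, $Q$, and $\otimes$ all have to be mobilised in concert.
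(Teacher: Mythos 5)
Your first two steps match the paper's argument: cross quotientability is obtained exactly as you describe (reflexive pair split by $Q(\eta_{M})$, existence of RCs in $\ct{C}^{Q}$ from $\ct{C}$ having RCs and $Q$ preserving them, Lemma~\ref{LCrsQuo} for the bimonad structure), and the paper likewise reads the invertibility on free modules off the commuting rectangle \eqref{EqCrossFusionOp}, since that diagram factors the fusion operator of $F_{Q}\dashv U_{Q}$ as $\ov{H}^{l}_{P,F_{T}(X),Y}\circ F_{P}\big(\ov{H}^{l}_{T,X,U_{P}(Y)}\big)$ with the $T$-piece invertible by hypothesis.

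The last step, however, has a genuine problem as you have set it up. By phrasing the free case as invertibility of the \emph{monad} fusion operator $H^{l}_{P,F_{T}X,F_{T}Y}$ on two free arguments, you commit yourself to extending \emph{both} entries along reflexive-coequalizer presentations. Extending the first entry uses exactly the hypothesis that $\tn$ preserves RCs in the left entry; but extending the second entry requires the functors $P((M,r)\tn P(-))$ and $P(M,r)\tn P(-)$ to preserve RCs, and the latter needs $\tn$ to preserve RCs in the \emph{right} entry --- which is not among the hypotheses in the left-Hopf case. The way out, and what the paper (following Lemma 4.8 of \cite{bruguieres2011hopf}) actually does, is to never descend to the monad fusion operator at all: diagram \eqref{EqCrossFusionOp} already yields invertibility of the \emph{adjunction} fusion operator $\ov{H}^{l}_{F_{T}(M),X}$ for an \emph{arbitrary} object $X$ of $\ct{C}^{Q}$ in the second slot, so only the first slot needs the colimit argument. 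One then writes $(M,r)$ as the reflexive coequalizer of $F_{T}(r),\mu_{M}:F_{T}T(M)\rightrightarrows F_{T}(M)$, notes that $F_{P}(-\tn U_{P}(X))$ and $F_{P}(-)\tn X$ both preserve RCs (using that $F_{P}$ is a left adjoint and that $\tn$ preserves RCs in the left entry of $\ct{C}^{T}$ and $\ct{C}^{Q}$, which is Lemma 4.2 of \cite{bruguieres2011hopf}), and concludes $\ov{H}^{l}_{(M,r),X}$ is invertible for all $(M,r)$ and all $X$. Left-Hopfness of $P$ then follows from Theorem~\ref{PropFus}. So the obstacle you flagged at the end is real, but it is created by your choice of operator rather than being an unavoidable difficulty; switching to the adjunction fusion operator dissolves it.
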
 
\begin{sketchproof} We previously mentioned that $\phi$ is cross quotientable if and only if reflexive pairs $\mu'_{M}Q(\phi_{M}), Q(r): F_{Q}T(M)\rightrightarrows F_{Q}(M)$ corresponding to $T$-modules $(M,r)$ admit coequalizers in $\ct{C}^{Q}$ [Lemma 4.9 of \cite{bruguieres2011hopf}]. Hence, under the assumptions made above $Q\div T$ exists and is a bimonad by Lemma~\ref{LCrsQuo}. Let $\ov{H}^{l}$ denote the left fusion operator of the adjunction corresponding to $Q\div T$. Next we observe that Diagram \eqref{EqCrossFusionOp} commuting also shows that if the left fusion operators of $T$ and $(Q\div T)\rtimes T\cong Q$ are invertible, then $\ov{H}^{l}_{F_{T}, -}$ is invertible ($P=Q\div T$).

The final step is to view any $T$-module $(M,r)$ as the coequalizer of its corresponding reflexive pair $F_{T}(r),\mu_{M}: F_{T}T(M)\rightrightarrows T(M)$ in $\ct{C}^{T}$ with $F_{T}(\eta_{M})$ and $r: T(M)\rightarrow M$. By Lemma 4.2 of \cite{bruguieres2011hopf}, $\tn$ also preserves RCs in the left component in $\ct{C}^{T}$ and $\ct{C}^{Q}$. Since $F_{P}$ is left adjoint and preserves coequalizers, then the functors $F_{P}(-\tn U_{P}(X))$ and $F_{P}(-)\tn X$ both preserve RCs for any $X\in \ct{C}^{Q}$. Therefore if $\ov{H}^{l}_{F_{T}(M), X}$ is invertible, then so is $\ov{H}^{l}_{(M,r),X}$. We refer the reader to Lemma 4.8 of \cite{bruguieres2011hopf} for additional details on this argument.
\end{sketchproof} 

Given a pair of ordinary Hopf algebras $H_{1}$ and $H_{2}$, their corresponding left Hopf monads $Q=H_{2}\tnK -$ and $T= H_{1}\tnK -$ on $\Vecs$ satisfy the conditions in Theorem~\ref{TCrsQuo}. Hence we can consider the cross quotient of $\phi =\iota\tnK- $ for any Hopf algebra map $\iota: H_{1}\rightarrow H_{1}$. In particular, the resulting adjunction given by restriction and extension of scalars between $\lmod{H_{2}}$ and $\lmod{H_{1}}$ will be left Hopf. We already mentioned this fact in Section~\ref{SGalois}. The corresponding Hopf monad $Q\div T $ is given by the functor ${}_{H_{1}}{H_{2}}\otimes_{H_{1}} -$ on $\lmod{H_{1}}$, where we consider the natural $H_{1}$-bimodule structure which $\iota$ induces on $H_{2}$. 

\begin{thm}[Corollary 5.12 \cite{bruguieres2011hopf}]\label{ThmRadford} If $T$, $Q$ and $\ct{C}$ satisfy the conditions of Theorem~\ref{TCrsQuo} and the bimonad morphism $\phi:T\rightarrow Q$ admits a retraction $\iota : Q\rightarrow T$ then there exists a central Hopf algebra $(B,\tau)$ in $\ct{C}^{T}$ such that $Q\div T\cong B\tn_{\tau}-$. 
\end{thm}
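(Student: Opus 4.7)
My plan is to construct an augmentation $\epsilon : Q \div T \to \id_{\ct{C}^T}$ on the bimonad $P := Q \div T$ directly from the retraction $\iota$, and then invoke Theorem~\ref{TAugHpfMnd} to extract the desired central Hopf algebra. The hypotheses of Theorem~\ref{TCrsQuo} are already in force, so $P$ is a left Hopf monad on $\ct{C}^T$ sitting inside a monoidal equivalence $(\ct{C}^T)^{P} \cong \ct{C}^Q$ over $\ct{C}^T$, under which the forgetful functor $U_P$ corresponds to $\phi^* : \ct{C}^Q \to \ct{C}^T$.

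To build $\epsilon$, I will exploit the correspondence recalled at the beginning of Section~\ref{SRadford} between (bi)monad morphisms and functors commuting with the forgetful functors. Since $\iota : Q \to T$ is a bimonad morphism, the associated functor $\iota^* : \ct{C}^T \to \ct{C}^Q$, $(M,r) \mapsto (M, r\iota_M)$, is strict monoidal and satisfies $U_Q \iota^* = U_T$. The retraction identity $\iota \phi = \id_T$ translates at the level of functors into $\phi^* \iota^* = \id_{\ct{C}^T}$. Transporting $\iota^*$ through the equivalence $\ct{C}^Q \cong (\ct{C}^T)^{P}$ therefore yields a strict monoidal functor $F : \ct{C}^T \to (\ct{C}^T)^{P}$ with $U_P F = \id_{\ct{C}^T}$. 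Applying the monad morphism/functor correspondence once more, this time on the base $\ct{C}^T$, $F$ corresponds to a monad morphism $\epsilon : P \to \id_{\ct{C}^T}$, and the strict monoidality of $F$ ensures that $\epsilon$ is comonoidal, hence a bimonad morphism. This exhibits $P$ as an augmented left Hopf monad on $\ct{C}^T$.

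Having produced $(P, \epsilon)$, I conclude by invoking Theorem~\ref{TAugHpfMnd} applied to the monoidal category $\ct{C}^T$: the equivalence between augmented left Hopf monads and central Hopf algebras furnishes a central Hopf algebra $(B,\tau)$ in $\ct{C}^T$ together with an isomorphism of bimonads $P \cong B \tn_\tau -$. Unwinding Theorem~\ref{TAugHpfMnd}, $B$ is obtained as $P(\un_{\ct{C}^T})$ with central braiding $\tau = v^{\epsilon}(u^{\epsilon})^{-1}$, where the invertibility of $u^{\epsilon}$ uses precisely that $P$ is left Hopf, which we already have.

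The main obstacle I anticipate is the careful bookkeeping in the middle step: one must verify that the monoidal structure on $\iota^*$ (induced by $\iota$ being a comonoidal natural transformation between $Q$ and $T$) matches, after transport through the equivalence $\ct{C}^Q \cong (\ct{C}^T)^{P}$, the monoidal structure needed to witness $F$ as a strict monoidal section of $U_P$, and that the induced $\epsilon$ is then comonoidal with respect to the bimonad structure $(P_2, P_0)$ inherited from $(Q_2, Q_0)$ via the cross-quotient construction. This amounts to a diagram chase through the comonoidal structures $T_2, Q_2, P_2$ and the defining relations of $\iota$, and presents no conceptual difficulty once the correspondences are set up, but requires some care.
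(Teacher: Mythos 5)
Your proposal is correct and follows essentially the same route as the paper: both arguments convert the retraction $\iota$ into an augmentation $Q\div T\to \id_{\ct{C}^{T}}$ and then invoke Theorem~\ref{TAugHpfMnd}. The only difference is that the paper cites the functoriality of the cross-quotient construction (Remark 4.12 of \cite{bruguieres2011hopf}) as a black box, whereas you unpack that functoriality explicitly through the correspondence between bimonad morphisms and strict monoidal functors over the base together with the identification $(\ct{C}^{T})^{Q\div T}\cong\ct{C}^{Q}$.
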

\begin{sketchproof} The correspondence between bimonad morphisms $T\rightarrow Q$ and bimonads $Q\div T$ is functorial (Remark 4.12 of \cite{bruguieres2011hopf}). Therefore, if view the retraction of $\phi$ as a morphism between bimonad maps $T\rightarrow Q$ and $T\rightarrow T$, it will be sent to an augmentation map $Q\div T\rightarrow T\div T=\id_{\ct{C}^{T}}$ under this correspondence. Hence, $Q\div T$ is augmented and by Theorem~\ref{TAugHpfMnd} it corresponds to a central Hopf algebra in $\ct{C}^{T}$. 
\end{sketchproof} 

The setting of Radford's biproduct Theorem for a pair of Hopf algebras $H_{1}$ and $H_{2}$ is precisely the reduction of the above result for monads $Q=H_{2}\tnK -$ and $T= H_{1}\tnK -$ on $\Vecs$, where the Hopf algebra map $\iota:H_{1}\hookrightarrow H_{2}$ admits a retraction $\pi: H_{1}\twoheadrightarrow H_{2}$. 

\begin{ex}[Radford's biproduct Theorem for Hopf Algebroids]\label{ExRadfordTheorem} Note that for an arbitrary $\field$-algebra $A$, the tensor product of $\bim$ preserves coequalizers in both entries. Additionally, the corresponding monads for bialgebroids were those which preserved colimits (Theorem~\ref{TSzl}). Hence, we can apply Theorem~\ref{ThmRadford} to the setting of Hopf algebroids: If $H_{1}$ and $H_{2}$ are a pair of Hopf algebroids over a base algebra $A$ and there exist a pair of bialgebroid morphisms $\pi: H_{2}\twoheadrightarrow H_{1}$ and $\iota : H_{1}\hookrightarrow H_{2}$ such that $\pi\iota =\id_{H_{1}}$, then there exists a Hopf algebra $(B,\triangleright, \delta)$ in $\ct{Z}(\lmod{H_{1}})$ such that $H_{2}\cong B\bosonleft H_{1}$. As in Radford's result, $B$ can be identified as the subspace of elements of the form $h-\iota\pi(h)$, for $h\in H$. 
\end{ex}
\subsection{Hopf Modules}\label{SHopfModule} 
An important result in the theory of ordinary Hopf algebras concerns Hopf modules and is often referred to as the Fundamental Theorem of Hopf Algebras. This result describes an equivalence between the category of vector spaces and the category of Hopf modules and has been generalised to the case of braided Hopf algebras and braided categories in \cite{bespalov1998hopf} under the additional requirement that idempotents split in the base category. In this section we review the analogous statement for Hopf monads.

A Hopf module for a Hopf algebra $H$ consists of an $H$-module with a compatible $H$-comodule structure. Therefore, the first important step for defining Hopf modules over Hopf monads is picking a notion of comodules over the monad. Hopf modules over Hopf monads were first defined in \cite{bruguieres2007hopf}, where `$T$-comodules' were defined as comodules with respect to the induced coalgebra of the monad $(T(\un), T_{2}(\un,\un), T_{0})$. Using this definition, a version of the Hopf module theorem was proved in \cite{bruguieres2007hopf} for right Hopf monads on right rigid categories. This result was then extended to Hopf monads on monoidal categories in \cite{bruguieres2011hopf}. More recently, the result of \cite{bruguieres2011hopf} was interpreted as an example of Galois entwining structures in \cite{mesablishvili2012notes}. We will discuss this viewpoint at the end of this section. 

Note that for any bimonad $T$ on a monoidal category $\ct{C}$, the image of the induced coalgebra $\hat{C}$ of Section~\ref{SAdj2CC} under $U_{T}$ becomes a coalgebra $(T(\un), T_{2}(\un,\un), T_{0})$ in $\ct{C}$. With this notation, a triple $(M, r, \delta )$ is called a \emph{(left) Hopf module}, if $r:T(M)\rightarrow M$ is a $T$-action and $\delta : M\rightarrow T(\un)\otimes M$ is a $T(\un)$-coaction satisfying 
$$\xymatrix{T(M) \ar[d]_{T\delta}\ar[r]^{r}& M\ar[r]^-{\delta} &T(\un)\otimes M \\T(T(\un)\otimes M)\ar[rr]^{H^{r}_{\un,X}} &&T(\un)\otimes T(M)\ar[u]_{\id_{T(\un)}\tn r}} $$
Morphisms of Hopf modules are defined accordingly as morphisms in $\ct{C}$ which commute with the relevant actions and coactions. We will denote the category of (left) Hopf modules by $\mathcal{H}^{l}(T)$. Note that for the corresponding Hopf monad of a braided Hopf algebra $B$, we recover the usual notion of Hopf modules which are triples $(M, \triangleright, \delta )$ where $\triangleright$ is a $B$-action and $\delta$ a $B$-coaction satisfying $\delta\triangleright = (m\tn \triangleright ) (\id_{H}\tn\Psi_{B,B}\tn\id_{M})( \Delta\tn \delta)$. 

Let us briefly recall the notion of a \emph{mixed distributive law} or \emph{entwining} from monad $(T,\mu, \eta)$ to a comonad $(G,\Delta , \epsilon)$ from \cite{wisbauer2008algebras} (the axioms first appeared for entwinings between ordinary algebras and coalgebras in \cite{brzezinski1998coalgebra}). Such an entwining is a natural transformation $\lambda : TG \rightarrow GT$  satisfying analogous conditions to the distributive laws in Section~\ref{SDist}. Given such a $\lambda $, we can define a comonad $\hat{G}$ on $\ct{C}^{T}$ such that $U_{T}\hat{G}=GU_{T}$. Explicitly, the functor $\hat{G}$ is defined by $\hat{G} (M,r) = (G(M), (Gr)\lambda_{M}: TG(M)\rightarrow GT(M) \rightarrow G(M) ) $ for any $T$-module $(M,r)$. The compatibility conditions on $\lambda$ then ensure that $(\hat{G}, \Delta, \epsilon)$ defines a comonad on $\ct{C}^{T}$. In a symmetric fashion, $\lambda$ lifts $T$ to $\ct{C}_{G}$, defining a monad $\hat{T} (M,\delta ) = (T(M), \lambda_{M}T\delta: T(M)\rightarrow TG(M) \rightarrow GT(M) )  $, where $(M,\delta )$ is a $G$-comodule in $\ct{C}_{G}$. Finally, we obtain a natural isomorphism $\left( \ct{C}^{T} \right)_{\hat{G}} \cong \left( \ct{C}_{G} \right)^{\hat{T}}$.

In the definition of left Hopf modules, the right fusion operator $H^{r}_{\un,X}$ is acting as a distributive law. Hence, $\mathcal{H}^{l}(T)= \left(\ct{C}^{T}\right)_{\hat{G}}$ where $\hat{G}:\ct{C}^{T}\rightarrow\ct{C}^{T} $ is the comonad obtained by lifting $T(\un)\otimes- $ via $H^{r}_{\un,X}$. We have already encountered $\hat{G}$ in Section \ref{SAdj2CC} with the notation $\hat{C}\otimes-$ and know it to be isomorphic to the comonad $\hat{T}=FU$ when $H^{l}_{\un,X}$ is invertible. 

The Fundamental Theorem of braided Hopf algebras describes an equivalence between the base category and the category of Hopf modules. This equivalence is given by the free Hopf module functor and the functor sending Hopf modules to their coinvariant parts. For ordinary Hopf algebras the coinvariant part of an $H$-comodule is the subspace of elements which satisfy $\delta(h)=1\tn_{\field} h$. Now we recall the analogous notions for Hopf monads from \cite{bruguieres2011hopf}.
 
For any left Hopf module $(M, r, \delta )$ over a bimonad $T$ the equalizer, if it exists, of the coreflexive pair $\delta, \eta_{\un}\otimes \id_{M}: M\rightarrow T(\un)\otimes M$ is called its \emph{coinvariant part} and we denote it by $M^{T(\un )}$. Furthermore, $T$ is said to preserve coinvariant parts of (left) Hopf modules if $T$ preserves this class of equalizers. There also exists a natural functor $H_{T}: \ct{C}\rightarrow  \mathcal{H}^{l}(T)$ defined by $H_{T}(X)= (T(X), \mu_{X}, T_{2}(\un,X)) $ which sends an object to the \emph{free Hopf module} generated by it. As mentioned $\mathcal{H}^{l}(T)= \left(\ct{C}^{T}\right)_{\hat{G}}$ and the free functor $H_{T}$ satisfies $U^{\hat{G}}H_{T}=F_{T}$, where $F_{T}\dashv U_{T} :\ct{C}\leftrightarrows \ct{C}^{T}$ and $U^{\hat{G}}\dashv F^{\hat{G}}: \mathcal{H}^{l}(T)\leftrightarrows \ct{C}^{T}$ denote the free/forgetful adjunctions for the monad $T$ and the comonad $\hat{G}$, respectively.  

For a general monad $T$, the question of when $F^{\hat{T}}F_{T}:\ct{C}\rightarrow\left(\ct{C}^{T}\right)_{\hat{T}}$ induces an equivalence of categories is answered in \cite{frei1971algebras}. Theorem 6.11 of \cite{bruguieres2011hopf} combines this with the observation that $\hat{G}\cong \hat{T}$ for a left pre-Hopf monad to obtain an analogous statement to the Fundamental Theorem:
\begin{thm}[Theorem 6.11 \cite{bruguieres2011hopf}]\label{ThmHpfMod} Assuming $T$ is a left pre-Hopf monad, TFAE:
\begin{enumerate}[label=(\Roman*)]
\item The functor $H_{T}$ defines an equivalence of categories. 
\item The functor $T$ is conservative, left Hopf modules admit coinvariant parts and $T$ preserves them.
\end{enumerate}
\end{thm}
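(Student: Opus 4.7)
The plan proceeds by translating the Hopf module theorem into a question of comonadicity for the adjunction $F_{T}\dashv U_{T}$ and then applying a dual form of Beck's theorem. First I would use the left pre-Hopf hypothesis to identify the two comonads $\hat{T}$ and $\hat{G}$ on $\ct{C}^{T}$. Since $H^{l}_{\un,-}$ is invertible and $U_{T}$ is conservative, the comonad morphism $\overline{H}^{l}_{\un,-}:\hat{T}\Rightarrow \hat{G}$ of Section~\ref{SAdj2CC} is an isomorphism of comonads on $\ct{C}^{T}$. Transport along this isomorphism yields an equivalence $\mathcal{H}^{l}(T)=(\ct{C}^{T})_{\hat{G}}\simeq(\ct{C}^{T})_{\hat{T}}$, and a short diagram chase (using naturality of $T_{2}$ and $\mu T\eta=\id_{T}$) shows that under this equivalence $H_{T}$ is transported to the canonical comparison functor $K:\ct{C}\rightarrow (\ct{C}^{T})_{\hat{T}}$, $X\mapsto (F_{T}(X),F_{T}(\eta_{X}))$. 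Thus (I) is equivalent to the adjunction $F_{T}\dashv U_{T}$ being comonadic.

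Second I would invoke the dual of Beck's theorem for $F_{T}\dashv U_{T}$, as established in~\cite{frei1971algebras}: $K$ is an equivalence if and only if $F_{T}$ is conservative and $\ct{C}$ admits equalizers of $F_{T}$-split coreflexive pairs which $F_{T}$ preserves. The conservativity of $F_{T}$ is equivalent to that of $T$, because $U_{T}$ is faithful and so $F_{T}(f)$ is invertible iff $T(f)=U_{T}F_{T}(f)$ is invertible; this handles the first clause of (II).

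Third I would establish a correspondence between Hopf modules $(M,r,\delta)$ and $F_{T}$-split coreflexive pairs in $\ct{C}$. Given a Hopf module, the pair $\delta,\ \eta_{\un}\otimes\id_{M}:M\rightrightarrows T(\un)\otimes M$ is coreflexive via $T_{0}\otimes \id_{M}$ (using the counit axiom of the $T(\un)$-coaction together with \eqref{EqBimonad4}), and its equalizer is by definition the coinvariant part $M^{T(\un)}$. Under $F_{T}$ this pair becomes a split coreflexive pair in $\ct{C}^{T}$, with $(M,r)$ playing the role of the split equalizer, the splittings being assembled from $r$, $\eta_{M}$ and the Hopf module compatibility axiom (in which $H^{r}_{\un,M}$ enters). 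Conversely, an $F_{T}$-split coreflexive pair supplies a $\hat{T}$-comodule via its splitting, and thus, through the isomorphism $\hat{T}\cong \hat{G}$, a Hopf module. Under this correspondence the existence and $F_{T}$-preservation of the equalizer matches exactly the existence of coinvariant parts $M^{T(\un)}$ together with their preservation by $T$ (since $U_{T}$ creates all limits that exist in $\ct{C}^{T}$), yielding (I)$\Leftrightarrow$(II).

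The main obstacle will be the third step: verifying that the Hopf module action $r$ produces precisely an $F_{T}$-split coreflexive pair in the sense required by Beck's criterion, and that this assignment is genuinely inverse to the passage from a split pair to a $\hat{T}$-comodule. The delicacy lies in keeping both fusion operators in play simultaneously---$H^{r}_{\un,-}$, which appears in the Hopf module compatibility axiom, and $H^{l}_{\un,-}$, which provides the comonad isomorphism $\hat{T}\cong\hat{G}$---and checking that the splitting data produced on one side is natural with respect to the bimonad structure so that it really does lift to $\ct{C}^{T}$; the two operators are compatible through the bimonad axioms, but the bookkeeping is what makes the argument technical.
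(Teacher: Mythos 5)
Your proposal is correct, and it reaches the theorem by a route that differs from the one sketched in the paper in its middle steps. Both arguments begin the same way, by using the invertibility of $H^{l}_{\un,-}$ to identify the comonads $\hat{T}=F_{T}U_{T}$ and $\hat{G}$ on $\ct{C}^{T}$, so that $\mathcal{H}^{l}(T)\simeq(\ct{C}^{T})_{\hat{T}}$ and $H_{T}$ is carried to the canonical comparison functor; your diagram chase showing $(\id_{T(\un)}\otimes\mu_{X})T_{2}(\un,T(X))T(\eta_{X})=T_{2}(\un,X)$ is exactly what is needed there. After that the paper constructs the coinvariants functor $E$ as an explicit right adjoint to $H_{T}$ and analyses the unit and counit of $H_{T}\dashv E$ directly, citing \cite{frei1971algebras} for the equivalence between the counit being invertible and $T$ preserving coinvariants, and then reduces essential surjectivity of $E$ to conservativity of $H_{T}$. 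You instead invoke the dual precise monadicity theorem for $F_{T}\dashv U_{T}$. The two approaches buy slightly different things: the paper's is more self-contained at the level of the specific adjunction and produces the quasi-inverse $E$ explicitly, while yours outsources the hard work to Beck's theorem and makes transparent that the Fundamental Theorem is literally the statement that $F_{T}$ is comonadic, which is closer in spirit to the descent-theoretic treatment in Section 6.5 of \cite{bruguieres2011hopf}.

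Two points in your third step deserve care. First, condition (II) only quantifies over the canonical coreflexive pairs $\delta,\ \eta_{\un}\otimes\id_{M}$ attached to Hopf modules, whereas the naive form of dual Beck quantifies over all $F_{T}$-split pairs; you therefore need the refined form of the theorem in which it suffices to test the canonical pairs $(U_{T}\delta',\eta_{M})$ attached to $\hat{T}$-comodules (these are always $F_{T}$-split, with $\delta'$ itself providing the split equalizer in $\ct{C}^{T}$ via the counit $\epsilon$). Your ``converse direction'' of the correspondence is exactly the observation that makes this reduction legitimate, but it should be stated as such rather than as an incidental remark. Second, the transfer of conservativity from $F_{T}$ to $T$ uses that $U_{T}$ is conservative (it reflects isomorphisms, being the forgetful functor of an Eilenberg--Moore category), not merely that it is faithful; faithfulness alone does not give ``$F_{T}(f)$ invertible iff $U_{T}F_{T}(f)$ invertible.'' Neither point is a gap in substance, only in the stated justification.
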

\begin{sketchproof} As in the classical setting, if Hopf modules admit coinvariants, the functor $E: \mathcal{H}^{l}(T)\rightarrow \ct{C} $ which is defined on objects by $E(M,r,\delta )=M^{T(\un )} $ and extended naturally to morphisms, becomes right adjoint to $H_{T}$. Furthermore, as demonstrated in \cite{frei1971algebras}, the counit of the adjunction is an isomorphism if and only if $T$ preserves coinvariants. When the counit of the adjunction is an isomorphism then the right adjoint $E$ is full and faithful and it is part of an equivalence if and only if it is essentially surjective. It is an easy exercise to check that the latter condition is true if and only if the left adjoint $H_{T}$ is conservative. Since $T=U_{T}F_{T}=U_{T}U^{\hat{G}}H_{T}$, where $U_{T}$, $U^{\hat{G}}$ are both conservative functors, then $H_{T}$ is conservative if and only if $T$ is.\end{sketchproof} 

First note that the corresponding Hopf monad of an ordinary Hopf algebra $H$ on $\Vecs$ naturally satisfies the conditions of Theorem~\ref{ThmHpfMod} and we recover the classical Fundamental Theorem of Hopf algebras in this case. Secondly, note that a symmetric theory of right Hopf modules can also be defined over a Hopf monad, with $H^{l}_{\un,-}$ acting as a distributive law between $T$ and $-\tn T(\un)$. In the theory of ordinary Hopf algebras and braided Hopf algebras, the notions of left and right Hopf modules become equivalent via the braiding. However, this is not the case for Hopf monads. In Example 5.6 of \cite{mesablishvili2012notes}, a right pre-Hopf monad is provided which does not satisfy the left Hopf module Theorem. 

\begin{ex}[Fundamental Theorem of Hopf algebras for Hopf algebroids]\label{ExFundThmHpfAlgebroids} Let $H$ be a bialgebroid over an algebra $A$ and let $( \underline{\Delta}, \underline{\epsilon})$ denote the coalgebra structure on $\underline{H}:=H\boxtimes A$, which we described in Example~\ref{ExAlgebroidCCC}. A left Hopf module for the bimonad $H\boxtimes -$ on $\bim$ will consists of an $H$-module $(M,\triangleright)$ with an $\underline{H}$-coaction $\delta: M\rightarrow \underline{H}\tn_{A} M$ satisfying 
$$(h\triangleright m)_{(-1)} \tn_{A}(h\triangleright m)_{(0)}  = \pi ({h_{(1)}.m_{(-1)}}) \tn_{A}(h_{(2)}\triangleright m_{(0)})\in \und{H}\tn_{A} M$$
where we denote the natural projection $H\rightarrow H\boxtimes A$ by $\pi$ and $\delta(m)= m_{(-1)}\tn_{A} m_{(0)}$ for $m\in M$. If $H$ is a Hopf algebroid and $H$ is faithfully flat as a right $A^{e}$-module then $H\boxtimes - $ becomes conservative and preserves coinvariant parts and by Theorem~\ref{ThmHpfMod} we obtain an equivalence between $\bim$ and the category of Hopf modules defined above. In particular, the equivalence is given by the functor which sends any $A$-bimodule $M$ to the corresponding Hopf module $H\boxtimes M$ with the relevant free $H$-action and $\und{H}$-coaction $h\boxtimes m\rightarrow \pi( h_{(1)})\tn_{A} (h_{(2)}\boxtimes m)$. Note that this result differs from the other version of the Fundamental Theorem for Hopf algebroids which appears in Section 4.3.4 of \cite{Bohm2009hopf}. The latter result describes an equivalence of categories between another notion of Hopf modules and $\lmod{A}$. 
\end{ex}

The above description and proof of the Hopf module Theorem can be generalised to the category mixed modules $\left(\ct{C}^{T}\right)_{\hat{G}}$ obtained from any distributive law $\lambda :TG\rightarrow GT$ between an arbitrary monad $T$ and comonad $G$ on a category $\ct{C}$. This theory is fully described in \cite{mesablishvili2010galois} and elaborated on in \cite{mesablishvili2011bimonads,mesablishvili2012notes}. In this framework, a natural transformation $g: \id_{\ct{C}}\rightarrow G$ is called a \emph{group-like morphism} if $g$ is a comonad morphism i.e. $\Delta g= g_{G}g$ and $\epsilon g=\id_{\id_{\ct{C}}}$. If such a morphism exists, one can construct a functor $H^{g}:\ct{C}\rightarrow \left(\ct{C}^{T}\right)_{\hat{G}}$ defined by $H^{g}(M)=(T(M),\mu_{M},\lambda Tg_{M})$ satisfying $U^{\hat{G}}K=F_{T}$. Moreover, the functor $T$ obtains two left $G$-comodule structure $\lambda (Tg)$ and $ g_{T}$, where we view $G$ as a comonoid in the monoidal category $\End(\ct{C})$. In the case of bimonads discussed above, $\lambda= H^{r}_{\un, -}$, $G=T(\un)\tn -$ and $g= \eta_{\un}\tn -$.

If the pairs $\lambda_{M} (Tg_{M})$ and $ g_{T(M)}$ admit equalizers denoted by $i^{g}:T^{g}(M)\rightarrow T(M)$, then $T^{g}$ becomes functorial and $i^{g}$ a monad morphism. Moreover, $H^{g}$ becomes monadic and $T^{g}$ the monad generated by $H^{g}$. In the bimonad case, $T^{g}$ would exist if all free $T$-modules admit coinvariants. The monad morphism $i^{g}$ naturally induces a functor between $(i^{g})^{*}:\ct{C}^{T}\rightarrow \ct{C}^{T^{g}}$, which under suitable conditions [Section 3.7 \cite{mesablishvili2010galois}] admits a left adjoint and induces a comonad on $\ct{C}^{T}$. Whenever this comonad is isomorphic to $\hat{G}$, and $\ct{C}^{T^{g}}$ is equivalent to $\left(\ct{C}^{T}\right)_{\hat{G}}$, then the quadruple $(T,G,\lambda ,g)$ are called a \emph{Galois entwining}. In Proposition 4.6 of \cite{mesablishvili2012notes}, it is shown for a bimonad $T$ the quadruple $(T,T(\un)\tn -, H^{r}_{\un,-} , \eta_{\un}\tn - )$ becomes a Galois entwining if and only if $H^{l}_{\un, -}$ is invertible. For further details on Galois entwining and their connection to bimonads we refer the reader to \cite{mesablishvili2010galois} and Section 4 of \cite{mesablishvili2012notes}. 

\subsection{Quasitriangular Structures}\label{SQuasitrig}
Quasitriangular structures on ordinary Hopf algebras were originally introduced by Drinfeld in \cite{drinfeld1986quantum,drinfeld1990almost} and correspond to braidings on the category of modules of the Hopf algebra. For braided Hopf algebras, a diagrammatic generalisation was presented in \cite{majid2000foundations}. In this section, we review the corresponding notion for bimonads, which was introduced in \cite{bruguieres2007hopf}.

An R-matrix or quasitriangular structure on an ordinary bialgebra $H$ is an invertible element of $R\in H\tn_{\field} H$ (where we $H\tn_{\field} H$ has an algebra structure $\bullet$ using the tensor product of algebras) satisfying
\begin{align*}
\Psi^{\Vecs}_{H,H} \Delta(h)\bullet R = R\bullet \Delta(h)
\\ (\Delta\tn_{\field}  \id_{H}) R= R_{13}\bullet R_{23}
\\ (\id_{H}\tn_{\field} \Delta) R=R_{13}\bullet R_{12} 
\end{align*}
where $R_{ij}: H^{\tn 3}\rightarrow H^{\tn 3} $ are the appearances of $R$ to the $i$ and $j$-th components of $H^{\tn 3}$ i.e. $R_{23}= 1_{H}\tn_{\field} R$.  To generalise this theory to braided bialgebras, $B$, Majid \cite{majid1993transmutation} interpreted the R-matrix as a morphism $R: \field \rightarrow H\tn_{\field} H$ satisfying analogous conditions (Fig. 9.13 of \cite{majid2000foundations}).

A \emph{quasitringular} structure or \emph{R-matrix} on a bimonad $T$ is a natural transformation $R_{X\otimes Y}:X\otimes Y \rightarrow T(Y)\otimes T(X)$ satisfying
\begin{align}
&(\mu_{Y}\otimes\mu_{X})R_{T(X),T(Y)}T_{2}(X,Y)=(\mu_{Y}\otimes\mu_{X})T_{2}(T(Y),T(X))T(R_{X,Y})\label{EqBimonadRMatrixI}
\\ (\id_{T(Z)}\otimes &T_{2}(X,Y))R_{X\otimes Y,Z} = (\mu_{Z}\otimes \id_{T(X)\otimes T(Y)})(R_{X,T(Z)}\otimes \id_{T(Y)})(\id_{X}\otimes R_{Y,Z})\label{EqBimonadRMatrixII}
\\ (T_{2}(Y,Z)&\otimes \id_{T(X)} )R_{X,Y\otimes Z} = (\id_{T(Y)\otimes T(Z)}\otimes\mu_{X})(\id_{T(Y)}\otimes R_{T(X),Z})(R_{X,Y}\otimes \id_{Z})\label{EqBimonadRMatrixIII}
\end{align}
for any triple of objects $X,Y,Z$ in $\ct{C}$.

\begin{thm}[Theorem 8.5 \cite{bruguieres2007hopf}]\label{TQuasi} If $T$ is a bimonad on a monoidal category $\ct{C}$, there is a bijection between R-matrices on $T$ and lax braidings on $\ct{C}^{T}$.  
\end{thm}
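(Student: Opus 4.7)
The plan is to exhibit an explicit bijection by describing both directions and verifying the compatibility conditions term-by-term.

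Forward direction: Given an R-matrix $R_{X,Y}\colon X\tn Y\to T(Y)\tn T(X)$, I would define, for any pair of $T$-modules $(M,r)$ and $(N,s)$, the candidate lax braiding
\[
\Psi_{(M,r),(N,s)} := (s\tn r)\,R_{M,N}\colon M\tn N \to N\tn M.
\]
The first task is to verify that this is a morphism in $\ct{C}^{T}$ from $(M,r)\tn(N,s)=(M\tn N,(r\tn s)T_{2}(M,N))$ to $(N,s)\tn(M,r)=(N\tn M,(s\tn r)T_{2}(N,M))$. Rewriting both sides using naturality of $R$ along $r$ and $s$, together with the module laws $rT(r)=r\mu_{M}$ and $sT(s)=s\mu_{N}$, this reduces exactly to axiom \eqref{EqBimonadRMatrixI}. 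Naturality of $\Psi$ in each slot is then immediate from naturality of $R$ and functoriality of $\tn$.

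Backward direction: Given a lax braiding $\Psi$ on $\ct{C}^{T}$, I would set
\[
R_{X,Y} := U_{T}\Psi_{F_{T}(X),F_{T}(Y)}\,(\eta_{X}\tn\eta_{Y})\colon X\tn Y \to T(Y)\tn T(X),
\]
which makes sense because $U_{T}$ is strict monoidal. Naturality of $R$ follows from that of $\Psi$ and of $\eta$. To recover the three R-matrix axioms, I would apply $\Psi$ to morphisms built from the multiplication $\mu_{X}\colon F_{T}T(X)\to F_{T}(X)$, viewed as a $T$-module morphism: naturality of $\Psi$ in its two slots with respect to these multiplications, combined with the two hexagon identities for a lax braiding and compatibility of $\Psi$ with the monoidal unit $(\un,T_{0})$, will yield \eqref{EqBimonadRMatrixI}, \eqref{EqBimonadRMatrixII} and \eqref{EqBimonadRMatrixIII} respectively after composing with appropriate $\eta$-maps.

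Mutual inverses: For $\Psi\mapsto R\mapsto \Psi'$, I would use naturality of $\Psi$ along the canonical $T$-module morphisms $r\colon F_{T}(M)\to(M,r)$ and $s\colon F_{T}(N)\to(N,s)$ to obtain $\Psi_{(M,r),(N,s)}(r\tn s)=(s\tn r)\Psi_{F_{T}(M),F_{T}(N)}$, and then invoke the unit axioms $r\eta_{M}=\id_{M}$, $s\eta_{N}=\id_{N}$ to conclude $\Psi'=\Psi$. For $R\mapsto\Psi\mapsto R'$, naturality of $R$ along $\eta_{X}$, $\eta_{Y}$ together with the monad identity $\mu T\eta=\id$ gives $R'=R$.

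The main bookkeeping obstacle is the verification of the two hexagon identities for $\Psi$ from the axioms \eqref{EqBimonadRMatrixII}–\eqref{EqBimonadRMatrixIII}, since one must thread the three actions $r,s,t$ through the comonoidal structure $T_{2}$ appearing in the iterated tensor product $(M,r)\tn(N,s)\tn(P,t)$. The cleanest way I envisage handling this is first to check both hexagons on free modules, where by construction the R-matrix axioms translate directly into the hexagon equalities (with $\mu$ playing the role of the actions), and then to bootstrap to arbitrary modules via naturality along the canonical quotients $r,s,t$, using that any module is a quotient of its associated free module.
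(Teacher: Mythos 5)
Your proposal is correct and matches the paper's argument: both directions use the same formulas ($\tau_{(M,r),(N,s)}=(s\tn r)R_{M,N}$ and $R_{X,Y}=\tau_{F_{T}(X),F_{T}(Y)}(\eta_{X}\tn\eta_{Y})$), with axiom \eqref{EqBimonadRMatrixI} accounting for the module-map property and \eqref{EqBimonadRMatrixII}--\eqref{EqBimonadRMatrixIII} for the hexagons, exactly as in the paper's sketch. Your strategy of checking the hexagons on free modules and descending along the split epimorphisms $r\tn s\tn t$ (using that $U_{T}$ is faithful) is a sound way to organise the verification the paper leaves implicit.
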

\begin{sketchproof} This bijection is constructed as follows: If $R$ is an R-matrix on $T$ then $\tau_{(M,r),(N,s)}= (s\otimes r)R_{M,N} $, where $(M,r)$ and $(N,s)$ are $T$-modules, defines a braiding on $\ct{C}^{T}$. The first condition \eqref{EqBimonadRMatrixI} ensures that $(s\otimes r)R_{M,N}$ is a well-defined $T$-module map, while the other two conditions \eqref{EqBimonadRMatrixII} and \eqref{EqBimonadRMatrixIII} dictate the braiding axioms. In the other direction, if $\tau$ is a braiding on $\ct{C}^{T}$ then $R_{M,N}= \tau_{(TX,\mu_{X}),(TY,\mu_{Y})}(\eta_{X}\otimes\eta_{Y}) $ defines an R-matrix on $T$.
\end{sketchproof}

Note that the R-matrix in the theory of ordinary bialgebras is expected to be an invertible element of $H\tn_{\field}H$. For bimonads, a similar notion of invertiblilty is discussed in \cite{bruguieres2007hopf}, where a natural transformation $R_{X\otimes Y}:X\otimes Y \rightarrow T(Y)\otimes T(X)$ is called \emph{convolution invertible} if there exists another natural transformation $R^{-1}_{X,Y}: Y\otimes X \rightarrow T(X)\otimes T(Y)$ satisfying
\begin{equation}\label{EqMonadConvInv}
\eta_{X}\tn \eta_{Y}=(\mu_{X}\tn \mu_{Y})R^{-1}_{T(X),T(Y)}R_{X,Y}, \quad \eta_{Y}\tn \eta_{X}=(\mu_{Y}\tn \mu_{X})R_{T(X),T(Y)}R^{-1}_{X,Y}
\end{equation}
Whenever $R^{-1}$ in the above sense exists, then the lax braiding $\tau$ defined in Theorem~\ref{TQuasi} obtains an inverse defined by by $\tau^{-1}_{(M,r),(N,s)}= (r\otimes s)R^{-1}_{N,M}:M\tn N \rightarrow N\tn M$.

If $\ct{C}=\Vecs$ and $T=H\tn_{\field} - $ is given by a bialgebra $H$, then the existence of any R-matrix $R_{V,W}:V\otimes_{\field} W\rightarrow H\otimes_{\field} W\otimes_{\field} H\otimes_{\field} V$ for the bimonad $T$ induces a classical R-matrix on $H$ defined by $R_{\un,\un}(1)\in H\tn_{\field} H$. It is then easy to check that the natural family of morphisms $R_{V,W}$ are solely determined by the choice of $R_{\un,\un}(1)$. In the opposite direction, we can encode any classical R-matrix $R= \sum_{i} h_{i}\tn_{\field} h'_{i}$ as an R-matrix on the bimonad $H\tnK - $ as follows $R_{X\tn_{\field} Y}( x\tn y) = \sum_{i} h'_{i}\tn_{\field}y\tn_{\field} h_{i} \tn_{\field} x$.  

In the more general setting where $\ct{C}$ is an arbitrary braided category and $T=\mathbb{B}$ corresponds to a braided bialgebra $B$ in $\ct{C}$, R-matrices on $T$ do not necessarily correspond to a morphism $\ov{R}:\un\rightarrow B\otimes B$ satisfying the conditions in Fig. 9.13 of \cite{majid2000foundations}. In particular, the latter R-matrices only define a braiding on a particular subcategory of $B$-modules (called \emph{commutative modules}). In Section 8.6 of \cite{bruguieres2012quantum} it is shown that when $\ct{C}$ is rigid and admits a coend $C= \int^{c\in \ct{C}} c\tn c^{\vee}$, then R-matrices on $\mathbb{B}$ are in bijection with morphisms $C\otimes C\rightarrow B\otimes B$ satisfying analogous conditions to Fig. 9.13 of \cite{majid2000foundations}. The case where $\ct{C}$ is $\fdVecs$ is particular since $C=\field$ and, therefore, the classical notion of R-matrices are recovered.

Another key aspect of the theory of R-matrices and Hopf algebras is their relation with the Yang-Baxter equation. An analogous version of the Yang-Baxter equation can be shown to hold for Hopf monads as well, Corollary 8.7 in \cite{bruguieres2007hopf}. Other concepts relating to braided categories and Hopf algebras such as \emph{Drinfeld elements} for a Hopf monad and analogous notions of \emph{ribbon} and \emph{sovereign} structures on Hopf monads have all been defined in \cite{bruguieres2007hopf}. We should also note that the notion of quasitriangular structures on bialgebroids first appeared in \cite{donin2006quantum}. 

\subsection{Tannaka Reconstruction for Hopf monads}\label{STannakaMnds}
In this section we review the construction of \cite{shimizu2021tannaka} which produces Hopf monads from suitable strong monoidal functors. This theory generalises our construction of Hopf monads from pivotal pairs in Section~\ref{SMnd} and recovers the usual Tannaka-Krein reconstruction for braided Hopf algebras \cite{majid1993braided} when the base category is braided. Let us first recall what we mean by Tannaka-Krein theory for Hopf algebras.

Let $\ct{D}$ be a small monoidal category and $\omega: \ct{D}\rightarrow \ct{C}$ be a strict monoidal functor such that $\omega$ factors through the inclusion functor $\inc : \ct{C}_{\rig}\rightarrow \ct{C}$, where $\ct{C}_{\rig}$ denotes the subcategory of rigid objects in $\ct{C}$. We consider the functor $\omega\tn\omega^{\vee}:\ct{D}\times \ct{D}^{\op}\rightarrow \ct{C}$ and denote its coend (see Section~\ref{SCoend}), if it exists, by 
\begin{equation}\label{Ecoend}
H_{\omega}:=\int^{a\in \ct{D}}	\omega(a)\tn\omega(a)^{\vee}
\end{equation}
Recall that the coend is the colimit of the diagram consisting of objects $ \omega(a)\tn\omega(b)^{\vee} $ and morphisms $\omega (f)\tn \id_{\omega(b)^{\vee}}$ and $ \id_{\omega (a)}\tn \omega(f)^{\vee} $ corresponding to objects $a,b\in \ct{D}$ and morphisms $f:a\rightarrow b$ in $\ct{D}$, respectively. 

\begin{thm}\label{TFRTmain} If $(\ct{C},\Psi)$ is a braided monoidal category, and the mentioned coend exists, it comes equipped with the structure of a braided bialgebra, such that $\dual{\ct{D}}{\omega}{\ct{C}}_{r,\lax}$ is monoidal equivalent to the the category of left $H_{\omega}$-modules, ${}_{H_{\omega}}{\ct{C}}$. Additionally, if $\ct{D}$ is rigid then $H_{\omega} $ admits a bijective antipode making it a braided Hopf algebra object in $\ct{C}$.
\end{thm}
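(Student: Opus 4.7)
The proof strategy is organized around the universal property of the coend: dinatural families $\{f_a: \omega(a)\tn\omega(a)^\vee \to Y\}_{a\in\ct{D}}$ correspond bijectively to morphisms $H_\omega \to Y$ via the universal projections $i_a$. Since every $\omega(a)$ is dualizable, $-\tn-$ preserves the diagrams defining $H_\omega$ in each entry, so $H_\omega^{\tn n}$ is itself the coend on $\ct{D}^{\times n}$ of the iterated bifunctor, with universal cone $i_{a_1}\tn\cdots\tn i_{a_n}$. Every structural morphism, and every identity between them, will be obtained by exhibiting the appropriate dinatural cone and invoking this universal property.

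First I would define the bialgebra structure on $H_\omega$. The unit is $\eta = i_\un$ (valid since $\omega(\un)\tn\omega(\un)^\vee = \un$), the counit $\epsilon$ is induced by the dinatural family $\{\evr_{\omega(a)}\}$, and the coproduct $\Delta$ is induced by inserting $\cvr_{\omega(a)}$ between the two factors of $\omega(a)\tn\omega(a)^\vee$ and postcomposing with $i_a \tn i_a$. The multiplication crucially uses the braiding, and I would define it via the dinatural-in-$(a,b)$ family
\begin{equation*}
\omega(a)\tn\omega(a)^\vee\tn\omega(b)\tn\omega(b)^\vee \xrightarrow{\id\tn\Psi\tn\id} \omega(a)\tn\omega(b)\tn\omega(a)^\vee\tn\omega(b)^\vee \xrightarrow{i_{a\tn b}} H_\omega,
\end{equation*}
identifying $\omega(a)\tn\omega(b)\cong\omega(a\tn b)$ and $\omega(a)^\vee\tn\omega(b)^\vee\cong\omega(a\tn b)^\vee$ via strong monoidality of $\omega$ and the canonical isomorphism between the tensor product of duals and the dual of a tensor product.

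The main technical obstacle will be verifying the bialgebra compatibility \eqref{EqBialgebraAxiom}: both composites $\Delta m$ and $(m\tn m)(\id\tn\Psi\tn\id)(\Delta\tn\Delta)$ can be pulled back to the universal cone $i_a \tn i_b$ of $H_\omega \tn H_\omega$, and their equality there requires a diagram chase in which repeated applications of the hexagon axioms cancel the various instances of $\Psi$ and $\Psi^{-1}$ against each other. The remaining conditions in \eqref{EqBialgebraRest} will reduce to the triangle identities for duality and dinaturality, and so are essentially routine once $m$ and $\Delta$ are in place.

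For the monoidal equivalence $\dual{\ct{D}}{\omega}{\ct{C}}_{r,\lax} \simeq {}_{H_\omega}\ct{C}$, I would send $(X,\tau)$ to the $H_\omega$-action $\rho_X$ induced from the dinatural family
\begin{equation*}
\omega(a)\tn\omega(a)^\vee\tn X \xrightarrow{\id\tn\Psi^{-1}} \omega(a)\tn X\tn\omega(a)^\vee \xrightarrow{\tau_a\tn\id} X\tn\omega(a)\tn\omega(a)^\vee \xrightarrow{\id\tn\evr} X,
\end{equation*}
and conversely recover $\tau_a$ from an action $\rho$ by composing with $(i_a\tn\id_X)(\id_{\omega(a)}\tn\cvr_{\omega(a)}\tn\id_X)$ up to braiding; associativity and monoidality of the equivalence translate directly into the identities defining $m$ and $\Delta$. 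Finally, when $\ct{D}$ is rigid I would define the antipode by the dinatural cone
\begin{equation*}
\omega(a)\tn\omega(a)^\vee \xrightarrow{\Psi} \omega(a)^\vee\tn\omega(a) \cong \omega(\pr a)\tn\omega(\pr a)^\vee \xrightarrow{i_{\pr a}} H_\omega,
\end{equation*}
exploiting $\omega(\pr a)\cong\pr{\omega(a)}$ from strong monoidality; bijectivity of $S$ is obtained by performing the symmetric construction using right duals $a^\vee$ in $\ct{D}$, with $\pr{(\pr a)}\cong a$ ensuring that the two composites collapse to the identity after applying the universal property of $H_\omega$.
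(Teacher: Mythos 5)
Your construction reproduces, formula for formula, the structural morphisms that the paper records for $H_{\omega}$ (note that the paper does not actually prove Theorem~\ref{TFRTmain}: it defers the verification to Chapter 9 of \cite{majid2000foundations} and only lists $m,\eta,\Delta,\epsilon,S$). Your unit, counit, coproduct and multiplication are exactly the paper's $\mu_{\un}$, $\evr_{\omega(a)}$, insertion of $\cvr_{\omega(a)}$ in the middle slot, and $\mu_{x\tn y}(\id\tn\Psi_{\omega(x)^{\vee},\omega(y)}\tn\id)$. Your antipode, ``braid the two factors and reindex along $a\mapsto\pr{a}$'', differs from the paper's displayed expression only in that the paper spells out the identification $\omega(a)^{\vee}\tn\omega(a)\cong\omega(a^{\vee})\tn\omega(a^{\vee})^{\vee}$ through evaluations and coevaluations; the two agree by the snake identities. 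The reduction of the bialgebra axiom to a hexagon-axiom chase on the cone $i_{a}\tn i_{b}$, and the passage between lax braidings $\tau$ and $H_{\omega}$-actions, are likewise the standard route.

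The one step that does not hold as stated is your opening claim that dualizability of the $\omega(a)$ alone makes $H_{\omega}^{\tn n}$ the iterated coend with universal cone $i_{a_{1}}\tn\cdots\tn i_{a_{n}}$. Dualizability gives that $X\tn-$ and $-\tn X$ preserve colimits whenever $X$ is dualizable, so $(\omega(a)\tn\omega(a)^{\vee})\tn H_{\omega}$ is indeed a coend in $b$; but to pass the outer colimit over $a$ across $-\tn H_{\omega}$ you need $-\tn H_{\omega}$ to preserve that colimit, and $H_{\omega}$ itself is not dualizable in general. The same issue afflicts $H_{\omega}\tn X$ in your definition of the action $\rho_{X}$ for an arbitrary, non-dualizable $X$. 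Without this, $m$ and $\rho_{X}$ are not determined by their values on the universal cone, and every uniqueness argument you invoke afterwards collapses. You need an extra hypothesis -- $\ct{C}$ closed, or $\tn$ cocontinuous in each variable, which the paper does impose in the surrounding monadic generalisation and which holds in all the intended examples -- or you must switch to Majid's formulation, where the multiplication is characterised by representability of a functor of natural transformations rather than by a double coend. Once that assumption is stated, the rest of your argument goes through.
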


Tannaka-Krein reconstruction over $\Vecs$ and symmetric monoidal categories was first formally treated in \cite{saavedra1972categories} and later for algebraic groups in \cite{deligne1982tannakian}. We refer the reader to \cite{joyal1991introduction} for a detailed comparison with Tannaka-Krein duality for compact topological groups. The formulation of braided Hopf algebras and braided Tannaka-Krein reconstruction as we have presented here appears in \cite{majid1993braided}. The detailed proof of this result can be found in Chapter 9 of \cite{majid2000foundations}, where the coend is is described in terms of natural transformations between certain functors. 

We will not go into the details of the proof of Theorem~\ref{TFRTmain}, but only present the structural morphisms of the resulting Hopf algebra in the case where the functor $\omega$ is strict monoidal and additionally $\omega(x)^{\vee}=\omega (x^{\vee})$ for all $x\in \ct{D}$. If $\mu_{x}: \omega(x)\tn\omega(x)^{\vee}\rightarrow H_{\omega} $ denote the unique natural morphisms, making $H_{\omega}$ the colimit of the diagram, then the Hopf algebra structure $(m,\eta,\Delta, \epsilon, S)$ on $H_{\omega} $ is defined via the unique morphisms satisfying:
\begin{align*}
m:H_{\omega}\tn H_{\omega}\rightarrow H_{\omega}\ ;&\quad m(\mu_{x}\tn \mu_{y})=\mu_{x\tn y}\big( \id_{\omega(x)}\tn \Psi_{\omega(x)^{\vee},\omega(y)}\tn \id_{\omega(y)^{\vee}}\big) 	
\\  \eta :\un \rightarrow H_{\omega}\ ;& \quad \eta=\mu_{\un}
\\\Delta: H_{\omega}\rightarrow H_{\omega}\tn H_{\omega}\ ;& \quad\Delta\mu_{x} = (\mu_{x}\tn\mu_{x})\big(\id_{\omega(x)}\tn \cvl_{\omega(x)}\tn \id_{\omega(x)^{\vee}}\big)
\\\epsilon :H_{\omega}\rightarrow \un\ ;& \quad \epsilon\mu_{x}=\evl_{\omega(x)}
\\S: H_{\omega}\rightarrow H_{\omega}\ ;&\quad S\mu_{x}= \mu_{x^{\vee}}(\evl_{\omega(x)}\tn\id_{\omega(x^{\vee})\tn \omega(x^{\vee\vee})})( \Psi_{\omega(x^{\vee})\tn \omega(x^{\vee\vee}),\omega(x^{\vee}) })
\\ &\quad ( \id_{\omega(x)\tn \omega(x)^{\vee}}\tn \omega(\evl_{x})^{\vee})
\end{align*}
where $x,y\in \ct{D}$. In \cite{shimizu2021tannaka}, a generalisation of Theorem~\ref{TFRTmain} is described where $\ct{C}$ is a monoidal category, not necessarily braided, with suitable colimits. The output in this setting is then a Hopf monad rather than a braided Hopf algebra. We will briefly review this theory with reference to our construction in Section~\ref{SMnd}.

Assume $\ct{C}$ is a monoidal category where $\tn$ preserves colimits in both entries. A \emph{construction data} over $\ct{C}$ consists of a strong monoidal functor $\omega:\ct{D}\rightarrow \ct{C}$ where $\ct{D}$ is a small monoidal category and $\omega$ factors through the inclusion functor $\inc : \ct{C}_{\rig}\rightarrow \ct{C}$. Additionally, we assume that the following coends exist for every object $X$ in $\ct{C}$:
\begin{equation}\label{EqMonadTannakaCoend}
T_{\omega}(X):=\int^{d\in \ct{D}} \omega (d)\otimes X\otimes \omega (d)^{\vee}
\end{equation}
Now we review the main result of \cite{shimizu2021tannaka}: 
\begin{thm}[Theorem 3.3 \cite{shimizu2021tannaka}]\label{TShimizu} Let $ \omega:\ct{D}\rightarrow \ct{C}$ be a functor factorising through $\inc : \ct{C}_{\rig}\rightarrow \ct{C}$ such that the coends $T_{\omega}(X)$ in \eqref{EqMonadTannakaCoend} exist. In this case: 
\begin{enumerate}[label=(\Roman*)]
\item $T_{\omega}$ is functorial and comonoidal. 
\item If $\ct{D}$ is monoidal and $\omega$ is strong monoidal then $T_{\omega}$ also has a monad structure. Moreover, $\ct{C}^{T_{\omega}}$ becomes isomorphic to the lax right dual of $\omega$. 
\item If $\ct{D}$ is left (right) rigid then $T_{\omega}$ is right (left) Hopf. 
\end{enumerate}
\end{thm}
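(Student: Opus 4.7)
For part (I), I would first establish the functoriality of $T_{\omega}$ on morphisms $f:X\rightarrow Y$ in $\ct{C}$ by showing that the dinatural family $\id_{\omega(d)}\tn f\tn \id_{\omega(d)^{\vee}}$ induces a unique morphism $T_{\omega}(X)\rightarrow T_{\omega}(Y)$ via the coend universal property. For the comonoidal structure, I would define $(T_{\omega})_{2}(X,Y):T_{\omega}(X\tn Y)\rightarrow T_{\omega}(X)\tn T_{\omega}(Y)$ as the unique morphism whose composite with the coend injection at $d$ equals $(\mu_{X}^{d}\tn \mu_{Y}^{d})(\id_{\omega(d)}\tn \id_{X} \tn \cvr_{\omega(d)}\tn \id_{Y}\tn \id_{\omega(d)^{\vee}})$, and define $(T_{\omega})_{0}:T_{\omega}(\un)\rightarrow \un$ as the unique morphism induced by the dinatural family $\evr_{\omega(d)}$. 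The comonoidal coassociativity and counit axioms reduce to the duality axioms for $(\omega(d),\omega(d)^{\vee})$ and can be checked after postcomposing with the universal injections.

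For part (II), assuming $\omega$ is strong monoidal, I would define the unit $\eta_{X}:X\rightarrow T_{\omega}(X)$ by composing the left/right unitor with $\omega_{0}\tn \id_{X}\tn (\omega_{0})^{-\vee}$ and the coend injection at $\un\in\ct{D}$, and define the multiplication $\mu_{X}:T_{\omega}T_{\omega}(X)\rightarrow T_{\omega}(X)$ at level $(c,d)$ of the iterated coend $\int^{c,d}\omega(c)\tn\omega(d)\tn X\tn \omega(d)^{\vee}\tn \omega(c)^{\vee}$ by applying the monoidal coherence $\omega_{2}(c,d)$ (and its transpose on the dual side) followed by the coend injection at $c\tn d$. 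The monad axioms then reduce to the associativity and unitality of $(\omega_{2},\omega_{0})$. For the equivalence $\ct{C}^{T_{\omega}}\cong \dual{\ct{D}}{\omega}{\ct{C}}_{r,\lax}$, I would use the universal property of $T_{\omega}(X)$ to identify a $T_{\omega}$-action $r:T_{\omega}(X)\rightarrow X$ with a dinatural family $r_{d}:\omega(d)\tn X\tn \omega(d)^{\vee}\rightarrow X$, and then use the duality $(\omega(d),\omega(d)^{\vee})$ to convert this to a natural family $\tau_{d}:\omega(d)\tn X\rightarrow X\tn \omega(d)$ via $\tau_{d}=(r_{d}\tn\id_{\omega(d)})(\id_{\omega(d)\tn X}\tn \cvr_{\omega(d)})$. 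The module axioms $r\mu_{X}=rT_{\omega}(r)$ and $r\eta_{X}=\id_{X}$ then translate precisely to \eqref{EqBrai1} and \eqref{EqBrai2} for $\tau$ by direct computation, using the monoidal structure on $\omega$.

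For part (III), which I expect to be the main obstacle, I need to construct inverses to the fusion operators. Taking the left-rigid case, I would trace the right fusion operator $H^{r}_{X,Y}:T_{\omega}(T_{\omega}(X)\tn Y)\rightarrow T_{\omega}(X)\tn T_{\omega}(Y)$ through the double-coend presentation and observe that at level $(d,c)$ of the left-hand coend it is induced by $\omega_{2}(d,c)$ on the first two factors, together with the corresponding identification of $\omega(c)^{\vee}\tn \omega(d)^{\vee}\cong \omega(d\tn c)^{\vee}$. To construct its inverse, I would exploit left rigidity in $\ct{D}$: given a pair of indices $(f,d)$ on the right-hand side, I would use the coevaluation $\un\rightarrow d\tn\pr{d}$ in $\ct{D}$ (whose image under $\omega$ is the coevaluation of $\omega(d)$) to rewrite level $(f,d)$ as level $(d,\pr{d}\tn f)$, and then verify using the triangle identities for $(d,\pr{d})$ in $\ct{D}$ together with the coend universal property that the resulting morphism is a two-sided inverse of $H^{r}_{X,Y}$. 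The right-Hopf case when $\ct{D}$ is right rigid is symmetric, using right duals in $\ct{D}$ to invert $H^{l}$. This is essentially the same pattern as the explicit fusion-operator inversion carried out in the proof of Theorem~\ref{TPivHpfMndGen}, which handles the pivotal-pair case; the main care needed is to keep track of the dual identifications $\omega_{2}(c,d)^{-\vee}$ and to check that all maps constructed are genuinely well-defined on the relevant coends, which follows once dinaturality in each index is verified separately.
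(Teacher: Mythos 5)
Your plan is correct and follows essentially the same route as the paper, which itself only records the structure maps \eqref{EqT2Tannaka}--\eqref{EqetaTannaka}, the identification of $T_{\omega}$-actions with lax right braidings via $\cvr_{\omega(d)}$, and defers the details to \cite{shimizu2021tannaka}. In particular your strategy for part (III) — inverting the fusion operators by re-indexing level $(f,d)$ to level $(d,\pr{d}\tn f)$ using duals in $\ct{D}$ and the identifications $\omega(\pr{d})\cong\pr{\omega(d)}$ — is exactly the generalisation of the proof of Theorem~\ref{TPivHpfMndGen} that the paper indicates, with the coherence data $\zeta$ playing the role of your ``dual identifications.''
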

The proofs of these results are rather lengthy and elaborate and we refer the reader to Section 3 of \cite{shimizu2021tannaka} for the full proofs. However, we will briefly review how the Hopf monad structure on $T_{\omega}$ is defined with reference to our construction from Section~\ref{SMnd}.

Let us denote the universal morphisms $ \omega(d) \tn X\tn \omega(d)^{\vee}\rightarrow T_{\omega}(X)$ by $\psi_{d,X}$. Recall from Section~\ref{SMnd} that any pivotal pair $(P,Q)$ in a monoidal category $\ct{C}$ corresponds to a strict monoidal functor $\omega: \mathrm{Piv}(1)\rightarrow \ct{C}$. By definition the image of $\omega$ falls in $\ct{C}_{\rig}$ and the coends $T_{\omega}(X)$ become precisely the colimits $T(X)$ described in Section~\ref{SMnd} in this case: Objects of $\mathrm{Piv}(1)$ are tensor products of several objects of the form $+$ and $-$ and their images under $\omega$ took the form $P_{i_{1}}\tn \cdots \tn P_{i_{n}}$, corresponding to lists $i_{j}\in \lbrace -,+\rbrace $. Hence, we wrote the universal morphisms $ \psi_{d,X}:\omega(d) \tn X\tn \omega(d)^{\vee}\rightarrow T_{\omega}(X)$ in the form $(\psi_{i_{1},\ldots , i_{n}})_{X}$. Similarly, it should be clear that by definition $T_{\omega}$ is the colimit of a diagram in $\End( \ct{C})$ with functors $F_{d}:=\omega (d) \tn -\tn \omega (d)^{\vee}$ and natural transformations $\omega(f)\tn - \tn \omega (d')$ and $\omega (d)\tn -\tn \omega(f)^{\vee}$ corresponding to morphisms $f:d\rightarrow d'$ in $\ct{D}$. Note that $\omega (\un) \tn -\tn \omega (\un)^{\vee}=F_{0}=\id_{\ct{C}}$ in Section~\ref{SMnd} since $\omega$ was strict monoidal. Since all morphisms in $\mathrm{Piv}(1)$ were generated by the quadruple of duality morphisms we only needed to look at four families of parallel pairs to construct $T_{\omega}$ in Section~\ref{SMnd}. 

With the above notation $T_{\omega}$ obtains a natural comonoidal structure $(T_{2} , T_{0})$ where $T_{2}(X,Y)$ and $T_{0}$ are the unique morphisms satisfying
\begin{align}
T_{2}(X,Y)\psi_{d,X\tn Y}=(\psi_{d,X}\tn \psi_{d,Y})&(\id_{\omega(d)\tn X}\tn  \cvr_{\omega(d)}\tn \id_{Y\tn \omega(d)^{\vee}}) \label{EqT2Tannaka}
\\T_{0} \psi_{d,\un}&=\evr_{\omega(d)}\label{EqT0Tannaka}
\end{align} 
for all objects $d\in \ct{D}$. In the example of $\omega: \mathrm{Piv}(1)\rightarrow \ct{C}$ the evaluation and coevaluation maps for arbitrary objects $d \in \mathrm{Piv}(1)$ took the forms $\cvl_{i_{1},\ldots , i_{n}} $ and $\evl_{i_{1},\ldots , i_{n}}$ and gave rise to the same comonoidal structure described above. 

If we further assume that $\omega$ is strong monoidal, then $T$ obtains a monad structure $(\mu, \eta)$ where $\mu_{X}$ and $\eta_{X}$ are the unique morphisms satisfying 
\begin{align}
\mu_{X}\psi_{d,T(X)}(\id_{\omega (d)} \tn \psi_{d',X}\tn\id_{\omega (d)^{\vee}} )&=\psi_{d\tn d', X} (\omega_{2}(d,d') \tn \id_{X}\tn (\omega_{2}(d,d')^{-1})^{\vee})\label{EqmuTannaka}
\\\eta_{X}&=\psi_{\un,X} ( \omega_{0}\tn X \tn (\omega_{0}^{-1})^{\vee})\label{EqetaTannaka}
\end{align} 
for any pair of objects $d,d'\in \ct{D}$. In Section~\ref{SMnd}, \eqref{EqmuTannaka} translated to the universal property of $\theta$ and \eqref{EqetaTannaka} simplified since $\omega$ was strict monoidal and thereby $\omega_{0}=\id_{\un}$. 

The isomorphism $\ct{C}^{T_{\omega}}\cong \dual{\ct{D}}{\omega}{\ct{C}}_{r,\lax}$ is defined as follows: For any $T$-module $(X,r:T(X)\rightarrow X)$ the object $X$ obtains a lax right braiding $(r\psi_{d,X}\tn \id_{\omega (d)})(\id_{\omega (d)\tn X}\tn \cvr_{\omega (d)})$. In the converse direction for any object $(X,\tau)$ in $\dual{\ct{D}}{\omega}{\ct{C}}_{r,\lax}$, we obtain a $T$-action $r$ on $X$ as the unique morphism satisfying $r\psi_{d,X}=(\id_{X}\tn \evr_{\omega (d)}) (\tau_{d}\tn\id_{\omega (d)^{\vee}})$. In Section~\ref{SMnd}, we observed this isomorphism by showing that each object $(X,\sigma)$ of $\ct{C}(P,Q)$ obtains a $T$-action via $\alpha_{\sigma}$ and $\beta_{\sigma}$ and vice versa. 

If $\ct{C}$ is left or right closed then part (III) of Theorem~\ref{TShimizu} follows directly from the isomorphism of $\ct{C}^{T}\cong \dual{\ct{D}}{\omega}{\ct{C}}_{r,\lax}$ and a symmetric version of Theorem~\ref{TSchauenburg}. However, when $\ct{C}$ is not closed, the statement is still shown to hold in \cite{shimizu2021tannaka} and the inverses of the fusion operators of $T$ are constructed as in the proof of Theorem~\ref{TPivHpfMndGen} but with the additional use of the natural isomorphisms $\zeta  : \omega (-^{\vee}) \rightarrow \omega (-)^{\vee}$. For our choice of $\omega: \mathrm{Piv}(1)\rightarrow \ct{C}$, $\zeta$ were simply the identity morphisms. We refer the reader to Section 3.8 of \cite{shimizu2021tannaka} for this description since the details of this construction go beyond the scope of this work. 

In \cite{shimizu2021tannaka} it is also noted that if $\ct{C}$ admits a braiding then the coend $T_{\omega}$ can be written as $- \tn B$ for some braided Hopf algebra $B$. As seen in Section~\ref{SMnd}, if $\omega$ factors through the center of $\ct{C}$ then one can produce an augmentation on $T_{\omega}$ by using the braidings $\omega(d)\tn X\tn \omega(d)^{\vee}\rightarrow X\tn \omega(d)\tn \omega(d)^{\vee}$ as in Theorem~\ref{ThmTPivAug}.

Note that when applying Tannaka-Krein reconstruction for ordinary Hopf algebras ($\ct{C}=\Vecs$) we obtain an embedding of $\ct{D}$ into the category of comodules of the reconstructed Hopf algebra $H_{\omega}$. In the monadic setting, the category of comodules over $T_{\omega}$ are defined as the left lax dual of the forgetful functor $U_{T_{\omega}}$ and this embedding follows trivially. We will briefly discuss the notion of comodules in Section~\ref{SHopfMonadNotes}. In the classical setting, we can also use a `Recognition Theorem' to tell if this embedding is an equivalence when restricted to the rigid subcategory of comodule [Theorem 3 \cite{joyal1991introduction}]. As far as the author is aware, such a result does not exist in the monadic setting for a general base category $\ct{C}$. 

\textit{Duals and Cenetralisers of Hopf monads:} Before the work of Shimizu in \cite{shimizu2021tannaka} which we discussed above, Brugui{\`e}res and Virelizier \cite{bruguieres2012quantum} studied the question of when the dual of the forgetful functor $U_{T}:\ct{C}^{T}\rightarrow \ct{C}$ of a Hopf monad $T$ could be identified as modules over another monad $Z_{T}$. In the Tannaka-Krein terminology, we would obtain $Z_{T}$ as $T_{U_{T}}$ if suitable coends exist. However, the work in \cite{bruguieres2012quantum} is presented from a slightly different point of view.

First note that for an ordinary finite dimensional Hopf algebras $H$, the dual of its category of finite dimensional modules can be identified with the category of modules over the dual Hopf algebra $H^{*}$. Hence, $Z_{T}$ is generalising the dual Hopf algebra construction and since the latter structure on $H^{*}$ only makes sense when $H$ is finite dimensional, generalising this theory to Hopf monads only makes sense if we assume $\ct{C}$ to be rigid. In this setting, an endofunctor $T$ on a monoidal category $\mathcal{C}$ was called \emph{centralizable} in \cite{bruguieres2012quantum} if for any object $X$ in $ \mathcal{C}$, there exists an object $Z_{T}(X)$ with a universal natural transformation $\phi^{X}_{-} : X\otimes - \rightarrow T(-)\otimes Z_{T}(X)$, such that for any other such pair $(\ov{Z(X)},\ov{\phi^{X}})$ there exists a morphism $f : \ov{Z(X)} \rightarrow Z_{T}(X)$ satisfying $\phi^{X} = \ov{\phi^{X}} f$. If $Z_{T}$ exists, $\phi^{X}_{Y} : X\otimes Y \rightarrow T(Y)\otimes Z_{T}(X)$ becomes a natural transformation in both entries [Lemma 5.2 \cite{bruguieres2012quantum}] and the pair $(Z_{T}, \phi)$ is called a \emph{centralizer} for $T$.

In principle, the universal property described means that $Z_{T}(X)=\int^{Y\in \ct{C}} T(Y)\otimes X\otimes Y^{\vee}$ and given the adjunction $F_{T}\dashv U_{T}$ we obtain an equality 
$$Z_{T}(X) =\int^{Y\in \ct{C}} T(Y)\otimes X\otimes Y^{\vee} =\int^{(M,r)\in \ct{C}^{T}} U_{T}(M,r)\otimes X\otimes U_{T}(M,r)^{\vee}=T_{U_{T}}$$ 
See Lemma 3.9 of \cite{bruguieres2012quantum} for more details. Thereby, Theorem 5.6 of \cite{bruguieres2012quantum} follows as a consequence of Theorem~\ref{TShimizu} and $\ct{C}^{Z_{T}}\cong \dual{\ct{C}^{T}}{(U_{T})}{\ct{C}}$. 

Finally, we should note that $Z_{T}$ was described in \cite{bruguieres2012quantum} as the monad whose module category recovers the \emph{relative centre} to $T$, which was denoted by $\ct{Z}_{T}(\ct{C})$, rather than $\dual{\ct{C}^{T}}{(U_{T})}{\ct{C}}$. The relative centre is defined to have pairs $(M,\delta )$ as objects, where $M$ is an object of $\ct{C}$ and $\delta :M\otimes \id_{\ct{C}}\rightarrow T\otimes M$ a natural transformation satisfying $(T_{2}(Y,Z)\otimes \id_{M})\delta_{Y\otimes Z}= (\id_{T(Y)}\tn \delta_{ Z})(\delta_{Y}\otimes \id_{Z})$ and $  \id_{M}=(T_{0}\otimes \id_{M})\delta_{\un}$. Using the adjunction $F_{T}\dashv U_{T}$, we obtain a natural isomorphism between $\ct{Z}_{T}(\ct{C})$ and $\dual{\ct{C}^{T}}{(U_{T})}{\ct{C}}_{l,\lax} $: If $\eta$ and $\epsilon$ denote the unit and counit of $F_{T}\dashv U_{T}$, then we can define a pair of inverse functors $L:\ct{Z}_{T}(\ct{C}) \rightarrow \dual{\ct{C}^{T}}{(U_{T})}{\ct{C}}_{l,\lax}$ and $K  :\dual{\ct{C}^{T}}{(U_{T})}{\ct{C}}_{l,\lax}\rightarrow \ct{Z}_{T}(\ct{C})$ by 
\begin{align*}
 L(M,\delta) :=&\big(M, (U_{T}\epsilon_{(N,r)} \otimes \id_{M})\delta_{N} :M\otimes U_{T}(N,r)\rightarrow U_{T}(N,r)\otimes M\big)
\\  K(M,\tau) :=& \big(M, \tau_{F_{T}(N)}(\id_{M}\otimes\eta_{N} ): M\otimes N\rightarrow  U_{T}F_{T}(N)\otimes M\big)
\end{align*}
Note that since $\ct{C}$ was assumed to be rigid for constructing $Z_{T}$, there was no difference between the dual and the lax dual of $U_{T}$, as noted in Theorem~\ref{ProplaxDual}.

\section{Other aspects of Hopf monads}\label{SHopfMonadNotes}
In this section, we briefly mention some other aspects of the theory of Hopf monads which we have not covered.  

\textbf{Comodules over bimonads:} For an ordinary bialgebra $H$, it is well-known that the category of $H$-comodules $\lcomod{H}$ becomes isomorphic to the dual $\dual{\lmod{H}}{U}{\Vecs}$ of the forgetful functor $U:\lmod{H}\rightarrow \Vecs$. The correspondence between $H$-coactions on $M$ and braidings $M\tn U\rightarrow U\tn M$ follows exactly as in the correspondence between $\Yetter{H}$ and $Z_{l,\lax}(\lmod{H})$. An interesting question is finding an appropriate notion of comodules over an arbitrary bimonad.

If we consider the bimonads arising from braided bialgebras $B$, we recover the notion of $B$-comodules by looking at $T(\un)$-comodules. Therefore in the main references on Hopf monads \cite{bruguieres2007hopf,bruguieres2011hopf} comodules over $T(\un)$ are called $T$-comodules, as in Section~\ref{SHopfModule}. However, given a bialgebroid $B$ over an algebra $A$, there exists a well-define notion of $B$-comodules such that the category of these objects again become isomorphic to the lax left dual of the forgetful functor $\lmod{B}\rightarrow \bim$ \cite{schauenburg2017module}. Therefore, given a bimonad $T$ on a category $\ct{C}$, it is more natural to define the category of $T$\emph{-comodules} as the category $\dual{\ct{C}^{T}}{(U_{T})}{\ct{C}}_{l,\lax}$. This is the point of view which is taken in \cite{shimizu2021tannaka}. However as pointed out in Remark 2.13 of \cite{shimizu2021tannaka}, this definition is not appropriate for braided bialgebras in arbitrary braided categories. 

Since the forgetful functor $\ov{U}: \dual{\ct{C}^{T}}{(U_{T})}{\ct{C}}_{l,\lax}\rightarrow \ct{C}$ creates colimits by Theorem~\ref{TCol}, it is natural to expect that for suitable choices of $\ct{C}$, the functor $\ov{U}$ admits a right adjoint and that $\dual{\ct{C}^{T}}{(U_{T})}{\ct{C}}_{l,\lax}$ can indeed be identified with the category of comodules over some bicomonad. For ordinary bialgebras this would be the bicomonad structure on $H\tnK-$, while for a bialgebroid $B$ over a base algebra $A$ it would be the bicomonad structure on the functor $B\times_{A}- $ on $\bim$. However, the existence of such a right adjoint has to be verified for specific examples using the adjoint functor Theorems and we can not make a general statement on the comonadicity of $\ov{U}$. 

\textbf{Double of a Hopf monad:} Given a finite-dimensional Hopf algebra $H$, the Drinfeld double of $H$ usually denoted by $D(H)$, is a Hopf algebra structure on the tensor product $H\tn_{\field}H^{*}$ with a quasitriangular structure and its category of representations recover the center of the category of $H$-modules $Z(\lmod{H})\cong \Yetter{H}$. If $T$ is a centralizable Hopf monad on a rigid category $\ct{C}$ with centralizer $Z_{T}$ (replacing $H^{*}\tn_{\field}-$ from the ordinary case), one can obtain a Hopf monad structure on the endofunctor $Z_{T} T$ [Theorem 6.1 \cite{bruguieres2012quantum}]. This is done by first obtaining a comonoidal distributive law $\lambda :TZ_{T}\rightarrow Z_{T}T$ using the universal property of $Z_{T}$ and then defining the \emph{double} of $T$ as $D_{T}:=Z_{T}\circ_{\lambda}T$. 

In particular, it follows that $\ct{C}^{D_{T}}\cong \left( \ct{C}^{T}\right)^{\hat{Z_{T}}}$ where $\hat{Z_{T}}$ denotes the lift of monad $Z_{T}$ onto $\ct{C}^{T}$ via $\lambda$, as discussed in Section~\ref{SDist}. As in the case with the ordinary double, there exists an isomorphism $Z(\ct{C}^{T})\cong\ct{C}^{D_{T}}$ [Theorem 6.5 \cite{bruguieres2012quantum}] and one can also obtain a quasitriangular structure on $D_{T}$. For more details on this we refer the reader to Section 6 of \cite{bruguieres2012quantum}. In \cite{bruguieres2012quantum}, this theory was utilised to define the notion of double for a braided Hopf algebra $B$ in a rigid category $\ct{C}$ which admits a coend $C$. The resulting double is a braided Hopf algebra structure on $ B\tn \pr{B}\tn C$ rather than $B\tn \pr{B}$. In the case of ordinary Hopf algebras $C=\field $ in $\fdVecs$, and the usual double is recovered.

\textbf{Hopf Monads of Mesablishvili \& Wisbauer:} In \cite{mesablishvili2010galois,mesablishvili2011bimonads,mesablishvili2012notes}, bimonads as discussed here are referred to as opmonoidal monads and the term bimonad refers to endofunctors $B$ on arbitrary categories $\ct{C}$ (not necessarily monoidal) which have a monad structure as well as a comonad structure along with a distributive law $\lambda :BB\rightarrow BB$ satisfying a compatibility condition similar to the bialgebra axioms. An antipode is then a natural transformation $S:B\rightarrow B$ and, under suitable conditions, it exists if and only if a version of the Hopf module Theorem holds i.e. the free functor $\ct{C}\rightarrow \left(\ct{C}^{B}\right)_{\hat{B}}$ is an equivalence, where $\hat{B}$ is the lift of comonad $B$ onto $\ct{C}^{B}$ via $\lambda$. 

\textbf{Bimonoids \& Duoidal Categories:} The theory of bimonoids and duoidal categories is very much parallel to that of bimonads. Duoidal categories, originally introduced as 2-monoidal categories, are categories with two distinct monoidal structures and a compatibility structure between them. A detailed discussion of these categories and $n$-monoidal categories can be found in \cite{aguiar2010monoidal}. These categories are not to be confused with monoidal 2-categories, which are 2-categories with a monoidal structure. 

Explicitly, a \emph{duoidal structure} on a category $\ct{C}$ consists of two monoidal structures $(\otimes, \un_{\otimes})$ and $(\star , \un_{\star})$ along with a monoidal structure on the functor $\star :\ct{C}\times \ct{C}\rightarrow \ct{C}$, given by $\zeta_{X,Y,W,Z}:(X\star Y )\otimes (W\star Z)\rightarrow (X\otimes W)\star (Y\otimes Z) $ and $\zeta^{0}: \un_{\otimes} \rightarrow  \un_{\otimes}\star \un_{\otimes}$, where $\ct{C}\times \ct{C}$ is a monoidal category with $\tn$ acting in each component. A bimonoids in a duoidal category is then an object with a monoid structure with respect to one monoidal structure $\tn$ and a comonoid structure with respect to the other monoidal structure $\star$ satisfying an analogous version of the bialgebra axiom using $\zeta$. Such objects naturally give rise to bimonads on the category, with respect to the second monoidal structure $\star$. Braided monoidal categories and braided bialgebras become examples of this theory with with $\star = \tn^{\op}$ and $\zeta = \id_{\ct{C}}\tn\Psi \tn \id_{\ct{C}}$. For a brief discussion of different notions of the Hopf condition for bimonoids, we refer the reader to Section 7.20 of \cite{bohm2018hopf}. 

\textbf{2-Categorical point of view:} The notions of bimonads and Hopf monads can be defined at a 2-categorical level. Monoidal categories can be viewed as 2-monoids (pseudomonoids) in the monoidal bicategory $\Cats$ consisting of categories, functors and natural transformations. From this point of view, one can define a bimonad structure on a 1-morphism (a functor in $\Cats$). In a bicategory, one can defined analogous notions of a monad structure on any 1-morphism with the same source and target as well as that of comonoidal structure on any 1-morphism whose source and target carry pseudomonoid structures. Hence, one can study bimonads on any pseudomonoid in a monoidal bicategory. This was the point of view taken by P. McCrudden in \cite{mccrudden2002opmonoidal}. The Hopf conditions in Definition~\ref{DFusHopf} are particularly useful, since they naturally extend to this setting. 

More recently in \cite{bohm2016hopf}, a Frobenius condition for pseudomonoids is discussed so that one can define a notion of antipode in the bicategory setting as well. This theory recovers the antipodes given in Section~\ref{SHopfonRig} for rigid monoidal categories as an example. Moreover, this point of view allows us to view bimonads as examples of bimonoids in a special duoidal category: For a \emph{map-monoidale} (map pseudomonoid) in a monoidal bicategory, the category of 1-morphisms from the map-monoidale to itself and 2-morphisms between them, comes equipped with a duoidal structure. In particular, if the map-monoidale is a monoidal category viewed in $\Cats$, picking a bimonoid in the mentioned duoidal category is equivalent to picking a bimonad on the monoidal category. 

\appendix
\section{Appendix: Hopf Adjunctions in Topos Theory}\label{STopos}
In this section, we present some occurrences of Hopf adjunctions in topos theory. Since the definitions of Hopf monads and adjunctions were introduced much later than the examples presented here, these examples are usually stated without reference to Hopf adjunctions. However, we hope that by presenting these examples in this survey, we provide an intersection of interests for topos theorist and Hopf algebraist. We use \cite{Johnstone1,Johnstone2} as our main references on topos theory. Here, we will only care about the fact that topoi are cartesian closed categories and recall that the appropriate notion of morphisms between topoi are \emph{geometric morphisms} $f:\ct{D} \rightarrow \ct{C}$ which consist of a pair of functors $f^{*}:\ct{C}\rightarrow \ct{D}$ and $f_{*}:\ct{D}\rightarrow \ct{C}$ where $f^{*}\dashv f_{*}$ and $f^{*}$ preserves finite limits. 

If $f:\ct{D} \rightarrow \ct{C}$ is a geometric morphism, then the monad $f_{*}f^{*}$ obtains a strong monoidal structure since $f^{*}$ preserves the cartesian structure by definition. Consequently, the fusion operator \eqref{EqleftFusion} is invertible if and only if the counit of the adjunction is an isomorphism, or equivalently the monad $f_{*}f^{*}$ is an idempotent monad. The geometric morphisms satisfying these equivalent properties are called \emph{geometric embeddings} or \emph{geometric inclusions}. See Lemma A4.2.9 of \cite{Johnstone1} for more details. Any topological immersion $f:X\rightarrow Y$ gives rise to such an embedding $\text{Sh}(X)\hookrightarrow \text{Sh}(Y)$ where $f_{*}$ is the direct image functor sending a sheaf $\ct{F}$ on $X$ to the sheaf $f_{*}\ct{F}$ defined by $U\mapsto \ct{F}f^{-1}(U)$ for open $U\subseteq Y$. See Example A4.2.12(c) in \cite{Johnstone1} for more details. 

A geometric morphism $f:\ct{D} \rightarrow \ct{C}$ is called \emph{essential} if $f^{*}$ admits a left adjoint $f_{!}: \ct{D}\rightarrow \ct{C}$. Since $f^{*}$ preserves finite products, then $f_{!}f^{*}$ obtains a bimonad structure on $\ct{D}$. As far as the author is aware, geometric morphisms were $f^{*}$ is also cartesian closed do not have their own name. However, an important class of geometric morphisms satisfy this condition. Namely, \emph{locally connected} geometric morphisms, which are morphisms where the induced functors $f^{*}/c: \ct{C}/c \rightarrow \ct{D}/f^{*}(c)$ for arbitrary objects $c$ in $\ct{C}$ are cartesian closed. This statement is equivalent to asking $f^{*}/c$ to admit left adjoints in a compatible manner (see C3.3.1 in \cite{Johnstone2}). Consequently, for any locally connected geometric morphism, we obtain a family of Hopf monads on $\ct{D}/f^{*}(c)$. In particular, $f^{*}$ preserves finite limits and sends the terminal object of $\ct{C}$ to a terminal object of $\ct{D}$ i.e $f^{*}(1_{\ct{C}})= 1_{\ct{D}}$ and $f^{*}/1_{\ct{C}}=f^{*}: \ct{C} \simeq\ct{C}/1_{\ct{C}} \rightarrow \ct{D}/1_{\ct{D}} \simeq\ct{D}$ is thereby cartesian closed. Therefore, if $f:\ct{D}\rightarrow \ct{C}$ is a locally connected geometric morphism, then we obtain a Hopf monad $f_{!}f^{*}$ on $\ct{D}$.
 
The classical of example for locally connected geometric morphisms are given by locally connected topological spaces. Given such a space $X$, we obtain a locally connected geometric morphism $f:\text{Sh} (X)\rightarrow \Set $, where $f^{*}$ is the constant sheaf functor, $f_{*}$ the global section functor and $f_{!}$ the connected components functor which sends a sheaf $\ct{F}$ to the set of connected components of its associated {\'e}tale space. See C1.5.9 of \cite{Johnstone2} for additional details.  

Another class of essential geometric morphisms for which $f^{*}f_{!}$ carries a Hopf monad structure are \emph{connected} morphisms where $f_{!}(1)=1$. A geometric morphism $f:\ct{D} \rightarrow \ct{C}$ is called connected if $f^{*}$ is faithful and full. In this case, $f^{*}f_{!}$ again becomes an idempotent monad. See Lemmas C1.5.7 and C3.3.3 in \cite{Johnstone2} for further details.
\bibliographystyle{plain}

\end{document}